%% file: BB_arXiv.tex
\documentclass[leqno]{amsart}

\usepackage[letterpaper,margin=1.39in]{geometry} 

\usepackage{amsmath, amsfonts, amssymb, amsthm, mathrsfs, stmaryrd, wasysym, mathtools}
\usepackage{enumerate}
\usepackage{tikz, tikz-cd}
\usepackage[hidelinks]{hyperref}
\usepackage{cleveref}
\usepackage{standalone}
\usetikzlibrary{decorations.markings, arrows.meta}
\usepackage[backend=biber,style=alphabetic]{biblatex}
\usepackage{appendix}
\usepackage{bbm}
\usepackage[mathscr]{eucal}

\usepackage{yfonts}

\numberwithin{figure}{section}

\makeatletter
\renewcommand{\l@subsection}{\@tocline{2}{0pt}{2.5em}{}{}}
\makeatother

\usepackage[frame,cmtip,arrow,matrix,line,graph,curve]{xy}
\usepackage{graphpap,color,paralist}
\usepackage{tikz-3dplot}

\newtheorem{Thm}{Theorem}[section]
\newtheorem*{thm}{Theorem}
\newtheorem{Prop}[Thm]{Proposition}

\newtheorem{Lm}[Thm]{Lemma}
\newtheorem{Cor}[Thm]{Corollary}

\newtheorem{IntroThm}{Theorem} %alphabetic theorem counter: Theorem A, Theorem B, ...

\newtheorem{IntroCor}[IntroThm]{Corollary}

\theoremstyle{definition}
\newtheorem{Df}[Thm]{Definition}

\theoremstyle{remark}
\newtheorem{Rmk}[Thm]{Remark}
\newtheorem{Ex}[Thm]{Example}

\tikzset{
middle arrow/.style={
decoration={markings, mark=at position 0.7 with {\arrow{>}}},
postaction={decorate}
}
}

\tikzset{
middle arrow chain/.style={
decoration={
markings,
mark=at position 0.5 with {\arrow{>}},
mark=at position 0.75 with {\arrow{>}}
},
postaction={decorate}
}
}

\tikzset{
middle arrow cover/.style={
decoration={markings, mark=at position 0.8 with {\arrow{Latex}}},
postaction={decorate}
}
}

\newcommand{\midarrow}{
\;
\begin{tikzpicture}
\draw[middle arrow] (0,0) -- (0.4,0);
\end{tikzpicture}
\;
}

\newcommand{\middarrow}{
\;
\begin{tikzpicture}
\draw[middle arrow chain] (0,0) -- (0.4,0);
\end{tikzpicture}
\;
}

\newcommand{\midddarrow}{
\;
\begin{tikzpicture}
\draw[middle arrow cover] (0,0) -- (0.4,0);
\end{tikzpicture}
\;
}

\newcommand{\midarrowX}[1]{\ensuremath{\overset{#1}{\midarrow}}}

\newcommand{\ltl}{\ensuremath{<\hspace{-0.45em}<}}

\newcommand{\inner}[1]{\left\langle #1 \right\rangle}

\newcommand{\up}{\shortuparrow}
\newcommand{\down}{\shortdownarrow}
\newcommand{\orb}[1]{O_{#1}}
\newcommand{\Gm}[0]{\ensuremath{\mathbb{G}_m}}
\newcommand{\BB  }[0]{Bia\l ynicki-Birula }

\DeclareMathOperator{\del}{\backslash}
\DeclareMathOperator{\deq}{\vcentcolon=}
\newcommand{\R}{\mathbb{R}}
\newcommand{\Z}{\mathbb{Z}}
\newcommand{\QQ}{\mathbb{Q}}
\newcommand{\PP}{\mathbb{P}}
\DeclareMathOperator{\Pop}{Pop}
\DeclareMathOperator{\Spec}{Spec}
\DeclareMathOperator{\Hom}{Hom}
\DeclareMathOperator{\Span}{Span}
\DeclareMathOperator{\Conv}{Conv}
\DeclareMathOperator{\Cone}{Cone}

\DeclareMathOperator{\Bl}{Bl}
\DeclareMathOperator{\Div}{div}
\newcommand{\yeq}{\ensuremath{\preccurlyeq}}
\DeclareMathOperator{\Aut}{Aut}
\DeclareMathOperator{\PGL}{PGL}
\DeclareMathOperator{\GL}{GL}

\title{Structural properties of Bia\l ynicki-Birula decompositions}
\author{Teddy Gonzales}
\address{Rutgers University}
\email{tg570@math.rutgers.edu}

\author{Chayim Lowen}
\address{Princeton University}
\email{chayiml@princeton.edu}
\begin{document}
\begin{abstract}
We investigate several aspects of the \BB decomposition of a smooth complete $\Gm$-variety with finite fixed locus.
Our results include novel characterizations of when the \BB decomposition is filterable or forms a stratification, showing that these properties are invariant under reversing the $\Gm$-action.
We additionally classify the smooth projective toric varieties for which the \BB decomposition either may or must be a stratification.
Our study of $\Gm$-convexity and $\Gm$-rigidity---properties recently introduced by Buch--Chaput--Perrin---answers several questions posed in their \emph{Equivariant rigidity of Richardson varieties}.
In particular, assuming only filterability of the decomposition, we show that the \BB cell closures are determined by their $\Gm$-equivariant Chow classes.
\end{abstract}
\maketitle
\tableofcontents
\section{Introduction}\label{sec:intro}
In his seminal work entitled \emph{Some theorems on actions of algebraic groups} \cite{BB}, \BB introduced what is now known as the \BB decomposition for $\Gm$-varieties.
Our setting will be that of a smooth complete $\Gm$-variety $X$ with finite fixed locus.
In this case, his decomposition has the following simple description.
The cells of the decomposition are indexed by the fixed points of $X$.
The (positive) \BB cell of $X$ corresponding to a fixed point $p \in X^{\Gm}$ is defined to be the set
\[
X_p^+ = \left\{x \in X \;\middle|\;  \lim_{t\to 0} t \cdot x = p\right\}
\]
of points attracted to $p$ by the action of $t \in \Gm$.
The \emph{negative} \BB cell $X_p^-$ is obtained by instead taking $\lim_{t \to \infty}$.
Essential to the theory is that these cells are affine spaces.
\begin{thm}[{\cite[Theorem 4.4]{BB}}]\label[Thm]{thm:BB}
Each \BB cell of a smooth complete $\Gm$-variety with finite fixed locus is a locally closed subvariety isomorphic to some affine space $\mathbb{A}^n$.
\end{thm}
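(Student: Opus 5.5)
The plan is to reduce to a local statement at each fixed point $p$ and use the existence of a $\Gm$-stable affine neighborhood together with linearization. First, I would invoke Sumihiro's theorem. In the form that applies to smooth, hence normal, varieties, it says $p$ has a $\Gm$-stable affine open neighborhood $U$. Since $U$ is open and $\Gm$-stable, any $x$ with $\lim_{t\to0}t\cdot x=p$ has its orbit eventually inside $U$. Because $U$ is $\Gm$-stable, this forces $x\in U$, so $X_p^+=U_p^+$. Completeness of $X$ is used to guarantee that limits exist, so the cells $X_p^+$ cover $X$ as $p$ ranges over the finite fixed locus. That covering matters for local closedness (see below) but not for the affine-space claim, which is purely local at $p$.

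Next, on the affine $\Gm$-variety $U=\Spec A$, I would decompose $A=\bigoplus_{n\in\Z}A_n$ by weights. The attracting set $U_p^+$ is the closed subscheme cut out by the ideal generated by $\bigoplus_{n<0}A_n$ (after fixing sign conventions). I would then check that the fixed point $p$ corresponds to the unique maximal homogeneous ideal containing all nonzero-weight parts. Since $U^{\Gm}$ is finite and we may shrink $U$ so that $p$ is its only fixed point, $U_p^+=\Spec(A/A\cdot A_{<0})=\Spec B$ with $B=\bigoplus_{n\ge0}$-graded and $B_0=k$. In particular $U_p^+$ is closed in $U$, hence locally closed in $X$.

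The key step is to show that $B$ is a polynomial ring. Choose homogeneous elements of $\mathfrak m_p$ whose images give a weight basis of $T_p^*X$. The tangent space $T_pX$ splits as $T^+\oplus T^0\oplus T^-$, and $T^0=0$ because $p$ is isolated in $X^{\Gm}$ and $X$ is smooth. The homogeneous lifts of the positive-weight covectors define an equivariant morphism $\phi\colon U_p^+\to T^+$, and I would prove it is an isomorphism. Surjectivity on cotangent spaces at $p$ follows from the weight analysis. The graded Nakayama lemma, applicable because $B$ is positively graded with $B_0=k$, then shows that these lifts generate $B$. For injectivity, i.e.\ no relations, I would compare dimensions. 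Smoothness of $X$ gives a formal-local equivariant linearization $\widehat{\mathcal O}_{X,p}\cong k[[T_p^*X]]$; this is the step I expect to be the main obstacle, and I would handle it by averaging with the reductive group $\Gm$, or by lifting weight vectors inductively modulo powers of $\mathfrak m$. With the linearization in hand, the attracting locus formally is exactly $T^+$, so $\dim U_p^+=\dim T^+$. A surjection $k[T^+]\to B$ between domains of the same dimension is then an isomorphism. Hence $X_p^+\cong\mathbb A^{\dim T^+}$.

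The crux is the smoothness input. One must show that $B$ has no embedded relations and is reduced of the expected dimension, which comes down to exactness of the weight decomposition under completion. Equivalently, one must show that the attracting locus is smooth at $p$ with tangent space $T^+$. I would try first to prove that $U_p^+$ represents the functor of equivariant maps $\mathbb A^1\to U$ sending $0\mapsto p$, and then to deduce smoothness from the formal lifting criterion using the linearized completion.
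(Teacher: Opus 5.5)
The paper does not prove this statement; it quotes it from \cite[Theorem 4.4]{BB}, so your argument has to stand on its own. Its first half does. Sumihiro gives a $\Gm$-stable affine $U=\Spec A$ with $X_p^+=U_p^+$. The attractor is $\Spec B$ with $B=A/I$ and $I=A\cdot A_{<0}$, and it is closed in $U$. Graded Nakayama shows that homogeneous lifts $x_1,\dots,x_a\in A_{>0}$ of a basis of $(\mathfrak m/\mathfrak m^2)_{>0}$ generate $B$, so $U_p^+\to T^+$ is a closed embedding. Two small corrections are needed here:
\begin{itemize}
\item $B_0=A/(A\cdot A_{\neq 0})$ is the coordinate ring of the fixed-point \emph{scheme}. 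Its cotangent space at $p$ is $(\mathfrak m/\mathfrak m^2)_0\cong (T^0)^*$, so what gives $B_0=k$ is $T^0=0$, not just set-theoretic finiteness of $U^{\Gm}$.
\item You do not know beforehand that $B$ is a domain, so your last step should not rely on it.
\end{itemize}

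The genuine gap is injectivity of $k[y_1,\dots,y_a]\to B$. You assert that ``the attracting locus formally is exactly $T^+$''. Your last paragraph then concedes this is the crux and offers only a tentative detour through the functor of points and the lifting criterion. The step you single out as the main obstacle is in fact immediate. Since $\mathfrak m$ is a homogeneous ideal, you may take homogeneous lifts $x_1,\dots,x_d\in A$ of a weight basis of $\mathfrak m/\mathfrak m^2$, with $x_{a+1},\dots,x_d$ of negative weight. Cohen's theorem then gives $\widehat A=\widehat{\mathcal O}_{X,p}=k[[x_1,\dots,x_d]]$, and no averaging is needed.

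What actually has to be proved is $I\widehat A=(x_{a+1},\dots,x_d)\widehat A$, and this is a weight count. The quotient $A/\mathfrak m^N$ is a graded vector space spanned by monomials in the $x_i$. So any $f\in A_n$ with $n<0$ is congruent modulo $\mathfrak m^N$ to a combination of monomials of weight $n$. Each such monomial contains a negative-weight variable, because $T^0=0$ rules out weight-zero variables. Hence $f\in(x_{a+1},\dots,x_d)\widehat A+\widehat{\mathfrak m}^N$ for every $N$, and Krull's intersection theorem puts $f$ in $(x_{a+1},\dots,x_d)\widehat A$.

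It follows that $\widehat B=\widehat A/I\widehat A\cong k[[x_1,\dots,x_a]]$. The composite $k[y_1,\dots,y_a]\to B\to\widehat B$, $y_i\mapsto x_i$, is just the inclusion of polynomials into power series. So $k[y_1,\dots,y_a]\to B$ is injective as well as surjective, and $X_p^+=U_p^+\cong\mathbb A^a$ with $a=\dim T^+$. This needs no dimension theory and no prior knowledge that $B$ is reduced or a domain.
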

In the projective setting, the \BB decomposition is filterable (cf.\  \Cref{df:filterable}):
\begin{thm}[{\cite[Theorem 3]{BB_Example}}]\label[Thm]{BB_Filterability}
Let $X$ be a smooth projective $\Gm$-variety with finite fixed locus.
For a suitable ordering $p_1,\dots,p_n$ of the fixed points of $X$, the union $X_{p_1}^+ \sqcup \dots \sqcup X_{p_j}^+$ is closed in $X$ for each $j$.
\end{thm}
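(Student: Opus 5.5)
We plan to prove filterability using a $\Gm$-equivariant projective embedding together with a moment-type weight function. The key is a function $\mu$ on $X^{\Gm}$ that strictly increases along closures of cells.

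\emph{Step 1: a linearized embedding.} Since $X$ is smooth, hence normal, and projective, Sumihiro's theorem gives a $\Gm$-equivariant closed embedding $X \hookrightarrow \PP(V)$. Here $V$ is a finite-dimensional $\Gm$-representation. Decompose $V = \bigoplus_{k} V_k$ into weight spaces, where $t$ acts on $V_k$ by $t^k$. For $x = [v] \in \PP(V)$, write $v = \sum_k v_k$. Then $\lim_{t\to 0} t\cdot x = [v_{k_{\min}}]$, where $k_{\min}$ is the smallest weight with $v_k \neq 0$. In particular, each fixed point $p$ lies in some $\PP(V_{k})$, and we set $\mu(p) = k$. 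For $x \in X_p^+$, the minimal weight occurring in $x$ is therefore exactly $\mu(p)$.

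\emph{Step 2: boundary of a cell.} We claim that if $y \in \overline{X_p^+}\setminus X_p^+$ with $y \in X_q^+$, then $\mu(q) > \mu(p)$.

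First, consider the set $Z_{\ge m} = \{[v] : v_k = 0 \text{ for } k < m\}$. This set is closed, so $\overline{X_p^+} \subseteq Z_{\geq \mu(p)}$. Hence $\mu(q) \ge \mu(p)$, since $\mu(q)$ is the minimal weight of $y$.

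It remains to exclude equality. The natural tool is the projection $\pi\colon Z_{\ge m} \setminus Z_{\ge m+1} \to \PP(V_m)$, $[v]\mapsto [v_m]$, which is exactly the limit map. It is continuous and $\Gm$-equivariant. Since $X^{\Gm}$ is finite, $\pi^{-1}(p) \cap X = X_p^+$. Moreover, $\pi(X_p^+) = \{p\}$, so by continuity $\pi$ sends points of $\overline{X_p^+}\cap(Z_{\ge m}\setminus Z_{\ge m+1})$ to $p$. Consequently, if $\mu(q)=\mu(p)=m$, then $y$ lies in the domain of $\pi$, so $q = \pi(y) = p$. This contradicts $y \notin X_p^+$.

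\emph{Step 3: ordering.} Order the fixed points so that $\mu(p_1) \ge \mu(p_2) \ge \dots \ge \mu(p_n)$, breaking ties arbitrarily. Set $U_j = X_{p_1}^+\sqcup\dots\sqcup X_{p_j}^+$. By Step 2, $\overline{X_{p_i}^+}$ is contained in $X_{p_i}^+$ together with cells $X_q^+$ satisfying $\mu(q) > \mu(p_i)$. All such $q$ come earlier in the list. Thus $U_j = \bigcup_{i\le j}\overline{X_{p_i}^+}$ is a finite union of closed sets, hence closed. Here we use that the cells partition $X$, since every $\Gm$-orbit has a limit in the complete variety $X$.

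\emph{Main obstacle.} The delicate point is Step 2, namely the strict inequality when several fixed points share the same weight $m$. The argument above handles this through the continuity of $\pi$. One must check carefully that the points of $\overline{X_p^+}$ with minimal weight exactly $m$ really lie in the open domain of $\pi$. They do, because $\pi$ is defined wherever the $V_m$-component is nonzero.
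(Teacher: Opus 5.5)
Your proof is correct. The paper gives no proof of this statement to compare against: it is quoted from \cite[Theorem 3]{BB_Example} and used as a black box.

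What you give is the standard linearization argument. The decomposition of $\PP(V)$ into the strata $U_m = Z_{\ge m}\setminus Z_{\ge m+1}$ is filtered by the closed linear subspaces $Z_{\ge m} = \bigsqcup_{k\ge m}U_k$. On $U_m$ the limit map is the morphism $\pi$, so each cell $X_p^+ = X\cap\pi^{-1}(p)$ is closed in $X\cap U_{\mu(p)}$. This is exactly why ties in $\mu$ do no harm, and your continuity argument for it is sound.

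Two minor remarks. First, finiteness of $X^{\Gm}$ is not needed for the identity $\pi^{-1}(p)\cap X = X_p^+$; it is only needed so that there are finitely many cells, indexed by points. Second, smoothness enters only through normality, in Sumihiro's theorem.

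It is worth comparing your argument with the paper's own machinery. Your weight function immediately shows that $\Gamma_X$ is acyclic. Take a non-fixed $x=[v]\in X$ with $\lim_{t\to\infty}t\cdot x = p$ and $\lim_{t\to 0}t\cdot x = q$. Then $\mu(q)$ and $\mu(p)$ are the smallest and largest weights occurring in $v$, so $\mu(q)<\mu(p)$. Together with \Cref{thm:filt_iff_acyclic} this gives a second proof. Your decreasing-$\mu$ order is then one of the total orders extending the relation generated by $\Gamma_X$, as in the remark following that theorem.

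However, the implication ``acyclic $\Rightarrow$ filterable'' in \Cref{thm:filt_iff_acyclic} rests on \Cref{thm:broken_trajectory}. That result needs the equivariant resolution of $\PP^1\times C\dashrightarrow X$. Your argument bypasses this entirely, because continuity of the limit map on $U_m$ controls closures of cells directly. The price is the equivariant projective embedding. That embedding is exactly what is unavailable in the complete, non-projective setting the paper's acyclicity criterion is designed for.
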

These two properties combine to give perhaps the most useful application of the \BB decomposition.
They ensure that the classes of the \BB cell closures form a basis for the Chow homology of $X$ \cite[Theorem 1]{chow_basis} (see also \cite[Appendix B, Lemma 6]{Fulton_tableaux} and \cite[Example 19.1.11(b)]{Fulton_intersection_theory}).
Similarly, they form a basis of the total equivariant Chow group of $X$ as a module over the equivariant Chow cohomology of a point \cite[\textsection3.2, Corollary 1(iii)]{Brion_1997} (see also \cite[Proposition 17.1.2]{Anderson_Fulton_2023}).

To understand when the decomposition is filterable for a smooth complete $\Gm$-variety $X$ with finite fixed locus, we consider its graph of $\Gm$-orbits.
Let $\Gamma_X$ be the directed graph with vertex set $X^{\Gm}$ and directed edges $p \to q$ for distinct $p, q \in X^{\Gm}$ whenever $X_p^- \cap X_q^+ \neq \emptyset$, i.e.\ whenever there is a $\Gm$-orbit which limits negatively to $p$ and positively to $q$.
We prove a connectedness property for this graph in \Cref{sec:graph}.
As an application, in \Cref{sec:filt_prop} we show the following.
\begin{IntroThm}[\ref{thm:filt_iff_acyclic}]
Let $X$ be a smooth complete $\Gm$-variety with finite fixed locus.
Then the (positive) \BB decomposition of $X$ is filterable if and only if $\Gamma_X$ contains no directed cycles.
\end{IntroThm}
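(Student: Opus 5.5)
The plan is to prove both directions by comparing the closure relation among \BB cells with directed paths in $\Gamma_X$. The basic observation is this: if $p \to q$ is an edge, pick $x \in X_p^- \cap X_q^+$. Then the orbit $\Gm \cdot x$ lies in $X_q^+$, and $p = \lim_{t\to\infty} t\cdot x$ lies in $\overline{X_q^+}$.

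\emph{Filterable $\Rightarrow$ acyclic.} Suppose $p_1,\dots,p_n$ is a filtration ordering, so each $Z_j = X_{p_1}^+ \sqcup \dots \sqcup X_{p_j}^+$ is closed. Let $p \to q$ be an edge, with $q = p_j$. Then $X_q^+ \subseteq Z_j$, and since $Z_j$ is closed, $p \in \overline{X_q^+} \subseteq Z_j$. Because $p$ is a fixed point, it lies in its own cell $X_p^+$. The cells are disjoint, so $p = p_i$ for some $i \le j$, and $i \neq j$ since $p \ne q$. Thus every edge strictly increases the index in the ordering. Following a directed cycle would then give $i_1 < i_2 < \dots < i_1$, which is impossible.

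\emph{Acyclic $\Rightarrow$ filterable.} If $\Gamma_X$ has no directed cycles, choose a topological ordering $p_1,\dots,p_n$, meaning that every edge $p_i \to p_j$ has $i<j$. It suffices to prove the following claim: for each $q$, the closure $\overline{X_q^+}$ is contained in the union of the cells $X_p^+$ over those $p$ admitting a directed path $p \to \cdots \to q$ (including $p=q$). Granting the claim, all such $p$ precede $q$ in the ordering. Hence $\overline{X_{p_j}^+} \subseteq Z_j$ for every $j$, so $\overline{Z_j} = \bigcup_{i\le j}\overline{X_{p_i}^+} \subseteq Z_j$, and each $Z_j$ is closed.

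\emph{The claim, which is the main obstacle.} Here I would invoke the connectedness property of $\Gamma_X$ from \Cref{sec:graph}. I expect it to say exactly that if $x \in \overline{X_q^+}$ with $\lim_{t\to 0} t\cdot x = p$, then there is a directed path from $p$ to $q$. If I had to prove it directly, I would argue as follows.
\begin{enumerate}[(i)]
\item Take a curve in $X_q^+$ whose limit is $x$ (by curve selection, using completeness of $X$).
\item Degenerate the $\Gm$-orbit closures along this curve. By completeness and finiteness of $X^{\Gm}$, the limit is a connected chain of $\Gm$-stable curves, i.e.\ a chain of orbit closures (a broken orbit).
\item Each orbit closure in the chain joins its $t\to\infty$ limit to its $t\to 0$ limit, so each one is an edge of $\Gamma_X$. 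These edges are consistently oriented, since the attraction direction is preserved under degeneration.
\item The chain starts at $q$, because the positive limits of the nearby orbits are all $q$, and it passes through the orbit of $x$, whose positive limit is $p$.
\item Concatenating the edges gives a directed path from $p$ to $q$.
\end{enumerate}
The delicate part is step (iii): checking that the orientations along the chain are consistent so the edges really concatenate into a directed path. Completeness (rather than projectivity) is where compactness of the limit chain must be justified carefully.
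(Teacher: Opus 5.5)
Your proof is correct and is essentially the paper's argument. Your two directions amount to two facts. First, $\Gamma_X$ is contained in the closure graph (edges $p\to q$ when $p\neq q$ and $X_p^+\cap\overline{X_q^+}\neq\varnothing$). Second, that closure graph lies in the transitive closure of $\Gamma_X$. The paper combines these with \Cref{prop:filter_cond}, and it obtains your claim exactly as you intend, by applying \Cref{thm:broken_trajectory} to the complete, possibly singular, $\Gm$-variety $\overline{X_q^+}$. In that variety $X_q^+$ is dense, so $q$ plays the role of $X^{\up}$, and your degeneration sketch is what \Cref{prop:create_chain} carries out rigorously via equivariant blowups.
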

\begin{IntroCor}[\ref{cor:filtration_reversibility}]
The positive \BB decomposition of $X$ is filterable if and only if the negative one is.
\end{IntroCor}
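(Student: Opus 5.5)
The corollary will follow from Theorem~\ref{thm:filt_iff_acyclic} once we compare the orbit graph of $X$ with the orbit graph of the same variety under the inverted action. Let $X'$ denote $X$ with $\Gm$ acting through the inverse, $t \ast x = t^{-1}\cdot x$. Then $X'$ is again a smooth complete $\Gm$-variety, and it has the same finite fixed locus $X'^{\Gm} = X^{\Gm}$. For each fixed point $p$ the positive cells of $X'$ are the negative cells of $X$, and conversely:
\[
X'^{+}_p = \{x : \lim_{t\to 0} t^{-1}\cdot x = p\} = X^-_p, \qquad X'^{-}_p = X^+_p .
\]
So the negative \BB decomposition of $X$ is, by definition, the positive \BB decomposition of $X'$. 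Filterability of the negative decomposition of $X$ is therefore the same condition as filterability of the positive decomposition of $X'$.

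Next we compare the two graphs. The graph $\Gamma_{X'}$ has the same vertex set as $\Gamma_X$. It has an edge $p\to q$ exactly when $X'^-_p\cap X'^+_q = X^+_p\cap X^-_q \neq\emptyset$. That is precisely the condition for the edge $q\to p$ in $\Gamma_X$. Hence $\Gamma_{X'}$ is $\Gamma_X$ with every edge reversed. Reversing all edges sends a directed cycle $p_1\to p_2\to\dots\to p_k\to p_1$ to the cycle traversed backwards. So one graph is acyclic exactly when the other is.

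We now chain the equivalences, applying Theorem~\ref{thm:filt_iff_acyclic} once to $X$ and once to $X'$:
\[
\text{positive decomposition of }X\text{ filterable}
\iff \Gamma_X\text{ acyclic}
\iff \Gamma_{X'}\text{ acyclic}
\iff \text{negative decomposition of }X\text{ filterable}.
\]
There is no real obstacle here; all the substance lies in Theorem~\ref{thm:filt_iff_acyclic}. The only point needing care is that Theorem~\ref{thm:filt_iff_acyclic} may legitimately be applied to $X'$. This holds because its hypotheses (smooth, complete, finite fixed locus) do not depend on the choice between $t$ and $t^{-1}$.
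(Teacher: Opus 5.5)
Your proposal is correct and follows essentially the same route as the paper: reversing the $\Gm$-action reverses every edge of $\Gamma_X$, which preserves directed acyclicity, and \Cref{thm:filt_iff_acyclic} applied to both actions gives the equivalence. Your explicit check that $\Gamma_{X'}$ is the edge-reversal of $\Gamma_X$, using $X'^{\pm}_p = X^{\mp}_p$, is exactly the one-line observation the paper uses.
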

A strengthening of the filtration property for a decomposition is that of stratification.
Recall that a decomposition such as $X = \bigsqcup_{p \in X^{\Gm}} X_p^+$ is called a \emph{stratification} when each cell closure $\overline{X_p^+}$ is a union of cells in the decomposition.
\BB has given the following \emph{sufficient} condition for his decomposition to be a stratification.
\begin{thm}[{\cite[Theorem 5]{BB_Example}}]
Let $X$ be a smooth complete $\Gm$-variety with finite fixed locus.
If $X_p^+$ and $X_q^-$ meet transversely for all $p, q \in X^{\Gm}$, then the positive and negative \BB decompositions of $X$ are stratifications.
\end{thm}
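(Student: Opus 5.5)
We must show: if every pair $X_p^+$, $X_q^-$ meets transversely, then each $\overline{X_p^+}$ (and each $\overline{X_q^-}$) is a union of cells. By symmetry under $t\mapsto t^{-1}$ it suffices to treat the positive decomposition.

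\textbf{Step 1: reduce to an incidence claim.} Every point of $X$ lies in exactly one cell $X_q^+$, and $\overline{X_p^+}$ is $\Gm$-stable and closed. It therefore suffices to prove the implication
\[
X_q^+\cap\overline{X_p^+}\neq\emptyset \;\Longrightarrow\; X_q^+\subseteq\overline{X_p^+}.
\]
If $X_q^+$ meets $\overline{X_p^+}$, then taking $t\to 0$ inside the closed $\Gm$-stable set gives $q\in\overline{X_p^+}$. So the task becomes: if $q\in \overline{X_p^+}$, then $X_q^+\subseteq \overline{X_p^+}$.

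\textbf{Step 2: use transversality at $q$.} Since $X_q^+$ and $X_q^-$ meet transversely at $q$, we have
\[
T_qX=T_qX_q^+\oplus T_qX_q^-,
\]
which is the weight decomposition into positive and negative parts. Now let $x\in X_q^+$ be arbitrary. The key idea is to produce points of $X_p^+$ accumulating at $x$, not just at $q$.

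Consider $X_p^+\cap X_q^-$. If $q\in\overline{X_p^+}$ with $q\neq p$, I want this intersection to be nonempty, and in fact to accumulate at $q$ along a family filling out the $X_q^-$ directions. By transversality, $X_p^+\cap X_q^-$ is smooth of dimension
\[
\dim X_p^+ + \dim X_q^- - \dim X,
\]
near any of its points. Here is the \textbf{main obstacle}: showing that $q\in\overline{X_p^+}$ forces $X_p^+\cap X_q^-\neq\emptyset$. My approach is to choose a $\Gm$-stable affine neighborhood $U$ of $q$ that is equivariantly étale over $T_qX$ (by Sumihiro and Luna-type linearization near a smooth fixed point, as in \cite{BB}). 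In these coordinates, $X_q^\pm$ correspond to the positive and negative weight subspaces $V^\pm$.

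Take a point $y\in X_p^+\cap U$ near $q$, and write $y=(y^+,y^-)$. The limit as $t\to\infty$ of $t\cdot y$ cannot stay in $U$ unless $y^+=0$. Instead, I run the flow in the negative direction for as long as the orbit remains in $U$, and take a limit of the points where $|y^+|$ becomes of unit size while $y\to q$. This yields a point of $\overline{X_p^+}\cap(\text{boundary of a small ball in }V^-\text{-directions})$, after rescaling. The limit can be arranged to lie in $X_q^-\setminus\{q\}$ but only in $\overline{X_p^+}$, not in $X_p^+$. So I would instead argue by induction on the (acyclic, by transversality via dimension counting) order: the limit point lies in some $X_r^+\cap X_q^-$ with $r\in\overline{X_p^+}$. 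Transversality makes $\dim X_r^+>\dim X_q^+$, and induction down the poset closes the argument.

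\textbf{Step 3: spread out using $X_q^+$.} Given a point $z\in X_p^+\cap X_q^-$ (or inductively in $X_r^+$), transversality at $z$ means $X_p^+$ is locally a graph over the $X_q^+$-directions near $z$. Flowing by $t\to 0$ pushes $z$ toward $q$ and carries a neighborhood of $z$ in $X_p^+$ to sets whose projection onto $V^+$ fills an expanding region, since positive weights contract as $t\to 0$ only after rescaling. Concretely, $t\cdot(\text{slice})$ converges, as $t\to 0$, to a subset of $\overline{X_p^+}$ containing a neighborhood of $q$ in $X_q^+$.

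Since $\overline{X_p^+}\cap X_q^+$ is closed in $X_q^+$, $\Gm$-stable, and contains a neighborhood of $q$, and every $\Gm$-orbit in $X_q^+$ enters any neighborhood of $q$, it follows that $X_q^+\subseteq\overline{X_p^+}$. This completes the incidence claim and hence the stratification property.
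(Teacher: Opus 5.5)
Your strategy (a local rescaling at $q$, the inclination ($\lambda$-)lemma, and a downward induction) is the classical Morse--Smale route, which is genuinely different from the paper's. As written, however, both limiting arguments run the flow in the wrong direction, and the argument is analytic.

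\textbf{Flow directions and the induction.} In Step~3, $z\in X_q^-$ means $t\cdot z\to q$ as $t\to\infty$. As $t\to 0$ the point goes instead to $r$ (or $p$), the $V^+$-projection of your slice shrinks to $0$, and its $V^-$-coordinates blow up. So the claimed convergence onto a neighborhood of $q$ in $X_q^+$ is false. The computation works with $t\to\infty$. If the slice is the graph of $g$ over a neighborhood $B$ of $0\in V^+$ with $g(0)=z^-$, then $t\cdot S$ is the graph of $c\mapsto t\cdot g(t^{-1}\cdot c)$ over $t\cdot B$, which exhausts $V^+$. This function tends to $0$ because $t^{-1}\cdot c\to 0$ and $t$ contracts $V^-$. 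Likewise, in Step~2 you must normalize the $V^-$-component of $y$, flowing $t\to 0$; that component is nonzero because $y\notin X_q^+$. Normalizing $y^+$ instead lands you in $X_q^+$, not in $X_q^-\setminus\{q\}$.

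The induction also has to be made explicit. Fix $p$ and induct downward on $\dim X_q^+$. Step~2 yields $r\in\overline{X_p^+}$ with $q\midarrow r$, and transversality gives $\dim X_r^+>\dim X_q^+$. The inductive hypothesis gives $X_r^+\subseteq\overline{X_p^+}$. Step~3 applied to the pair $(q,r)$, not $(q,p)$, gives $X_q^+\subseteq\overline{X_r^+}\subseteq\overline{X_p^+}$. Your ``main obstacle'' $X_p^+\cap X_q^-\neq\varnothing$ is then never needed. With these repairs the argument can be made rigorous over $\mathbb{C}$.

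\textbf{The ground field.} Norms, unit spheres, convergent subsequences and analytic graphs only make sense over $\mathbb{C}$. The statement is over an arbitrary algebraically closed $\mathbb{K}$, including positive characteristic. Step~2 algebraizes easily: replace the sequence by an arc in $\overline{X_p^+}$ centered at $q$, inside a $\Gm$-stable affine neighborhood equivariantly embedded in a representation, and rescale by a suitable power of the arc parameter after a ramified base change. The $\lambda$-lemma has no equally cheap algebraic substitute.

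You can avoid the $\lambda$-lemma altogether. Iterate Step~2: cell dimensions strictly increase, so the chain cannot cycle and must end at $p$. This shows that $q\in\overline{X_p^+}$ with $q\neq p$ forces $\dim X_q^+<\dim X_p^+$, which is condition~(iii) of \Cref{thm:dim_strat}. The paper's proof of (iii)$\Rightarrow$(ii), via purity of the boundary $\overline{X_q^+}\setminus X_q^+$ of an affine cell, then finishes algebraically.

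\textbf{Comparison with the paper.} The paper uses even less of the hypothesis. It needs only that a nonempty transverse intersection $X_p^-\cap X_q^+$ with $p\neq q$, being $\Gm$-stable without fixed points, has positive dimension; that is condition~(v) of \Cref{thm:dim_strat}. All the geometry sits in that theorem, whose key input is the global broken-trajectory result \Cref{thm:broken_trajectory}. Your local route replaces that global input with honest transversality, which rules out cycles through the dimension increase. The price is the analytic $\lambda$-lemma.
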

Seeing as $X_p^+$ and $X_p^-$ are always transverse, the content of the above condition rests in the case $p \neq q$ and $X_p^+ \cap X_q^- \neq \varnothing$.
Transversality then implies $\dim X_p^+ + \dim X_q^- > \dim X$ since the intersection $X_p^+ \cap X_q^-$ is $\Gm$-stable and contains no fixed points---so has dimension at least 1.
We show in \Cref{subsec:strat_gen} that this numerical consequence of transversality is both necessary and sufficient to have a stratification.
\begin{IntroThm}[{\ref{thm:dim_strat}}]\label[Thm]{thm:numerical_trans}
Let $X$ be a smooth complete $\Gm$-variety with finite fixed locus.
The (positive) \BB decomposition of $X$ is a stratification if and only if $\dim X_p^+ + \dim X_q^- > \dim X$ for all distinct $p, q \in X^{\Gm}$ for which $X_p^+ \cap X_q^- \neq \varnothing$.
\end{IntroThm}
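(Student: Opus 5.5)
Both directions go through the tangent spaces at fixed points. At $q$, write $T_qX = T_q^+ \oplus T_q^-$, with $T_q^+ = T_q X_q^+$ the positive-weight part and $T_q^- = T_qX_q^-$ the negative-weight part. Each cell closure $\overline{X_p^+}$ is irreducible of dimension $\dim X_p^+$, and it is $\Gm$-stable and closed.

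\emph{Necessity.} Assume the decomposition is a stratification, and let $p\neq q$ with $X_p^+\cap X_q^-\neq\varnothing$.
\begin{enumerate}[(i)]
\item Take $x\in X_p^+\cap X_q^-$. Then $q=\lim_{t\to\infty}t\cdot x$ lies in $\overline{X_p^+}$.
\item By the stratification hypothesis, $X_q^+\subseteq\overline{X_p^+}$.
\item The orbit $\Gm\cdot x$ lies in $\overline{X_p^+}$ and limits to $q$ as $t\to\infty$. So its tangent direction at $q$ is a nonzero vector of negative weight in $T_q\overline{X_p^+}$, lying outside $T_q^+$.
\end{enumerate}
The main claim is that $\dim \overline{X_p^+} > \dim X_q^+$ in this situation.
\begin{itemize}
\item Since $q\neq p$, we have $q\notin X_p^+$, so $X_q^+$ lies in the boundary $\overline{X_p^+}\setminus X_p^+$. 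Hence $\dim X_q^+<\dim X_p^+$.
\item Smoothness of $X$ and the product decomposition near $q$ give $\dim X_q^+ + \dim X_q^- = \dim X$.
\end{itemize}
Combining these, $\dim X_p^+ + \dim X_q^- > \dim X_q^+ + \dim X_q^- = \dim X$. This direction is essentially formal.

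\emph{Sufficiency.} Assume the numerical condition. We must show that if $q\in\overline{X_p^+}$, then $X_q^+\subseteq\overline{X_p^+}$.
\begin{enumerate}[(i)]
\item Set $Z=\overline{X_p^+}$, which is closed, $\Gm$-stable and irreducible. The intersection $Z\cap X_q^-$ contains $q$.
\item Since $Z$ is $\Gm$-stable and $q$ is the repelling limit in $X_q^-$, every component of $Z\cap X_q^-$ is $\Gm$-stable and contains $q$. We want it to be positive-dimensional near $q$, i.e.\ we want $q$ to be approached by points of $X_p^+$ lying on orbits arriving from the negative side.
\item If $\dim Z+\dim X_q^- > \dim X$, then since $X_q^-\cong\mathbb A^m$ is closed in the open neighborhood $X_q^+\times X_q^-$-like chart around $q$, the intersection is positive-dimensional at $q$. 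Here I would use the local $\Gm$-equivariant chart at $q$ given by \Cref{thm:BB} together with Luna/Sumihiro-type linearization.
\item To obtain the inequality, I would induct on the graph $\Gamma_X$. Filterability follows since the condition forbids directed cycles: along an edge $p\to q$ one has $\dim X_p^+>\dim X_q^+$, so the dimension strictly decreases and \Cref{thm:filt_iff_acyclic} applies. Then $q\in\overline{X_p^+}$ should be realized by a chain $p=p_0\to p_1\to\cdots\to p_k=q$ of orbits inside $Z$, using the connectedness result of \Cref{sec:graph}.
\item By induction on $k$, with intermediate steps $X_{p_i}^+\subseteq Z$, we reduce to a single edge $p'\to q$ with $X_{p'}^+\subseteq Z$.
\end{enumerate}

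For the single edge, consider the $\Gm$-stable closed subset $Z\cap X_q^-$ near $q$, together with the fibration of a neighborhood of $q$ over $X_q^-$ given by the limit map $t\to\infty$ restricted to the open subset attracted to $X_q^-$. The edge $p'\to q$ lies in $Z$, so $Z$ contains a curve through $q$ with negative tangent weight. The aim is to show that the tangent cone of $Z$ at $q$ contains all of $T_q^+$, and then to use a Białynicki-Birula-type attraction argument: points of $Z$ near the orbit, pushed by $t\to 0$, sweep out an open subset of $X_q^+$. Closedness and irreducibility of $X_q^+$ then give $X_q^+\subseteq Z$.

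This last step is the main obstacle. I would try a dimension count. Let $W$ be the set of points of $X_q^+$ lying in $Z$. Consider the flow: the $t\to0$ limits of points of $Z\cap(\text{neighborhood})$ land in $Z\cap X_q^+$. A family argument over the positive-dimensional $Z\cap X_q^-$ should give $\dim W\ge \dim Z-\dim(Z\cap X_q^-)$. The goal is to show this equals $\dim X_q^+$, using the hypothesis $\dim X_{p'}^++\dim X_q^->\dim X$ applied with the right $p'$. If the count falls short, I would instead argue by contradiction: choose $q$ maximal in the filtration with $q\in Z$ and $X_q^+\not\subseteq Z$, and exhibit a cell violating the inequality.
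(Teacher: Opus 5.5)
Your necessity argument is correct and follows the paper's chain (i)$\Rightarrow$(ii)$\Rightarrow$(iii)$\Rightarrow$(iv)$\Rightarrow$(v) of \Cref{thm:dim_strat}; the tangent-direction remark is not needed. For sufficiency, your step (iv) already contains the paper's (v)$\Rightarrow$(iii). The hypothesis gives $\dim X_a^+>\dim X_b^+$ whenever an orbit has $t\to 0$ limit $a$ and $t\to\infty$ limit $b\neq a$. Applying \Cref{thm:broken_trajectory} to $Z=\overline{X_p^+}$ links $p$ to any fixed $q\in Z$ by such orbits inside $Z$, so $\dim X_q^+<\dim X_p^+$ for every fixed $q\neq p$ in $Z$. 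Your reduction to a single edge is also valid.

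The gap is the single-edge step itself. This is the one place where ``$q\in Z$'' must become ``$X_q^+\subseteq Z$'', and you leave it open. The tools you sketch do not apply. Since $t\to 0$ and $t\to\infty$ limits are fixed points, there is no fibration over $X_q^-$ by limits and no sweeping of $X_q^+$ by limits. Dimension theory bounds $\dim(Z\cap X_q^-)$ only from below, so your count runs the wrong way.

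More fundamentally, no argument whose only numerical input is the inequality for the pair $(p',q)$ can work. In coordinates $(t,x,y)$, take the smooth polytope $P=\{0\le t\le 1,\ 0\le y\le 1,\ x\ge 0,\ x+y\le 2+t\}$, with $v=(-1,3,2)$, $p'=(1,2,1)$ and $q=(1,0,1)$. Then:
\begin{itemize}
\item $P_{p'}^+$ is the facet $\{y=1\}$.
\item The edge $qp'$ gives $X_{p'}^+\cap X_q^-\neq\varnothing$.
\item $q$ has the single lower neighbour $(1,0,0)$, so $\dim X_{p'}^++\dim X_q^-=2+2>3$.
\item Yet $P_q^+$ is the edge from $(1,0,0)$ to $q$, so $X_q^+\not\subseteq\overline{X_{p'}^+}$.
\end{itemize}
Here $\overline{X_{p'}^+}\cap X_q^-=X_q^-$ is $2$-dimensional, so your count yields nothing. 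The global hypothesis fails elsewhere, for instance for the pair $(0,1,1),(0,0,1)$, where $1+1\not>3$. So the hypothesis has to enter through other fixed points.

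The missing idea is the paper's (iii)$\Rightarrow$(ii): an induction on $\dim X_p^+$ through the boundary $W=\overline{X_p^+}\setminus X_p^+$.
\begin{itemize}
\item Because $X_p^+$ is an affine open subset of $\overline{X_p^+}$, $W$ is pure of codimension one (Stacks Project, Tag 0BCV).
\item Given a fixed $q\neq p$ in $\overline{X_p^+}$, let $C$ be a component of $W$ through $q$. It is closed and $\Gm$-stable, so its generic point lies in $X_r^+$ for some fixed $r\in C$, whence $C\subseteq\overline{X_r^+}$.
\item Since $r\neq p$ and $r\in\overline{X_p^+}$, the dimension drop gives $\dim X_p^+-1=\dim C\le\dim X_r^+<\dim X_p^+$. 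Hence $C=\overline{X_r^+}$.
\item Induction applied to $q\in\overline{X_r^+}$ gives $X_q^+\subseteq\overline{X_r^+}\subseteq\overline{X_p^+}$.
\end{itemize}
This replaces any local analysis at $q$, and it is exactly how the hypothesis at intermediate fixed points is used. Your fallback of taking $q$ maximal and ``exhibiting a violating cell'' points in this direction but supplies no mechanism.
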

\Cref{ex:transversality_failure} shows that the numerical condition may hold even when transversality fails.
On the other hand, from the symmetry of the criterion it immediately follows:
\begin{IntroCor}[\ref{cor:strat_reversibility}]\label[Cor]{cor:reversible}
The positive \BB decomposition of $X$ is a stratification if and only if the negative one is.\footnotemark\footnotetext{\label{fn:foot1}
Shortly before the release of this arXiv preprint, we learned that Micha\l ek, Monin, and Wang \cite{MMW} had independently proved an analogue of Corollary D for polytopes, not necessarily smooth. This implies our statement in the special case of smooth projective toric varieties.
See also footnote~\ref{fn:foot2}.
}
\end{IntroCor}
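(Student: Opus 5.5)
The plan is to deduce the corollary directly from \Cref{thm:numerical_trans}, applied once to the given action and once to the reversed action. Write $X^{\mathrm{rev}}$ for the variety $X$ with the $\Gm$-action precomposed with inversion $t \mapsto t^{-1}$. This is again a smooth complete $\Gm$-variety with the same (finite) fixed locus. Since $t \to 0$ for the reversed action corresponds to $t \to \infty$ for the original one, the positive \BB cells of $X^{\mathrm{rev}}$ are exactly the negative cells of $X$, and conversely: $(X^{\mathrm{rev}})_p^{+} = X_p^-$ and $(X^{\mathrm{rev}})_p^{-} = X_p^+$. Hence the negative \BB decomposition of $X$ is a stratification if and only if the positive \BB decomposition of $X^{\mathrm{rev}}$ is.

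Next, apply \Cref{thm:numerical_trans} to $X^{\mathrm{rev}}$. Its positive decomposition is a stratification if and only if
\[
\dim X_p^- + \dim X_q^+ > \dim X
\]
for all distinct $p,q \in X^{\Gm}$ with $X_p^- \cap X_q^+ \neq \varnothing$. Swapping the names of $p$ and $q$, this is the condition that $\dim X_q^- + \dim X_p^+ > \dim X$ for all distinct $p,q$ with $X_p^+ \cap X_q^- \neq \varnothing$. That is verbatim the criterion of \Cref{thm:numerical_trans} for the positive decomposition of $X$, so the two properties are equivalent.

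There is no real obstacle here; the whole content sits in \Cref{thm:numerical_trans}. The only points to check are the bookkeeping facts that reversing the action preserves the hypotheses (smoothness, completeness, finite fixed locus) and exactly interchanges the positive and negative cells. Both are immediate from the definitions of the limits.
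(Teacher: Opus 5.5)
Your proof is correct and is essentially the paper's argument. The paper deduces the corollary by noting that condition (v) of \Cref{thm:dim_strat} is invariant under reversing the $\Gm$-action. You spell out exactly that bookkeeping using $X^{\mathrm{rev}}$, the interchange $(X^{\mathrm{rev}})_p^{\pm} = X_p^{\mp}$, and swapping the roles of $p$ and $q$.
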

If the positive and negative \BB decompositions of $X$ are both stratifications then the positive and negative cell closures form Poincar\'e dual bases for the Chow cohomology of $X$.
This was shown in \cite[Lemma 3.11]{BP22} in the projective setting, though the argument is completely general.
By \Cref{cor:reversible}, knowing just one of these to be a stratification suffices.

Toric varieties constitute a fruitful testing ground for the \BB theory.
Let us consider a smooth projective toric variety $X$ with torus $T \cong \Gm^n$ whose fan is the \emph{outer} normal fan of a lattice polytope $P$.
The polytope $P$ naturally sits inside $M\otimes_\mathbb{Z} \mathbb{R}$ where $M$ is the character lattice of $T$.
Recall that faces of $P$ are in natural bijection with the $T$-orbits of $X$; in particular, the vertices of $P$ correspond to the $T$-fixed points of $X$.
We call a cocharacter $v\colon \Gm \to T$ in $N \deq M^\vee$ \emph{admissible} if the corresponding fixed locus $X^{\Gm}$ is finite.
Equivalently, $X^{\Gm} = X^T$.
Note that this holds for a generally chosen $v\in N$.
Each choice of admissible $\Gm$ gives rise to a \BB decomposition.
We describe this decomposition in detail in \Cref{sec:toric_BB}.

The following theorem classifies toric varieties which admit a \BB stratification.
\Cref{prop:strat_bottom_up} refines this by identifying the cocharacters which give rise to stratifications.
\begin{IntroThm}[\ref{thm:existential_strat}, \ref{thm:universal_strat}]\label{introthm:strat}
Let $X$ be a smooth projective toric variety.
Let $P$ be a polytope representing an ample toric line bundle on $X$.
\begin{itemize}
\item The \BB decomposition of $X$ is a stratification for \emph{some} admissible cocharacter if and only if $P$ has the combinatorial type of a product of simplices.\footnotemark\footnotetext{\label{fn:foot2}The ``only if'' direction of this first half of \Cref{introthm:strat} was proven independently by Micha\l ek, Monin, and Wang (private communication) in the generality of arbitrary simple polytopes.
We do not know whether the ``if'' direction holds in this setting as well. See \cite{MMW} for the relevant definitions.
See also footnote~\ref{fn:foot1}.}
\item The \BB decomposition of $X$ is a stratification for \emph{all} admissible cocharacters if and only if $P$ is a product of simplices, up to a unimodular transformation.
\end{itemize}
\end{IntroThm}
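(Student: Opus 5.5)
We reduce everything to the combinatorics of the polytope $P$ via Theorem~\ref{thm:numerical_trans}. For an admissible cocharacter $v$, the fixed points are the vertices of $P$. The positive cell of a vertex $u$ is the orbit-union corresponding to the face $F_u^+$ spanned by the edges at $u$ along which $\langle \cdot, v\rangle$ decreases, with the sign convention fixed in \Cref{sec:toric_BB}. Since $X$ is smooth, $P$ is simple, and $\dim X_u^+$ is the number of such edges. The cell $X_u^-$ is the analogous face $F_u^-$ spanned by the increasing edges. Because $P$ is simple, $\dim X_u^+ + \dim X_u^- = n$.

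Moreover, $X_p^+\cap X_q^-\neq\varnothing$ exactly when some face $G$ contains both $p$ and $q$, with $p$ the $v$-minimum of $G$ and $q$ its $v$-maximum. Indeed, a torus orbit $O_G$ limits to its minimal and maximal vertices. The numerical criterion then says: whenever $p\neq q$ are the min and max of a common face $G$, we need $\#\{\text{increasing edges at }p\} > \#\{\text{decreasing edges at } q\}$ in the appropriate orientation. Equivalently, the positive cell face of $p$ and the negative cell face of $q$ must have dimensions summing to more than $n$. Since $G$ lies in both, this amounts to their meeting in a face of dimension at least one more than forced. My first step is to write this criterion cleanly as a statement about pairs of vertices on common faces.

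\textbf{Existence direction.} For a product of simplices $\Delta^{a_1}\times\cdots\times\Delta^{a_k}$, choose $v$ generic and order-compatible, so that on each simplex factor the vertices are linearly ordered by $v$. The face with min $p$ and max $q$ is then a product of subsimplices, and $\dim F_p^+ + \dim F_q^-$ can be computed factorwise. I will verify the inequality directly; for a single simplex it is clear. Combinatorial type is what matters here, since the face lattice plus the edge orientation given by $v$ determine everything. However, an orientation induced by $v$ on a non-unimodular but combinatorially equivalent polytope may differ. I would therefore choose $v$ so that one vertex is a global min and orient each factor's edges "lexicographically". I would realize this by taking $v$ near a suitable vertex cone direction, so that the orientation matches the product one.

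\textbf{Necessity directions.} I would proceed by induction on dimension, using that faces of $P$ correspond to torus-invariant subvarieties and that the criterion restricts to faces: the criterion involves only faces containing $p,q$, and cells of $V(F)$ are intersections. So if $X$ is a stratification, every face of $P$ also satisfies the criterion. In dimension $2$ I would check directly that a polygon with at least $5$ vertices fails for every $v$: take the global min $p$ and a vertex $q$ that is neither adjacent to $p$ nor the max. Then $G=P$, and the dimension sum is $2+\dim F_q^-$ with $\dim F_q^- = 0$... I must find a violating pair, likely a non-adjacent pair on a path where the counts are $1+1=2\le 2$. Quadrilaterals and triangles, by contrast, work for some $v$. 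For general $n$, I would use the characterization that a simple polytope all of whose $2$-faces are triangles or quadrilaterals, satisfying some additional condition, is a product of simplices. A known result in this direction says that a simple polytope in which every pair of facets either intersects or ... is a product of simplices. I expect this combinatorial classification step to be the \textbf{main obstacle}. I would try to show that the global min vertex $p$ and global max $q$ satisfy $n+\dim F_q^->n$ trivially, and then exploit pairs with $G$ a proper face to force the facet structure to split.

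For the universal statement, every admissible $v$ gives a stratification. By the previous part $P$ is combinatorially a product of simplices. If it is not unimodularly a product, some $2$-face is a quadrilateral that is a non-parallelogram trapezoid. Choosing $v$ nearly orthogonal to one edge then produces a vertex order $p<a<q<b$ in which the min and max are adjacent in the wrong way, violating the criterion. Non-parallelism in higher faces similarly propagates.
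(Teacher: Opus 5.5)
Your reduction of the necessity directions to $2$-faces follows the paper's route. The sufficiency half of the existential statement, however, has a genuine gap. When $P$ is only \emph{combinatorially} a product of simplices, the orientation $v$ induces on edges is not determined by the face lattice. ``Taking $v$ near a suitable vertex cone direction'' does not produce the product orientation, i.e.\ it does not make the maximum and minimum of $v$ opposite on every quadrilateral $2$-face simultaneously. This fails already in dimension two. For the smooth quadrilateral $\Conv\{(0,0),(3,0),(2,1),(0,1)\}$, both $v=(2,-1)$ and $v=(10,9)$ lie in the interior of the normal cone of $(3,0)$. Only the first has its minimum at the opposite vertex $(0,1)$; the second has its minimum at $(0,0)$, adjacent to the maximum. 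Note also that nothing in your sketch uses smoothness, so if it worked it would settle the ``if'' direction for arbitrary simple polytopes, which the paper explicitly leaves open.

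The paper's argument uses the lattice structure essentially. By \Cref{thm:GBT}, $X$ is a generalized Bott tower, so after twisting the bundle one may take $P=\Conv\bigl(\bigcup_{i}\{e_i\}\times Q_i\bigr)$. This is a Cayley polytope over the polytope $Q$ of a smaller Bott tower, with every $Q_i$ having the normal fan of $Q$. Sharing the normal fan is what guarantees that a stratifying $v$ for $Q$ (available by induction) orients all the copies $\{e_i\}\times Q_i$ the same way. Choosing $0<w_1\ll\cdots\ll w_n$, one then gets $P^+_{(e_i,q)}=\Conv\bigl(\bigcup_{j\le i}\{e_j\}\times(Q_j)^+_q\bigr)$, and \Cref{lm:toric_strat_crit} shows that $(w,v)$ stratifies.

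For necessity, the decisive inputs are missing. No ``additional condition'' is needed: a simple polytope all of whose $2$-faces are triangles or quadrilaterals is combinatorially a product of simplices (\Cref{thm:comb_prod}). This must be cited or proved. In the universal part, your sentence ``if it is not unimodularly a product, some $2$-face is a non-parallelogram quadrilateral'' is precisely the contrapositive of \Cref{thm:aff_prod}. That needs the actual argument of \Cref{append1}: split off the simplex face $J$ spanned by one $\sim_o$-class and prove $P=C+J$ by transporting edges. Then \Cref{prop:unimodular} upgrades affine to unimodular equivalence. ``Non-parallelism propagates'' does not replace this.

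Some smaller points:
\begin{itemize}
\item Inheritance by faces is not a formal consequence of the numerical criterion, since cell dimensions drop by different amounts at different vertices of a face. Argue instead with the containment form \Cref{lm:toric_strat_crit} together with $Q_q^+=Q\cap P_q^+$ (\Cref{lm:BB_inheritance}), as in \Cref{lm:toric_strat_inherit}.
\item With the paper's conventions, $X_p^+\cap X_q^-\neq\varnothing$ means $p$ is the \emph{maximum} and $q$ the minimum of a common face. The requirement is $\#\{\text{decreasing edges at } p\}+\#\{\text{increasing edges at } q\}>n$; the comparison you wrote is not equivalent.
\item For a polygon with at least five vertices, the violating pair is two \emph{adjacent} intermediate vertices on one of the two monotone boundary paths from the minimum to the maximum. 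Both cells are $1$-dimensional and $1+1\le 2$. It is not a pair with $G=P$.
\end{itemize}
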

The smooth projective toric varieties whose associated polytopes are products of simplices are exactly the products of projective spaces.
The smooth projective toric varieties whose associated polytopes are combinatorial products of simplices are still rather limited.
It turns out that they are precisely the \emph{generalized Bott towers}, i.e.\ the toric varieties obtained by iterated projectivizations of split toric vector bundles over a point.
This follows from the results in \cite{toric_classification}, \cite{quasitoric}, \cite{tuong_and_others}.
See \Cref{thm:GBT} and the discussion thereafter.

Buch--Chaput--Perrin \cite{buch} introduced a notion of equivariant homological rigidity for torus invariant subvarieties of $T$-varieties.
Given a smooth complete variety $X$ on which an algebraic torus $T$ acts with finite fixed locus, we say that a $T$-stable subvariety $Z$ is \emph{$T$-rigid} if whenever a positive $T$-invariant $\QQ$-cycle $\alpha$ in $X$ satisfies $\alpha = [Z]$ in $A_*^T(X)_{\QQ}$ (the torus-equivariant total Chow group of $X$ with $\QQ$-coefficients) then $\alpha = [Z]$ as a cycle.\footnote{Later, we refer to this notion as strong $\Gm$-rigidity.
See \Cref{rmk:differences} for the nuances of our conventions as against those of Buch--Chaput--Perrin.}
That is, $Z$ is determined by its equivariant class among positive $\mathbb{Q}$-cycles.
In \Cref{sec:rigidity}, we show that filterability of the \BB decomposition implies that all cell closures are $\Gm$-rigid.
\begin{IntroThm}[\ref{thm:BB_rigidity}]
\label[IntroThm]{introthm:rigidity}
Let $X$ be a smooth complete $\Gm$-variety with finite fixed locus whose \BB decomposition is filterable (e.g.\ $X$ is projective). Then each cell closure is $\Gm$-rigid.
\end{IntroThm}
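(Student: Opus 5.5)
The plan is to reduce rigidity to a \emph{positivity} (non-vanishing) statement for equivariant classes of positive $\Gm$-invariant cycles on suitable $\Gm$-stable open subsets of $X$. Fix $p$ and set $Z=\overline{X_p^+}$. Suppose $\alpha=\sum_i a_i[V_i]$ with $a_i>0$ and the $V_i$ distinct irreducible $\Gm$-stable subvarieties, and suppose $\alpha=[Z]$ in $A_*^{\Gm}(X)_{\QQ}$.

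Each $V_i$ has a unique fixed point $q_i$ such that $V_i\cap X_{q_i}^+$ is dense in $V_i$: the cells partition $X$, and $V_i$ is irreducible and $\Gm$-stable. Fix a filtration $p_1,\dots,p_n$ as in \Cref{df:filterable}, with closed unions $F_j=X_{p_1}^+\sqcup\dots\sqcup X_{p_j}^+$. If $V_i\cap X_{p_k}^+$ is dense in $V_i$, then $V_i\subseteq F_k$. So $V_i$ is contained in $F_k$ and meets the open set $X\setminus F_{k-1}$, where it contains a dense open subset of the closed subset $X_{p_k}^+\subseteq X\setminus F_{k-1}$.

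The key lemma I would prove is the following. \emph{Let $U=X\setminus F_{k-1}$, and let $\beta$ be a positive $\Gm$-invariant cycle on $U$, all of whose components are supported in $X_{p_k}^+$ (i.e.\ are closures in $U$ of $\Gm$-stable subvarieties of $X_{p_k}^+$). If $\beta\neq0$, then its class in $A_*^{\Gm}(U)_{\QQ}$ is nonzero.} To prove it, restrict further to the open $\Gm$-stable set $U\setminus(\text{closed cells of }U\text{ other than }X_{p_k}^+)$. Here I would rather use the cell $X_{p_k}^+\cong\mathbb{A}^d$ as a closed subvariety of $U$ with the pushforward $A_*^{\Gm}(X_{p_k}^+)\to A_*^{\Gm}(U)$. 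The cell $X_{p_k}^+$ is an affine space with a linear $\Gm$-action of strictly positive weights, since the action contracts it to $p_k$. Hence $A_*^{\Gm}(X_{p_k}^+)\cong A_*^{\Gm}(\mathrm{pt})$, and the class of a $\Gm$-stable subvariety of dimension $e$ is its equivariant multiplicity at $p_k$ times $t^{d-e}$. That multiplicity is a positive rational number: it equals $\deg/\prod(\text{weights})$ computed on the weighted projective quotient $(\mathbb{A}^d\setminus 0)/\Gm$, and all weights are positive. Consequently a nonzero positive combination cannot cancel.

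The main obstacle is injectivity of $A_*^{\Gm}(X_{p_k}^+)\to A_*^{\Gm}(U)$, or at least its injectivity on the relevant classes. I would try to obtain it by restricting to the fixed point $p_k$ and using the self-intersection formula. The composite $A_*^{\Gm}(X_{p_k}^+)\to A_*^{\Gm}(U)\to A_*^{\Gm}(p_k)$ is multiplication by the equivariant Euler class of the normal bundle of $X_{p_k}^+$ at $p_k$. That normal space carries only the negative weights at $p_k$, so this Euler class is a nonzero multiple of a power of $t$. This gives injectivity after localization, and $A_*^{\Gm}(\mathrm{pt})_{\QQ}=\QQ[t]$ is torsion-free, so injectivity holds outright.

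With the lemma in hand, I would argue by descending induction on the filtration index. Let $k$ be the largest index with some $q_i=p_k$. Restrict $\alpha=[Z]$ to $U=X\setminus F_{k-1}$: every component with $q_i=p_{k'}$ for $k'<k$ lies in $F_{k-1}$ and disappears, and the remaining components are supported in $X_{p_k}^+$ by maximality of $k$. If $p_k\neq p$, then $[Z]|_U=0$, because $Z\subseteq F_{j}$ where $p=p_j$, and either $j<k$, or $j>k$ and then $X_{p_k}^+\cap U$ contains no component. Then the lemma forces every such $a_i=0$, a contradiction. Hence all $q_i$ have index at most that of $p$, and the same restriction at $k=j$ gives $\sum_{q_i=p}a_i[V_i\cap U]=[X_p^+]$ in $A_*^{\Gm}(U)$. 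Comparing equivariant multiplicities and degrees in $t$ forces a single component $V_i=Z$ of full dimension $d$ with $a_i=1$. Subtracting, we are left with a positive cycle $\alpha-[Z]$, all of whose components have $q_i$ of smaller index, and whose class is $0$. Repeating the argument at its largest index shows that it vanishes, so $\alpha=[Z]$.
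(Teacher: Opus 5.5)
Your argument is essentially the paper's proof, reorganized. The paper restricts directly to the filtration-maximal fixed point $q$ of the support (\Cref{lm:effective_nonzero}). It then uses the self-intersection formula to pass to the positive-weight directions $T_qX_q^+$, whose complement has nonzero Euler class, and it uses nonvanishing of positive cycles in a positive-weight representation (\Cref{fully_definite_non_zero}, \Cref{lm:non_zero_to_point}). Your opens $X\setminus F_{k-1}$ and the cell $X_{p_k}^+$ replace this direct localization and the tangent cones in $T_qX$.

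Your variant needs $X_{p_k}^+$ to be $\Gm$-equivariantly a linear representation, which is more than ``contracted to $p_k$''. It is true nonetheless: the coordinate ring is positively graded with degree-zero part $\mathbb{K}$. By graded Nakayama and smoothness, homogeneous lifts of a basis of $\mathfrak{m}/\mathfrak{m}^2$ are then polynomial generators.

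One step is wrong as written. If the index $j$ of $p$ exceeds the top index $k$ of $\alpha$, the claim $[Z]|_U=0$ on $U=X\setminus F_{k-1}$ is false. Indeed $Z\cap U\supseteq X_p^+$, and $[Z]|_U$ restricts to $[Z]|_p\neq 0$ at $p\in U$. So you have only shown $k\le j$, not the equality $k=j$ that your phrase ``the same restriction at $k=j$'' relies on.

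The fix is one line using your own key lemma. On $U'=X\setminus F_{j-1}$, every component of $\alpha$ lies in $F_k\subseteq F_{j-1}$, so $\alpha|_{U'}=0$ as a cycle, whereas $[Z]|_{U'}=[X_p^+]\neq 0$. This is exactly the paper's observation that $[\overline{X_p^+}]|_p\neq 0$ forces $p$ into the support of $\alpha$. With this repair your proof is complete.
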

Buch--Chaput--Perrin had proved the $T$-rigidity property for \BB cell closures of a smooth projective $T$-variety $X$ with finite fixed locus under the additional assumptions that the \BB decomposition of $X$ be a stratification and that the action be fully-definite: for each fixed point $p$ of $X$, the $T$-weights of the Zariski tangent space $T_pX$ lie strictly in a halfspace of the character lattice of $T$.
They also showed that the same assumptions guarantee that intersections of positive and negative \BB cell closures are $T$-rigid.
From this they deduce the equivariant rigidity of Schubert and Richardson subvarieties of flag varieties.
Note that for $T = \Gm$, the fully-definiteness condition is impossible to fulfill as soon as $\dim X > 1$.
As a remedy, we show that this assumption is unnecessary.

\begin{IntroThm}[\ref{thm:strong_richardson_rigidity}]
\label[IntroThm]{introthm:Richardson_rigidity}
If a smooth complete $\Gm$-variety $X$ with finite fixed locus has a \BB stratification, then for $p, q \in X^{\Gm}$ the intersection $\overline{X_p^+} \cap \overline{X_q^-}$ is $\Gm$-rigid whenever it is irreducible.
\end{IntroThm}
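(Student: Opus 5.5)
Write $R=\overline{X_p^+}\cap\overline{X_q^-}$. Since the decomposition is a stratification (and, by \Cref{cor:reversible}, so is the negative one), $\overline{X_p^+}=\bigsqcup_{r\in S_p}X_r^+$ and $\overline{X_q^-}=\bigsqcup_{r\in S'_q}X_r^-$. Here $S_p$ is the set of fixed points in $\overline{X_p^+}$, and $S'_q$ is defined analogously for $\overline{X_q^-}$. Suppose $\alpha=\sum_i a_i[Z_i]$ is a positive $\Gm$-invariant $\QQ$-cycle with $\alpha=[R]$ in $A_*^{\Gm}(X)_\QQ$. I would show that every $Z_i$ lies in $R$. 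Irreducibility of $R$ together with a dimension count then forces $\alpha$ to be a multiple of $[R]$. The multiple is $1$, because $[R]\neq 0$ in the equivariant group, which one sees by localizing at a fixed point.

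The first step uses \Cref{introthm:rigidity}, or more precisely the mechanism behind it. Filterability lets one detect support via the basis of \BB cell closure classes, or equivalently via equivariant localization at fixed points. So if $\alpha=[Z]$ with $Z$ contained in a union of cells, then each $Z_i$ is contained in the corresponding closed union. The key is to exploit both decompositions at once. Pair $\alpha$ against the Poincaré dual bases: the positive and negative cell closure classes are dual by \cite[Lemma 3.11]{BP22}. We have $[R]=[\overline{X_p^+}]\cdot[\overline{X_q^-}]$, granting that the intersection is proper and generically transverse, which I would check from the stratification hypothesis together with \Cref{thm:numerical_trans}. Consequently the coefficients of $[R]$ in the equivariant basis $\{[\overline{X_r^+}]\}$ vanish unless $r\in S_p$.

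Next I would run the rigidity argument of \Cref{thm:BB_rigidity} in the form ``a positive cycle whose equivariant class is supported (in the basis sense) on the closed $\Gm$-stable set $\overline{X_p^+}$ has all components inside $\overline{X_p^+}$''. Applying it to the positive decomposition gives $Z_i\subseteq\overline{X_p^+}$, and applying it to the negative decomposition gives $Z_i\subseteq\overline{X_q^-}$. Hence every $Z_i\subseteq R$. To make this work I would prove a relative version of the earlier theorem: if $Y$ is a closed union of cells and $[\alpha]$ lies in the image of $A^{\Gm}_*(Y)$, then $\operatorname{supp}\alpha\subseteq Y$. I would prove it by induction along the filtration. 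At the top cell outside $Y$, restrict to the open complement of the lower cells, where that cell is closed and isomorphic to an affine space. Then compare equivariant localizations at its fixed point, using positivity: the equivariant multiplicity of a positive cycle at a fixed point has a definite sign with respect to the attracting weights, so no cancellation can occur.

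Finally, each $Z_i$ has dimension $\dim R$, since the equivariant class determines dimension degree by degree. Each $Z_i$ is irreducible and contained in the irreducible $R$ of the same dimension, so $Z_i=R$. Thus $\alpha=c[R]$, and $c=1$. The main obstacle is the positivity and no-cancellation step in the relative support lemma. The fully-definite hypothesis of Buch--Chaput--Perrin was used exactly there, and for $\Gm$ the tangent weights at a fixed point have mixed signs. I would try to replace it by using only the attracting directions: localize at the top cell through the affine chart $X_r^+\times X_r^-$ near $r$, and project to the $X_r^-$ factor, where all weights are negative.
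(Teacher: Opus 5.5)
The skeleton you propose matches the paper's. You show that every component of $\alpha$ lies in $\overline{X_p^+}$ and, symmetrically, in $\overline{X_q^-}$, and then finish with irreducibility of $R$, purity of dimension, and $[R]\neq 0$. Your ``relative support lemma'' is also a true statement. The gap is that its proof, which carries the whole content of the theorem, is not supplied, and the mechanism you suggest for it fails.

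``Detecting support via the basis'' only restates the claim. Your assertion that the localization of a positive cycle at a fixed point has a definite sign is false in general. Take a surface with tangent weights $\pm1$ at $r$. The two $\Gm$-stable curves through $r$ tangent to the two weight lines localize at $r$ to $-t$ and $t$, so their sum localizes to $0$. Projecting onto the $X_r^-$ factor also goes the wrong way, quite apart from the fact that nothing provides a product chart $X_r^+\times X_r^-$. As explained below, the components that matter lie in the \emph{attracting} cell near the relevant fixed point, so that projection would contract them to a point and lose all information.

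The missing idea is the right choice of fixed point. Let $\ell$ be maximal, for a filtering order, among fixed points lying on components of $\alpha$ \emph{not} contained in $\overline{X_p^+}$. This is not the top cell outside $\overline{X_p^+}$, and no induction is needed.

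Each such component $Z\ni\ell$ satisfies $\ell\in Z\subseteq\overline{X^+_{Z^{\up}}}$, so $\ell\preceq Z^{\up}$. Maximality then forces $Z^{\up}=\ell$, that is, $Z\subseteq\overline{X_\ell^+}$. If $\ell$ lay in $\overline{X_p^+}$, stratification would give $Z\subseteq\overline{X_\ell^+}\subseteq\overline{X_p^+}$, a contradiction. Hence $\ell\notin\overline{X_p^+}$, so $[R]|_\ell=0$, and the components inside $\overline{X_p^+}$ contribute nothing to $\alpha|_\ell$.

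Every cycle contributing to $\alpha|_\ell$ now has its tangent cone inside $T_\ell X_\ell^+$. The self-intersection formula for $T_\ell X_\ell^+\hookrightarrow T_\ell X$ writes $\alpha|_\ell$ as $c^{\Gm}_{\mathrm{top}}(T_\ell X/T_\ell X_\ell^+)$ times a positive cycle class in a positive-weight representation. The Euler class is nonzero because $X^{\Gm}$ is finite, and the second factor is nonzero by \Cref{lm:effective_nonzero}, \Cref{lm:non_zero_to_point} and \Cref{fully_definite_non_zero}. So the repelling directions enter only through a nonzero Euler class and never need to be made definite. This contradicts $\alpha|_\ell=[R]|_\ell=0$.

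Two further points:
\begin{itemize}
\item Your claim that stratification makes $\overline{X_p^+}\cap\overline{X_q^-}$ generically transverse is false. \Cref{ex:transversality_failure} is stratified, yet $X_{01}^+$ and $X_{21}^-$ meet in a non-reduced curve. You do not need the product formula or the duality pairing; the argument only uses $[R]|_\ell=0$ for $\ell\notin R$.
\item Both the purity of $\alpha$ and the equality $c=1$ rest on \Cref{cor:nonvanishing}, which is proved by this same maximal-fixed-point argument.
\end{itemize}
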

We give more refined sufficient conditions for $\Gm$-rigidity in \Cref{prop:weak_rigidity_sufficiency} and \Cref{cor:convexity_rigidity}. \Cref{ex:convexity_is_needed} shows that $\Gm$-rigidity need not hold for these intersections in the projective case, unlike for the \BB cell closures themselves.

A relaxation of the stratification condition was introduced by Buch--Chaput--Perrin under the banner ``$T$-convexity''.
A $T$-stable subvariety $Z$ of a $T$-variety $X$ with finite fixed locus is \emph{$T$-convex} if whenever we have containment of fixed point sets $Y^{T} \subseteq Z^{T}$---where $Y$ is a $T$-stable subvariety of $X$---we have containment of subvarieties $Y \subseteq Z$.
In their rigidity results mentioned above, the stratification condition may be weakened to the $T$-convexity of each \BB cell closure.
Specializing to $T = \Gm$, in \Cref{prop:convex_is_richardson} we show that when $X$ is filterable, a $\Gm$-convex subvariety must be an irreducible component of the intersection of a pair of opposite \BB cell closures.
As we did for stratifications, we examine more closely the $\Gm$-convexity property for smooth projective toric varieties.
This is done in \Cref{subsec:bad_convex}.
Whereas \BB cell closures are always $\Gm$-convex in curves and surfaces (\Cref{prop:uGm_convex_small}), we show that in higher dimensions $\Gm$-convexity fails quite often for toric varieties.
We note that the $\Gm$-convexity property depends sensitively on the choice of cocharacter (see \Cref{rmk:convex_non_rev}).
The following result shows that $\Gm$-convexity is a rather precarious property.
\begin{IntroThm}[\ref{prop:blowup}]\label[IntroThm]{introthm:blowup}
Suppose $X_0$ is a smooth projective toric variety with dense torus $T$ of dimension at least three.
Then the blowup $X$ of $X_0$ at two suitably chosen $T$-fixed points admits a non-$\Gm$-convex \BB cell closure for some admissible cocharacter.
\end{IntroThm}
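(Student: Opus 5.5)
The plan is to produce the non-convex cell closure by an explicit construction with polytopes, relying on the toric description of the \BB decomposition that will be worked out in \Cref{sec:toric_BB}. Take $P_0$ to be a polytope for $X_0$. Blowing up a $T$-fixed point amounts to truncating the corresponding vertex of $P_0$ by a hyperplane close to it, which replaces that vertex with a simplex facet. We fix an admissible cocharacter $v$, so that $\inner{v,\cdot}$ is a generic linear functional on $P$. In this picture the $\Gm$-fixed points are the vertices of $P$, and the closure $\overline{X_p^+}$ is the closure of the torus orbit corresponding to the smallest face of $P$ that contains $p$ together with every edge at $p$ along which $v$ increases, equivalently the face spanned by the ``upward'' edges at $p$. (Whether edges point up or down depends on the sign conventions; we will use whichever orientation the paper fixes.) This is a toric subvariety $Z = V(F)$ for a face $F$ of $P$.

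The first step is to turn $\Gm$-convexity into combinatorics. If $Y$ is $\Gm$-stable and irreducible, its fixed points span a connected subgraph of the graph of $\Gm$-orbit closures. We want a $Y$ that is not contained in $Z$ while $Y^{\Gm} \subseteq Z^{\Gm} = \mathrm{vert}(F)$. The simplest candidate is a $\Gm$-stable curve that is not $T$-stable, namely the closure of a generic $\Gm$-orbit inside a $2$-dimensional $T$-orbit closure $V(G)$, where $G$ is a $2$-face. The limits of such an orbit are the $v$-minimal and $v$-maximal vertices of $G$. Non-convexity therefore follows once we find a $2$-face $G \not\subseteq F$ whose $v$-extreme vertices $a$ and $b$ both lie in $F$. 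We also need the generic orbit in $V(G)$ not to lie in $Z$, which is automatic because its open $T$-orbit is disjoint from $V(F)$ when $G \not\subseteq F$.

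The second step is to choose the two blowup points and the cocharacter so that such a pair $(F, G)$ exists. This requires $\dim X_0 \ge 3$. Choose two vertices $u$ and $w$ of $P_0$ joined by an edge $e$, and pick $v$ so that $u$ is nearly minimal and $w$ follows it along $e$. Truncating both $u$ and $w$ shortens $e$ to a segment between two new vertices $a'$ and $b'$, and near each end of that segment there are new simplex facets. The aim is to take $F$ to be a face, for instance a $2$-face arising from a side of $e$ in the truncation, whose upward-edge span at its $v$-minimal vertex $p$ contains $a'$ and $b'$. We then need a different $2$-face $G$ through $e$, one that after truncation has $a'$ and $b'$ as its $v$-extreme vertices. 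Arranging this needs at least two $2$-faces containing $e$, which in turn needs $\dim \ge 3$. By perturbing $v$ we can make the edges of $G$ other than $e$ lie outside the $v$-range $[v(a'), v(b')]$ only if $G$ contains vertices below $a'$, so the $v$-extremes of $G$ have to be chosen with care. We will tune the depths of the truncations so that $G$ collapses to a quadrilateral whose extremes are exactly the endpoints on the two truncation facets, while those endpoints lie in $F$.

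The main obstacle is this second step: locating the right $F$ and $G$ and checking that the extreme vertices of $G$ actually lie in the face that $\overline{X_p^+}$ corresponds to. My first attempt will be local. Since everything near $e$ looks like a neighborhood of an edge of a cube (smoothness gives $P_0$ a local product structure $\mathbb{R}_{\ge0}^{n}$ at each vertex), I will first do the computation for $X_0 = (\PP^1)^3$ with explicit $v$, then argue that the same local configuration embeds in any smooth $P_0$ of dimension at least three by choosing the edge $e$ and a $v$ that is sufficiently close to a direction along $e$.
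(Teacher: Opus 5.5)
Your first step is sound: it is exactly \Cref{lm:face-condition}, restricted to $2$-faces. The gap is in the second step: the configuration you aim for does not exist in general. Fix a $2$-face $G \supseteq e$ and choose lattice coordinates on its plane so that the primitive edge directions of $G$ at $u$ are $d_e=(1,0)$ (toward $w$) and $d_g=(0,1)$. Smoothness of $G$ at $w$ then forces the other edge of $G$ at $w$ to have direction $d_f=(a,1)$ for some $a\in\Z$. After truncating $u$ and $w$, the edges of $G$ at $a'$ point along $d_e$ and $d_g-d_e$, and those at $b'$ point along $-d_e$ and $d_f+d_e$. Write $(s,t)$ for the restriction of $v$ to this plane. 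Then $a'=G^{\down}$ and $b'=G^{\up}$ amounts to $0<s<t$ and $as+t<-s$, which is possible only when $a\le -3$; the opposite orientation gives the same condition.

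Every $2$-face of $(\PP^1)^3$ has $a=0$, and every $2$-face of $\PP^n$ has $a=-1$. So the test computation you plan for $(\PP^1)^3$ will fail, and for $\PP^3$ no configuration of your type exists at all. The transfer to arbitrary $P_0$ also rests on a false premise. Smoothness gives an orthant model at each vertex, not along an edge, and the twist $a$ relating the two ends of $e$ is precisely the data that matters. Tuning truncation depths cannot rescue this either. A toric blowup adds one vertex to every $2$-face through the blown-up point, so $G$ has at least five vertices and never ``collapses to a quadrilateral''. Moreover, $G^{\up}$, $G^{\down}$ and the faces $P_p^+$ depend only on the edge directions (that is, on the fan and $v$), not on where the truncating hyperplanes sit.

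The missing idea is to take $G$ to be a triangular facet $\triangle_p$ created by the blowup. In a triangle the $v$-extremes are automatically adjacent. So by \Cref{thm:3d-iff} it only remains to ensure that the other facet $E$ through the edge joining $\triangle_p^{\down}$ and $\triangle_p^{\up}$ does not contain $P^{\up}$; then $P^+_{E^{\up}}=E\not\supseteq\triangle_p$.

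The paper proceeds as follows:
\begin{itemize}
\item It first reduces to $\dim X_0=3$ via \Cref{lm:inheritance}, a step your proposal omits.
\item It handles the tetrahedron by the explicit polytope of \Cref{fig:twiceblown}.
\item Otherwise it picks an edge $L$ of $P_0$ sharing no facet with $p$, and blows up $p$ and a vertex $\ell$ of $L$.
\item It takes $v$ to be an admissible outer normal at the vertex $\tilde\ell$ of $\triangle_\ell$ lying on $L$. Then $P^{\up}=\tilde\ell$ lies on no facet through an edge of $\triangle_p$.
\end{itemize}
In this last case the two blown-up points are not adjacent. The second blowup serves only to produce a vertex $\tilde\ell$ whose facets all avoid $p$.
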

Even more pathological behaviors are possible, as the following theorem demonstrates.
\begin{IntroThm}[\ref{prop:tetrahedron}]\label[IntroThm]{introthm:blowup_Pn}
Blow up all torus-fixed points in $\mathbb{P}^n$ to get a toric variety $X$.
If $n\geq 3$, then each admissible cocharacter gives rise to at least one non-$\Gm$-convex \BB cell closure.
\end{IntroThm}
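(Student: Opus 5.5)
The plan is to turn $\Gm$-convexity of a toric \BB cell closure into a condition on faces of the polytope, and then find a violation of that condition on the truncated simplex. Throughout, let $P$ be the simplex $\Delta_n$ with every vertex truncated. Its vertices are the pairs $(i,j)$ with $i\neq j$, where $(i,j)$ is the vertex near $e_i$ on the edge $[e_i,e_j]$. Its facets are of two kinds: the $n+1$ truncation facets $S_i\cong\Delta_{n-1}$, and the $n+1$ truncated facets of $\Delta_n$.

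\textbf{Step 1: a face-theoretic criterion.} Fix an admissible $v\in N$. Since $P$ is simple, at each vertex $p$ the edges along which $\langle\cdot,v\rangle$ increases span a unique face $F_p$. I will use the description of \Cref{sec:toric_BB} to identify $\overline{X_p^+}$ with the orbit closure $V(F_p)$, whose fixed points are the vertices of $F_p$.

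Next, take any face $G$. A general $\Gm$-orbit in the open $T$-orbit of $V(G)$ limits to the $v$-minimal and $v$-maximal vertices of $G$. So if both of these vertices lie in $F_p$ but $G\not\subseteq F_p$, the closure of that orbit is a $\Gm$-stable curve $Y$ with $Y^{\Gm}\subseteq \overline{X_p^+}^{\Gm}$ and $Y\not\subseteq\overline{X_p^+}$. Hence $\overline{X_p^+}$ is not $\Gm$-convex.

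The theorem therefore reduces to a combinatorial claim: for every generic linear functional $v$ on $P$, there exist a vertex $p$ and a face $G\not\subseteq F_p$ whose $v$-extreme vertices both lie in $F_p$.

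\textbf{Step 2: normalize $v$ and describe the faces.} By the $S_{n+1}$-symmetry, relabel so that $v(e_1)<\dots<v(e_{n+1})$. Because the truncation is small, the order of the vertices of $P$ is lexicographic to first order: first compare $v(e_i)$, then compare $v(e_j)$ within $S_i$.

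Every face not contained in a single $S_i$ has the form $\widetilde{\Delta_I}\cap\bigcap_{i\in J}S_i$. Here $\widetilde{\Delta_I}$ is the truncated face of $\Delta_n$ on a subset $I$ of vertices, and $J\subseteq I$.

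From this one can list $F_p$ explicitly for $p=(i,j)$:
\begin{itemize}
\item If $j<i$, the outgoing edge to $(j,i)$ decreases. Then $F_p\subseteq S_i$, namely the face of $S_i$ spanned by $(i,k)$ for $k=j$ and $k>j$, $k\neq i$.
\item If $j>i$, that edge increases. Then $F_p=\widetilde{\Delta_I}\cap S_{i}$-free, i.e.\ $F_p=\widetilde{\Delta_I}$ with $I=\{i\}\cup\{k>j\}\cup\{j\}$, where the $k<j$ directions are discarded.
\end{itemize}

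\textbf{Step 3: produce the witness.} The candidates I would test first are these. Take $p=(i,j)$ with $j>i$, so that $F_p=\widetilde{\Delta_I}$ does not contain every vertex of $\Delta_n$. Take $G=S_k$ for some $k\in I$, or $G$ a $2$-dimensional face of $S_k$ meeting $F_p$. For $k\in I$, the face $S_k\cap\widetilde{\Delta_I}$ is a proper face of $S_k$. The $v$-extremes of $S_k$ are $(k,\min)$ and $(k,\max)$, where $\min$ and $\max$ are the smallest and largest indices different from $k$.

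So I want $k\in I$ with both $\min$ and $\max$ in $I$, while $I\neq\{1,\dots,n+1\}$. For instance, choose $i=1$, $j=2$, and $k=n+1$. Then $I=\{1,2,\dots\}$ contains both extremes of $S_{n+1}$, which are $1$ and $n$. That particular choice of $j$ makes $I$ everything, so I will instead choose $j$ so that some middle index $m$ is dropped from $I$, while $\min=1$ and $\max$ stay in $I$. It is here that $n\ge 3$ is needed: we need at least one index strictly between the extremes, so that $S_k$ has dimension at least $2$ and $G=S_k\not\subseteq F_p$.

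\textbf{Main obstacle.} The hard part is pinning down Step 1 precisely, namely that $\overline{X_p^+}=V(F_p)$ and that the generic orbit in $V(G)$ has exactly the $v$-extreme limits. It is equally hard to choose $p$ and $G$ uniformly in $v$, since the outcome depends on the relative order inside each $S_k$, which the small-truncation approximation fixes.

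If the direct choice above fails, I would fall back on a case analysis in the position of the relevant truncation facet: bottom $S_1$, top $S_{n+1}$, or a middle $S_k$ with $1<k<n+1$, which exists because $n\ge 3$. In each case I would search among the $2$-faces for a $G$ with both extremes in $F_p$.
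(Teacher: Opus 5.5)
Your argument is correct, and it takes a different route from the paper's.

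\textbf{Comparison with the paper.} The paper first reduces to $n=3$ by \Cref{lm:inheritance}. It then applies the three-dimensional criterion \Cref{thm:3d-iff}, taking $F$ to be the triangle containing $P^{\down}$. The one real input is that $P^{\up}$ and $P^{\down}$ never lie on a common facet. The paper derives this from the central symmetry of the hexagons and the fact that the two edges leaving opposite vertices of a hexagon are parallel.

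You instead work directly in dimension $n$. After ordering the simplex vertices by $v$, you compute every $F_p$ from local edge orientations and exhibit an explicit witness. Completed, it reads $p=(1,3)$, $F_p=\widetilde{\Delta_I}$ with $I=\{1,3,4,\dots,n+1\}$, and $G=S_{n+1}$. The extreme vertices $(n+1,1)$ and $(n+1,n)$ of $G$ lie in $\widetilde{\Delta_I}$ because $n\geq 3$, while $(n+1,2)$ does not. After reversing $v$ and relabeling $l\mapsto n+2-l$, this is exactly the paper's configuration for $n=3$: the triangle $S_1$ containing $P^{\down}$, and the hexagon through its two extreme vertices.

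Your route gives a uniform, explicit non-convex cell closure in every dimension, and needs neither \Cref{lm:inheritance} nor \Cref{thm:3d-iff}. The paper's route avoids coordinates, and it is the template reused for \Cref{prop:poponce}.

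\textbf{Points to tighten.}
\begin{enumerate}
\item The face spanned by the $v$-increasing edges at $p$ is the face on which $p$ is the $v$-minimum. With the paper's conventions (\Cref{lm:Fup}, \Cref{cor:BB_face}) its orbit closure is $\overline{X_p^-}$, not $\overline{X_p^+}$. This is harmless, because your criterion is symmetric in the two extremes and you may replace $v$ by $-v$. You should still say so.
\item The ``small truncation, lexicographic order'' claim is not justified for a fixed polytope and arbitrary $v$, and you do not need it. Every edge of $P$ is parallel to a difference of two simplex vertices: the edge from $(i,j)$ to $(i,l)$ is parallel to $e_l-e_j$, and the edge from $(i,j)$ to $(j,i)$ is parallel to $e_j-e_i$. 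So all edge orientations are determined by the order of the $v(e_a)$ alone. Hence so are all $F_p$ and all $G^{\up}, G^{\down}$, since a vertex of a face is its $v$-maximum exactly when no edge of that face at the vertex increases $v$. The uniformity issue you list as the main obstacle therefore does not arise, and no fallback case analysis is needed.
\item Distinct truncation facets $S_i$ are disjoint. So the faces of $P$ are the $\widetilde{\Delta_I}$ together with the faces $S_i\cap\widetilde{\Delta_I}$ of the $S_i$, and your set $J$ has at most one element.
\end{enumerate}
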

More generally, we show that for ``most'' smooth projective toric varieties blowing up all fixed points results in the same pathology (see \Cref{prop:poponce}).

\section{Notation}\label{sec:notation}
\addtocontents{toc}{\protect\setcounter{tocdepth}{1}}
\subsection*{Varieties}
Let $\mathbb{K}$ be an algebraically closed field.
A \emph{variety} will mean an integral, separated, finite-type scheme over $\mathbb{K}$.
A closed (resp.\ open, locally closed) subvariety will mean an irreducible closed (resp.\ open, locally closed) subset equipped with the induced reduced scheme structure.
A subvariety \emph{simpliciter} will always be assumed closed.
Unless otherwise indicated, a \emph{point} in a variety means a closed point.
We follow the usual convention of identifying a variety with its set of closed points. For a subset $S$ of a variety, we write $\overline{S}$ for its Zariski closure.

We write $\Gm = \Spec \mathbb{K}[t, t^{-1}]$ for the multiplicative group over $\mathbb{K}$.
The automorphism of $\Gm$ induced by $t \mapsto t^{-1}$ we call \emph{reversal}.
An algebraic torus $T$ is an algebraic group isomorphic to $\Gm^n$ for some $n$.
A $T$-variety $X$ is a variety equipped with an algebraic action of $T$.

\subsection*{\BB subsets} It will be useful to extend the definition of \BB cells in the introduction as follows.
For any complete $\Gm$-variety $X$ and any point $x \in X$, the morphism $\Gm \to X$ defined by $t \mapsto t \cdot x$ extends uniquely to a morphism 
$\mathbb{P}^1 \to X$.
We write $\lim_{t \to 0} t \cdot x$ for the image of the point $0 \in \mathbb{P}^1$ under this morphism and $\lim_{t \to \infty} t \cdot x$ for the image of $\infty \in \mathbb{P}^1$.
For a closed point $p \in X^{\Gm}$, we define
the \emph{\BB subsets}
\[
X_p^+ = \left\{x \in X \;\middle|\;  \lim_{t\to 0} t \cdot x = p\right\}, \qquad X_p^- = \left\{x \in X \;\middle|\;  \lim_{t\to \infty} t \cdot x = p\right\}.
\]
When $X$ is smooth with finite fixed locus, this agrees with the usual definition of the \BB cells.
However, if $p$ is not an isolated fixed point these are \emph{not} \BB cells as defined in \cite[\textsection 4]{BB}.
Note that $X_p^-$ coincides with the $X_p^+$ cell for the reverse $\Gm$-action.
When $X$ is normal, the $\Gm$-action on $X$ is locally linear by a theorem of Sumihiro \cite[Lemma 8, Theorem 1]{Sumihiro}.
From this it easily follows that $X_p^+$ and $X_p^-$ are locally closed affine subschemes of $X$.
In general, applying this idea to the normalization of $X$ shows that $X_p^+$ and $X_p^-$ are constructible (a similar argument appears in \cite[Remark 6]{constructibleJ}).
We additionally use $X_p^0$ to denote the union of all irreducible components of $X^{\Gm}$ containing $p$.
When $X$ is smooth, $X^{\Gm}$ is too by \cite{Iversen1972_smooth_fixed} (see also \cite[Theorem 13.1]{Milne_2017}) and thus $X_p^0$ is a smooth subvariety of $X$.

\subsection*{Equivariant Chow groups}
For a $T$-variety $X$, we write $Z_*^T(X)$ for the free abelian group of $T$-invariant cycles on $X$, which is naturally graded by dimension.
We write $Z_*^T(X)_{\mathbb{Q}}$ for $Z_*^T(X) \otimes \mathbb{Q}$ and call its elements \emph{$T$-invariant $\QQ$-cycles}.
A $T$-invariant cycle (resp.\ $\QQ$-cycle) is called \emph{positive} if it is nonzero and all its coefficients are nonnegative.
We write $A_*^T(X)$ for the total equivariant Chow group of $X$ and $A_*^T(X)_{\QQ}$ for $A_*^T(X) \otimes \QQ$.
For the relevant definitions and properties of equivariant Chow groups, see \cite{graham} and \cite[\textsection17]{Anderson_Fulton_2023}.
For a $T$-stable subvariety $Z \subseteq X$ we write $[Z]$ for the corresponding $T$-invariant cycle in $Z_*^T(X)$ or $Z_*^T(X)_{\QQ}$.
We use the same symbol for the equivariant fundamental class of $Z$, i.e.\ the image of this cycle in either $A_*^T(X)$ or $A_*^T(X)_{\QQ}$.

For a point $p \in X$ we write $T_p X$ for its Zariski tangent space in $X$.
When $p$ is a fixed point, $T_pX$ is naturally a $T$-representation.
When $p \in X^T$ is smooth in $X$, we write $\alpha|_p$ for the pullback $\iota_p^*(\alpha) \in A_*^T(p)$ of a class $\alpha \in A_*^T(X)$ along the inclusion $\iota_p\colon p \hookrightarrow X$.
For a $T$-equivariant vector bundle $E$ on $X$ we write $c_k^T(E)$ for the $k$-th equivariant Chern class of $E$.
We write $c_{\text{top}}^T(E)$ for the equivariant Chern class $c_{\text{rank}(E)}^T(E)$ of top degree.

\subsection*{Toric varieties}
Given an algebraic torus $T$, we let $M = \Hom(T, \Gm)$ and let $N = \Hom(\Gm, T)$.
These are, respectively, the character and cocharacter lattices of $T$.
We write $M_{\R} \deq {M \otimes_{\Z} \R}$ and $N_{\R} \deq {N \otimes_{\Z} \R}$.
Given $\chi \in M_{\R}$ and $v \in N_{\R}$, we write $\langle \chi, v \rangle$ for the value of the canonical pairing $M_{\R} \times N_{\R} \to \R$.
By contrast, for $v \in N$, we write $v(t)$ for the image of $t \in \Gm$ under $v$.

Let $P$ be an \emph{integral} full-dimensional convex polytope in $M_{\R}$.
The \emph{outer} normal fan $\Sigma$ to $P$ is the complete fan in $N_{\R}$ with cones 
\[
\sigma_F \deq \{u \in N_{\R} \,\mid\, \langle p , u \rangle \leq \langle f , u \rangle \text{ for all } p \in P, f \in F\},
\]
one for each face $F$ of $P$.
This fan defines a projective toric variety $X$ (see \cite{fult93toric}, \cite{cox2011toric}).
The faces of $P$ are in an inclusion- and dimension-preserving correspondence with the $T$-stable subvarieties of $X$.
Each such subvariety is the closure of a unique $T$-orbit.
We write $\orb{F}$ for the $T$-orbit corresponding to a face $F$ of $P$.
The correspondence above gives $\overline{O_F} = \bigcup_{E \subseteq F} \orb{E}$.
We write $N_F$ for the sublattice of $N$ generated by $\sigma_F \cap N$.
The orbit $O_P$ is the (unique) dense open orbit of $X$, which we identify with $T$ itself.
For a vertex $p \in P$, the orbit $O_{p}$ consists of a single fixed point, which we identify with the point $p$.
For all projective toric varieties we consider, we assume that they have been constructed in the above manner.
Said otherwise, whenever we work with a projective toric variety, we have the choice of an ample toric line bundle in mind.

In this text, we always assume that the polytope $P$ is \emph{smooth}.
Recall that smoothness for an integral polytope $P$ means that at each vertex of $P$ the primitive edge directions form a $\Z$-basis of the lattice $M$.
This condition is precisely what is needed to have $X$ be smooth.
Note that a smooth polytope is in particular simple: each vertex of $P$ has $\dim P$ neighbors.

Recall that two polytopes are said to be \emph{combinatorially equivalent} if their face lattices are isomorphic.
Two polytopes are said to be \emph{affinely equivalent} if one may be mapped bijectively onto the other by an affine-linear map.
Two \emph{lattice} polytopes are \emph{unimodularly equivalent} if each may be mapped bijectively onto the other by a lattice-preserving affine-linear map.

For much of our discussion involving toric varieties, we implicitly assume that a choice of cocharacter $v \in N$ has been made, giving rise to a corresponding $\Gm$-action.
The cocharacter $v$ is assumed to be \emph{admissible}.
By this we mean that $v$ is not perpendicular to any edge of $P$.
This is equivalent to asking that the $T$- and $\Gm$-fixed points of $X$ coincide.

\section{The Bia\l ynicki-Birula cells of a projective toric variety}\label{sec:toric_BB}
\addtocontents{toc}{\protect\setcounter{tocdepth}{2}}

Throughout this section, $X$ will denote a smooth projective toric variety with torus $T$ defined by an integral polytope $P$ embedded in the character lattice of $T$.
We let $v$ represent some fixed admissible cocharacter, which gives $X$ the structure of a $\Gm$-variety.
\begin{Df}\label[Df]{df:P_plus}
Consistent with our identification of vertices of $P$ with the corresponding fixed points of $X$, we write 
$X_p^+$ for the \BB cell attracted to the fixed point corresponding to a vertex $p$ of $P$.
We write $P_p^+$ for the face of $P$ satisfying $\overline{\orb{P_p^+}} = \overline{X_p^+}$.
The existence of such a face is guaranteed by the fact that $\overline{X_p^+}$ is $T$-stable and irreducible.
\end{Df}
\begin{Df}
For a face $F$ of $P$, we write $F^{\up}$ for the vertex of $F$ which maximizes the linear functional $\inner{-,v}$ and $F^{\down}$ for the vertex which minimizes it.
Uniqueness in each case follows from the admissibility of $v$.
\end{Df}
The following lemma identifies the $\Gm$-limit points of a $T$-orbit $\orb{F}$.
\begin{Lm}\label[Lm]{lm:Fup}
Let $F$ be a face of $P$.
Then $\orb{F} \subseteq X_{F^\up}^+$.
\end{Lm}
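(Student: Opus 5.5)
We need to show that for any $x \in \orb{F}$ we have $\lim_{t\to 0} v(t)\cdot x = F^{\up}$. Since $\overline{\orb{F}}$ is a closed $T$-stable subvariety, itself a projective toric variety with polytope $F$, the limit lies in $\overline{\orb{F}}$. So we may restrict to the affine chart of $\overline{\orb{F}}$ at a vertex and compute with characters.

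The plan is as follows. Because all points of $\orb{F}$ are $T$-translates of one another and the $\Gm$-action commutes with $T$, it suffices to treat a single distinguished point $x_F \in \orb{F}$. Let $q = F^{\up}$. By smoothness of $P$, the primitive edge directions $m_1,\dots,m_n$ of $P$ at $q$ form a $\Z$-basis of $M$. The affine chart $U_q \cong \mathbb{A}^n$ of $X$ has coordinates $\chi^{m_1},\dots,\chi^{m_n}$, with $q$ the origin. The torus acts on the coordinate $\chi^{m_i}$ through the character $m_i$, up to sign convention; with the outer normal fan convention, the functions on $U_q$ are the characters in the cone generated by $P-q$.

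The face $F$ contains $q$, and its edges at $q$ are some subset, say $m_1,\dots,m_k$. In the chart $U_q$, the orbit $\orb{F}$ is then the locus where $\chi^{m_i}\neq 0$ for $i\le k$ and $\chi^{m_i}=0$ for $i>k$. Under the cocharacter $v$, the coordinate $\chi^{m_i}$ of $v(t)\cdot x$ is $t^{\langle m_i,v\rangle}\chi^{m_i}(x)$.

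Since $q$ maximizes $\langle -,v\rangle$ on $F$, each edge direction $m_i$ of $F$ at $q$ points into $F$ and satisfies $\langle m_i,v\rangle\le 0$. Admissibility makes this strict, so $\langle m_i,v\rangle<0$. With the paper's conventions the action must send these coordinates to $0$ as $t\to 0$, so the effective exponents are $-\langle m_i,v\rangle>0$. The main obstacle is exactly this sign convention: whether $t$ acts on $\chi^{m}$ by $t^{\langle m,v\rangle}$ or by its inverse, given the outer normal fan.

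To settle it, I would check the simplest case $\mathbb{P}^1$ with $P=[0,1]$ and $v=1$. Here $F^{\up}=1$, and the claim says the open orbit flows to the fixed point $1$ as $t\to 0$. This pins down the convention, and with it all coordinates tend to $0$. The limit is therefore the origin of $U_q$, which is $q = F^{\up}$, giving $\orb{F}\subseteq X_{F^{\up}}^+$.
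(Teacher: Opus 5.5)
Your chart strategy can be made to work, but as written it has a genuine gap at exactly the step you call the main obstacle: you assume the sign rather than derive it. The whole content of the lemma is whether $\orb{F}$ flows to $F^{\up}$ or to $F^{\down}$ as $t\to 0$. Both of your resolutions of the sign invoke the statement being proved rather than the paper's definitions, so the argument is circular. These are ``the action must send these coordinates to $0$'' and ``the claim says the open orbit flows to the fixed point $1$''.

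Moreover, the chart you wrote down is the wrong one. With $U_\sigma = \Spec \mathbb{K}[\sigma^\vee \cap M]$, the semigroup $\Cone(P-q)\cap M$ belongs to the \emph{inner} normal fan. With those coordinates, your (correct) formula $\chi^{m_i}(v(t)\cdot x) = t^{\langle m_i, v\rangle}\chi^{m_i}(x)$ proves the opposite statement, namely that $\orb{F}$ flows to $F^{\down}$. So the sign is not a cosmetic convention to be fixed after the fact.

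The missing step is short. By the definition of the outer normal fan, $u \in \sigma_q$ iff $\langle p - q, u\rangle \le 0$ for all $p \in P$. Hence $\sigma_q^\vee = \Cone(q - P)$, generated by $-m_1, \dots, -m_n$. The coordinates on $U_q \cong \mathbb{A}^n$ are therefore $y_i = \chi^{-m_i}$, and $\orb{F} = \{y_i \neq 0 \text{ for } i \le k,\ y_i = 0 \text{ for } i > k\}$. We have $y_i(v(t)\cdot x) = t^{-\langle m_i, v\rangle} y_i(x)$ with $-\langle m_i, v\rangle > 0$ for $i \le k$. So the orbit map extends over $t = 0$ with value the origin, which is $q = F^{\up}$. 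Your $\mathbb{P}^1$ check, done from the definitions, confirms this: $\sigma_{\{1\}} = \R_{\geq 0}$, so the coordinate at the vertex $1$ is $\chi^{1} = \chi^{-m_1}$.

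With this correction your proof is valid and genuinely different from the paper's. The paper replaces $X$ by $\overline{\orb{F}}$, with lattice $N/N_F$ and cocharacter $v + N_F$, to reduce to $F = P$. It then observes that $v$ lies in the interior of $\sigma_{P^{\up}}$ and cites Fulton's fact that $\lim_{t\to 0} v(t)$ is the distinguished point of the cone containing $v$ in its relative interior. There the sign is automatic from the definition of $\sigma_{P^{\up}}$. Your computation avoids passing to the orbit closure, at the price of explicit coordinate bookkeeping. It works even though $v$ need not lie in $\sigma_q$, because the coordinates transverse to $\orb{F}$ vanish identically on it.
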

\begin{proof}
Replacing $X$, $N$, $M$, and $v$ by, respectively, $\overline{\orb{F}}$, $N/N_{F}, M\cap \sigma_F^{\perp}$, and $v + N_F$, we reduce to the case $F = P$, ${O}_F = T$.
Since $v$ is admissible and pairs maximally with $P^\up$, it lies in the interior of $\sigma_{P^\up}$.
By \cite[p.\,38, Claim 1]{fult93toric}, the limit $\lim_{t\to 0} v(t)$ is given by the distinguished point corresponding to $\sigma_{P^\up}$, which is precisely the fixed point corresponding to $P^\up$.
It follows that $ \orb{P}\subseteq X_{P^\up}^+$ since $\lim_{t \to 0} (v(t) \cdot x) = x \cdot \lim_{t \to 0} v(t)$ for any $x \in T$.
\end{proof}
For any face $F$ of $P$ and any point $y \in O_F$, the lemma above shows that the $T$-fixed point ${\lim_{t\to 0} (v(t) \cdot y)}$ is the one associated to $F^\up$.
Symmetrically, $\lim_{t\to \infty} (v(t)\cdot y)$ is the $T$-fixed point associated to $F^\down$.
These two are precisely the $T$-fixed points in the closure of the $\Gm$-orbit of $y$.
In this language, we can write for any vertex $p$ of $P$
\begin{equation}\label{eq:union}
X_{p}^+ = \bigsqcup_{{F^{\up} = p}} \orb{F}\tag{$\star$}.
\end{equation}
Comparing this equality with the defining equation $\overline{\orb{P_p^+}} = \overline{X_p^+}$ for $P_p^+$, we conclude:
\begin{Cor}\label[Cor]{cor:BB_face}
Let $p$ be a vertex of $P$.
The face $P_p^+$ of $P$ is characterized combinatorially as the unique maximal face occurring in the union (\ref{eq:union}), i.e.\ the unique maximal face $F$ of $P$ satisfying $F^{\up} = p$.
\end{Cor}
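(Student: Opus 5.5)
The plan has three steps: write $\overline{X_p^+}$ as a union of orbit closures, show that the set $\mathcal{F}_p = \{F : F^{\up} = p\}$ has a unique maximal element $G$, and then identify $G$ with $P_p^+$.

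\emph{Step 1.} By (\ref{eq:union}), $X_p^+$ is the finite disjoint union of the orbits $\orb{F}$ with $F \in \mathcal{F}_p$. Taking closures gives
\[
\overline{X_p^+} = \bigcup_{F \in \mathcal{F}_p} \overline{\orb{F}}.
\]

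\emph{Step 2.} I would give a direct combinatorial argument for the unique maximal element. Since $P$ is simple, the vertex $p$ has exactly $\dim P$ incident edges. Let $G$ be the face spanned by those edges at $p$ whose other endpoint $q$ satisfies $\inner{q,v} < \inner{p,v}$. Such a face exists and is unique, because in a simple polytope every subset of the edges at a vertex spans a unique face.

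\begin{itemize}
\item \emph{$G \in \mathcal{F}_p$.} At $p$, every edge direction of $G$ decreases $\inner{-,v}$. The cone of $G$ at $p$ is generated by these edge directions, so $p$ is the strict maximum of $\inner{-,v}$ on $G$. Thus $G^{\up} = p$.
\item \emph{Every $F \in \mathcal{F}_p$ satisfies $F \subseteq G$.} Let $F$ be a face with $F^{\up}=p$. Each edge of $F$ at $p$ must decrease $\inner{-,v}$: otherwise, using admissibility, $p$ would not be the maximum on $F$. Hence these edges are among those spanning $G$. By simplicity, $F$ is the face spanned by its own edges at $p$, so $F \subseteq G$.
\end{itemize}

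\emph{Step 3.} Each $F \in \mathcal{F}_p$ satisfies $F \subseteq G$, so $\overline{\orb{F}} \subseteq \overline{\orb{G}}$. The union in Step 1 therefore equals $\overline{\orb{G}}$. Irreducible $T$-orbit closures are distinct for distinct faces, so the defining equation $\overline{\orb{P_p^+}} = \overline{X_p^+}$ forces $P_p^+ = G$.

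The only delicate point is Step 2: the existence of a unique maximal face with $F^{\up}=p$. This needs simplicity and admissibility. If one prefers to avoid Step 2, there is a geometric alternative. Since $\overline{X_p^+}$ is irreducible, it equals $\overline{\orb{P_p^+}}$ with $P_p^+ \in \mathcal{F}_p$, because the dense orbit of the union must be one of the $\orb{F}$. Any other $F \in \mathcal{F}_p$ has $\orb{F} \subseteq \overline{\orb{P_p^+}}$, hence $F \subseteq P_p^+$, which again gives uniqueness and maximality.
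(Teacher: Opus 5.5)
Your proof is correct, and its main line (Step 2) takes a different route from the paper's. The paper's proof is essentially your closing ``geometric alternative''. By (\ref{eq:union}), $\overline{X_p^+}$ is the finite union of the $\overline{\orb{F}}$ with $F^{\up}=p$. Since $\overline{X_p^+}$ is irreducible (ultimately because $X_p^+$ is an affine space, by Bia\l ynicki-Birula's theorem for the smooth variety $X$), it coincides with one of these orbit closures, and that one contains all the others. Comparing with $\overline{\orb{P_p^+}}=\overline{X_p^+}$ then finishes.

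Your Step 2 instead proves the existence of a unique maximal face purely polyhedrally, using only simplicity of $P$ and admissibility of $v$. It does not use irreducibility of the cell; in fact Steps 1--2 re-derive that irreducibility in the toric case. This buys two things:
\begin{itemize}
\item An explicit description of $P_p^+$ as the face spanned by the edges at $p$ along which $\inner{-,v}$ decreases. This is exactly the description the paper only invokes later, with little justification, to obtain (\ref{eq:arrowcount}).
\item It makes visible where simplicity is needed. For a non-simple polytope such as the octahedron, a suitable admissible $v$ makes some vertex the top of two distinct maximal faces.
\end{itemize}
The paper's route is shorter: it packages this combinatorial input into the geometric fact that the Bia\l ynicki-Birula cells of a smooth variety are irreducible.
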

\begin{Rmk}\label[Rmk]{rmk:finiteness}
It follows from \Cref{cor:BB_face} that, for fixed $X$, the \BB decomposition depends only on the preorder on the vertices of $P$ induced by the (admissible) functional $\inner{-,v}$.
In particular, only finitely many behaviors are possible.
\end{Rmk}
We next show that $\overline{Y_p^+} = Y \cap \overline{X_p^+}$ for a $T$-stable subvariety $Y \subseteq X$.
We will use this later to show that certain properties of the \BB decomposition are heritable.
\begin{Lm}\label[Lm]{lm:BB_inheritance}
Let $Y \subseteq X$ be a $T$-stable subvariety.
Let $Q$ be the face of $P$ such that $Y = \overline{\orb{Q}}$.
Then $Q_q^+ = Q \cap P_q^+$ and thus $\overline{Y_q^+} = Y \cap \overline{X_q^+}$.
Here, the $\Gm$-structure on $Y$ is induced by the image $\bar v \in N/N_Q$ of the cocharacter $v$.
\end{Lm}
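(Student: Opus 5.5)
The plan is to use the combinatorial characterization of \Cref{cor:BB_face}, applied both to $P$ and to the face $Q$ regarded as the polytope of the toric variety $Y = \overline{\orb{Q}}$. Here $q$ is a vertex of $Q$. The first step is bookkeeping. The torus of $Y$ is $T/v(\ldots)$, more precisely the quotient with cocharacter lattice $N/N_Q$ and character lattice $M \cap \sigma_Q^\perp$. The face $Q$, after translating a vertex to the origin, is a lattice polytope in this lattice, and its outer normal fan defines $Y$. The faces of $Q$ are exactly the faces of $P$ contained in $Q$, and the corresponding orbits agree. For $\chi$ in the span of $Q - Q$ we have $\langle \chi, \bar v\rangle = \langle \chi, v\rangle$. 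So the functional $\inner{-,\bar v}$ on $Q$ agrees with $\inner{-,v}$ up to an additive constant. In particular $\bar v$ is admissible for $Y$, since the edges of $Q$ are edges of $P$, and for every face $F \subseteq Q$ the vertex $F^\up$ is the same whether computed in $Q$ or in $P$.

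By \Cref{cor:BB_face}, $Q_q^+$ is the unique maximal face $F \subseteq Q$ with $F^\up = q$, and $P_q^+$ is the unique maximal face $F\subseteq P$ with $F^\up = q$. By \eqref{eq:union} and the closure relation $\overline{\orb{P_q^+}} = \bigcup_{E\subseteq P_q^+} \orb{E}$, every face $F$ with $F^\up = q$ satisfies $F \subseteq P_q^+$. I will show $Q_q^+ = Q \cap P_q^+$ by proving both inclusions.
\begin{itemize}
\item[($\subseteq$)] The face $Q_q^+$ lies in $Q$ and has top vertex $q$, so $Q_q^+ \subseteq P_q^+$.
\item[($\supseteq$)] The set $G \deq Q \cap P_q^+$ is a face of $P$ contained in $Q$, and it contains $q$. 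Since $G \subseteq P_q^+$, the maximum of $\inner{-,v}$ on $G$ is at most its maximum on $P_q^+$, which is attained at $q \in G$. Hence $G^\up = q$. Maximality of $Q_q^+$ among faces of $Q$ with top vertex $q$ then gives $G \subseteq Q_q^+$.
\end{itemize}

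For the variety statement, $\overline{Y_q^+} = \overline{\orb{Q_q^+}} = \overline{\orb{Q\cap P_q^+}}$. The face–orbit correspondence preserves inclusions and sends intersections of faces to intersections of orbit closures, because $\overline{\orb{F}}\cap\overline{\orb{F'}} = \bigcup_{E\subseteq F\cap F'}\orb{E}$ as the orbits are disjoint. This gives $\overline{Y_q^+} = Y \cap \overline{X_q^+}$.

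The main obstacle is the first step: checking carefully that $Y$ with the $\bar v$-action is the toric variety of $Q$ in the paper's conventions, and that the $\Gm$-action on $Y$ induced from $X$ is the one given by $\bar v$. This is essentially the same reduction already used in the proof of \Cref{lm:Fup}, so I would cite that reduction and the standard orbit–cone correspondence \cite{fult93toric} rather than reprove it. Once that identification is in place, the rest is purely combinatorial.
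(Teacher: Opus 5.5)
Your proof is correct and is essentially the paper's argument: both inclusions $Q_q^+ \subseteq P_q^+$ and $Q \cap P_q^+ \subseteq Q_q^+$ come from the maximality characterization in \Cref{cor:BB_face}, with the key step being $(Q\cap P_q^+)^{\up} = q$. Your extra bookkeeping only makes explicit what the paper's ``and thus'' leaves implicit. That bookkeeping consists of checking that $\bar v$ is admissible for $Y$ and picks out the same top vertices on faces of $Q$ as $v$ does, and that $\overline{\orb{Q}} \cap \overline{\orb{P_q^+}} = \overline{\orb{Q \cap P_q^+}}$.
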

\begin{proof}
By making repeated use of \Cref{cor:BB_face}, we see that $(Q_q^+)^{\up} = q$, that $Q_q^+ \subseteq P_q^+$, that $(P_q^+ \cap Q)^{\up} = q$ and finally that $P_q^+ \cap Q \subseteq Q_q^+$.
We conclude that $P_q^+ \cap Q = Q_q^+$.
\end{proof}
The following notational shorthand will become useful in later sections.
\begin{Df}\label[Df]{df:arrows_for_poly}
Given an admissible cocharacter $v$ and adjacent vertices $p, q \in P$, we will use the notation $p\midddarrow q$ to indicate $p, q$ are adjacent in the polytope and $\inner{p, v} < \inner{q, v}$.
\end{Df}
\begin{Ex}
Let $S_{n+1}$ be the permutation group on $0,\dots,n$.
It acts via 
$\sigma \cdot e_i = e_{\sigma(i)}$ on the vector space $\R^{n+1}$ with standard basis 
$e_0, e_1, \dots, e_n$.
The permutahedron $\Pi_n \subseteq \R^{n+1}$ is the convex hull of the $S_{n+1}$-orbit of the point $(0,1,\dots,n)$.
We identify the permutation $\sigma$ with the vertex $\sigma \cdot (0,1,\dots,n) = (\sigma^{-1}(0), \dots, \sigma^{-1}(n))$.

The $n$-dimensional polytope $\Pi_n$ lies in an affine hyperplane orthogonal to $\mathbbm{1} \deq (1,\dots,1)$.
The normal fan ${\Sigma_n \subseteq \mathbb{R}^{n+1}\hspace{-0.2em}/\mathbbm{1}}$ of $\Pi_n$ defines the permutahedral variety $X_n$, which is the $T$-orbit closure of a general point in the complete flag variety of type A.
We consider the \BB decomposition on $X_n$ induced by a vector $v = (v_0, \dots, v_n) \in \mathbb{Z}^{n+1}$ for which $v_0 \ltl \cdots \ltl v_n$.

\begin{figure}[ht]
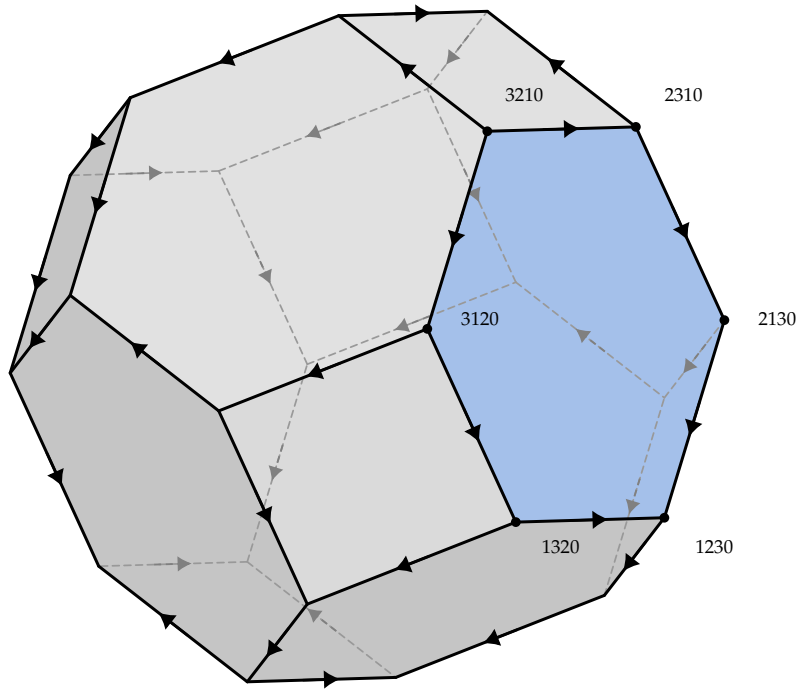

\includestandalone[width = 0.72\textwidth]{permutohedron}
\caption{
\centering
The flow on $\Pi_3$ with the facet $\overline{(\Pi_3)_{1230}^+}$ labeled, and vertices are labeled by the corresponding permutations.}\label{fig:permutahedron}
\end{figure}

The faces of $\Pi_n$ of codimension $k$ are indexed by ordered partitions $E_0 | E_1| \cdots | E_k$ of $\{0,\dots,n\}$ into $k+1$ nonempty parts.
The vertices contained in such a face correspond to the permutations in whose one-line notation all elements of $E_i$ precede all elements of $E_{i+1}$.
The maximizer of $\inner{-,v}$ on such a face is the permutation whose one-line notation is given by $\overrightarrow{E_0} \overrightarrow{E_1} \dots \overrightarrow{E_k}$ where $\overrightarrow{E_i}$ is the set $E_i$ listed in increasing order.
Similarly, the minimizer is  $\overleftarrow{E_0} \overleftarrow{E_1} \dots \overleftarrow{E_k}$ where $\overleftarrow{E_i}$ is the set $E_i$ listed in decreasing order.
By \Cref{cor:BB_face}, the \BB face ${(\Pi_n)_\sigma^+}$ of the vertex $\sigma \in S_{n+1}$ is the ordered partition obtained from the one-line word for $\sigma$ by inserting the separator $|$ at each descent of $\sigma$.
For example, the permutation $1230$ has associated positive \BB face given by the ordered partition $123|0$.
In general, the fixed points lying in the \BB cell closure of the permutation $\sigma \in S_{n+1}$ consist of the permutations in the weak Bruhat interval $[\tilde \sigma, \sigma]$ where $\tilde \sigma$ is obtained from $\sigma$ by reversing each ascending run.

As mentioned in the introduction, the \BB cell closures of a smooth projective variety form a basis for its total Chow group.
Since a permutation with $k$ descents has a \BB cell of codimension $k$, our computation above recovers the fact that the Betti numbers of the permutahedral variety $\Pi_n$ are given by the $(n+1)$-st Eulerian numbers---which by definition count the number of permutations of $\{0,\dots,n\}$ with exactly $k$ descents.
\end{Ex}

\section{The orbit graph}\label{sec:graph}
We now let $X$ represent any complete $\Gm$-variety with finite fixed locus.
\begin{Df}\label[Df]{df:arrow_notation}
Let $p, q \in X^{\Gm}$ be fixed points.
\begin{itemize}
\item For $x \in X\del X^{\Gm}$, we write $p \midarrowX{x} q$ to indicate that $p = \lim_{t \to \infty} t \cdot x$ and $q = \lim_{t \to 0} t \cdot x$.
\item We write $p \midarrow q$ to indicate that $p \midarrowX{x} q$ holds for \emph{some} $x \in X\del X^{\Gm}$.
\item We write $\middarrow$ for the transitive-reflexive closure of the  $\midarrow$ relation.
Explicitly, $p \middarrow q$ if there exists a chain of fixed points $p = p_0 \midarrow p_1 \midarrow \dots \midarrow p_k = q$ (possibly with $k = 0$).
\end{itemize}
\end{Df}
\begin{Df}\label[Df]{df:orbit_graph}
We define $\Gamma_X$ to be the directed graph with vertices $p \in X^{\Gm}$ and an edge $p \to q$ whenever $p \midarrow q$.
\end{Df}
There exist unique fixed points $p, q \in X^{\Gm}$ such that $X_p^+$ and $X_q^-$ are dense in $X$.
This follows from the constructibility of the \BB subsets of $X$ (see the discussion in \Cref{sec:notation}).
This leads to the following definition.
\begin{Df}
We let $X^{\down}, X^{\up}$ be the elements of $X^{\Gm}$ such that $X^{\down} \midarrowX{x} X^{\up}$ for general $x \in X$.
\end{Df}

The following result will be crucial for later sections.
\begin{Thm}\label[Thm]{thm:broken_trajectory}
Let $X$ be a complete $\Gm$-variety with finite fixed locus. Let $p \in X^{\Gm}$ be a fixed point.
Then in the graph $\Gamma_X$ we have $X^{\down} \middarrow p \middarrow X^{\up}$.
\end{Thm}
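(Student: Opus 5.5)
We prove $X^{\down} \middarrow p$ and $p \middarrow X^{\up}$ together, by degenerating a general $\Gm$-orbit onto $p$. Since $X$ is irreducible, we can choose an irreducible curve $C \subseteq X$ through $p$ whose general point $c$ lies in the dense open set $X_{X^{\down}}^- \cap X_{X^{\up}}^+$. For such $c$ we have $X^{\down} \midarrowX{c} X^{\up}$. We then consider the rational map
\[
\Phi\colon \PP^1 \times C \dashrightarrow X, \qquad (t,c)\mapsto t\cdot c ,
\]
which is $\Gm$-equivariant for the action on the $\PP^1$ factor. We replace $C$ by its normalization $\tilde C$ and let $S$ be the normalization of the closure of the graph of $\Phi$. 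Then $S$ is a normal surface, proper over $\tilde C$, with a $\Gm$-action and an equivariant morphism $\phi\colon S \to X$. If necessary, we pass to an equivariant projective modification (equivariant Chow lemma, or resolution of the surface followed by Sumihiro) so that $S$ carries a $\Gm$-linearized ample line bundle.

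Let $\pi\colon S \to \tilde C$ be the projection and $\tilde p \in \tilde C$ a point over $p$. The general fiber of $\pi$ is $\PP^1$, so by Stein factorization (Zariski's connectedness theorem) the special fiber $S_{\tilde p}=\pi^{-1}(\tilde p)$ is connected. It is a $\Gm$-stable curve containing the limits of the sections $\{0\}\times \tilde C$ and $\{\infty\}\times\tilde C$. Under $\phi$ these two limit points map to $X^{\up}$ and $X^{\down}$ respectively, because $\phi$ is constant on these sections over a dense open subset of $\tilde C$. Moreover, the point $(1,\tilde p)$ of the graph maps to $p$, so $p \in \phi(S_{\tilde p})$.

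The key step, and the main obstacle, is to show that $S_{\tilde p}$ is a \emph{broken trajectory}: a chain of $\Gm$-orbit closures $D_1,\dots,D_k$ such that $D_1$ has its $\infty$-limit at the limit of the $\infty$-section, $D_k$ has its $0$-limit at the limit of the $0$-section, and the $0$-limit of $D_i$ equals the $\infty$-limit of $D_{i+1}$.

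To prove this I would use the weights $\mu(s)$ of the linearized ample bundle at the $\Gm$-fixed points of $S_{\tilde p}$. There are finitely many such points, since they lie over $X^{\Gm}$ or in components contracted to fixed points. Along any non-fixed orbit closure the weight strictly changes from its $\infty$-end to its $0$-end, with a consistent sign. Connectedness of the fiber, together with the fact that each orbit in a fiber has exactly two limits, then shows the following. Starting at the $\infty$-section limit, repeatedly flow to the $0$-limit of an orbit component. This yields a monotone path that never revisits a point and ends at the unique weight-maximal attracting point, namely the $0$-section limit. A second use of connectedness, via the intersection-multiplicity/degree count of the fiber against the ample bundle (the fiber has the same degree as a general fiber, which is a single orbit), shows that no branch exists off this path. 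This step is where I expect the real work to lie.

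Components of $S_{\tilde p}$ may be contracted by $\phi$ to fixed points. Dropping those components, the image chain becomes a chain of orbits
\[
X^{\down} = q_0 \midarrow q_1 \midarrow \cdots \midarrow q_m = X^{\up}
\]
in $X$, and $p$ lies on it: either $p$ is some $q_i$, or $p$ lies on a non-contracted orbit, which is impossible since $p$ is fixed. Hence $X^{\down} \middarrow p \middarrow X^{\up}$.
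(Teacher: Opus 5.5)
Your architecture is essentially the paper's. You resolve $(t,c)\mapsto t\cdot c$ over a curve through $p$, show that the fiber over $\tilde p$ is a directed chain of orbit closures, and push that chain to $X$. But the step you flag as the main obstacle is the entire content of the theorem, and your sketch of it does not go through as written.

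Write $\Sigma_0,\Sigma_\infty$ for the strict transforms of $\{0\}\times\tilde C$ and $\{\infty\}\times\tilde C$, write $z_0,z_\infty$ for their points over $\tilde p$, and write $D_0$ for the strict transform of $\PP^1\times\{\tilde p\}$.

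\textbf{The endpoint of the path.} Weight monotonicity only shows that your path from $z_\infty$ stops at some fixed point $b$ from which no orbit of the fiber departs. Knowing that $z_0$ is the unique \emph{global} maximum of $\mu$ does not rule out $b\neq z_0$ being a local maximum. Since $\pi$ is $\Gm$-invariant, every orbit with $\infty$-limit $b$ lies in the fiber, so $S_b^-=\{b\}$. What you need is the local-to-global statement that such a point lies on the sink $\Sigma_0$. On a normal but possibly singular $S$, this is exactly the kind of assertion the paper notes was left unproved in \cite{constructibleBB}. Your justification that the fiber has finitely many fixed points has the same problem: it must exclude pointwise-fixed vertical components, and "they lie over $X^{\Gm}$" does not do that.

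\textbf{No branching.} If the generic stabilizer of a fiber component $D$ has order $k_D$, and $D$ has limits $D^0,D^\infty$, then $\deg L|_D=(\mu(D^0)-\mu(D^\infty))/k_D$. Comparing $\sum_D m_D\deg L|_D$ with the degree of a general fiber excludes extra components only once you know $m_D\ge k_D$ for the fiber multiplicities, and you do not address this. Without it, a path started at $z_\infty$ need not pass through $D_0$, which is the only component known to map to $p$.

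\textbf{How to close the gap.} Commit to a smooth model, either an equivariant resolution of $S$ or, as in the paper, iterated blowups at $\Gm$-fixed points; it is projective by Zariski's criterion. There Bia\l ynicki-Birula's theorem at each fixed point does everything.
\begin{itemize}
\item A fixed point $b$ with $S_b^-=\{b\}$ has no negative tangent weights, so either $b$ or the fixed curve through $b$ has an open attracting set. Uniqueness of the sink then forces $b\in\Sigma_0$, i.e.\ $b=z_0$.
\item Symmetrically, a point of the fiber with no incoming orbit must be $z_\infty$.
\item A pointwise-fixed vertical curve would have an open attracting or repelling set, which is impossible for the same reason.
\end{itemize}
You can then start at $D_0$ and flow forward to $z_0$ and backward to $z_\infty$, with no degree count at all. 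The paper proceeds differently: it resolves $h$ by an explicit sequence of blowups, shows inductively that each center is a $\Gm$-fixed point of a smooth $\Gm$-surface, and uses Proposition~\ref{prop:blowup_path} to see that each blowup inserts one consistently directed orbit closure into the existing trajectory.
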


An elegant proof of the projective case of \Cref{thm:broken_trajectory} is given in \cite[Proposition 2.1]{totaro}.
The general case appears (in somewhat stronger forms) both as \cite[Proposition 2.3]{constructibleBB} and as \cite[Proposition 3.7]{stacky_totaro}. However, in each case there are unjustified steps.
In \cite{constructibleBB}, it is claimed without proof that there can only be one component in the fixed locus of a normal projective $\Gm$-variety with no outgoing $\Gm$-orbits. 
In \cite{stacky_totaro}, the existence of an \emph{equivariant} resolution of the rational map used there is not properly justified. Given the importance of this result to our later sections, we provide a self-contained proof here based on the ideas in \cite{stacky_totaro}. Our proof replaces the local computations there with a geometric argument.

To prove \Cref{thm:broken_trajectory}, we need a technical lemma on the behavior of smooth $\Gm$-surfaces under blowup.
%Our surface need not have finite $\Gm$-fixed locus.
We keep using the extended \BB notation from \Cref{sec:notation}.
\begin{Prop}\label[Prop]{prop:blowup_path}
Let $I$ be a smooth $\Gm$-surface.
Let $p \in I^{\Gm}$ be a fixed point.
Let $\varPhi \deq \Bl_p I$ be the blowup of $I$ at $p$.
Write $\rho\colon \varPhi \to I$ for the blowdown morphism and $O$ for the exceptional divisor.
The $\Gm$-action on $I$ lifts uniquely to $\varPhi$ and leaves $O$ invariant.
Moreover:
\begin{figure}[ht]
\includestandalone[width = 0.9\textwidth]{blowups}
\caption{The cases (II), (I), and (III) in \Cref{prop:blowup_path}.}\label{fig:blowups}
\end{figure}
\begin{enumerate}[(I)]
\item 
Suppose $\dim I_p^+, \dim I_p^- > 0$.
Then $O$ contains $\Gm$-fixed points $p^+, p^-$ such that ${\overline{\varPhi_{p^+}^-} =\overline{\varPhi_{p^-}^+} = O}$.
The strict transform of $\overline{I_p^+}$ is $\overline{\varPhi_{p^+}^+}$ and the strict transform of $\overline{I_p^-}$ is $\overline{\varPhi_{p^-}^-}$.
\item Suppose $\dim I_p^-, \dim I_p^0 > 0$.
Then $O$ contains $\Gm$-fixed points $p^-, p^0$ such that $\overline{\varPhi_{p^0}^-} = \overline{\varPhi_{p^-}^+} = O$.
The strict transform of $I_p^0$ is $\varPhi_{p^0}^0$ and the strict transform of $\overline{I_p^-}$ is $\overline{\varPhi_{p^-}^-}$.
\item Suppose $\dim I_p^+, \dim I_p^0 > 0$.
Then $O$ contains $\Gm$-fixed points $p^+, p^0$ such that $\overline{\varPhi_{p^0}^+} = \overline{\varPhi_{p^+}^-} = O$ and the strict transform of $I_p^0$ is $\varPhi_{p^0}^0$.
The strict transform of $\overline{I_p^+}$ is $\overline{\varPhi_{p^+}^+}$.
\end{enumerate}
\end{Prop}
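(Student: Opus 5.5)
The plan is to reduce everything to a linear computation at $p$ and then read off the geometry on the exceptional divisor $O \cong \PP(T_pI)$.

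\textbf{Step 1: the lift.} The $\Gm$-action lifts to $\varPhi$ by the universal property of blowing up. For $t \in \Gm$, the automorphism $t\colon I \to I$ fixes $p$, so it maps the ideal of $p$ to itself and induces an automorphism of $\Bl_p I$ over $t$. These automorphisms assemble into an algebraic action, since $\varPhi = \operatorname{Proj}\bigoplus_n \mathfrak m_p^n$ carries the induced $\Gm$-linearization. The lift is unique because $\rho$ is an isomorphism over the dense open set $I \setminus \{p\}$ and $\varPhi$ is separated. Since $O = \rho^{-1}(p)$ and $p$ is fixed, $O$ is $\Gm$-stable.

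\textbf{Step 2: linearization at $p$.} The action on $O$ is the projectivization of the action on $T_pI$. Since $I$ is smooth at $p$, I choose $\Gm$-semi-invariant local parameters $x,y$ at $p$: the action on $\mathfrak m_p/\mathfrak m_p^2$ is diagonalizable, and a $\Gm$-stable complement lifts eigenvectors to $\mathfrak m_p$, using a $\Gm$-stable affine neighbourhood. The weights of these parameters are $a,b$. By the Białynicki-Birula local description at a smooth fixed point, which can also be derived via the étale chart $(x,y)$, the following hold:
\begin{itemize}
\item $T_p I_p^+$ is the positive weight space.
\item $T_pI_p^-$ is the negative weight space.
\item $T_pI_p^0$ is the zero weight space.
\end{itemize}
Each of $I_p^+$, $I_p^-$, $I_p^0$ is smooth at $p$ of the corresponding dimension. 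In each case (I)–(III), two of these dimensions are positive. As they sum to $2$, the weights are two distinct values with the prescribed signs. The two eigenlines give exactly two fixed points on $O \cong \PP^1$, which I call as in the statement: $p^+$ is the tangent direction of $I_p^+$, $p^-$ that of $I_p^-$, and $p^0$ that of $I_p^0$. The remaining points of $O$ form a single $\Gm$-orbit, acted on with weight $b-a \neq 0$.

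\textbf{Step 3: dynamics on $O$.} On the chart of $O$ with coordinate $[1:s]$, where $s = y/x$ has weight $b-a$, a point flows as $t\to 0$ to $[0:1]$ if $b-a<0$ and to $[1:0]$ if $b-a>0$. The flow always goes from the direction of larger weight to the direction of smaller weight.
\begin{itemize}
\item In case (I), it goes from $p^+$ to $p^-$. Hence $O = \overline{\varPhi_{p^-}^+} = \overline{\varPhi_{p^+}^-}$.
\item In case (II), with weights $0$ and negative, it goes from $p^0$ to $p^-$.
\item In case (III), it goes from $p^+$ to $p^0$.
\end{itemize}
This gives all the assertions about $O$.

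\textbf{Step 4: strict transforms.} Each of $\overline{I_p^\pm}$ and $I_p^0$ is smooth at $p$, so its strict transform $C'$ is a curve meeting $O$ exactly at its tangent direction. Away from $O$, $\rho$ is an equivariant isomorphism. Hence the points of $C' \setminus O$ near $O$ still flow in $\varPhi$ toward $C' \cap O$, because limits in $\varPhi$ are computed by the unique extension $\PP^1 \to \varPhi$ of $t \mapsto t\cdot x$.

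For instance, in case (I), a point $x \in I_p^+ \setminus \{p\}$ has $\lim_{t\to0} t\cdot x = p$ in $I$. Its lift therefore has limit in $O \cap C' = \{p^+\}$, so $C'\setminus O \subseteq \varPhi_{p^+}^+$. By irreducibility and dimension, $C' = \overline{\varPhi_{p^+}^+}$. The cases of $I_p^-$ and $I_p^0$ are handled the same way, with $C' = \varPhi^0_{p^0}$ as a curve of fixed points.

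\textbf{Main obstacle.} I expect the hardest step to be the second one: producing semi-invariant parameters and justifying the identification of tangent weights with the local BB pieces in this generality. Note that $I$ is not assumed complete or to have finite fixed locus. I would use Sumihiro's theorem to get a $\Gm$-stable affine neighbourhood of $p$, and then an equivariant étale map to $T_pI$.
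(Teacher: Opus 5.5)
Your Steps 1--3 and the first half of Step 4 follow the paper's proof. The paper also takes the weight-space description of $T_pI_p^{\pm}$ and $T_pI_p^0$ from Bia\l ynicki-Birula and identifies $O=\mathbb{P}(T_pI)$. It likewise reads off that a general point of $O$ flows from the larger-weight direction to the smaller-weight one, and uses equivariance of $\rho$ to force lifts of points of $I_p^+\setminus\{p\}$ to flow to the tangent direction $p^+$.

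What is missing is the passage from these containments to the stated \emph{equalities}. The dynamics on $O$ give only $O\subseteq\overline{\varPhi^+_{p^-}}$ and $O\subseteq\overline{\varPhi^-_{p^+}}$ in case (I), and analogous inclusions in (II) and (III). They cannot exclude points of $\varPhi\setminus O$ that also flow to $p^-$ as $t\to 0$, or to $p^+$ as $t\to\infty$. So ``This gives all the assertions about $O$'' does not follow. Likewise, Step 4 shows only $\rho^{-1}(I_p^+\setminus\{p\})\subseteq\varPhi^+_{p^+}$, hence $C'\subseteq\overline{\varPhi^+_{p^+}}$. Your ``by irreducibility and dimension'' presupposes that $\overline{\varPhi^+_{p^+}}$ is an irreducible curve, which you never establish, because you never look at $\varPhi$ near its new fixed points. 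As written, $C'\setminus O\subseteq\varPhi^+_{p^+}$ is also slightly off, since $\overline{I_p^+}$ may contain points outside $I_p^+$.

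The paper closes this with a dimension count at the new fixed points. The tangent description at $p^+$ gives $\dim\varPhi^+_{p^+}+\dim\varPhi^-_{p^+}\le\dim T_{p^+}\varPhi=2$. Both terms are at least $1$ by the containments already proved, so both equal $1$ and the containments become equalities.

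Within your framework a more direct repair is to run the equivariance of $\rho$ backwards. If $x\in\varPhi\setminus O$ and $\lim_{t\to0}t\cdot x\in O$, then $\lim_{t\to0}t\cdot\rho(x)=p$, so $x\in\rho^{-1}(I_p^+\setminus\{p\})$. Conversely, such $x$ do have a limit in $O$ by properness of $\rho$. This, rather than an extension to $\mathbb{P}^1$, is the correct justification, since $I$ need not be complete.

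So the only points off $O$ that flow into $O$ as $t\to0$ are those of $\rho^{-1}(I_p^+\setminus\{p\})$, and all of them go to $p^+$. There are none when $I_p^+=\{p\}$, as in case (II). The same holds for $t\to\infty$ with $I_p^-$ in place of $I_p^+$. Combined with Step 3, this pins each set down exactly. For example, in case (I) you get $\varPhi^+_{p^-}=O\setminus\{p^+\}$ and $\varPhi^+_{p^+}=\rho^{-1}(I_p^+\setminus\{p\})\cup\{p^+\}$, and all the stated closures follow.

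For $\varPhi^0_{p^0}$ you additionally need that $\varPhi^{\Gm}$ is smooth and at most one-dimensional, so that the fixed curve $C'$ is the only component through $p^0$. Finally, the step you flagged as the main obstacle is exactly what the paper simply cites from Bia\l ynicki-Birula. The real content of the proof lies in this exclusion of extra points.
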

\begin{proof}
That the action lifts uniquely follows from the universal property of blowups.
That $O$ is $\Gm$-stable follows from equivariance of the blowdown map.
Since $I$ is a smooth $\Gm$-variety, \cite[Theorem 4.1]{BB} implies that 
$T_p I_p^0$,
$T_p I_p^+$ and $T_p I_p^-$ 
are, respectively, the zero, positive, and negative $\Gm$-weight spaces of 
$T_p I$.
In particular,
$T_p I = T_p I_p^+ \oplus T_p I_p^0  \oplus T_p I_p^-$.
We have $O = \mathbb{P}(T_p I)$.

Consider case~(I).
In this case $\dim T_pI_p^+ = \dim T_pI_p^- = 1$ and $\dim T_pI_p^0 = 0$.
Let $p^+ = \mathbb{P}(T_p I_p^+)$ and $p^- = \mathbb{P}(T_p I_p^-)$.
For any smooth curve $C \subseteq I$ passing through $p$, the strict transform of $C$ in $\varPhi$ intersects $O$ at $\mathbb{P}(T_p C)$.
Let $x \in \rho^{-1}(I_p^+\del p)$ be a point.
Equivariance of $\rho$ gives $\rho(\lim_{t\to 0}t\cdot x) = \lim_{t\to 0} t\cdot\rho(x)$.
Since $\rho(x) \in {I_p^+}$, we have $\lim_{t\to 0} t\cdot x \in O$.
Thus, $\lim_{t\to 0} t\cdot x = p^+$.
It follows that the strict transform of $\overline{I_p^+}$ is contained in $\overline{\varPhi_{p^+}^+}$.
Similarly, the strict transform of $\overline{I_p^-}$ is contained in $\overline{\varPhi_{p^-}^-}$.
Since $O = \mathbb{P}(T_pI_p^- \oplus T_pI_p^+)$ where $T_pI_p^-$ has negative (hence strictly smaller) $\Gm$-weight, a general point $o \in O$ has $\lim_{t \to 0} t\cdot o = \mathbb{P}(T_p I_p^-) = p^-$ and $\lim_{t \to \infty} t\cdot o = \mathbb{P}(T_p I_p^+) = p^+$.
From the above discussion we see that $\dim \varPhi_{p^+}^+, \dim \varPhi_{p^+}^-, \dim\varPhi_{p^-}^+, \dim\varPhi_{p^-}^- \geq 1$.
Since 
\[
\dim \varPhi_{p^+}^+ + \dim \varPhi_{p^+}^- \leq \dim T_{p^+}\varPhi_{p^+}^+ + \dim T_{p^+} \varPhi_{p^+}^0 + \dim T_{p^+} \varPhi_{p^+}^- = \dim T_{p^+} \varPhi = 2,
\]
we must have $\dim \varPhi_{p^+}^+ = \dim \varPhi_{p^+}^- = 1$.
Similarly, $\dim \varPhi_{p^-}^+ = \dim \varPhi_{p^-}^- = 1$.
Hence the containments described above are actually equalities.

Cases (II) and (III) may be treated similarly.
One defines $p^0 = \mathbb{P}(T_pI_p^0)$ and uses that the strict transform of $I_p^0$ is a $\Gm$-fixed subvariety meeting $O$ at $p^0$ to conclude that it is contained in $\varPhi_{p^0}^0$.
A similar dimension argument finishes the proof.
\end{proof}

In the following proposition, by a \emph{trajectory} in a $\Gm$-variety $X$ we will mean the union of the $\Gm$-orbit closures of points $x_1, \dots, x_n \in X \del X^{\Gm}$ such that 
$
p_0 \midarrowX{x_1} p_1 \midarrowX{x_2} \dots \midarrowX{x_n} p_n
$
for some $p_0, \dots, p_n \in X^{\Gm}$ (not necessarily pairwise distinct).
We call $p_0, p_n$ the \emph{ends} of the trajectory.
\begin{Prop}\label[Prop]{prop:create_chain}
Let $f: C\to X$ be a morphism from a smooth curve $C$ whose general point maps inside $X_{X^{\up}}^+ \cap X_{X^{\down}}^-$.
The $\Gm$-action on $X$ induces a morphism $\Gm \times C\to X$, which we think of as a rational map $h: \mathbb{P}^1 \times C \dashrightarrow  X$.
This rational map may be resolved via a commutative diagram
\[
\begin{tikzcd}
I\ar[rd, "\tilde h"]\ar[d, "\rho"] & \\
\mathbb{P}^1 \times C \ar[r, "h" below, dashed] & X
\end{tikzcd}
\]
where $\rho$ is proper birational and $\tilde h (\rho^{-1}(\mathbb{P}^1 \times \{c\}))$ is a trajectory with ends $X^{\down}$ and $X^{\up}$ for each $c \in C$.
\end{Prop}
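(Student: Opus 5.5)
We construct $I$ in two stages: first a possibly singular equivariant resolution, then a normalization and equivariant blowups that bring us into the setting of \Cref{prop:blowup_path}. We begin with the closure of the graph. Let $\Gm$ act on $\mathbb{P}^1 \times C$ by scaling the first factor, so that $h$ is $\Gm$-equivariant on $\Gm \times C$. Let $I_0 \subseteq \mathbb{P}^1 \times C \times X$ be the closure of the graph of $h|_{\Gm\times C}$, with the diagonal $\Gm$-action; since $X$ is complete, the projection $\rho_0\colon I_0 \to \mathbb{P}^1\times C$ is proper and birational. Because the graph is $\Gm$-stable, its closure is too, so $I_0$ is an equivariant resolution of $h$, which avoids the gap noted in \cite{stacky_totaro}. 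We then replace $I_0$ by its normalization and by an equivariant resolution of singularities of the surface. For surfaces this is available in all characteristics: normalize, then repeatedly blow up singular points, which are finitely many and permuted by the connected group $\Gm$, hence fixed. This produces a smooth $\Gm$-surface $I$ with $\rho\colon I\to \mathbb{P}^1\times C$ proper birational and $\tilde h$ equivariant.

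Next we analyze the fibers. For $c \in C$, the fiber $F_c = \rho^{-1}(\mathbb{P}^1\times\{c\})$ is a connected (Zariski's main theorem, since $\mathbb{P}^1\times C$ is normal) $\Gm$-stable proper curve. It consists of the strict transform $\widetilde{\mathbb{P}^1_c}$ together with exceptional components over $(0,c)$ and $(\infty,c)$; only these can appear, since $h$ is defined on $\Gm\times C$ and the indeterminacy sits among fixed points. The image $\tilde h(F_c)$ is a union of $\Gm$-orbit closures and fixed points of $X$, since $\tilde h$ is equivariant. It remains to show that these orbit closures chain from $X^{\down}$ to $X^{\up}$. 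For general $c$, the image is just the orbit closure of $f(c)$, which runs from $X^{\down}$ to $X^{\up}$ by hypothesis. For special $c$, the limits $\lim_{t\to 0}$ and $\lim_{t\to\infty}$ of points of $\widetilde{\mathbb{P}^1_c}$ lie over $(0,c)$ and $(\infty,c)$. The key point is that the fixed points of $I$ lying over $(0,c)$ map to fixed points of $X$ lying in $\overline{X^+_{X^{\up}}\cap\cdots}$. More precisely, the section $\{0\}\times C$ has strict transform $\Sigma_0$, which is fixed pointwise; its general point maps to $X^{\up}$, so all of $\Sigma_0$ maps to the finite set $X^{\Gm}$ and, by connectedness, to $X^{\up}$. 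Likewise, the strict transform $\Sigma_\infty$ maps to $X^{\down}$.

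The main obstacle is to show that each fiber $F_c$ contains a chain of $\Gm$-orbit closures from $\Sigma_\infty\cap F_c$ to $\Sigma_0\cap F_c$, with all orbits oriented consistently. This is where \Cref{prop:blowup_path} enters. We would arrange $I$ as an iterated blowup at fixed points of a smooth model, tracking the invariant that the fiber over $c$ is a chain $p_0\midarrowX{x_1}\cdots\midarrowX{x_n}p_n$ running from the point of $\Sigma_\infty$ to the point of $\Sigma_0$, with fixed curves allowed only as the horizontal sections. Starting from $\mathbb{P}^1\times C$, where the invariant clearly holds, each blowup at a fixed point $p$ of the fiber falls into one of the cases (I)--(III). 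Case (II) or (III) occurs when $p$ lies on $\Sigma_\infty$ or $\Sigma_0$, which are contained in $I_p^0$. Case (I) occurs when $p$ is an interior node of the chain. In every case the proposition shows that the exceptional curve $O$ is inserted with the correct orientation between the two neighbouring pieces, so the chain property persists. Because $I$ must dominate the graph closure, we would take $I$ to be a resolution of $I_0$ obtained by blowing up $\mathbb{P}^1\times C$ at fixed points. Such a resolution exists because the indeterminacy of a rational map from a smooth surface is resolved by successive point blowups, and the points to be blown up are fixed since they are finite in number and $\Gm$ is connected. Finally, orbit closures contracted by $\tilde h$ simply merge adjacent fixed points, and non-contracted ones map onto $\Gm$-orbit closures with the same orientation, since $\tilde h$ is equivariant. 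Hence $\tilde h(F_c)$ is a trajectory with ends $X^{\down}$ and $X^{\up}$.
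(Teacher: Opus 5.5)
Your proof is correct and is essentially the paper's argument. You resolve $h$ by successive blowups of $\mathbb{P}^1\times C$ at indeterminacy points; these form a finite $\Gm$-stable set and so are fixed, which keeps each stage equivariant (this is an induction, which the paper records as $(\dagger)$). You then track each fiber through cases (I)--(III) of \Cref{prop:blowup_path}, and identify the ends via the strict transforms of $\{\infty\}\times C$ and $\{0\}\times C$, which are sent to $X^{\down}$ and $X^{\up}$.

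The graph-closure, normalization and resolution-of-singularities construction in your first paragraph is unnecessary. Since you ultimately take $I$ to be this iterated blowup at fixed points, that opening detour can be dropped.
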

\begin{proof}
By the normality of $\mathbb{P}^1 \times C$ and the completeness of $X$, $h$ is defined on the complement of a finite set.
Restricting to an open subset on $C$, we may assume that $h$ is defined on $\mathbb{P}^1 \times (C \del c)$.
By construction, it is also well-defined on $\Gm \times \{c\}$.
By \cite[\href{https://stacks.math.columbia.edu/tag/0C5H}{Tag 0C5H}]{stacks-project}, there exists a diagram as above in which $I$ is obtained from $\mathbb{P}^1 \times C$ by a finite sequence of blowups at closed points.
Write $\mathbb{P}^1 \times C = I_0, I_1, \dots, I_n = I$ for the sequence of surfaces obtained in this way, blowing up one point at a time.
Without loss of generality, for each $j$ the point in $I_j$ blown up to get $I_{j+1}$ is a point where (the lift to $I_j$ of) $h$ is not defined.

We equip the surface $\mathbb{P}^1 \times C$ with the standard action of $\Gm$ on the first factor.
The rational map $h$ is $\Gm$-equivariant for this choice.
We prove the following statement by induction on $j$.
\begin{itemize}
\item[($\dagger$)] 
$I_j$ is a smooth $\Gm$-variety and (if $j \geq 1$) $I_{j}$ is obtained from $I_{j-1}$ by blowing up a $\Gm$-fixed point.
In particular, the blowdown $I_{j} \to I_{j-1}$ is $\Gm$-equivariant.
\end{itemize}
Let $j \geq 1$ and assume that ($\dagger$) holds for $j' < j$.
This ensures that the rational map $h: I_{j-1} \dashrightarrow X$ is $\Gm$-equivariant.
Since $I_{j-1}$ is normal, the lift of $h$ to $I_{j-1}$ is defined away from a finite set of closed points.
Since $h$ is $\Gm$-equivariant, this finite set must be $\Gm$-stable, so consists solely of $\Gm$-fixed points.
The point in $I_{j-1}$ blown up to get $I_{j}$ is one of these.
Since the blowup of a $\Gm$-variety along a $\Gm$-stable subvariety is canonically a $\Gm$-variety and the blowdown morphism is $\Gm$-equivariant, this proves the claim.

Now consider the preimage of $\mathbb{P}^1 \times \{c\}$ in $I_j$ as $j$ varies.
In $I_0$, this fiber is a single $\Gm$\nobreakdash-orbit closure whose ends $p = (\infty, c)$ and $q = (0, c)$ satisfy $\dim (I_0)_p^0, \dim (I_0)_q^0 > 0$ as well as $\overline{(I_0)_p^-} = \overline{(I_0)_q^+} = \mathbb{P}^1 \times \{c\}$.
Using ($\dagger$), we see that the changes to the fiber are described by \Cref{prop:blowup_path}.
Namely, at each stage a $\Gm$-orbit closure is added to the fiber, directed consistently with the existing trajectory.
In particular, $\rho^{-1}(\mathbb{P}^1 \times \{c\})$ is a trajectory.
The same is true for $\tilde h(\rho^{-1}(\mathbb{P}^1 \times \{c\}))$ since $\tilde h$ is $\Gm$-equivariant.
Note that orbits may be contracted in the process.
The start of the trajectory $\rho^{-1}(\mathbb{P}^1 \times \{c\})$ lies in the strict transform of $\{\infty\} \times C$ and its end lies in the strict transform of $\{0\} \times C$.
Since these map under $h$ (hence under $\tilde h$) to $X^{\down}$ and $X^{\up}$ respectively, these are the ends of the trajectory $\tilde h(\rho^{-1}(\mathbb{P}^1 \times \{c\}))$.
\end{proof}
\begin{proof}[Proof of \Cref{thm:broken_trajectory}]
Let $p \in X^{\Gm}$ be a fixed point.
Applying \cite[Corollary 1.9]{Bertini_Irred}, we can find a morphism $f\colon C \to X$ from an irreducible complete curve $C$ to $X$ with image containing $p$ and generically contained in $X_{X^\down}^- \cap X_{X^\up}^+$.
After normalizing, we may assume that $C$ is smooth.
Let $c \in C$ be such that $c \mapsto p$.
Composing with the $\Gm$-action gives a morphism $\Gm \times C \to X$.
Applying \Cref{prop:create_chain}, we get a trajectory with ends $X^{\down}, X^{\up}$.
From the construction, we see that this trajectory contains the image of $\Gm \times \{c\} \to X$, which is precisely $p$.
\end{proof}

\begin{Rmk}\label[Rmk]{rmk:loopless}
\Cref{thm:broken_trajectory} shows that $\Gamma_X$ is (weakly) connected for any complete $\Gm$-variety $X$ with finite fixed locus.
When $X$ is smooth, $\Gamma_X$ is also loopless (see \cite[Lemma 4.1]{BB}).
\end{Rmk}

\section{Filterability}\label{sec:filt_prop}
We recall what it means for the decomposition of a variety into subvarieties to be filterable.
\begin{Df}\label[Df]{df:filterable}
Let $X$ be an arbitrary variety.
Let $X = X_1 \sqcup \dots \sqcup X_n$ be a decomposition of $X$ into locally closed subvarieties.
We say that the ordering $X_1, \dots, X_n$ on the cells \emph{filters} $X$ if the partial unions $X_1 \sqcup \dots \sqcup X_j$ are closed in $X$ for all $1 \leq j \leq n$.
We say that the decomposition is filterable if it filters $X$ after suitably reindexing.
\end{Df}
In this section, we will let $X$ be any smooth complete $\Gm$-variety with finite fixed locus.
Considering the \BB decomposition of $X$, an ordering of the cells that filters $X$ is given by a total order on the fixed points of $X$.
We call this a \emph{filtering order on $X^{\Gm}$}.
\begin{Prop}\label[Prop]{prop:filter_cond}
Let $X$ be a variety and $X = X_1 \sqcup \dots \sqcup X_n$ a locally-closed decomposition of $X$.
The given order $X_1, \dots, X_n$ filters $X$ if and only if whenever $X_i \cap \overline{X_j} \neq \varnothing$ we have also $i \leq j$.
\end{Prop}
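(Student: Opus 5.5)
We prove the two implications separately, using only elementary point-set topology of the decomposition $X = X_1 \sqcup \dots \sqcup X_n$. Throughout, write $U_j \deq X_1 \sqcup \dots \sqcup X_j$ for the partial unions.

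For the forward direction, assume the given order filters $X$, so each $U_j$ is closed. Fix $j$. Since $X_j \subseteq U_j$ and $U_j$ is closed, we get $\overline{X_j} \subseteq U_j$. The cells are pairwise disjoint, so $U_j$ meets $X_i$ only when $i \leq j$. Hence $X_i \cap \overline{X_j} \neq \varnothing$ forces $i \leq j$.

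For the converse, assume that $X_i \cap \overline{X_j} \neq \varnothing$ implies $i \leq j$. Fix $j$ and take any $k \leq j$. Because the $X_i$ cover $X$, we can write
\[
\overline{X_k} = \bigcup_i \bigl(\overline{X_k} \cap X_i\bigr).
\]
By hypothesis, a term $\overline{X_k} \cap X_i$ is nonempty only when $i \leq k$, and then $i \leq j$ as well. So $\overline{X_k} \subseteq U_j$. It follows that
\[
U_j \subseteq \bigcup_{k \leq j} \overline{X_k} \subseteq U_j,
\]
so $U_j$ equals a finite union of closed sets and is therefore closed.

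Neither direction has a real obstacle. The only points to watch are that disjointness of the cells is used in the forward direction, and that the covering property of the decomposition is used in the reverse direction. Local closedness of the cells plays no role in the argument.
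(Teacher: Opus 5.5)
Your proof is correct and follows essentially the same route as the paper: the forward direction uses $\overline{X_j} \subseteq X_1 \sqcup \dots \sqcup X_j$ together with disjointness of the cells, and the converse sandwiches $X_1 \sqcup \dots \sqcup X_j$ between itself and $\bigcup_{k \leq j} \overline{X_k}$. The paper writes the middle term as the closure of the partial union rather than observing directly that a finite union of closed sets is closed, which is an immaterial difference.
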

\begin{proof}
If $X_1, \dots, X_n$ is a filtering order, then $\overline{X_j} \subseteq \bigsqcup_{i \leq j} X_i$ for each $j$.
So $X_i \cap \overline{X_j} = \varnothing$ for $i > j$.
Conversely, if this condition holds then for each $j$ the closedness of $\bigsqcup_{i \leq j} X_i$ follows from
\[
\bigsqcup_{i \leq j} X_i \subseteq \overline{\bigsqcup_{i \leq j} X_i} = \bigcup_{i \leq j} \overline{X_i} \subseteq \bigsqcup_{i \leq j} {X_i} \qedhere
\]
\end{proof}
As mentioned in the introduction, if $X$ is projective then its \BB decomposition is filterable by {\cite[Theorem 3]{BB_Example}}.
This is also true if $\dim X \leq 2$ since smooth complete curves and surfaces are projective by Zariski's criterion \cite[Corollary II.2.6]{zariski}, \cite[Theorem 1.28]{smoo_sur_proj}.
In our setting, much more can be said: 
\begin{Thm}\label[Thm]{thm:smoo_sur_are_toric}
Every smooth complete $\Gm$-surface with finite fixed locus is obtained from a smooth projective toric surface with torus $T$ by restricting the action via some cocharacter $\Gm \to T$.
\end{Thm}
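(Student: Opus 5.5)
The plan is to combine the classical structure theory of smooth $\Gm$-surfaces with the blowup analysis of \Cref{prop:blowup_path}. Let $S$ be a smooth complete $\Gm$-surface with finite fixed locus. By Zariski's criterion (quoted just above), $S$ is projective. I will then invoke the classification of smooth projective $\Gm$-surfaces due to Orlik--Wagreich: such a surface is obtained from a relatively minimal model by a sequence of $\Gm$-equivariant blowups at fixed points. When the fixed locus is finite, the relatively minimal models are $\mathbb{P}^2$ or a Hirzebruch surface $\mathbb{F}_n$, each equipped with a $\Gm$-action. The assumption of finite fixed locus rules out the ruled-surface-over-a-curve cases of higher genus, since those carry fixed curves or non-isolated fixed components.

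Next I show that on each minimal model the $\Gm$-action factors through a maximal torus $T$ of $\Aut^0$ acting torically. The group $\Gm$ maps into the connected linear algebraic group $\Aut^0(\mathbb{P}^2)=\PGL_3$ or $\Aut^0(\mathbb{F}_n)$. Its image is a torus, so it is contained in some maximal torus $T$. For $\mathbb{P}^2$ and $\mathbb{F}_n$ the maximal tori are two-dimensional and act with a dense orbit, giving the standard toric structure. Hence the $\Gm$-action is the restriction of a toric action via a cocharacter $\Gm \to T$.

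For the induction step, suppose $S'$ is toric with torus $T \supseteq \Gm$, and $S$ is obtained by blowing up a $\Gm$-fixed point $p$. Since the fixed locus of $S$ is finite, the exceptional divisor $O=\mathbb{P}(T_pS')$ is not pointwise fixed. This means that the two $\Gm$-weights on $T_pS'$ are distinct, and by finiteness of the fixed locus of $S'$ both are nonzero. The fixed locus $(S')^{\Gm}$ is finite and $T$-stable, so $p$ is $T$-fixed. Because $\Gm$ has distinct weights on $T_pS'$, the $\Gm$-eigenlines coincide with the $T$-eigenlines. Blowing up a $T$-fixed point of a smooth toric surface is again toric, namely the star subdivision of the corresponding cone. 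The lifted $\Gm$-action is the restriction of the lifted $T$-action, by uniqueness of lifts as in \Cref{prop:blowup_path}. Since each blowup in the sequence is centered at a fixed point of the previous surface, induction finishes the argument.

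The main obstacle is the first step: justifying that every $\Gm$-equivariant minimal model with finite fixed locus is $\mathbb{P}^2$ or $\mathbb{F}_n$, and that the contractions can be chosen $\Gm$-equivariantly. I would argue as follows. Any $(-1)$-curve $E$ is $\Gm$-stable, because $\Gm$ is connected and the $(-1)$-curves form a discrete set. Contracting $E$ is therefore equivariant, and it keeps the fixed locus finite, since the image point is fixed and nothing else changes. It then remains to classify minimal rational surfaces, using rationality from \Cref{thm:broken_trajectory}-type connectedness or from Bia\l ynicki-Birula's theorem that $S$ has a cell decomposition, which forces $h^1=0$ and an affine open cell. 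A non-rational ruled surface would have $H^1\neq 0$, contradicting the cell decomposition of \Cref{thm:BB}. So $S$ is rational, and its minimal models are exactly $\mathbb{P}^2$ and $\mathbb{F}_n$.
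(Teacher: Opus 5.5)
Your proposal is correct and follows essentially the paper's argument. The shared steps are rationality from the dense open cell, $\Gm$-equivariant contraction of $\Gm$-stable $(-1)$-curves down to $\mathbb{P}^2$ or a Hirzebruch surface, and a maximal torus of $\Aut^0$ containing the image of $\Gm$. The argument then blows back up at points that are $T$-fixed because $T$ preserves the finite $\Gm$-fixed locus; your weight analysis in that step is correct but not needed.

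The one step you assert without justification is that the $\Gm$-action descends along the contraction $\pi\colon S\to S'$. The paper obtains this from $\pi_*\mathcal{O}_S=\mathcal{O}_{S'}$ together with Blanchard's lemma. Your discreteness argument for the $\Gm$-stability of $(-1)$-curves can be made precise by $H^0(N_{E/S})=H^0(\mathcal{O}_{\mathbb{P}^1}(-1))=0$, and it replaces the paper's citation of Orlik.
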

A proof of \Cref{thm:smoo_sur_are_toric} was sketched in \cite[\textsection4.1]{Orlik}.
\Cref{append2} fills in the necessary details.

In \cite[Example 3.3.3]{JJ}, Jurkiewicz provides an example of a smooth complete toric threefold $X$ whose \BB decomposition for a suitable (admissible) cocharacter is not filterable.
The non-filterability of the decomposition is proved by producing a cycle in the graph $\Gamma_X$.
Here we show that such a cycle exists in any non-filterable \BB decomposition.
\begin{Thm}\label[Thm]{thm:filt_iff_acyclic}
Let $X$ be a smooth complete $\Gm$-variety with finite fixed locus.
The decomposition $\{X_p^+\}_{p \in X^{\Gm}}$ is filterable if and only if $\Gamma_X$ is directed acyclic.
\end{Thm}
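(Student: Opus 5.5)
We prove both directions through \Cref{prop:filter_cond}. The key is to relate the closure relation among cells, $X_p^+ \cap \overline{X_q^+} \neq \varnothing$, to the relation $\middarrow$ in $\Gamma_X$. We aim to show that $X_p^+ \cap \overline{X_q^+} \neq \varnothing$ if and only if $p \middarrow q$. Granting this, a filtering order is exactly a linear extension of the relation $\middarrow$. Such an extension exists precisely when $\middarrow$ is antisymmetric, i.e.\ when $\Gamma_X$ has no directed cycles. Loops are already excluded by \Cref{rmk:loopless}.

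\emph{Easy half.} Suppose $p \midarrowX{x} q$. Then $x \in X_q^+$, and since $\lim_{t\to\infty} t\cdot x = p$ we have $p \in \overline{\Gm \cdot x} \subseteq \overline{X_q^+}$. Hence $X_p^+ \cap \overline{X_q^+} \ni p$ is nonempty. So every edge $p \to q$ of $\Gamma_X$ forces $p$ before $q$ in any filtering order. It follows that a directed cycle makes filterability impossible, which gives ``filterable $\Rightarrow$ acyclic''.

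\emph{Hard half.} Assume $\Gamma_X$ is acyclic. Extend $\middarrow$ to a total order, and check the criterion of \Cref{prop:filter_cond}. It suffices to show: if $y \in X_p^+ \cap \overline{X_q^+}$, then $p \middarrow q$. Since $\overline{X_q^+}$ is closed and $\Gm$-stable, it contains $\lim_{t\to 0} t\cdot y = p$. So it is enough to show $p \in \overline{X_q^+}$ implies $p \middarrow q$.

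Set $Y = \overline{X_q^+}$. This is a complete $\Gm$-variety with finite fixed locus. Its generic attracting point is $Y^{\up} = q$, because $X_q^+$ is dense in $Y$ and every point of it flows to $q$ as $t\to 0$. Apply \Cref{thm:broken_trajectory} to $Y$; this is legitimate because that theorem was proved for arbitrary complete $\Gm$-varieties, not necessarily smooth. It gives a chain $p \middarrow q$ in $\Gamma_Y$. Every edge of $\Gamma_Y$ is realized by a point of $Y \subseteq X$ with the same limits, so it is also an edge of $\Gamma_X$. Therefore $p \middarrow q$ in $\Gamma_X$.

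Now let $p_1,\dots,p_n$ be a total order extending $\middarrow$. It satisfies the hypothesis of \Cref{prop:filter_cond}, since $X_{p_i}^+ \cap \overline{X_{p_j}^+} \neq \varnothing$ forces $p_i \middarrow p_j$, hence $i \le j$. Such an order exists because an acyclic finite digraph admits a topological sort.

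The main obstacle is the hard half, and specifically the step $p \in \overline{X_q^+} \Rightarrow p \middarrow q$. It requires passing to the possibly singular closure $Y$ and knowing that \Cref{thm:broken_trajectory} applies there. One must check that $Y^{\up}=q$ and that the edges of $\Gamma_Y$ are edges of $\Gamma_X$. Once that reduction is in place, the rest is order-theoretic bookkeeping.
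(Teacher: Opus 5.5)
Your proof is correct and is essentially the paper's argument. The paper introduces the graph $\Gamma_X'$ with an edge $p \to q$ whenever $p \neq q$ and $X_p^+ \cap \overline{X_q^+} \neq \varnothing$, uses \Cref{prop:filter_cond} to identify filterability with acyclicity of $\Gamma_X'$, and shows that $\Gamma_X \subseteq \Gamma_X'$ and that $\Gamma_X'$ lies in the transitive closure of $\Gamma_X$; these are exactly your easy half and your application of \Cref{thm:broken_trajectory} to $\overline{X_q^+}$.

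One correction to your opening roadmap: the pointwise equivalence ``$X_p^+ \cap \overline{X_q^+} \neq \varnothing$ if and only if $p \middarrow q$'' fails in the ``if'' direction. For example, on a smooth toric surface whose polygon has five vertices, $X^{\down} \middarrow q$ for every $q$, yet $X^{\down} \notin \overline{X_q^+}$ for any $q \neq X^{\up}$ not adjacent to $X^{\down}$ on the polygon. Your two halves only use single edges in one direction and the true implication in the other, so the argument is unaffected.
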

\begin{proof}
Let $\Gamma_X'$ be the directed graph on $X^{\Gm}$ with an edge $p \to q$ whenever $p \neq q$ and $X_p^+ \cap \overline{X_q^+} \neq \varnothing$.
\Cref{prop:filter_cond} shows that the \BB decomposition on $X$ is filterable if and only if there is a total order $\yeq$ on $X^{\Gm}$ such that $p \prec q$ whenever $p \to q$ in $\Gamma_X'$.
This holds precisely when $\Gamma_X'$ is directed acyclic.

Now we compare the two directed graphs $\Gamma_X$ and $\Gamma_X'$.
If $p \midarrow q$ then $p \neq q$ (by \cite[Lemma 4.1]{BB}) and $p \in \overline{X_q^+}$; so $p \to q$ in $\Gamma_X'$.
This shows that $\Gamma_X$ is a subgraph of $\Gamma_X'$.
On the other hand, suppose $p \to q$ in $\Gamma_X'$.
For $x \in X_p^+ \cap \overline{X_q^+}$, the limit $\lim_{t \to 0} t \cdot x = p$ shows that $p \in \overline{X_q^+}$.
Then \Cref{thm:broken_trajectory} applied to $\overline{X_q^+}$ shows that $p \middarrow q$.
So $\Gamma_X'$ is contained in the transitive closure of $\Gamma_X$.
These results show that $\Gamma_X'$ is directed acyclic if and only if $\Gamma_X$ is.
\end{proof}
\begin{Rmk}
Retracing through the proof of \Cref{thm:filt_iff_acyclic} shows more:
if the \BB decomposition of $X$ is filterable then $\middarrow$ is a partial order on $X^{\Gm}$ and a filtering order is any total order which extends it.
\end{Rmk}
\begin{Cor}\label[Cor]{cor:filtration_reversibility}
If the decomposition $\{X_p^+\}_{p \in X^{\Gm}}$ is filterable then so is the decomposition $\{X_p^-\}_{p \in X^{\Gm}}$.
\end{Cor}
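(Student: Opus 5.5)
The plan is to apply \Cref{thm:filt_iff_acyclic} twice, once to $X$ and once to $X$ with the reversed action. Write $X^{\mathrm{rev}}$ for $X$ equipped with the $\Gm$-action precomposed with reversal $t \mapsto t^{-1}$. Then $X^{\mathrm{rev}}$ is again a smooth complete $\Gm$-variety with the same (finite) fixed locus. Its positive \BB cells are exactly the negative cells of $X$: for $x \in X$, the limit $\lim_{t\to 0}$ in $X^{\mathrm{rev}}$ equals $\lim_{t\to\infty}$ in $X$, since the extension $\mathbb{P}^1 \to X$ of the orbit map is simply precomposed with the automorphism of $\mathbb{P}^1$ swapping $0$ and $\infty$. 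Hence $(X^{\mathrm{rev}})_p^+ = X_p^-$ for all $p$, and filterability of $\{X_p^-\}$ is the same as filterability of the positive decomposition of $X^{\mathrm{rev}}$.

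Next I would compare the orbit graphs. By \Cref{df:arrow_notation}, $p \midarrowX{x} q$ in $X$ means $p = \lim_{t\to\infty} t\cdot x$ and $q = \lim_{t\to 0} t\cdot x$. In $X^{\mathrm{rev}}$ the roles of the two limits swap, so this becomes $q \midarrowX{x} p$. Thus $\Gamma_{X^{\mathrm{rev}}}$ is $\Gamma_X$ with every edge reversed. A directed graph has a directed cycle if and only if its edge-reversal does, because a cycle traversed backwards is a cycle in the reversed graph.

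Combining these steps gives the result. The positive decomposition of $X$ is filterable if and only if $\Gamma_X$ is acyclic, by \Cref{thm:filt_iff_acyclic}. That holds if and only if $\Gamma_{X^{\mathrm{rev}}}$ is acyclic, which in turn holds if and only if the positive decomposition of $X^{\mathrm{rev}}$, namely $\{X_p^-\}$, is filterable, again by \Cref{thm:filt_iff_acyclic}.

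There is no real obstacle; the substantive input is \Cref{thm:filt_iff_acyclic}. The only point to state explicitly is that reversal preserves all the hypotheses of that theorem and swaps the two limits, which is immediate from the uniqueness of the extension $\mathbb{P}^1 \to X$.
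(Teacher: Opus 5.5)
Your proposal is correct and matches the paper's proof: reversing the $\Gm$-action reverses every edge of $\Gamma_X$, edge reversal preserves acyclicity, and \Cref{thm:filt_iff_acyclic} then transfers filterability to the negative decomposition. You also make explicit that the reversed action satisfies the theorem's hypotheses and that its positive cells are the $X_p^-$, which the paper leaves implicit.
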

\begin{proof}
Reversing the $\Gm$-action on $X$ has the effect of reversing the directed edges in $\Gamma_X$.
This preserves the acyclicity of the digraph, so the result follows from \Cref{thm:filt_iff_acyclic}.
\end{proof}
The \BB decomposition restricts nicely to subvarieties.
\begin{Lm}\label[Lm]{lm:restriction}
Let $Y \subseteq X$ be a $\Gm$-stable subvariety.
Let $p \in Y^{\Gm}$.
Then $Y_p^+ = Y \cap X_p^+$.
\end{Lm}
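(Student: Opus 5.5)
This is essentially a direct unwinding of the definitions; the only point needing care is that limits computed in $Y$ agree with limits computed in $X$. We prove the two inclusions in turn.

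First, $Y$ is a closed subvariety of the complete variety $X$, so $Y$ is complete. Fix a point $y \in Y$ and consider the orbit map $\Gm \to Y$, $t \mapsto t \cdot y$. This map extends uniquely to a morphism $\varphi_Y\colon \mathbb{P}^1 \to Y$. Composing with the closed inclusion $\iota\colon Y \hookrightarrow X$ gives a morphism $\iota \circ \varphi_Y\colon \mathbb{P}^1 \to X$ extending the orbit map of $y$ regarded as a point of $X$. By uniqueness of the extension $\varphi_X$ (which holds because $X$ is separated and $\mathbb{P}^1$ is a reduced curve in which $\Gm$ is dense), we get $\varphi_X = \iota \circ \varphi_Y$. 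Evaluating at $0$ gives
\[
\lim_{t\to 0} t\cdot y \text{ computed in } Y \;=\; \lim_{t\to 0} t\cdot y \text{ computed in } X.
\]

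Both inclusions now follow. If $y \in Y_p^+$, then its limit in $Y$ is $p$, so its limit in $X$ is also $p$, and hence $y \in Y \cap X_p^+$. Conversely, if $x \in Y \cap X_p^+$, then $x$ is a point of $Y$ whose limit in $X$, and therefore in $Y$, equals $p$. Since $p \in Y^{\Gm}$, this says exactly that $x \in Y_p^+$.

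The only step with any content is the identification of the two limits, and it rests on uniqueness of extensions from the dense open $\Gm \subseteq \mathbb{P}^1$ into a separated target. An alternative route to the same identification is to observe that $\varphi_X(\mathbb{P}^1) = \overline{\Gm \cdot y} \subseteq Y$ because $Y$ is closed and $\Gm$-stable. No smoothness or finiteness hypotheses are needed, and the same argument gives $Y_p^- = Y \cap X_p^-$.
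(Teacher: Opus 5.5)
Your proof is correct and takes the same approach as the paper, which simply unwinds the definitions: $y \in X_p^+$ if and only if $\lim_{t\to 0} t\cdot y = p$ if and only if $y \in Y_p^+$. You also make explicit a point the paper leaves implicit, namely that the limit computed in $Y$ agrees with the one computed in $X$; this follows from $Y$ being closed (hence complete) and from uniqueness of the extension $\mathbb{P}^1 \to X$ into a separated target.
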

\begin{proof}
Let $y \in Y$.
We have $y \in X_p^+$ if and only if $\lim_{t \to 0} t\cdot y = p$ if and only if $y \in Y_p^+$.
\end{proof}
\begin{Ex}
The tempting equality $\overline{Y_p^+} = Y \cap \overline{X_p^+}$ is generally false.
For instance, let $X = \mathbb{P}^3$ and let $Y = \mathbb{P}^1 \times \mathbb{P}^1$ be embedded in $X$ via Segre:
\[
([x_0 : x_1], [y_0 : y_1]) \mapsto [x_0 y_0 : x_0 y_1 : x_1 y_0 : x_1 y_1].
\]
Let $\Gm$ act on $X$ via $t \cdot [x : y : z : w] = [x : ty : t^2 z : t^3 w]$.
Then $Y$ is $\Gm$-stable and $X^{\Gm} = Y^{\Gm}$.
Letting $p = [0 : 1 : 0 : 0]$ and $q = [0 : 0 : 1 : 0]$, we have $q \in \overline{X_p^+}$ but $q \notin \overline{Y_p^+}$.

For a special case in which the equality \emph{does} hold, see \Cref{lm:BB_inheritance}.
\end{Ex}
The filterability property is heritable in the strongest possible sense.
\begin{Prop}\label[Prop]{prop:filter_sub_gen}
Let $X$ be a variety with a decomposition $X = X_1 \sqcup \dots \sqcup X_n$ into locally closed subvarieties which filters $X$ in the given order.
Let $Y \subseteq X$ be a locally closed subvariety. If those intersections $Y_i \deq Y \cap X_i$ which are non-empty are irreducible, then the decomposition $Y = Y_1 \sqcup \dots \sqcup Y_n$ (omitting those that are empty) filters $Y$ in the given order.
\end{Prop}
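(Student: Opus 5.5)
The plan is to check directly that each partial union $Y_1 \sqcup \dots \sqcup Y_j$ is closed in $Y$ by exhibiting it as the trace of a closed subset of $X$; \Cref{prop:filter_cond} will not be needed. Since $X_1,\dots,X_n$ filters $X$, the set $Z_j \deq X_1 \sqcup \dots \sqcup X_j$ is closed in $X$ for every $j$. Intersecting with $Y$ gives
\[
Y \cap Z_j = \bigsqcup_{i \le j} (Y \cap X_i) = \bigsqcup_{i \le j} Y_i,
\]
and this is closed in $Y$ for the subspace topology because $Z_j$ is closed in $X$. When we omit the empty $Y_i$, each partial union of the reindexed decomposition is one of these sets $Y\cap Z_j$, namely the one with $j$ the original index of the last retained piece. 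Hence every partial union is closed in $Y$.

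It remains to check that $Y = \bigsqcup Y_i$ really is a decomposition into locally closed subvarieties, as \Cref{df:filterable} requires. Each $Y_i$ is locally closed in $Y$: writing $X_i = U_i \cap F_i$ with $U_i$ open and $F_i$ closed in $X$, we get $Y_i = (Y\cap U_i)\cap(Y\cap F_i)$. Alternatively, $Y_i = (Y\cap Z_i)\setminus(Y\cap Z_{i-1})$ is the difference of two closed subsets of $Y$. The sets $Y_i$ are pairwise disjoint and cover $Y$ because the $X_i$ are pairwise disjoint and cover $X$.

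The nonempty $Y_i$ are irreducible by hypothesis, so with the reduced induced structure they are locally closed subvarieties of $Y$. The locally closed subvariety $Y$ of $X$ is itself a variety, so \Cref{df:filterable} applies to it.

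The argument is essentially formal, and the only place needing care is the bookkeeping: the definition of filtering demands irreducible pieces, and the irreducibility hypothesis is used exactly there.
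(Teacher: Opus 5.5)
Your proposal is correct and matches the paper's proof: the paper also just observes that $Y_1 \sqcup \dots \sqcup Y_j = Y \cap (X_1 \sqcup \dots \sqcup X_j)$ is the trace on $Y$ of a closed subset of $X$, hence closed in $Y$. Your additional bookkeeping (local closedness of each $Y_i$ in $Y$, reindexing after omitting empty pieces, and using the irreducibility hypothesis so the pieces are subvarieties) is sound and simply spells out what the paper leaves implicit.
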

\begin{proof}
For each $j$, the assumption is that $X_1 \sqcup \dots \sqcup X_j$ is closed in $X$.
It follows that $Y_1 \sqcup \dots \sqcup Y_j$, which is the intersection of this set with $Y$, is closed in $Y$.
\end{proof}
Combining \Cref{lm:restriction} with \Cref{prop:filter_sub_gen} gives:
\begin{Cor}\label[Cor]{cor:filt_sub}
Let $Y \subseteq X$ be a smooth $\Gm$-stable subvariety.
If the \BB decomposition of $X$ is filterable, then so is the one on $Y$.
\end{Cor}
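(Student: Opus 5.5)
The plan is to apply \Cref{prop:filter_sub_gen} directly to the \BB decomposition of $X$, with $Y$ playing the role of the locally closed subvariety. First I fix a filtering order $p_1, \dots, p_n$ on $X^{\Gm}$, so that $X = X_{p_1}^+ \sqcup \dots \sqcup X_{p_n}^+$ filters $X$ in this order. Each $X_{p_i}^+$ is a locally closed subvariety: it is irreducible because it is isomorphic to an affine space by \cite[Theorem 4.4]{BB}. Since $Y$ is a closed (hence locally closed) subvariety, the hypotheses of \Cref{prop:filter_sub_gen} reduce to checking that each nonempty intersection $Y \cap X_{p_i}^+$ is irreducible.

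To check this, I compute the intersections using \Cref{lm:restriction}. First suppose $p_i \in Y$. Then $p_i \in Y^{\Gm}$, and \Cref{lm:restriction} gives $Y \cap X_{p_i}^+ = Y_{p_i}^+$. Now consider the case $p_i \notin Y$. If $y \in Y \cap X_{p_i}^+$, then $\lim_{t\to 0} t\cdot y = p_i$. Because $Y$ is closed and $\Gm$-stable, this limit lies in $Y$, which is a contradiction. So in this case the intersection is empty.

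It remains to see that $Y_{p}^+$ is irreducible for $p \in Y^{\Gm}$. Here $Y$ is a smooth variety, and it is complete as a closed subvariety of $X$. Its fixed locus $Y^{\Gm} \subseteq X^{\Gm}$ is finite. So \cite[Theorem 4.4]{BB} applies to $Y$ itself, and each $Y_p^+$ is an affine space, hence irreducible and locally closed.

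With these facts in hand, \Cref{prop:filter_sub_gen} shows that $Y = \bigsqcup_{p \in Y^{\Gm}} Y_p^+$ filters $Y$ in the induced order, which proves the claim. The only real point is the irreducibility of the pieces. That is exactly where smoothness of $Y$ is used, and without it the intersections $Y \cap X_p^+$ could fail to be irreducible. An alternative check would be through \Cref{thm:filt_iff_acyclic}: $\Gamma_Y$ is a subgraph of $\Gamma_X$, so it is acyclic whenever $\Gamma_X$ is. But the route through \Cref{prop:filter_sub_gen} is the one the paper sets up.
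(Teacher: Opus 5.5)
Your proposal is correct and follows the paper's route exactly: the paper obtains this corollary by combining \Cref{lm:restriction} with \Cref{prop:filter_sub_gen}. You also make explicit the two checks the paper leaves implicit. These are that $Y \cap X_p^+ = \varnothing$ for $p \notin Y$, and that each $Y_p^+$ is irreducible by the \BB theorem applied to the smooth complete $\Gm$-variety $Y$.
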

We omit the straightforward proof of the following lemma.
\begin{Lm}\label[Lm]{lm:Gm_product}
Let $X, Y$ be smooth complete $\Gm$-varieties with finite fixed loci.
Then $X \times Y$ is naturally a $\Gm$-variety, using the diagonal morphism $\Gm \to \Gm \times \Gm$.
For closed points $x \in X$, $y \in Y$,
\[
\lim_{t \to 0} t \cdot (x, y) = \left(\lim_{t \to 0} t \cdot x, \lim_{t \to 0} t \cdot y\right),\qquad \lim_{t \to \infty} t \cdot (x, y) = \left(\lim_{t \to \infty} t \cdot x, \lim_{t \to \infty} t \cdot y\right).
\]
Furthermore, we have the natural identifications
\[
(X \times Y)^{\Gm} = X^{\Gm} \times Y^{\Gm}, \quad (X \times Y)_{(p, q)}^+ = X_p^+ \times Y_q^+, \quad \overline{(X \times Y)_{(p, q)}^+} = \overline{X_p^+} \times \overline{Y_q^+}.
\]
\end{Lm}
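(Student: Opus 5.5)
The plan is to check each assertion of \Cref{lm:Gm_product} directly, in the order stated: first the limit formulas, then the fixed locus, then the cells, and last the cell closures.

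\textbf{Step 1: limits.} The product $X \times Y$ is smooth and complete, and the diagonal action is algebraic. Fix $(x,y)$. The orbit maps $t \mapsto t\cdot x$ and $t \mapsto t \cdot y$ extend to morphisms $f\colon \mathbb{P}^1 \to X$ and $g\colon \mathbb{P}^1 \to Y$. Then $(f,g)\colon \mathbb{P}^1 \to X \times Y$ extends $t \mapsto t\cdot(x,y)$. The extension is unique because $X\times Y$ is separated. Evaluating $(f,g)$ at $0$ and at $\infty$ gives both limit formulas.

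\textbf{Step 2: fixed locus.} A point $(x,y)$ is fixed exactly when $t\cdot x = x$ and $t \cdot y = y$ for all $t$. Hence $(X\times Y)^{\Gm} = X^{\Gm}\times Y^{\Gm}$, which is finite.

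\textbf{Step 3: cells.} By Step 1, $\lim_{t\to 0} t\cdot(x,y) = (p,q)$ holds if and only if both coordinate limits are $p$ and $q$. This gives $(X\times Y)^+_{(p,q)} = X_p^+ \times Y_q^+$ as sets of closed points. Both sides are locally closed and reduced, so they agree as subvarieties.

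\textbf{Step 4: closures.} The closure of a product of subsets equals the product of the closures, even though the Zariski topology on $X\times Y$ is not the product topology. For the inclusion $\overline{A\times B}\subseteq \overline A\times\overline B$, note that $\overline A \times \overline B$ is closed and contains $A\times B$. For the reverse inclusion, fix $a \in A$. The slice $\{a\}\times Y \cong Y$ is closed, so $\{a\}\times \overline B \subseteq \overline{A\times B}$. Then fix $b \in \overline B$. By the same slice argument $\overline A \times\{b\}\subseteq \overline{A\times B}$, since $A\times\{b\}\subseteq \overline{A\times B}$. This proves the closure identity.

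The only point needing any care is Step 4, and the slice argument handles it; everything else is routine.
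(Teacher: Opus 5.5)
Your proof is correct and complete. The paper omits this proof as straightforward, and your verification is the routine check it has in mind: extend the orbit map coordinatewise to $\mathbb{P}^1$, then read off the fixed points, the cells, and, via the slice argument, the identity $\overline{A\times B}=\overline{A}\times\overline{B}$ (valid because, with the paper's closed-point convention, slices through closed points are closed embeddings).
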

The following proposition shows that filterability is preserved under taking products.
\begin{Prop}\label[Prop]{prop:filter_prod_gen}
Let $X, Y$ be arbitrary varieties.
Let $X = X_1 \sqcup \dots \sqcup X_n$ and $Y = Y_1 \sqcup \dots \sqcup Y_k$ be locally-closed decompositions of $X$ and $Y$.
We have an induced decomposition $\{X_i \times Y_j\}_{\substack{1 \leq i \leq n\\1 \leq j \leq k}}$ of $X \times Y$.
This decomposition is filterable if and only if the decompositions on $X$ and $Y$ are.
\end{Prop}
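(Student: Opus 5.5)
We prove both directions with the criterion of \Cref{prop:filter_cond}, which says that an ordering filters exactly when the relation ``$X_i \cap \overline{X_j} \neq \varnothing$ implies $i \le j$'' holds. The basic fact used throughout is $\overline{X_i \times Y_j} = \overline{X_i} \times \overline{Y_j}$ (closure of a product of subsets in a product of varieties). Consequently $(X_a \times Y_b) \cap \overline{X_i \times Y_j} \neq \varnothing$ holds if and only if both $X_a \cap \overline{X_i} \neq \varnothing$ and $Y_b \cap \overline{Y_j} \neq \varnothing$.

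\emph{``If'' direction.} Assume the orderings $X_1,\dots,X_n$ and $Y_1,\dots,Y_k$ filter $X$ and $Y$. Order the cells $X_i \times Y_j$ by any total order extending the product partial order, for instance lexicographically by $(i+j, i)$. Suppose $(X_a \times Y_b) \cap \overline{X_i \times Y_j} \neq \varnothing$. By the fact above, $X_a \cap \overline{X_i} \neq \varnothing$ and $Y_b \cap \overline{Y_j} \neq \varnothing$. \Cref{prop:filter_cond} then gives $a \le i$ and $b \le j$, so $(a,b)$ precedes or equals $(i,j)$ in the chosen total order, and \Cref{prop:filter_cond} applied to $X \times Y$ shows this order filters. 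One small point needs checking: the cells $X_i \times Y_j$ must be locally closed subvarieties, i.e.\ irreducible. Over an algebraically closed field a product of irreducible varieties is irreducible, so this holds.

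\emph{``Only if'' direction.} Suppose some order filters the product decomposition. Define a relation on indices of $X$ by $a \to i$ whenever $a \neq i$ and $X_a \cap \overline{X_i} \neq \varnothing$. As in the proof of \Cref{thm:filt_iff_acyclic}, the decomposition of $X$ is filterable if and only if this digraph has no directed cycle. Suppose for contradiction that there is a cycle $i_1 \to i_2 \to \dots \to i_m \to i_1$. Fix any index $j$ and note the trivial fact $Y_j \cap \overline{Y_j} \neq \varnothing$. Then each step $i_s \to i_{s+1}$ gives $(X_{i_s} \times Y_j) \cap \overline{X_{i_{s+1}} \times Y_j} \neq \varnothing$, so we obtain a directed cycle among the distinct product cells $X_{i_s} \times Y_j$. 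This contradicts \Cref{prop:filter_cond} for the filtering order on $X \times Y$. Hence $X$ is filterable, and $Y$ follows by symmetry.

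An alternative for this direction is to use \Cref{prop:filter_sub_gen} with the closed slice $X \times \{y\}$, which would require only a locally closed intersection argument. The cycle argument is cleaner, since it avoids having to identify the intersections with cells.

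The only genuinely nontrivial input is the identity $\overline{A \times B} = \overline{A} \times \overline{B}$ for subsets of varieties in the Zariski topology (which is not the product topology). I would prove it as follows. The containment $\subseteq$ holds because $\overline{A} \times \overline{B}$ is closed. For $\supseteq$, fix $a \in A$. The slice $\{a\} \times Y$ is closed, and the map $Y \to X \times Y$, $y \mapsto (a,y)$, is continuous, so $\{a\} \times \overline{B} \subseteq \overline{A \times B}$. This gives $A \times \overline{B} \subseteq \overline{A \times B}$. Repeating the argument with the roles of the factors swapped, fixing $b \in \overline{B}$, gives $\overline{A} \times \overline{B} \subseteq \overline{A \times B}$. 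I expect this lemma to be the main point requiring care; the rest is bookkeeping with \Cref{prop:filter_cond}.
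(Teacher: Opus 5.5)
Your proof is correct and takes essentially the same route as the paper: both rest on the identity $(X_a\times Y_b)\cap\overline{X_i\times Y_j}=(X_a\cap\overline{X_i})\times(Y_b\cap\overline{Y_j})$ together with \Cref{prop:filter_cond}. The paper packages the combinatorics as the observation that $\Gamma'_{X\times Y}$ is the strong product of $\Gamma'_X$ and $\Gamma'_Y$, which is acyclic iff both factors are; your linear extension by $(i+j,i)$ and your lifting of a cycle into a single fiber $\{X_{i_s}\times Y_j\}$ prove that fact directly, and your Zariski-topology check of $\overline{A\times B}=\overline{A}\times\overline{B}$ supplies a detail the paper uses without comment.
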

\begin{proof}
Let $\Gamma_X'$ be the graph on $\{1, \dots, n\}$ in which the edge $i \to j$ is present whenever $i \neq j$ and $X_i \cap \overline{X_j} \neq \varnothing$.
As in the proof of \Cref{thm:filt_iff_acyclic}, we apply \Cref{prop:filter_cond} to see that the decomposition of $X$ is filterable if and only if $\Gamma_X'$ is directed acyclic.
We similarly construct $\Gamma_Y'$ and $\Gamma_{X \times Y}'$.

Using $(X_i \times Y_j) \cap \overline{X_{i'} \times Y_{j'}} = (X_i\cap \overline{X_{i'}}) \times (Y_{j} \cap \overline{Y_{j'}})$, we see that the edge $(i, j) \to (i', j')$ is present in $\Gamma_{X \times Y}'$ precisely in the following cases: (i) $i \to i'$ in $\Gamma_X'$ and $j \to j'$ in $\Gamma_Y'$, (ii)~$i \to i'$ in $\Gamma_X'$ and $j = j'$, or (iii) $i = i'$ and $j \to j'$ in $\Gamma_Y'$.
This exactly describes $\Gamma_{X \times Y}'$ as the \emph{strong product} of the directed graphs $\Gamma_X'$ and $\Gamma_Y'$ (cf.\ \cite[\textsection10.1]{graphs}).
We conclude using the fact that a strong product of nonempty directed graphs is directed acyclic if and only if each factor is.
\end{proof}
Combining \Cref{lm:Gm_product} with \Cref{prop:filter_prod_gen} gives:
\begin{Prop}\label[Prop]{prop:filt_product}
Let $X, Y$ be smooth complete $\Gm$-varieties, each with finite fixed locus.
Equip $X \times Y$ with the diagonal $\Gm$-action.
Then the \BB decomposition of $X \times Y$ is filterable if and only if the decompositions on $X$ and $Y$ are each filterable.
\end{Prop}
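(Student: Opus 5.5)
The plan is to deduce the statement directly from \Cref{lm:Gm_product} and \Cref{prop:filter_prod_gen}. First note that $X \times Y$ is a smooth complete $\Gm$-variety. By \Cref{lm:Gm_product} its fixed locus is $X^{\Gm} \times Y^{\Gm}$, which is finite, so its \BB decomposition is defined and indexed by pairs $(p,q)$.

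Next I identify this decomposition with the product decomposition. \Cref{lm:Gm_product} gives the cells as $(X \times Y)_{(p,q)}^+ = X_p^+ \times Y_q^+$. Hence the \BB decomposition $\{(X\times Y)_{(p,q)}^+\}$ is exactly the induced decomposition $\{X_p^+ \times Y_q^+\}$ attached to the \BB decompositions $X = \bigsqcup_p X_p^+$ and $Y = \bigsqcup_q Y_q^+$. These are decompositions into locally closed subvarieties by \Cref{thm:BB}, since each cell is an affine space and in particular irreducible. Their products are again locally closed and irreducible, so the hypotheses of \Cref{prop:filter_prod_gen} are met.

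Applying \Cref{prop:filter_prod_gen} then says the product decomposition is filterable if and only if both factor decompositions are, which is the claim. There is no real obstacle here. The only point needing care is that the lemma's cell identification is an honest equality of subsets of $X\times Y$. This follows from the componentwise formula for $\lim_{t\to0}$ in \Cref{lm:Gm_product}: a point $(x,y)$ has limit $(p,q)$ exactly when $x\to p$ and $y\to q$.
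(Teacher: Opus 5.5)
Your proposal is correct and matches the paper's proof, which likewise obtains the result by combining \Cref{lm:Gm_product} (identifying the product's \BB cells as $X_p^+ \times Y_q^+$) with the general product criterion \Cref{prop:filter_prod_gen}. Your extra checks, that $X\times Y$ is smooth and complete with finite fixed locus and that the cells are irreducible locally closed subvarieties, are the routine verifications the paper leaves implicit.
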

In \cite[Theorem 4.6]{BB}, \BB claims that for a smooth complete $\Gm$-variety with finite fixed locus his decomposition always contains at least one cell of each dimension.
As a corollary, this would extend a theorem of Rosenlicht \cite[Theorem 1]{rosenlicht}, who showed that a complete $\Gm$-variety equivariantly embedded in a projective space must have strictly more fixed points than its dimension.
Unfortunately, the key lemma \cite[Proposition 4.7]{BB} used in the proof is false, as we show in \Cref{ex:cell_count}.
In \Cref{prop:cells_of_every_dim}, we show that the strategy of the proof can be rescued if the \BB decomposition is assumed filterable (e.g.\ in the projective setting).
The present authors do not know if either the statement about fixed point counts or the more refined statement about dimensions of cells continue to hold in the non-filterable setting.
\begin{Ex}\label[Ex]{ex:cell_count}
Here we give a counterexample to \cite[Proposition 4.7]{BB}.
This proposition states that for any locally closed decomposition of a complete algebraic scheme $X$ into affine cells (i.e.\ locally closed affine subvarieties) there exists a cell of dimension $j$ for all $0 \leq j \leq \dim X$.
This is false even if one assumes irreducibility of $X$.
For instance, let $X = \mathbb{P}^2$ with coordinates $x,y,z$.
Let $Y = V(x)$ and let $Z = V(x^2 - yz)$.
They intersect at $p \deq [0:0:1]$ and $q \deq [0:1:0]$.
We have
\[
\mathbb{P}^2 = (\mathbb{P}^2\del (Y \cup Z)) \sqcup (Y \del p) \sqcup (Z \del q),
\]
which is a locally closed decomposition into affine cells and has no cells of dimension $0$.
\end{Ex}
We now present an argument that repairs \cite[Proposition 4.7]{BB} in the filterable setting.
\begin{Prop}\label[Prop]{prop:cells_of_every_dim}
Let $X$ be a complete variety.
In any filterable decomposition of $X$ into locally closed affine subvarieties, there exist cells of each dimension $0 \leq d \leq \dim X$.
\end{Prop}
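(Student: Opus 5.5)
Filterable decomposition $X = X_1 \sqcup \dots \sqcup X_n$ into locally closed affine cells, with $X$ complete. I would argue by induction on $\dim X$, peeling off cells from the top of the filtration. Set $Z_j \deq X_1 \sqcup \dots \sqcup X_j$, which is closed by assumption. Since $X$ is irreducible and $Z_n = X$, some cell is dense. The last cell $X_n$ is open in $X$, being the complement of the closed set $Z_{n-1}$, hence dense, so $\dim X_n = \dim X$. This gives a cell of dimension $\dim X$.

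For the lower dimensions, I would show that removing the open affine cell $X_n$ leaves a closed subset of dimension exactly $\dim X - 1$ (when $\dim X \geq 1$). Take $U = X_n$, an affine open subset of the complete variety $X$. If $\dim X \geq 1$ then $U \neq X$, since a complete affine variety is a point. The key input is the classical fact that the complement of a nonempty affine open subset in a complete (separated, irreducible) variety of dimension $\geq 1$ is of pure codimension one. This follows by passing to the normalization and applying Hartogs-type extension: if the complement had a component of codimension $\geq 2$ near some point, the regular functions on $U$ would extend, and one gets a contradiction. Alternatively, a cleaner route is to cite [EGA IV, 21.12.7] / Hartshorne II Ex.~3.? style results. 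I expect this to be the main obstacle, and I would handle it by citing the standard result rather than reproving it. So $Z_{n-1}$ is nonempty and has an irreducible component $W$ of dimension $\dim X - 1$.

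Now I would restrict the filtration to $W$. Each intersection $W \cap X_i$ is locally closed in $W$, and affine as a closed subset of the affine $X_i$. Two problems arise: these pieces may be reducible, and $Z_{n-1}$ itself may be reducible. I would avoid \Cref{prop:filter_sub_gen}'s irreducibility hypothesis by a more flexible induction, proving the stronger statement: \emph{for any complete algebraic scheme $Z$ (possibly reducible) with a filterable decomposition into locally closed affine subsets (not necessarily irreducible), there are cells whose dimensions realize every $d$ with $0 \le d \le \dim Z$.}

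The induction runs on the number of cells. Let $X_m$ be the last cell. It is open and affine in $Z$, and some irreducible component $Z'$ of $Z$ of maximal dimension meets it, since otherwise $Z_{m-1}$ would contain that component. Choose $Z'$ of dimension $\dim Z$, so $X_m \cap Z'$ is a dense affine open of $Z'$, giving a cell of dimension $\dim Z$. By the codimension-one fact, $Z' \setminus X_m$ has dimension $\dim Z - 1$. It is contained in $Z_{m-1}$, which is complete and filtered by the first $m-1$ cells, and $\dim Z_{m-1} \geq \dim Z - 1$. A dimension-count subtlety remains: $\dim Z_{m-1}$ could exceed $\dim Z - 1$ because of other components. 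That is harmless, since induction then gives cells of all dimensions up to $\dim Z_{m-1}$, which covers $0,\dots,\dim Z - 1$. Finally, in dimension $0$ a nonempty complete scheme has a $0$-dimensional cell, which yields the base case.
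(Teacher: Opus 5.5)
Your strategy works: peel off the open last cell, use that the complement of an affine open in a complete variety has pure codimension one, and induct on the number of cells over possibly reducible $Z$. One step, however, is false as written.

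In the inductive step you claim that some irreducible component of $Z$ of dimension $\dim Z$ meets the last cell $X_m$, ``since otherwise $Z_{m-1}$ would contain that component.'' That is not a contradiction. Once $Z$ is reducible, the open set $X_m$ need not be dense, and this happens inside your own induction. For instance, give $\mathbb{P}^2\times\mathbb{P}^1$ the product of the decompositions $\mathbb{P}^2=\mathbb{A}^0\sqcup\mathbb{A}^1\sqcup\mathbb{A}^2$ and $\mathbb{P}^1=\{0\}\sqcup(\mathbb{P}^1\setminus\{0\})$. Write $(a,b)$ for the product of the $a$-dimensional cell of $\mathbb{P}^2$ and the $b$-dimensional cell of $\mathbb{P}^1$, and use the filtering order $(0,0),(1,0),(2,0),(0,1),(1,1),(2,1)$. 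Then $Z_4=(\mathbb{P}^2\times\{0\})\cup(\{\mathrm{pt}\}\times\mathbb{P}^1)$. Its last cell $\{\mathrm{pt}\}\times(\mathbb{P}^1\setminus\{0\})$ is $1$-dimensional and disjoint from the $2$-dimensional component, yet your argument would assert $\dim X_4=2$.

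The repair is a case split. If no component of dimension $\dim Z$ meets $X_m$, all of them lie in the closed set $Z_{m-1}$. Then $\dim Z_{m-1}=\dim Z$, and the inductive hypothesis for $Z_{m-1}$ already gives every $d\le\dim Z$. Otherwise your argument goes through: $X_m\cap Z'$ is affine, being closed in $X_m$, and $Z'\setminus X_m\neq\varnothing$ because $Z'$ is complete of positive dimension. With this fix, and the base case taken to be $m=1$ (a complete affine scheme is finite), the induction is correct.

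The paper uses the same key input (EGA IV 21.12.7, Stacks Tag 0BCV) but avoids reducible schemes entirely. It takes the first cell $X_i$ of a given dimension $d>0$ in the filtering order. It then applies the codimension-one statement to $X_i$ as an affine open of the irreducible complete variety $\overline{X_i}$. The nonempty boundary $\overline{X_i}\setminus X_i$ is pure of dimension $d-1$ and lies in $X_1\sqcup\dots\sqcup X_{i-1}$. This is shorter, since no generalized statement or case analysis is needed.

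What your bookkeeping buys is that $\dim Z_{m-1}$ automatically bounds the dimensions of all the remaining cells. In the closure argument one must separately ensure the earlier cells have dimension $<d$. This is cleanest if $i$ is chosen minimal with $\dim X_i\ge d$. The boundary then forces $\dim X_i=d$ and produces a cell of dimension exactly $d-1$.
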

\begin{proof}
Let $X_1, \dots, X_n$ be the cells in some order in which they filter $X$.
Since there is certainly a cell of dimension $\dim X$, it suffices to show that if $\dim X_i = d > 0$ for some index $i$ then there exists an index $j$ such that $\dim X_j = d-1$.
Let $i$ be minimal for the condition $\dim X_i = d$.
Consider the closed subscheme $Y \deq \overline{X_i} \del X_i$ of $X$.
By the filtering assumption, $Y \subseteq \bigcup_{j < i} X_j$.
On the other hand, by \cite[\href{https://stacks.math.columbia.edu/tag/0BCV}{Tag 0BCV}]{stacks-project} (see also \cite[Corollary 21.12.7]{EGA4}), $Y$ is pure of dimension $d-1$.
We have $Y \neq \varnothing$ since $X_i$, being affine and positive-dimensional, is not complete.
It follows that $\dim X_j \geq d-1$ for some $j < i$.
We have equality by the minimality of $i$.
\end{proof}
The following result repairs \cite[Corollary 1]{BB} in the filterable setting.
\begin{Cor}
A smooth complete $\Gm$-variety $X$ with finite fixed locus whose \BB decomposition is filterable has at least $\dim X + 1$ fixed points.
\end{Cor}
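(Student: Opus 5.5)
The plan is to deduce the corollary directly from \Cref{prop:cells_of_every_dim}, applied to the \BB decomposition $X = \bigsqcup_{p \in X^{\Gm}} X_p^+$. The only work is to check that this decomposition meets the hypotheses of that proposition. Once that is done, the count of fixed points is immediate.

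First, I will verify the hypotheses. The variety $X$ is complete by assumption. The cells $X_p^+$ are locally closed subvarieties isomorphic to affine spaces by \Cref{thm:BB}, so in particular they are locally closed affine subvarieties. They are pairwise disjoint and cover $X$: for every $x \in X$, the limit $\lim_{t\to 0} t\cdot x$ exists by completeness and is a $\Gm$-fixed point, hence equals exactly one $p \in X^{\Gm}$. Filterability is assumed. Therefore \Cref{prop:cells_of_every_dim} applies and produces, for each $0 \le d \le \dim X$, a cell $X_{p_d}^+$ with $\dim X_{p_d}^+ = d$.

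Second, I will count. The cells are indexed bijectively by $X^{\Gm}$. Cells of different dimensions are distinct, so $p_0, \dots, p_{\dim X}$ are pairwise distinct fixed points. This gives $|X^{\Gm}| \ge \dim X + 1$.

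There is no real obstacle here. The only point needing care is that the \BB cells are irreducible and affine, so that they qualify as ``locally closed affine subvarieties'' in the sense of \Cref{prop:cells_of_every_dim}. This is exactly the content of \Cref{thm:BB} under the smoothness and finite-fixed-locus assumptions.
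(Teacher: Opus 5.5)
Your proposal is correct and follows essentially the same route as the paper: apply \Cref{prop:cells_of_every_dim} to the filterable \BB decomposition, whose cells are affine by \Cref{thm:BB}, to obtain cells of every dimension $0,\dots,\dim X$. Then use the bijection between cells and fixed points. Your explicit check of the hypotheses (completeness, affineness and irreducibility of the cells, and that they partition $X$) is just a more detailed version of what the paper leaves implicit.
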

\begin{proof}
\Cref{prop:cells_of_every_dim} ensures that $X$ has at least ${\dim X + 1}$ \BB cells.
Since these cells are in bijection with the fixed points of $X$, the result follows.
\end{proof}

\section{Stratifications}

\subsection{Generalities}\label{subsec:strat_gen}
In this section, $X$ will be a smooth complete $\Gm$-variety with finite fixed locus.
We give several equivalent conditions for the \BB decomposition to be a stratification---some of them new.
We show that the decomposition $\{X_p^+\}_{p \in X^{\Gm}}$  is a stratification if and only if $\{X_p^-\}_{p \in X^{\Gm}}$ is.
\begin{Df}\label[Df]{df:stratification}
A stratification of a variety $X$ is a partition of $X$ into finitely many (disjoint) locally closed subvarieties $X_1, \dots, X_n$ such that for all $i, j$ we have either $X_i \cap \overline{X_j} = \varnothing$ or $X_i \subseteq \overline{X_j}$.
Equivalently, the closure of each cell of the partition is a union of cells.
\end{Df}
\begin{Rmk}\label[Rmk]{rmk:strat_implies_filt}
Suppose $X = X_1 \sqcup \dots \sqcup X_n$ is a stratification by locally-closed subvarieties, where the order on the cells is weakly increasing in dimension.
If $X_i \cap \overline{X_j} \neq \varnothing$ then $X_i \subseteq \overline{X_j}$. If $i \neq j$, then we must have $\dim X_i < \dim X_j$ and thus $i < j$.
By \Cref{prop:filter_cond}, this shows that stratifications are filterable.
\end{Rmk}

\begin{Thm}\label[Thm]{thm:dim_strat}
The following are equivalent for a smooth complete $\Gm$-variety $X$ with finite fixed locus.
\begin{enumerate}[(i)]
\item The decomposition $\{X_p^+\}_{p \in X^{\Gm}}$ is a stratification.
\item For $p, q \in X^{\Gm}$, if $p \in \overline{X_q^+}$ then $X_p^+ \subseteq \overline{X_q^+}$.
\item For $p, q \in X^{\Gm}$, if $p \in \overline{X_q^+}$ then either $p = q$ or $\dim X_p^+ <  \dim X_q^+$.
\item For $p,q \in X^{\Gm}$, if $p \midarrow q$ then $\dim X_p^+ < \dim X_q^+$.
\item If $X_p^- \cap X_q^+ \neq \emptyset$ and $p\neq q$ then $\dim X_p^- + \dim X_q^+ > \dim X$.
\end{enumerate}
\end{Thm}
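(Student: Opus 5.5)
The plan is to prove the cycle (i)$\Leftrightarrow$(ii)$\Rightarrow$(iii)$\Rightarrow$(iv)$\Leftrightarrow$(v), and then close the loop with (iv)$\Rightarrow$(ii), which carries the real content. Throughout I use that each $X_p^+$ is an affine space (\Cref{thm:BB}), hence irreducible. I also use that, since $p$ is an isolated fixed point of the smooth variety $X$, $T_pX$ has no zero weights, so \cite[Theorem 4.1]{BB} gives $\dim X_p^+ + \dim X_p^- = \dim X$.

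\emph{The easy implications.}
\begin{itemize}
\item (i)$\Rightarrow$(ii): if $p \in \overline{X_q^+}$, then $X_p^+$ meets $\overline{X_q^+}$, so by \Cref{df:stratification} it is contained in it.
\item (ii)$\Rightarrow$(i): if $x \in X_p^+ \cap \overline{X_q^+}$, then $p = \lim_{t\to0} t\cdot x$ lies in the closed $\Gm$-stable set $\overline{X_q^+}$, and (ii) gives $X_p^+ \subseteq \overline{X_q^+}$.
\item (ii)$\Rightarrow$(iii): for $p \neq q$ the cell $X_p^+$ is disjoint from $X_q^+$, so it lies in the boundary $\overline{X_q^+}\setminus X_q^+$. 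This boundary has dimension strictly less than $\dim X_q^+$.
\item (iii)$\Rightarrow$(iv): $p \midarrowX{x} q$ gives $p \in \overline{X_q^+}$, and $p \neq q$ by \Cref{rmk:loopless}.
\item (iv)$\Leftrightarrow$(v): $p \midarrow q$ means exactly $X_p^-\cap X_q^+\neq\varnothing$ with $p\ne q$. Substituting $\dim X_p^- = \dim X - \dim X_p^+$ turns the inequality in (v) into the one in (iv).
\end{itemize}

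\emph{The main step, (iv)$\Rightarrow$(ii).} Assume (iv). I will prove by induction on $d = \dim X_q^+$ that $Y \deq \overline{X_q^+}$ is a union of \BB cells; this gives (ii). The first ingredient is the following claim: for every $r \in Y^{\Gm}$ with $r\neq q$ we have $\dim X_r^+ < d$.

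To prove the claim, apply \Cref{thm:broken_trajectory} to the complete $\Gm$-variety $Y$, whose fixed locus is finite. Since $X_q^+ = Y_q^+$ is dense in $Y$ (\Cref{lm:restriction}), we have $Y^{\up} = q$, so there is a chain $r \midarrow \cdots \midarrow q$ in $\Gamma_Y \subseteq \Gamma_X$. By (iv), $\dim X^+$ strictly increases along this chain.

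Next consider the boundary $\partial \deq Y \setminus X_q^+$. It is closed and $\Gm$-stable. Since $X_q^+$ is affine and positive-dimensional (the case $d=0$ being trivial), $\partial$ is nonempty and pure of dimension $d-1$, by \cite[\href{https://stacks.math.columbia.edu/tag/0BCV}{Tag 0BCV}]{stacks-project} as in \Cref{prop:cells_of_every_dim}.

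Let $Z$ be an irreducible component of $\partial$. Then $Z$ is $\Gm$-stable, and since $Z$ has finitely many fixed points and the subsets $Z\cap X_r^+$ are constructible, a general point of $Z$ flows to a single fixed point $r \in Y^{\Gm}\setminus\{q\}$. Hence $Z \subseteq \overline{X_r^+}$, and so
\[
d-1 = \dim Z \le \dim X_r^+ < d,
\]
where the last inequality is the claim. Thus $\dim X_r^+ = d-1 = \dim Z$. Since $\overline{X_r^+}$ is irreducible and contains $Z$, this forces $Z = \overline{X_r^+}$.

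By the induction hypothesis each such $\overline{X_r^+}$ is a union of cells. Hence $\partial$ is a union of cells, and therefore so is $Y = X_q^+ \sqcup \partial$. This completes the induction.

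I expect the main obstacle to be this component-identification step. It uses the broken-trajectory theorem to control which fixed points lie in $Y$, and the purity of the boundary of an affine open. Everything else is formal.
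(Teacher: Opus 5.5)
Your proof is correct and is essentially the paper's argument. Your claim is exactly the paper's (iv)$\Rightarrow$(iii), obtained by applying \Cref{thm:broken_trajectory} to $\overline{X_q^+}$. Your induction using purity of the boundary $\overline{X_q^+}\setminus X_q^+$ is the paper's (iii)$\Rightarrow$(ii), merged into a single step and phrased as ``$\overline{X_q^+}$ is a union of cells'' rather than following the boundary component through a given $p$. Your direct check of (ii)$\Rightarrow$(i) via $\lim_{t\to 0} t\cdot x \in \overline{X_q^+}$ replaces the paper's citation of \cite[Lemma 5.1]{buch}, but it is the same observation.
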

\begin{proof}
(i)$\Rightarrow$(ii): Clear from \Cref{df:stratification}.

(ii)$\Rightarrow$(i): For any $q \in X^{\Gm}$, the assumption gives $\overline{X_q^+} \supseteq \bigcup_{p \in \overline{X_q^+}^{\Gm}} X_p^+$.
The reverse inclusion holds in general (see \cite[Lemma 5.1]{buch}).

(ii)$\Rightarrow$(iii): If $X_p^+ \subseteq \overline{X_q^+}$ and $p \neq q$, then $X_p^+$ is contained in the proper closed subscheme $\overline{X_q^+} \del X_q^+$ of the irreducible variety $\overline{X_q^+}$, so has strictly smaller dimension.

(iii)$\Rightarrow$(ii):
We proceed by induction on $\dim X_q^+$, the case $\dim X_q^+ = 0$ being trivial since it forces $p = q$.
Suppose $\dim X_q^+ > 0$ and  $p \in \overline{X_q^+}$ is a fixed point other than $q$.
By \cite[\href{https://stacks.math.columbia.edu/tag/0BCV}{Tag 0BCV}]{stacks-project}, since $X_q^+$ is affine, its ``boundary'' $W \deq \overline{X_q^+}\del X_q^+$ is pure of codimension $1$ in $\overline{X_q^+}$.
Let $C \subseteq W$ be an irreducible component of $W$ containing $p$.
By \cite[Lemma 5.1]{buch}, we have $C \subseteq \bigsqcup_{c \in C^{\Gm}} X_c^+$.
The generic point of $C$ is contained in $X_r^+$ for a unique $r \in C^{\Gm}$.
Then $C \subseteq \overline{X_r^+}$.
Since $r \in C \subseteq \overline{X_q^+}$, (iii) gives
\[
\dim X_q^+ > \dim X_r^+ \geq  \dim C = \dim W = \dim X_q^+ - 1.
\]
Since $C \subseteq \overline{X_r^+}$ and both are irreducible, this forces $\overline{X_r^+} = C \subseteq \overline{X_q^+}$.
Since $p \in \overline{X_r^+}$ and $\dim X_r^+ < \dim X_q^+$, we may apply the induction hypothesis to $p, r$ to get $X_p^+ \subseteq \overline{X_r^+} \subseteq \overline{X_q^+}$.

(iii)$\Rightarrow$(iv):
This is clear from the fact that $p \midarrow q$ implies $p \neq q$ (see \Cref{rmk:loopless}) as well as $p \in \overline{X_q^+}$.

(iv)$\Rightarrow$(iii): 
First note that (iv) implies that 
$\dim X_p^+ < \dim X_q^+$ for distinct $p, q \in X^{\Gm}$ whenever
$p \middarrow q$.
Now assume $p \in \overline{X_q^+}$ and $p \neq q$.
\Cref{thm:broken_trajectory} applied to $\overline{X_q^+}$ implies $p \middarrow q$. Hence $\dim X_p^+ < \dim X_q^+$.

(iv)$\Leftrightarrow$(v): It suffices to observe that for any $p,q \in X^{\Gm}$, $p \midarrow q$ holds if and only if $p \neq q$ and 
$X_p^- \cap X_q^+ \neq \varnothing$, and similarly that 
$\dim X_p^+ < \dim X_q^+$ holds if and only if 
$\dim X_p^- + \dim X_q^+ > \dim X$.
The latter equivalence follows from \cite[Theorem 4.1(c)]{BB}, which gives the equality $\dim X_p^+ + \dim X_p^- = \dim X$.
\end{proof}
Since condition (v) in \Cref{thm:dim_strat} is invariant under reversing the $\Gm$-action, we deduce:
\begin{Cor}\label[Cor]{cor:strat_reversibility}
Let $X$ be a smooth complete $\Gm$-variety with finite fixed locus.
Then the positive \BB decomposition of $X$ is a stratification if and only if the negative one is.
\end{Cor}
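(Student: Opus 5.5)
The plan is to deduce the corollary from \Cref{thm:dim_strat}, applied twice: once to $X$ and once to the reversed $\Gm$-variety $X^{\mathrm{rev}}$. The point is that condition (v) of that theorem is symmetric under exchanging the roles of $+$ and $-$.

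First I would record how reversal changes the relevant data. Let $X^{\mathrm{rev}}$ denote $X$ with the action precomposed by the reversal automorphism $t \mapsto t^{-1}$. Then $X^{\mathrm{rev}}$ is again a smooth complete $\Gm$-variety, and its fixed locus equals $X^{\Gm}$, so it is finite. Since $\lim_{t\to 0}$ for the reversed action is $\lim_{t\to\infty}$ for the original action, we get
\[
(X^{\mathrm{rev}})_p^+ = X_p^- \quad\text{and}\quad (X^{\mathrm{rev}})_p^- = X_p^+ \qquad \text{for every } p \in X^{\Gm}.
\]
This is already noted in \Cref{sec:notation}. Consequently, the negative \BB decomposition of $X$ is literally the positive \BB decomposition of $X^{\mathrm{rev}}$.

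Next I would apply the equivalence (i)$\Leftrightarrow$(v) of \Cref{thm:dim_strat} to $X^{\mathrm{rev}}$. This gives that the negative decomposition of $X$ is a stratification if and only if, for all distinct $p,q \in X^{\Gm}$ with $X_p^+ \cap X_q^- \neq \varnothing$, we have $\dim X_p^+ + \dim X_q^- > \dim X$. Swapping the names $p \leftrightarrow q$ turns this into exactly condition (v) for $X$ itself:
\[
\text{for distinct } p,q \text{ with } X_p^- \cap X_q^+ \neq \varnothing, \qquad \dim X_p^- + \dim X_q^+ > \dim X.
\]
A second application of (i)$\Leftrightarrow$(v), now to $X$, shows that this condition holds if and only if the positive decomposition is a stratification.

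I do not expect a genuine obstacle here. The only care needed is to confirm that the hypotheses of \Cref{thm:dim_strat} (smooth, complete, finite fixed locus) are preserved under reversal, and to handle the relabelling of $p$ and $q$ correctly.
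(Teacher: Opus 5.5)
Your proposal is correct and follows essentially the same route as the paper, which deduces the corollary in one line from the invariance of condition (v) of \Cref{thm:dim_strat} under reversing the $\Gm$-action. You simply spell out that reversal swaps $X_p^+$ and $X_p^-$, preserves the hypotheses, and that relabelling $p \leftrightarrow q$ carries condition (v) for the reversed action to condition (v) for $X$.
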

The implication (v) $\Rightarrow$ (i) allows us to easily recover the main result in \cite{BB_Example}, which is a sufficient condition for the \BB decomposition to be a stratification.
\begin{Cor}\label[Cor]{cor:recover_BB_transversality}
Let $X$ be a smooth complete $\Gm$-variety with finite fixed locus.
Assume that the positive and negative \BB cells of $X$ meet transversely.
Then the positive and negative \BB decompositions are both stratifications.
\end{Cor}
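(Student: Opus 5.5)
The plan is to verify condition (v) of \Cref{thm:dim_strat} for the positive decomposition; the implication (v)$\Rightarrow$(i) then shows that the positive decomposition is a stratification. Since the transversality hypothesis is symmetric in the roles of $+$ and $-$, \Cref{cor:strat_reversibility} (or the same argument with the action reversed) gives the statement for the negative decomposition as well.

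So let $p \neq q$ be fixed points with $X_p^- \cap X_q^+ \neq \varnothing$, and set $Z \deq X_p^- \cap X_q^+$. By \Cref{thm:BB}, both cells are smooth locally closed subvarieties. Transversality means that at every point $x \in Z$ we have
\[
T_x X_p^- + T_x X_q^+ = T_x X.
\]
Hence $Z$ is smooth at $x$, and its local dimension there equals $\dim X_p^- + \dim X_q^+ - \dim X$. (If one prefers, the weaker inequality $\dim_x Z \ge \dim X_p^- + \dim X_q^+ - \dim X$ is not what we need; we need the exact formula, and transversality supplies it, since the scheme-theoretic intersection of transverse smooth subvarieties is smooth of the expected dimension.)

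It therefore suffices to show that $\dim Z \geq 1$. First, $Z$ is $\Gm$-stable, because each \BB cell is. Second, $Z$ contains no fixed point: the only fixed point in $X_p^-$ is $p$, the only fixed point in $X_q^+$ is $q$, and $p \neq q$. So for any $x \in Z$ the orbit $\Gm \cdot x$ is a one-dimensional subset of $Z$. This gives
\[
\dim X_p^- + \dim X_q^+ - \dim X \geq 1,
\]
which is exactly condition (v).

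The only step requiring care is the exact dimension formula for transverse intersections. This is standard for smooth subvarieties of a smooth variety meeting with tangent spaces spanning, via the local description by regular sequences. Apart from that, the argument is immediate from \Cref{thm:dim_strat}.
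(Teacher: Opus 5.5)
Your proposal is correct and follows essentially the same route as the paper. Both arguments reduce to condition (v) of \Cref{thm:dim_strat}, use transversality to fix $\dim (X_p^- \cap X_q^+) = \dim X_p^- + \dim X_q^+ - \dim X$, and use that this $\Gm$-stable intersection contains no fixed points to force positive dimension. The only difference is cosmetic: the paper argues by contradiction (a failure of (v) would make the intersection $0$-dimensional, hence consisting of fixed points), while you argue directly.
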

\begin{proof}
Suppose not.
By \Cref{thm:dim_strat}, we can find distinct $p,q \in X^{\Gm}$ such that $X_p^- \cap X_q^+ \neq \emptyset$ but $\dim X_p^- + \dim X_q^+ \leq \dim X$.
Transversality forces this inequality to be an \emph{equality} as well as ${\dim (X_p^- \cap X_q^+) = 0}$.
Being 0-dimensional and $\Gm$-stable, the intersection $X_p^- \cap X_q^+$ must consist of $\Gm$-fixed points.
But the only fixed point in $X_p^-$ (resp.\ $X_q^+$) is $p$ (resp.\ $q$). A contradiction.
\end{proof}
The converse to \Cref{cor:recover_BB_transversality} does not hold, as the following example shows.
\begin{Ex}\label[Ex]{ex:transversality_failure}
Let $\Gm$ act on $\mathbb{P}^2 \times \mathbb{P}^2$ by
\[
t \cdot ([x_0:x_1:x_2],[y_0:y_1:y_2]) = ([x_0:tx_1:t^2x_2],[t^4y_0:t^2y_1:y_2]).
\]
For a global section $s$ of the line bundle $\mathcal{O}(m, n)$ on ${\mathbb{P}^2 \times \mathbb{P}^2}$, we write $V(s)$ for its vanishing locus and $D(s)$ for its non-vanishing locus.
We write $V(s_1, \dots, s_k)$ for $V(s_1) \cap \dots \cap V(s_k)$.
Consider the subvariety $X = V(x_0^2 y_0 + x_1^2 y_1 + x_2^2 y_2) \subseteq \mathbb{P}^2 \times \mathbb{P}^2$.
Then $X$ is smooth and $\Gm$-stable and has fixed locus $\{([e_i], [e_j]): i \neq j\}$.
Writing $X_{ij}^{\pm}$ for $X^{\pm}_{([e_i], [e_j])}$ we have
\begin{align*}
X_{01}^+ &= V(x_0^2 y_0 + x_1^2 y_1, y_2) \cap D(x_0y_1), &  X^+_{02} &= X \cap D(x_0y_2), \\
X_{10}^+ &= V(x_0, y_1, y_2) \cap D(x_1), & X_{12}^+ &= V(x_1^2 y_1 + x_2^2 y_2, x_0)\cap D(x_1 y_2),\\
X_{20}^+ &= V(x_0, x_1, y_1, y_2), & X_{21}^+ &= V(x_0, x_1, y_2) \cap D(y_1).
\end{align*}
One checks that
\begin{align*}
\overline{X_{01}^+} &= X_{01}^+  \sqcup X_{10}^+\sqcup X_{20}^+ \sqcup  X_{21}^+,  & \overline{X_{02}^+} &= X_{01}^+ \sqcup X_{02}^+ \sqcup X_{10}^+ \sqcup X_{12}^+ \sqcup X_{20}^+  \sqcup X_{21}^+,\\
\overline{X_{10}^+} &= X_{10}^+ \sqcup X_{20}^+, & \overline{X_{12}^+} &= X_{10}^+ \sqcup X_{12}^+  \sqcup X_{20}^+ \sqcup X_{21}^+,\\
\overline{X_{20}^+} &= X_{20}^+, &\overline{X_{21}^+} &= X_{20}^+ \sqcup X_{21}^+.
\end{align*}
So this \BB decomposition is a stratification.
However,
\[
X_{21}^- = V(x_1^2 y_1 + x_2^2 y_2, y_0) \cap D(x_2y_1)
\]
leads to the non-reduced intersection
\[
X_{21}^- \cap X_{01}^+ = V(x_1^2 y_1, y_0, y_2) \cap D(x_0x_2y_1).
\]
So $X_{21}^{-}$ and $X_{01}^+$ intersect non-transversely.
\end{Ex}
\begin{Prop}\label[Prop]{prop:BB_strat_inherit}
Suppose the \BB decomposition of $X$ is a stratification.
Let $p \in X^{\Gm}$ be a fixed point and let $Y \deq \overline{X_p^+}$.
Then the \BB decomposition of $Y$ (which is not necessarily smooth) is a stratification by locally closed cells isomorphic to affine spaces.
\end{Prop}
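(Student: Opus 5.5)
Since $Y = \overline{X_q^+}$ (write $q$ for the given point to avoid clashes) is $\Gm$-stable and closed, its fixed locus is $Y^{\Gm} = Y \cap X^{\Gm}$, a finite set. By \Cref{lm:restriction}, the \BB subsets of $Y$ are $Y_p^+ = Y \cap X_p^+$ for $p \in Y^{\Gm}$. The plan is to use the stratification hypothesis in the form (ii) of \Cref{thm:dim_strat}: if $p \in Y^{\Gm}$, i.e.\ $p \in \overline{X_q^+}$, then $X_p^+ \subseteq \overline{X_q^+} = Y$. Hence $Y_p^+ = Y \cap X_p^+ = X_p^+$, which is a locally closed subvariety of $X$ isomorphic to an affine space by \Cref{thm:BB}. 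Being contained in $Y$, it is also locally closed in $Y$. This gives the decomposition
\[
Y = \bigsqcup_{p \in Y^{\Gm}} X_p^+ .
\]
That these sets exhaust $Y$ follows since every $y \in Y$ has $\lim_{t\to 0} t\cdot y \in Y^{\Gm}$ by closedness and completeness (equivalently, \cite[Lemma 5.1]{buch}). The cells of the decomposition of $Y$ are thus exactly the cells $X_p^+$ of $X$ with $p \in Y^{\Gm}$, and closures in $Y$ agree with closures in $X$ because $Y$ is closed.

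To check the stratification property, take $p, r \in Y^{\Gm}$ and suppose $Y_p^+ \cap \overline{Y_r^+} \neq \varnothing$. This says $X_p^+ \cap \overline{X_r^+} \neq \varnothing$. Since the decomposition of $X$ is a stratification (\Cref{df:stratification}), we get $X_p^+ \subseteq \overline{X_r^+}$, i.e.\ $Y_p^+ \subseteq \overline{Y_r^+}$.

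There is essentially no obstacle here. The only point needing care is that $Y$ may be singular, so the general theory (\Cref{thm:BB}, \Cref{thm:broken_trajectory} for smooth $Y$) cannot be applied to $Y$ directly. The argument above avoids this entirely by identifying the cells of $Y$ with cells of the smooth variety $X$ via (ii).
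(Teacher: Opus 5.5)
Your proof is correct and follows essentially the same route as the paper: both show $Y_r^+ = Y \cap X_r^+ = X_r^+$ for every $r \in Y^{\Gm}$ using the stratification of $X$, and then read off the stratification of $Y$ from that of $X$. The only difference is cosmetic: you check the pairwise condition of \Cref{df:stratification}, while the paper writes $\overline{Y_q^+} = \overline{X_q^+}$ as a union of cells $X_{r_i}^+ = Y_{r_i}^+$. One small inaccuracy in your closing remark: \Cref{thm:broken_trajectory} holds for any complete $\Gm$-variety with finite fixed locus, smooth or not, so it could be applied to $Y$, though your argument does not need it.
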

\begin{proof}
Let $q \in Y^{\Gm} \subseteq X^{\Gm}$.
By \Cref{lm:restriction} combined with the stratification assumption, we have $Y_q^+ = Y \cap X_q^+ = \overline{X_p^+} \cap X_q^+ = X_q^+$.
Now $\overline{Y_q^+} = \overline{X_q^+}$ is equal to the union $X_{r_1}^+ \sqcup \dots \sqcup X_{r_n}^+$ for some $r_1, \dots, r_n \in X^{\Gm}$.
Since $Y$ is closed in $X$, these cells all lie in $Y$.
In particular, $r_i \in Y^{\Gm}$ for all $i$.
By the same reasoning as before, we have $Y_{r_i}^+ = X_{r_i}^+$.
So $\overline{Y_q^+} = Y_{r_1}^+ \sqcup \dots \sqcup Y_{r_n}^+$.
\end{proof}
\begin{Rmk}
The full analogue for stratifications of \Cref{cor:filt_sub} fails.
For instance, if $Y$ is any smooth projective toric variety with torus $T$ and $\mathcal{L}$ is an ample toric line bundle on $Y$ then the projective space $X \deq \mathbb{P}(H^0(Y, \mathcal{L})^{\vee})$ is a smooth projective $T$-variety with finite fixed locus.
For a general cocharacter $\mathbb{G}_m \to T$, we have $X^{\Gm} = X^T$.
The \BB decomposition of $X$ is guaranteed to be a stratification by \Cref{thm:universal_strat}.
However, \Cref{thm:existential_strat} assures us that the \BB decomposition of $Y$ cannot be a stratification if $Y$ is chosen appropriately (e.g.\ a smooth toric surface with at least five $T$-fixed points).
See \Cref{lm:toric_strat_inherit} however for another special case in which the stratification property \emph{is} inherited.
\end{Rmk}
The following is the analogue for stratifications of \Cref{prop:filt_product}.
\begin{Prop}\label[Prop]{prop:strat_product_gen}
Let $X, Y$ be arbitrary varieties.
Let $X = X_1 \sqcup \cdots \sqcup X_n$ and $Y = Y_1 \sqcup \cdots \sqcup Y_k$ be locally-closed decompositions of $X$ and $Y$.
Then the decompositions on $X$ and $Y$ are stratifications if and only if the decomposition $\{X_i \times Y_j\}_{i,j}$ is a stratification of $X \times Y$.
\end{Prop}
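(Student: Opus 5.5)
The plan is to reduce everything to the elementwise identity
\[
(X_i \times Y_j) \cap \overline{X_{i'} \times Y_{j'}} = (X_i \cap \overline{X_{i'}}) \times (Y_j \cap \overline{Y_{j'}}),
\]
already used in the proof of \Cref{prop:filter_prod_gen}. It follows from $\overline{A \times B} = \overline{A} \times \overline{B}$ for subsets of varieties over an algebraically closed field, which we may check on closed points. We also need that $X_i \times Y_j$ is a locally closed subvariety: irreducibility of products of irreducible varieties over $\mathbb{K}$ algebraically closed guarantees this.

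For the forward direction, assume both decompositions are stratifications and suppose $(X_i \times Y_j) \cap \overline{X_{i'} \times Y_{j'}} \neq \varnothing$. By the identity, both $X_i \cap \overline{X_{i'}}$ and $Y_j \cap \overline{Y_{j'}}$ are nonempty. The stratification hypothesis then gives $X_i \subseteq \overline{X_{i'}}$ and $Y_j \subseteq \overline{Y_{j'}}$. Hence
\[
X_i \times Y_j \subseteq \overline{X_{i'}} \times \overline{Y_{j'}} = \overline{X_{i'} \times Y_{j'}},
\]
which is exactly the condition of \Cref{df:stratification} for the product decomposition.

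For the converse, assume the product decomposition is a stratification and that $X_i \cap \overline{X_{i'}} \neq \varnothing$. Pick any $j$; since the decomposition of $Y$ has nonempty cells, $Y_j \neq \varnothing$. Then $(X_i \times Y_j) \cap \overline{X_{i'} \times Y_j} \supseteq (X_i \cap \overline{X_{i'}}) \times Y_j \neq \varnothing$. The hypothesis gives $X_i \times Y_j \subseteq \overline{X_{i'}} \times \overline{Y_j}$, and projecting to the first factor (using $Y_j \neq \varnothing$) yields $X_i \subseteq \overline{X_{i'}}$. The argument for $Y$ is symmetric.

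There is no real obstacle here. The only points needing care are the closure-of-products identity, which reduces to closed points and the fact that the Zariski topology on $X \times Y$ is finer than the product topology, and the tacit assumption that the cells are nonempty.
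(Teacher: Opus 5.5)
Your proposal is correct and matches the paper's proof essentially step for step. The forward direction uses the identity $(X_i \times Y_j) \cap \overline{X_{i'} \times Y_{j'}} = (X_i\cap \overline{X_{i'}}) \times (Y_j \cap \overline{Y_{j'}})$, and the converse restricts to a nonempty slice $X_i \times Y_j$ and then projects; one small remark is that ``the Zariski topology on $X\times Y$ is finer than the product topology'' only gives $\overline{A\times B}\subseteq\overline{A}\times\overline{B}$, while the reverse inclusion (which you use when writing $\overline{X_{i'}}\times\overline{Y_{j'}}=\overline{X_{i'}\times Y_{j'}}$) comes from continuity of the slice maps $x\mapsto(x,b)$ and $y\mapsto(a,y)$, a point the paper also leaves implicit.
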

\begin{proof}
First assume the decompositions for $X$ and $Y$ are stratifications.
Suppose $(X_i \times Y_j) \cap \overline{X_{i'} \times Y_{j'}} \neq \varnothing$.
Using $(X_i \times Y_j) \cap \overline{X_{i'} \times Y_{j'}} = (X_i\cap \overline{X_{i'}}) \times (Y_{j} \cap \overline{Y_{j'}})$, we see that $X_i\cap \overline{X_{i'}} \neq \varnothing$ and $Y_{j} \cap \overline{Y_{j'}} \neq \varnothing$.
By the stratification assumption, $X_i \subseteq \overline{X_{i'}}$ and $Y_j \subseteq \overline{Y_{j'}}$.
Thus $X_i \times Y_j \subseteq \overline{X_{i'} \times Y_{j'}}$.

Conversely, assume $\{X_i \times Y_j\}_{i,j}$ is a stratification of $X\times Y$.
We may also assume $Y_1 \neq \varnothing$.
If $X_i \cap \overline{X_{i'}} \neq\emptyset$ then $(X_i \times Y_1)\cap \overline{X_{i'} \times Y_1} \neq \emptyset$, so by assumption $X_{i} \times Y_1 \subseteq \overline{X_{i'}} \times \overline{Y_1}$.
Hence $X_{i} \subseteq \overline{X_{i'}}$.
This shows that the decomposition on $X$ is a stratification.
The same argument applies to $Y$.
\end{proof}
Combining \Cref{lm:Gm_product} with \Cref{prop:strat_product_gen} gives:
\begin{Prop}\label[Prop]{prop:strat_product}
Let $X, Y$ be smooth complete $\Gm$-varieties, each with finite fixed locus.
Equip $X \times Y$ with the diagonal $\Gm$-action.
Then the \BB decomposition of $X \times Y$ is a stratification if and only if the \BB decompositions on $X$ and $Y$ both are.
\end{Prop}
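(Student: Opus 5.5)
The plan is to combine \Cref{lm:Gm_product} with \Cref{prop:strat_product_gen}, in exactly the way \Cref{prop:filt_product} was deduced from \Cref{prop:filter_prod_gen}.

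First, equip $X \times Y$ with the diagonal $\Gm$-action. Then $X \times Y$ is a smooth complete variety, and by \Cref{lm:Gm_product} its fixed locus is $X^{\Gm} \times Y^{\Gm}$, which is finite. So the \BB decomposition of $X \times Y$ is defined in our setting. Again by \Cref{lm:Gm_product}, its cells are exactly the products
\[
(X \times Y)^+_{(p,q)} = X_p^+ \times Y_q^+, \qquad p \in X^{\Gm},\ q \in Y^{\Gm}.
\]
In other words, it is precisely the product decomposition $\{X_p^+ \times Y_q^+\}_{p,q}$ induced by the \BB decompositions $\{X_p^+\}$ of $X$ and $\{Y_q^+\}$ of $Y$.

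Second, these cells are locally closed subvarieties: each $X_p^+$ and $Y_q^+$ is an affine space by \Cref{thm:BB}. Hence the hypotheses of \Cref{prop:strat_product_gen} are satisfied. That proposition says the product decomposition is a stratification if and only if both factor decompositions are, which is exactly the claim.

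The only point needing care is the identification of the limits and cells in \Cref{lm:Gm_product}, whose proof the paper omits. If I were to verify it, I would use that the orbit map $t \mapsto (t\cdot x, t\cdot y)$ extends to $\mathbb{P}^1$ via the product of the two extensions. By uniqueness of the extension, the limit of $t\cdot(x,y)$ is the pair of limits, and both cell identifications follow.

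No real obstacle is expected; the argument is a formal combination of the two cited results.
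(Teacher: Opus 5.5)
Your proposal is correct and is essentially the paper's argument: the paper likewise obtains this statement directly by combining \Cref{lm:Gm_product} (to identify the cells of $X \times Y$ as the products $X_p^+ \times Y_q^+$) with the general product criterion \Cref{prop:strat_product_gen}. Your added verification of the limit identification through the unique extension of the orbit map to $\mathbb{P}^1$ is a fine way to fill in the omitted proof of \Cref{lm:Gm_product}.
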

As suggested by \cite[Example 1]{BB_Example}, \BB decompositions are rarely stratifications.
Among smooth complete $\Gm$-surfaces with finite fixed-locus (necessarily rational), the only ones that admit \BB stratifications are $\mathbb{P}^2$ and the Hirzebruch surfaces.
Since every such surface is toric by \Cref{thm:smoo_sur_are_toric}, this follows from \Cref{lm:2_dim_stratified} and the correspondence between polygons and toric surfaces (see \cite[\textsection2.5]{fult93toric}). Stratifications become even rarer in higher dimensions: Theorems \ref{thm:universal_strat} and \ref{thm:existential_strat} show that among smooth projective toric varieties, the decomposition can be a stratification only when the associated polytope is combinatorially a product of simplices.

\subsection{The toric case}
For the remainder of this section, we assume that $X$ is the projective toric variety associated to a smooth integral lattice polytope $P$.
See \Cref{sec:notation} for our conventions for toric varieties.
By default we assume that $X$ has been equipped with the structure of a $\Gm$-variety by choosing an admissible cocharacter $v\colon \Gm \to T$.
Recall that the admissibility of $v$ means precisely $X^{T} = X^{\Gm}$.
For admissible $v$, we say that $v$ \emph{stratifies} $X$ if the corresponding {positive} \BB decomposition is a stratification.
Note that the \emph{negative} \BB decomposition of $X$ coincides with the positive one for $-v$.
Rewriting condition (ii) of \Cref{thm:dim_strat} in the toric case, we obtain the following.
(See \Cref{df:P_plus} for the $P_p^+$ notation.)
\begin{Lm}\label[Lm]{lm:toric_strat_crit}
$v$ stratifies $X$ if and only if $p\in P_q^+$ implies $P_p^+ \subseteq P_q^+$ for all vertices $p,q$ of $P$.
\end{Lm}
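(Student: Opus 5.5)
The plan is to translate condition (ii) of \Cref{thm:dim_strat} into polytope language, using the dictionary between $T$-stable subvarieties of $X$ and faces of $P$. By definition, $v$ stratifies $X$ exactly when (i) holds, and (i) is equivalent to (ii): for all $p,q\in X^{\Gm}$, if $p\in\overline{X_q^+}$ then $X_p^+\subseteq\overline{X_q^+}$. The argument then consists of two translations.

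First, by \Cref{df:P_plus} we have $\overline{X_q^+}=\overline{\orb{P_q^+}}=\bigcup_{E\subseteq P_q^+}\orb{E}$. The fixed point attached to a vertex $p$ is the orbit $\orb{p}$. It lies in $\overline{X_q^+}$ exactly when $p$ is a vertex of $P_q^+$, that is, when $p\in P_q^+$. This handles the hypothesis of (ii).

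Second, we translate the conclusion $X_p^+\subseteq\overline{X_q^+}$. Since $\overline{X_q^+}$ is closed, this containment is equivalent to $\overline{X_p^+}\subseteq\overline{X_q^+}$. Rewriting both sides gives $\overline{\orb{P_p^+}}\subseteq\overline{\orb{P_q^+}}$. The face–subvariety correspondence preserves inclusions, so this holds if and only if $P_p^+\subseteq P_q^+$.

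Substituting both translations into (ii) yields exactly the stated criterion. There is no real obstacle here. The only point requiring care is that inclusion of orbit closures matches inclusion of faces, and this is part of the standard correspondence recalled in \Cref{sec:notation}.
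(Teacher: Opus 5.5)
Your proposal is correct and is the same approach as the paper, which obtains this lemma directly by rewriting condition (ii) of \Cref{thm:dim_strat} in the toric setting. Your two translations spell out exactly what that rewriting involves: $p \in \overline{X_q^+}$ if and only if $p \in P_q^+$, and $X_p^+ \subseteq \overline{X_q^+}$ if and only if $P_p^+ \subseteq P_q^+$.
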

To get a simpler criterion for stratification, we make use of the $\midddarrow$ symbol from \Cref{df:arrows_for_poly}.
\begin{Lm}\label[Lm]{lm:toric_strat_dim}
$v$ stratifies $X$ if and only if whenever $p \midddarrow q$ for vertices $p, q \in P$ then $\dim P_p^+ < \dim P_q^+$.
\end{Lm}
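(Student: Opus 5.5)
This is the toric translation of condition (iv) of \Cref{thm:dim_strat}. The plan is to show that, for vertices $p,q$ of $P$, the relation $p \midarrow q$ in $\Gamma_X$ (orbits, \Cref{df:arrow_notation}) and the relation $p \midddarrow q$ (edges of $P$, \Cref{df:arrows_for_poly}) generate the same transitive closure. We also use $\dim X_p^+ = \dim P_p^+$, which holds because $\overline{X_p^+} = \overline{\orb{P_p^+}}$ and $\dim \orb{F} = \dim F$.

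\emph{The two relations.} First, if $p \midddarrow q$, the edge $E = [p,q]$ has $E^\down = p$ and $E^\up = q$. By \Cref{lm:Fup} and its symmetric counterpart, any $x \in \orb{E}$ satisfies $p \midarrowX{x} q$, so $p \midarrow q$. Conversely, suppose $p \midarrowX{x} q$. Then $x$ lies in some $T$-orbit $\orb{F}$ with $F^\down = p$ and $F^\up = q$. Since $v$ is admissible and $F$ is a polytope, following an edge path in $F$ that strictly increases $\inner{-,v}$ (the simplex-method argument: any non-maximal vertex of $F$ has an improving edge inside $F$) yields a chain $p = p_0 \midddarrow p_1 \midddarrow \cdots \midddarrow p_k = q$. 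So $\midarrow$ is contained in the transitive closure of $\midddarrow$.

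\emph{The equivalence.} Write (iv$'$) for the condition in the statement: $p \midddarrow q$ implies $\dim P_p^+ < \dim P_q^+$.
\begin{itemize}
\item (iv) $\Rightarrow$ (iv$'$): if $v$ stratifies $X$, then condition (iv) of \Cref{thm:dim_strat} holds. Since $p \midddarrow q$ implies $p \midarrow q$, we get $\dim P_p^+ = \dim X_p^+ < \dim X_q^+ = \dim P_q^+$.
\item (iv$'$) $\Rightarrow$ (iv): if $p \midarrow q$, take the chain $p = p_0 \midddarrow \cdots \midddarrow p_k = q$ above with $k \geq 1$. Applying (iv$'$) along the chain gives $\dim P_{p_0}^+ < \dim P_{p_1}^+ < \cdots < \dim P_{p_k}^+$. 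Hence $\dim X_p^+ < \dim X_q^+$, which is condition (iv), and \Cref{thm:dim_strat} then shows that $v$ stratifies $X$.
\end{itemize}

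The only step needing care is the monotone edge path inside $F$. It is standard linear programming for a generic functional: admissibility means $v$ is not perpendicular to any edge, so every edge strictly changes $\inner{-,v}$. At a vertex $r$ of $F$ with $r \neq F^\up$, some edge of $F$ at $r$ increases $\inner{-,v}$; otherwise $r$ would be a local, hence global, maximizer on $F$. This produces the required chain.
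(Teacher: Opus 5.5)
Your proof is correct and follows essentially the paper's argument. Necessity comes from (i)$\Rightarrow$(iv) of \Cref{thm:dim_strat} together with the fact that an edge $p \midddarrow q$ gives $p \midarrow q$; sufficiency comes from chaining the dimension inequality along a $v$-increasing edge path. The only difference is that the paper verifies condition (iii) directly, running the monotone path inside $P_q^+$ from $p$ up to $q = (P_q^+)^{\up}$. This avoids the appeal to \Cref{thm:broken_trajectory} that is hidden in the implication (iv)$\Rightarrow$(iii), which your route through (iv) relies on.
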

\begin{proof}
If $p\midddarrow q$ then $p \midarrow q$.
Since $\dim P_p^+ = \dim X_p^+$ and $\dim P_q^+ = \dim X_q^+$, the criterion is necessary by the equivalence (i)$\Leftrightarrow$(iv) in \Cref{thm:dim_strat}.

Now suppose the dimension condition holds.
We prove that criterion (iii) in \Cref{thm:dim_strat} is satisfied.
Suppose $p, q \in X^{T}$ are distinct with $p \in P_q^+$.
In the polytope $P_q^+$, we can find a path
\[
p = p_0 \midddarrow p_1 \midddarrow \dots \midddarrow p_k = q
\]
By assumption, $\dim X_p^+ = \dim P_p^+ = \dim P_{p_0}^+ < \dots < \dim P_{p_k}^+  = \dim P_{q}^+ = \dim X_q^+$.
\end{proof}
\begin{Df}
We say that $X$ is \emph{existentially stratified} if $X$ is stratified by \emph{some} admissible cocharacter and \emph{universally stratified} if it is stratified by \emph{all} admissible cocharacters.
\end{Df}
\Cref{lm:toric_strat_dim} allows us to easily see when two-dimensional toric varieties are stratified.
\begin{Lm}\label[Lm]{lm:2_dim_stratified}
Assume $X$ is two-dimensional.
Then $X$ is existentially stratified if and only if $P$ is a triangle or a quadrilateral.
It is universally stratified if and only if $P$ is a triangle or a parallelogram.
\end{Lm}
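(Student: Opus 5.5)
The plan is to apply \Cref{lm:toric_strat_dim} directly, after describing the \BB faces of a polygon combinatorially. Fix an admissible $v$. By admissibility the boundary of $P$ splits into two $v$-monotone chains of edges from the bottom vertex $P^{\down}$ to the top vertex $P^{\up}$. By \Cref{cor:BB_face}:
\begin{itemize}
\item $P_{P^\up}^+=P$, of dimension $2$;
\item $P_{P^\down}^+=\{P^\down\}$, of dimension $0$;
\item every other vertex $p$ has exactly one lower neighbour $p'$ on its chain, and $P_p^+$ is the edge $[p',p]$, of dimension $1$.
\end{itemize}
The only way the criterion of \Cref{lm:toric_strat_dim} can fail is therefore an edge $p\midddarrow q$ with both $p$ and $q$ in the interior of a chain. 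Every other edge starts at $P^\down$ or ends at $P^\up$, and then the dimension strictly increases. Hence $v$ stratifies $X$ if and only if each of the two monotone chains has at most one interior vertex.

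Counting vertices settles most cases. With $n$ vertices the two chains have $n-2$ interior vertices in total. If $n\ge 5$, some chain has at least two interior vertices, so no $v$ stratifies. If $n=3$, the split is always $1+0$, so every admissible $v$ stratifies. If $n=4$, $v$ stratifies exactly when the split is $1+1$, i.e.\ when $P^\up$ and $P^\down$ are opposite vertices. This reduces the lemma to two questions about a quadrilateral with vertices $p_1,p_2,p_3,p_4$ in cyclic order:
\begin{itemize}
\item does some admissible $v$ make $P^\up$ and $P^\down$ opposite?
\item does every admissible $v$ do so, exactly when $P$ is a parallelogram?
\end{itemize}

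For existence, I would take $v_0\perp (p_4-p_2)$, so that $\langle p_2,v_0\rangle=\langle p_4,v_0\rangle$. By convexity, $p_1$ and $p_3$ lie strictly on opposite sides of the diagonal $p_2p_4$, so one of them is the strict maximum of $\langle -,v_0\rangle$ and the other the strict minimum. A small generic perturbation of $v_0$ to a lattice vector (after scaling a rational approximation) preserves this and makes $v$ admissible, since only finitely many edge-perpendicular directions must be avoided.

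For universality, I would argue via normal cones: $P^\up=p$ and $P^\down=q$ exactly when $v\in \operatorname{int}\sigma_p\cap(-\operatorname{int}\sigma_q)$. If $P$ is a parallelogram, central symmetry gives $\sigma_{p_3}=-\sigma_{p_1}$ and $\sigma_{p_4}=-\sigma_{p_2}$. So for every admissible $v$, the maximiser and minimiser are opposite vertices, and $v$ stratifies. Conversely, if $P$ is not a parallelogram, the four rays of its normal fan are not centrally symmetric. Then for some vertex $p$ the cone $-\sigma_p$ is not exactly the cone of the opposite vertex. Since the cones $-\sigma_q$ tile the plane, $-\sigma_p$ must overlap the interior of the cone of a vertex adjacent to $p$. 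A generic lattice vector in that overlap is an admissible $v$ whose top and bottom vertices are adjacent, so this $v$ does not stratify.

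The main obstacle is this last normal-fan step. One must check that, in a fan with four rays that is not centrally symmetric, the antipodal cone $-\sigma_p$ genuinely has open overlap with a neighbouring cone for some $p$. I would do this by comparing angles: the four cone angles sum to $2\pi$, opposite pairs coincide only for parallelograms, and otherwise some $-\sigma_p$ must spill past the cone opposite $p$. Alternatively, I would use a trapezoid-type argument, projecting along a pair of non-parallel opposite sides.
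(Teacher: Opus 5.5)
Your proof is correct, and most of it follows the paper. The faces $P_p^+$ have dimension $0$, $1$, $2$ according as $p$ is the minimum, an intermediate vertex, or the maximum. So \Cref{lm:toric_strat_dim} shows that triangles always work, polygons with at least five vertices never do, and a quadrilateral is stratified by $v$ exactly when $P^{\up}$ and $P^{\down}$ are opposite. Your monotone-chain count makes explicit the paper's bare assertion for $n\ge 5$. Existence comes, as in the paper, from a $v$ perpendicular to a diagonal. No perturbation is needed there: $p_4-p_2\in M$ and no edge of a convex quadrilateral is parallel to a diagonal, so $v_0$ can be taken in $N$ and is already admissible.

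The real difference is the converse of the universal statement. The paper argues in one line. If, say, $AB$ is not parallel to $CD$ (cyclic order $A,B,C,D$), then $B-A$ and $D-C$ are linearly independent, so some admissible $v$ has $\langle B-A,v\rangle>0$ and $\langle D-C,v\rangle>0$. Then the maximum lies in $\{B,D\}$ and the minimum in $\{A,C\}$, so they are adjacent. This is essentially your ``trapezoid-type'' fallback, and it avoids all fan geometry.

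Your normal-fan route has one advantage: it describes the stratifying cocharacters exactly, as those in $\mathrm{int}\,\sigma_p\cap(-\mathrm{int}\,\sigma_{p'})$ for an opposite pair $p,p'$. However, the inference ``$-\sigma_p\neq\sigma_{p'}$, hence $-\sigma_p$ overlaps the interior of a cone adjacent to $p$'' is false as written when $-\sigma_p\subsetneq\sigma_{p'}$. The repair is to pass to $p'$:
\begin{itemize}
\item In that case $-\sigma_{p'}\supsetneq\sigma_p$, so $\mathrm{int}(-\sigma_{p'})\setminus\sigma_p$ is a nonempty open set.
\item This set misses $\sigma_{p'}$, because vertex normal cones are pointed. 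Hence it meets $\mathrm{int}\,\sigma_q$ for some $q$ adjacent to $p'$.
\item Any lattice $v$ in that overlap is admissible and has adjacent top and bottom vertices.
\end{itemize}
You should also justify why a non-parallelogram has a non-centrally-symmetric set of normal rays. The negative of an edge normal cannot be an adjacent edge normal, since consecutive normals span a pointed cone. So central symmetry forces opposite edges to be parallel.
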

\begin{proof}
For a vertex $p$ of $P$, we have $\dim P_p^+ = 0$ or $2$ if and only if $p$ is, respectively, the unique minimizer or maximizer of $v$.
Otherwise, $\dim P_p^+ = 1$.
From this we see that the criterion in \Cref{lm:toric_strat_dim} always holds for a triangle and always fails for a polygon with at least five vertices.
For a quadrilateral, the condition is equivalent to asking that the maximizer and minimizer of $v$ lie opposite one another on $P$.
If $P$ is a parallelogram then this always holds.

Suppose $P$ is any quadrilateral, with vertices $A, B, C, D$ in cyclic order.
If $v$ is a covector perpendicular to the diagonal $AC$, then the extrema of $v$ on $P$ are achieved at $B, D$.
This shows that $X$ is existentially stratified.
On the other hand, if for instance $AB$ is \emph{not parallel} to $CD$ then we can find an admissible covector $v$ such that $v(A) < v(B)$ and $v(C) < v(D)$.
Then neither $A$ nor $C$ can be the maximum and neither $B$ nor $D$ can be the minimum.
Thus the vertices $P^{\up}$ and $P^{\down}$ cannot lie opposite one another on $P$.
\end{proof}
We now show that stratifications are inherited by closed $T$-stable subvarieties.
\begin{Lm}\label[Lm]{lm:toric_strat_inherit}
Let $Y \subseteq X$ be a $T$-stable subvariety.
Let $Q$ be the face of $P$ such that $Y = \overline{O_Q}$.
If $v \in N$ stratifies $X$ then its image $\overline{v} \in N/N_Q $ stratifies $Y$.
\end{Lm}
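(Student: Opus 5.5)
The plan is to verify the combinatorial criterion of \Cref{lm:toric_strat_crit} for $Y$, using \Cref{lm:BB_inheritance} to express the \BB faces of $Q$ in terms of those of $P$.

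First, the setup. The toric variety $Y = \overline{O_Q}$ corresponds to the polytope $Q$ (a face of $P$, hence again a smooth lattice polytope in the appropriate lattice) with cocharacter $\bar v \in N/N_Q$. Admissibility of $\bar v$ is immediate. Edges of $Q$ are edges of $P$, and the pairing of $\bar v$ with an edge direction of $Q$ agrees with that of $v$, since $N_Q$ pairs trivially with directions in $Q$. So $\bar v$ is not perpendicular to any edge of $Q$. The same compatibility shows that $\inner{-,\bar v}$ and $\inner{-,v}$ induce the same order on the vertices of $Q$ (up to an additive constant). Thus the faces $Q_q^+$ are those computed in \Cref{lm:BB_inheritance}:
\[
Q_q^+ = Q \cap P_q^+ .
\]

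Second, the criterion. Let $p, q$ be vertices of $Q$ with $p \in Q_q^+$. Then $p \in P_q^+$. Since $v$ stratifies $X$, \Cref{lm:toric_strat_crit} gives $P_p^+ \subseteq P_q^+$. Intersecting with $Q$ yields
\[
Q_p^+ = Q \cap P_p^+ \subseteq Q \cap P_q^+ = Q_q^+ .
\]
Applying \Cref{lm:toric_strat_crit} to $Y$ then shows that $\bar v$ stratifies $Y$.

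The only potential obstacle is justifying that \Cref{lm:toric_strat_crit} and \Cref{lm:BB_inheritance} apply to $(Y, Q, \bar v)$. This amounts to the identification of the orbit closure $\overline{O_Q}$ with the toric variety of the face $Q$, with the induced torus and the lattice $N/N_Q$. That identification is the same reduction already used in the proof of \Cref{lm:Fup}, so I expect no real difficulty.
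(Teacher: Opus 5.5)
Your proposal is correct and matches the paper's proof: it uses \Cref{lm:BB_inheritance} to write $Q_q^+ = Q \cap P_q^+$, deduces $p \in P_q^+$ and hence $P_p^+ \subseteq P_q^+$ from \Cref{lm:toric_strat_crit}, and intersects with $Q$ to apply the criterion to $Y$. Your extra remarks on why $\bar v$ is admissible and induces the same vertex order on $Q$ are correct, since $\sigma_Q$ is constant on $Q$; the paper leaves these points implicit.
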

\begin{proof}
Let $p, q$ be vertices of $Q$ with $p \in Q_q^+$.
\Cref{lm:BB_inheritance} gives $Q_q^+ = Q\cap P_q^+$.
Hence $p \in P_q^+$.
Since $v$ stratifies $X$, this implies $P_p^+ \subseteq P_q^+$ by \Cref{lm:toric_strat_crit}.
Intersecting with $Q$ gives $Q\cap P_p^+ \subseteq Q \cap P_q^+$.
That is, $Q_p^+ \subseteq Q_q^+$, again using \Cref{lm:BB_inheritance}.
This shows that $\overline{O_Q}$ is stratified, by \Cref{lm:toric_strat_crit}.
\end{proof}
Surprisingly, this lemma has a converse.
Before stating it, we make the following observation.
Let $q$ be a vertex of $P$.
From \Cref{cor:BB_face} we know that $P_q^+$ is the unique maximal face of $P$ on which $q$ maximizes $v$.
Using the $\midddarrow$ notation, this is precisely the face of $P$ spanned by $q$ and all vertices $p$ such that $p \midddarrow q$.
It follows that
\[\tag{$\star\star$}\label{eq:arrowcount}
\dim P_q^+ = \# \{\text{$p$ a vertex of $P$} \mid p \midddarrow q\}.
\]
\begin{Prop}\label[Prop]{prop:strat_bottom_up}
Assume $v \in N$ is admissible.
Then $v$ stratifies $X$ if and only if it stratifies each 2-dimensional $T$-stable subvariety of $X$.
\end{Prop}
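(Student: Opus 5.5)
The forward implication is immediate from \Cref{lm:toric_strat_inherit}: if $v$ stratifies $X$, then its image stratifies every $T$-stable subvariety $\overline{O_Q}$, and in particular every $2$-dimensional one. For the converse I will verify the criterion of \Cref{lm:toric_strat_dim}. So fix an edge $p \midddarrow q$ of $P$. The goal is to show $\dim P_p^+ < \dim P_q^+$, and the tool is the counting formula (\ref{eq:arrowcount}): $\dim P_q^+$ equals the number of neighbours of $q$ that lie below $q$ with respect to $\inner{-,v}$.

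The key combinatorial input is simplicity of $P$. Let $n=\dim P$ and let $e$ be the edge $pq$. Each of the $n-1$ edges at $p$ other than $e$ spans, together with $e$, a unique $2$-face of $P$. The same holds at $q$, and both constructions give the $n-1$ two-faces of $P$ containing $e$. This yields a bijection between the other neighbours $p'$ of $p$ and the other neighbours $q'$ of $q$: $p'$ and $q'$ correspond when $p, q, p', q'$ all lie in a common $2$-face $F \supseteq e$. Moreover, in the polygon $F$ the neighbours of $p$ are exactly $q$ and $p'$, and the neighbours of $q$ are exactly $p$ and $q'$.

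Now apply the hypothesis to $Y=\overline{O_F}$, with the induced cocharacter $\bar v$. By \Cref{lm:BB_inheritance} and (\ref{eq:arrowcount}) applied within $F$, the edge ordering on $F$ is just the restriction of $\inner{-,v}$. Then \Cref{lm:toric_strat_dim} for $F$ gives $\dim F_p^+ < \dim F_q^+$. Within $F$, $\dim F_q^+ = 1 + [q' \midddarrow q]$, because $p$ lies below $q$. Likewise $\dim F_p^+ = [p' \midddarrow p]$, because $q$ lies above $p$. Here $[\,\cdot\,]$ denotes the indicator of the condition. The inequality therefore says exactly this: if $p' \midddarrow p$, then $q' \midddarrow q$.

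Summing these indicator inequalities over the $n-1$ two-faces containing $e$ gives
\[
\dim P_p^+ = \sum_{p'} [p'\midddarrow p] \le \sum_{q'} [q' \midddarrow q] = \dim P_q^+ - 1,
\]
where the final $-1$ accounts for the lower neighbour $p$ of $q$, which is not among the $q'$. Hence $\dim P_p^+ < \dim P_q^+$, and \Cref{lm:toric_strat_dim} shows that $v$ stratifies $X$.

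The only point I expect to need care is the bijection between the other edges at $p$ and at $q$ via $2$-faces. This is the standard fact that, in a simple polytope, any two edges at a vertex span a unique $2$-face, and an edge lies in exactly $n-1$ two-faces. I would state it with a short justification using the fact that the local structure at each vertex is that of a simplicial cone.
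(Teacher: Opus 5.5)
Your proof is correct and follows essentially the same route as the paper: both use simplicity of $P$ to pair the remaining edges at $p$ and at $q$ through the $2$-faces containing the edge $pq$, and then compare incoming-edge counts via (\ref{eq:arrowcount}). The only difference is how the per-face implication $[p'\midddarrow p]\le[q'\midddarrow q]$ is obtained. You get it by applying \Cref{lm:toric_strat_dim} to each $2$-face, whereas the paper reads it off from the five possible orientations of a stratified $2$-face shown in \Cref{fig:two_face}.
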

\begin{proof}
The ``only if'' direction is a special case of \Cref{lm:toric_strat_inherit}.
Now suppose $v$ stratifies every 2-dimensional $T$-stable subvariety of $X$.
As observed in the proof of \Cref{lm:2_dim_stratified}, this means that the maximizer and minimizer of $v$ are not adjacent on any quadrilateral 2-face of $P$.
Let $p, q$ be adjacent vertices of $P$ with $p \midddarrow q$.
It suffices to show that $\dim P_p^+ < \dim P_q^+$, by \Cref{lm:toric_strat_dim}.
By observation (\ref{eq:arrowcount}), this can be done by comparing the number of incoming edges at $p$ and $q$.

Since $P$ is simple, every edge of $P$ incident to $p$ and not $q$ lies in a unique 2-face containing both $p$ and $q$.
The same is true for every edge of $P$ incident to $q$ and not $p$.
By assumption, every such 2-face is stratified by $v$.
From the proof of \Cref{lm:2_dim_stratified}, we see that each of these must be directed in one of five ways, as shown in \Cref{fig:two_face}.
\begin{figure}[ht]
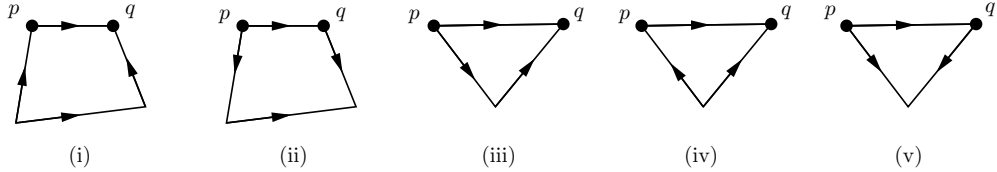

\includestandalone[width = 0.9\textwidth]{two_faced}
\caption{The five possible orientations induced by $v$ on a 2-face containing $p, q$.}\label{fig:two_face}
\end{figure}

\begin{samepage}
Excluding the edge $p \midddarrow q$, the 2-face contributes an incoming edge\dots
\begin{itemize}
\item \dots to both $p$ and $q$ in cases (i) and (iv).
\item \dots to neither $p$ nor $q$ in cases (ii) and (v).
\item \dots to $q$ but not $p$ in case (iii).
\end{itemize}
\end{samepage}
In total, $q$ gets at least as many incoming edges as $p$, and additionally has the incoming edge $p \midddarrow q$.
Thus $q$ has strictly more incoming edges than $p$.
So $\dim P_q^+ > \dim P_p^+$.
\end{proof}
Combining Lemmas \ref{lm:2_dim_stratified} and  \ref{lm:toric_strat_inherit} gives the following.
\begin{Cor}\label[Cor]{cor:strat_2_face}
If $X$ is existentially stratified, then every $2$-face of $P$ is a triangle or a quadrilateral.
If $X$ is universally stratified, then every $2$-face of $P$ is a triangle or a parallelogram.
\end{Cor}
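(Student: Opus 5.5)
The statement splits into two independent assertions, and I will get both by combining \Cref{lm:toric_strat_inherit} with \Cref{lm:2_dim_stratified}. The bridge between them is that every $2$-face $Q$ of $P$ corresponds to a $2$-dimensional $T$-stable subvariety $Y = \overline{O_Q}$. This $Y$ is itself a smooth projective toric surface, with torus $T/\text{(stabilizer)}$, cocharacter lattice $N/N_Q$, and polygon $Q$. Smoothness is inherited because the primitive edge directions of $Q$ at a vertex are a subset of a $\Z$-basis of $M$, so they form a basis of the saturated sublattice $M \cap \sigma_Q^\perp$.

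For the existential statement, suppose some admissible $v$ stratifies $X$, and fix a $2$-face $Q$. By \Cref{lm:toric_strat_inherit}, the image $\bar v \in N/N_Q$ stratifies $Y$. Before applying the surface lemma I need $\bar v$ to be admissible for $Y$, meaning not perpendicular to any edge of $Q$. This holds because the edges of $Q$ are edges of $P$, and for $\chi \in M \cap \sigma_Q^\perp$ the pairing $\langle \chi, \bar v\rangle$ equals $\langle \chi, v\rangle$. So $Y$ is existentially stratified, and \Cref{lm:2_dim_stratified} forces $Q$ to be a triangle or a quadrilateral.

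For the universal statement I will argue by contradiction rather than restricting a single cocharacter. Suppose some $2$-face $Q$ is neither a triangle nor a parallelogram. By \Cref{lm:2_dim_stratified}, some admissible $\bar w \in N/N_Q$ fails to stratify $Y$. The main step is to lift $\bar w$ to an admissible $v \in N$ with image $\bar w$. The lifts form a coset $w + N_Q$, which is a translate of a full-rank sublattice of the span of $\sigma_Q$. Admissibility for $P$ fails only on finitely many hyperplanes $e^\perp$, one for each edge direction $e$ of $P$.
\begin{itemize}
\item Edges $e$ of $Q$ pair with every lift as they do with $\bar w$, which is nonzero.
\item Any other edge $e$ does not lie in $\sigma_Q^\perp$, so $\langle e, -\rangle$ is a nonconstant affine function on the coset.
\end{itemize}
A lattice coset is not contained in a finite union of proper affine hyperplanes, so an admissible lift $v$ exists.

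Since $X$ is universally stratified, this $v$ stratifies $X$. Then \Cref{lm:toric_strat_inherit} says $\bar v = \bar w$ stratifies $Y$, which contradicts the choice of $\bar w$. The only delicate point in the whole argument is this lifting step, and the avoidance argument above handles it.
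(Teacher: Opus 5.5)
Your overall route is the paper's: restrict to $Y=\overline{O_Q}$ and combine \Cref{lm:toric_strat_inherit} with \Cref{lm:2_dim_stratified}. The existential half is fine, including your admissibility check for $\bar v$. The universal half also needs a lifting step, which the paper leaves implicit and you try to make explicit, but your second bullet there is false. The space $\sigma_Q^\perp$ is the linear subspace parallel to the affine span of $Q$. So \emph{every} edge of $P$ parallel to that plane lies in $\sigma_Q^\perp$, whether or not it is an edge of $Q$; the top edges of a cube, with $Q$ the bottom face, already show this. For such an $e$ the pairing is constant on $w+N_Q$, equal to $\langle e,\bar w\rangle$. Admissibility of $\bar w$ on $Q$ says nothing about this value unless $e$ happens to be parallel to an edge of $Q$.

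This failure actually occurs. Let $Q$ be the smooth pentagon with vertices $(0,0),(4,0),(4,2),(2,4),(0,4)$, and let $P$ be the prism $Q\times[0,2]$ truncated at the vertex $(4,0,2)$. The new edge in the top face has direction $(1,1,0)$, which is parallel to the bottom face $Q\times\{0\}$ but to none of its edges. The covector $\bar w=(1,-1)\in N/N_Q\cong\Z^2$ is admissible (and non-stratifying) for $Q$, yet every lift $(1,-1,c)$ is orthogonal to $(1,1,0)$. So the particular $\bar w$ supplied by \Cref{lm:2_dim_stratified} need not have any admissible lift, and your avoidance argument does not yet produce $v$.

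The repair is to use the freedom in choosing $\bar w$. By \Cref{cor:BB_face} and the observation preceding (\ref{eq:arrowcount}), the faces $Q_q^+$ depend only on the signs of $\bar w$ on the edges of $Q$. By \Cref{lm:toric_strat_crit}, so does whether $\bar w$ stratifies $Y$. Hence the non-stratifying admissible covectors form a nonempty union of open chambers of the line arrangement $\{e^\perp : e \text{ an edge of } Q\}$ in $(N/N_Q)_\R$. Such a chamber is an open cone. It therefore contains a lattice point $\bar w$ that also avoids the finitely many further lines $e^\perp$, for edges $e$ of $P$ parallel to $Q$ but not to any edge of $Q$. For this $\bar w$, every edge of $P$ lying in $\sigma_Q^\perp$ pairs nonzero with all lifts, and your coset argument handles the remaining edges. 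Equivalently, you can pick $v\in N$ directly in the preimage of such a chamber in $N_\R$ (a nonempty open cone), off the hyperplanes $e^\perp$ for all edges $e$ of $P$.
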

These constraints turn out to be quite restrictive, as the following results show.
\begin{Thm}[{\cite[Appendix]{Yu_2021}}]\label[Thm]{thm:comb_prod}
If a simple convex polytope $P$ has all two-dimensional faces triangles or quadrilaterals, then $P$ is combinatorially equivalent to a product of simplices.
\end{Thm}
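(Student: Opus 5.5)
The plan is to induct on $d=\dim P$ and split off one simplex factor at a time. For the final identification I will use the theorem of Blind--Mani and Kalai: a simple polytope is determined up to combinatorial equivalence by its graph. So it suffices to show that the graph of $P$ is isomorphic to the graph of $\Delta^k \times G$ for a simplex $\Delta^k$ and a simple polytope $G$ of dimension $d-k$. Every $2$-face of $G$ will be a $2$-face of $P$, hence a triangle or a quadrilateral, so induction applies to $G$.

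\emph{Step 1 (local structure).} Fix a vertex $v$. By simplicity, any two edges $e,f$ at $v$ span a unique $2$-face. Declare $e\sim f$ if that $2$-face is a triangle.

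I will show that $\sim$ is an equivalence relation by looking at the $3$-face spanned by three edges at $v$. This is a simple $3$-polytope all of whose faces have at most $4$ sides. Euler's relation in the form $\sum_k (6-k)\,p_k = 12$ gives $3p_3+2p_4=12$, so the $3$-face is a tetrahedron, a triangular prism, or a cube. In each of these three cases one checks directly that the "triangle" relation on the three edges at a vertex is transitive.

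Let $E_1$ be the $\sim$-class of some edge at $v$, with $|E_1|=k$. Let $F$ be the $k$-face spanned by $E_1$ and $G$ the $(d-k)$-face spanned by the remaining edges.

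All $2$-faces of $F$ at $v$ are triangles. I will show that this propagates to every vertex of $F$, so that every vertex of $F$ has all its $F$-neighbours pairwise adjacent. Since the graph of $F$ is connected, it is then complete on $k+1$ vertices, and $F$ is a simplex.

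\emph{Step 2 (transport along edges).} I will show that the partition of edges at a vertex into $\sim$-classes is carried along each edge by the $2$-faces. Let $e=vw$ be an edge and $f\neq e$ another edge at $v$. If the $2$-face spanned by $e$ and $f$ is a quadrilateral, it provides a unique edge $f'$ at $w$ opposite to $f$. If that $2$-face is a triangle, $f$ corresponds to the third edge of the triangle at $w$.

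Using the three $3$-face types from Step 1 again, I will check that this correspondence respects $\sim$. Consequently:
\begin{itemize}
\item for each vertex $u$ of $F$, the edges at $u$ that are not in $F$ span a face $G_u$;
\item the quadrilaterals through the edges of $F$ give isomorphisms $G_u\cong G_v=G$ of the graphs (indeed of the face lattices);
\item the faces $G_u$ for different $u\in F$ are pairwise disjoint.
\end{itemize}

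\emph{Step 3 (global product).} The union $V$ of the vertex sets of the $G_u$ is closed under taking neighbours: every edge at a vertex of $G_u$ either lies in $G_u$ or is the transport of an edge of $F$. Since the graph of $P$ is connected, $V$ is the whole vertex set of $P$.

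This gives a bijection between the vertices of $P$ and $\mathrm{vert}(F)\times \mathrm{vert}(G)$. Under it, edges of $P$ correspond exactly to pairs that differ in one coordinate by an edge of $F$ or of $G$. Hence the graph of $P$ is that of $F\times G$, and Blind--Mani/Kalai together with induction on $G$ finish the proof.

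The main obstacle is Step 2. One must show that the transported $\sim$-classes are consistent, independent of the path used to transport them, and in particular that the faces $G_u$ are pairwise disjoint rather than merely locally product-like. I would first try to reduce every consistency check to the three $3$-face types, since any two edge-paths between vertices can be related through $2$- and $3$-faces. If this becomes unwieldy, the fallback is to argue on the dual simplicial sphere: show that it is a join of simplex boundaries by exhibiting a partition of the facets of $P$ into classes in which each class behaves like the facets of a simplex factor.
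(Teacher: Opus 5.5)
The paper does not prove this statement; it cites Yu and Wiemeler. The closest argument in the paper is its proof of the parallelogram version in Appendix~\ref{append1}, and your Steps 1--2 follow the same skeleton: the Euler count for $3$-faces, $\sim$ as an equivalence relation, and transport across edges. Step 1 is fine. The simplex conclusion already follows at $v$ alone, as in the paper, since the closed neighbourhood of $v$ is a clique that is a component of the $k$-regular graph of $F$. The \emph{local} claim of Step 2 is also correct: the $2$-faces through $e=vw$ match edges at $v$ with edges at $w$, and a check in the $3$-face spanned by $e,f,g$ shows this matching carries $\sim_v$-classes to $\sim_w$-classes.

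The gap is that the second and third bullets of Step 2, and the edge correspondence asserted in Step 3, do not follow from that local statement. Edge-by-edge transport tells you nothing about the following:
\begin{itemize}
\item whether following the quadrilaterals across $e=vu$ from all vertices of $G$ lands \emph{injectively} on a vertex set;
\item whether that set is $\mathrm{vert}(G_u)$;
\item whether $G_u\cap G_{u'}=\varnothing$;
\item whether the identifications $G_v\to G_u\to G_{u'}$ and $G_v\to G_{u'}$ agree.
\end{itemize}
Checks inside $3$-faces only give flatness around $2$-faces, which cannot exclude something that is a product locally but glued globally; a global argument is needed. In Proposition~\ref{prop:general} this issue never arises, because translates in $\mathbb{R}^d$ are canonical and distinct points are distinct. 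That is what lets the paper finish with ``$\Sigma^1_{C+J}\subseteq\Sigma^1_P$, both connected and $d$-regular.'' Combinatorially there is no ambient copy of $\Delta^k\times G$ inside $P$, so injectivity is exactly what must be proved, and your proposal leaves it as an open obstacle.

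One way to close it with your own ingredients:
\begin{enumerate}
\item[(i)] \emph{Intrinsic class statement.} For every vertex $w$ of $G$, the $k$ edges at $w$ not in $G$ form one $\sim_w$-class. Walk from $v$ along edges of $G$, using your local lemma and the fact that transport along a $G$-edge sends $G$-edges to $G$-edges and non-$G$-edges to non-$G$-edges. The statement is intrinsic, so no path-dependence arises.
\item[(ii)] \emph{Prism lemma.} For $u\in F$, let $H$ be the $(d-k+1)$-face spanned by $G$ and $vu$. Each $w\in\mathrm{vert}(G)$ has a unique edge $ww'$ of $H$ off $G$; write $\psi_u(w)=w'$. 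By (i), $ww'$ spans quadrilaterals with all $G$-edges at $w$, so an edge $w_1w_2$ of $G$ forces $w_1'w_2'$ to be an edge. Counting the $d-k+1$ edges of $H$ at $w'$ gives $N_H(w')=\{w\}\cup\{w_i'\}$. Hence $\psi_u$ is injective, its image misses $G$, and $\mathrm{vert}(G)\sqcup\psi_u(\mathrm{vert}(G))$ is closed under neighbours in $H$, so it equals $\mathrm{vert}(H)$. A short traversal then shows $\psi_u(\mathrm{vert}(G))=\mathrm{vert}(G_u)$.
\item[(iii)] \emph{Using that $F$ is a simplex.} Any two $u,u'\in F$ are adjacent, so the same lemma based at $u$ for the edge $uu'$ gives $G_u\cap G_{u'}=\varnothing$. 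It also gives each vertex of $G_u$ exactly one neighbour in $G_{u'}$. By (i) the non-$G$ edges at $w$ span a simplex, so that neighbour of $\psi_u(w)$ is $\psi_{u'}(w)$. This is the compatibility you need.
\end{enumerate}
With these three steps, your Step 3 and the appeal to Blind--Mani--Kalai go through. Alternatively, one can globalize using the simple connectivity of the $2$-skeleton of a polytope.
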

As noted by the authors of \cite{Yu_2021}, their result follows from a more general (and more difficult) theorem of Wiemeler \cite[Proposition 4.5]{Wiemeler_2015}.
\begin{Thm}\label[Thm]{thm:aff_prod}
If a simple convex polytope $P$ has all two-dimensional faces triangles or parallelograms, then $P$ is affinely equivalent to a product of simplices.
Moreover, if $P$ is a smooth lattice polytope, then it is unimodularly  equivalent to such a product.
\end{Thm}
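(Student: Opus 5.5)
Theorem~\ref{thm:comb_prod} already tells us that $P$ is combinatorially equivalent to a product of simplices $\Delta_{n_1}\times\cdots\times\Delta_{n_k}$, because triangles and parallelograms are in particular triangles or quadrilaterals. I will upgrade this combinatorial equivalence to an affine one by induction on $\dim P$, using the parallelogram condition to make edge directions rigid.

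\emph{Step 1: parallel transport of edge directions.} Fix a vertex $p_0$ of $P$. Simplicity means its $\dim P$ edges span $M_\R$. Under the combinatorial identification, these edges split into groups of sizes $n_1,\dots,n_k$, one group per simplex factor. Any two edges at a vertex coming from different factors span a $2$-face combinatorially equal to (edge)$\times$(edge), i.e.\ a quadrilateral. By hypothesis this $2$-face is therefore a parallelogram. So walking along an edge of factor $j$ does not change the direction of the edges belonging to the other factors $i\neq j$. Since the edge graph is connected, the edge directions of factor $i$ at any vertex are obtained from those at $p_0$ by moving only within factor $i$. Concretely, I will show that every face $F=\{x\}\times\Delta_{n_i}\times\{y\}$ (a vertex in all the other factors) is a translate of a single simplex $S_i$, the one at $p_0$. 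Two faces of this form that differ only by one edge step in factor $j\neq i$ span a prism-type face $\Delta_{n_i}\times\Delta_1$. All of its square $2$-faces are parallelograms, so the two simplices are translates of each other. Connectivity then gives the claim for all such faces.

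\emph{Step 2: assembling the affine map.} Place $p_0$ at the origin and define $\phi\colon S_1\times\cdots\times S_k\to M_\R$ by $(s_1,\dots,s_k)\mapsto s_1+\cdots+s_k$. Here $\dim\operatorname{span}S_i=n_i$, and these spans are independent because their union is the edge set at $p_0$, which forms a basis. Hence $\phi$ is an injective affine map. Moving one factor at a time and applying Step 1, every vertex of $P$ equals $\phi$ of the corresponding vertex of the product. Then $P$, being the convex hull of its vertices, equals $\phi(S_1\times\cdots\times S_k)$, which gives the affine equivalence.

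\emph{Step 3: lattice version.} Suppose $P$ is a smooth lattice polytope. At $p_0$ the primitive edge directions $u^{(i)}_1,\dots,u^{(i)}_{n_i}$ (taken over all $i$) form a $\Z$-basis of $M$. Each $S_i$ is then $\Conv(0,a^{(i)}_1u^{(i)}_1,\dots)$ for positive integers $a^{(i)}_r$. I also need smoothness at the other vertices of $S_i$, where the edge directions inside $S_i$ are $u_r-\lambda u_s$-type combinations. This forces all $a^{(i)}_r$ to equal a common value $a_i$: a simplex with these edge lengths is smooth only if it is a dilate of the unimodular simplex. I would check this with a determinant computation at a second vertex. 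The map sending the basis $u^{(i)}_r$ to standard basis vectors is then unimodular and carries $P$ to $\prod a_i\Delta_{n_i}$.

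The main obstacle is Step 1, specifically justifying from purely combinatorial information that the relevant $2$-faces are exactly the mixed quadrilaterals, and that translation of the simplex faces propagates globally. I would handle this by inducting on $k$, treating $P$ as $\Delta_{n_1}\times Q$ combinatorially and applying the inductive hypothesis to the facets of the form $\{x\}\times Q$.
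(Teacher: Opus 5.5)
Your proof is correct, but it takes a different route from the paper's. You use Theorem~\ref{thm:comb_prod} to fix the combinatorial type. From there the parallelogram hypothesis does all the work. Mixed $2$-faces are parallelograms, so each prism face $\Delta_{n_i}\times\Delta_1$ forces the type-$i$ simplex faces to be translates of one another, with compatible vertex labels. Hence $P=S_1+\cdots+S_k$ with independent spans.

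The paper deliberately avoids Theorem~\ref{thm:comb_prod}, which rests on Yu's appendix and ultimately on Wiemeler's harder result. Instead it proceeds in three steps:
\begin{itemize}
\item it classifies the three-dimensional case by Euler's formula (Lemma~\ref{lm:3d_possibilities});
\item it shows that ``spanning a triangle'' is an equivalence relation on the edges at a vertex (Lemma~\ref{lm:vertex_eq_relation});
\item it inducts on dimension by splitting off the simplex $J$ spanned by one class, proving $P=C+J$ by transporting edges and comparing the $1$-skeleta of $P$ and $C+J$ as connected $d$-regular graphs.
\end{itemize}
Your argument is shorter and more transparent but depends on a nontrivial external classification. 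The paper's is elementary and self-contained, and in effect reproves that classification in the triangle/parallelogram case. Your Step~3 matches Proposition~\ref{prop:unimodular}: a Minkowski sum of the faces at a vertex whose spans are coordinate subspaces for a $\Z$-basis.

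Two minor remarks.

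\emph{Step~3 normal form.} It goes beyond what the theorem needs by producing $\prod a_i\Delta_{n_i}$. If you keep it, checking the determinant at ``a second vertex'' is not enough. Smoothness at the vertex $a_r u_r$ of $S_i$ gives only $a_s\mid a_r$ for all $s$, so you must check every vertex. For example, $\Conv(0,2e_1,e_2)$ is smooth at $2e_1$ but not at $e_2$.

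\emph{Your ``main obstacle'' is not one.} Given the face-lattice isomorphism, the $2$-faces of $\prod\Delta_{n_i}$ are visibly either triangles inside one factor or products of edges from two different factors. The graph of $\prod_{j\neq i}\Delta_{n_j}$ is connected. So Steps~1 and~2 go through as written, with no induction on $k$.
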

We give
a self-contained proof of \Cref{thm:aff_prod} in \Cref{append1}.

Generalized Bott towers were first defined in \cite{GBT}, over $\mathbb{C}$.
Here we define them in general.
A preliminary observation:
if $X$ is a smooth projective toric variety with torus $T$ and $\mathcal{E}$ is a $T$-equivariant vector bundle over $X$ then the projectivized total space $\mathbb{P}(\mathcal{E})$ is naturally equipped with a toric structure precisely when $\mathcal{E}$ is equivariantly split (see \cite[\textsection7.3]{Cox_Caley_Poly}, \cite[41]{oda78}).
\begin{Df}\label[Df]{df:Bott} 
A smooth projective toric variety $X$ is a \emph{generalized Bott tower} if it either consists of a single point or is equivariantly isomorphic to the projectivization of a split equivariant toric vector bundle over a generalized Bott tower.
\end{Df}
\begin{Thm}[\cite{toric_classification}, \cite{quasitoric}, \cite{tuong_and_others}]\label[Thm]{thm:GBT}
Let $X$ be a smooth projective toric variety with $P$ an associated polytope.
$P$ is combinatorially equivalent to a product of simplices if and only if $X$ is a generalized Bott tower.
\end{Thm}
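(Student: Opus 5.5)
The plan is to prove the two implications separately, inducting on $\dim X$ in each case and working with the fan $\Sigma$ of $X$.

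($\Leftarrow$) Suppose $X \cong \mathbb{P}(\mathcal{E})$ with $\mathcal{E} = \mathcal{O}(D_0) \oplus \dots \oplus \mathcal{O}(D_r)$ a split equivariant bundle over a generalized Bott tower $Y$. By induction, the polytope $Q$ of $Y$ is combinatorially a product of simplices.
\begin{itemize}
\item I will use the standard description of the fan of $\mathbb{P}(\mathcal{E})$ (\cite[\textsection7.3]{Cox_Caley_Poly}). Its rays are lifts $\tilde u_\rho$ of the rays $u_\rho$ of $\Sigma_Y$, together with rays $e_0, \dots, e_r$ satisfying $e_0 + \dots + e_r = 0$ and spanning a fiber copy of the fan of $\mathbb{P}^r$.
\item Its cones are exactly the sums of a lifted cone of $\Sigma_Y$ with a proper face of the simplex fan.
\item So the face poset of $\Sigma_X$ is the product of the face posets of $\Sigma_Y$ and of $\Sigma_{\mathbb{P}^r}$.
\end{itemize}
Dually, $P$ is combinatorially $Q \times \Delta^r$, and this direction is done.

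($\Rightarrow$) Suppose $P$ is combinatorially $\Delta^{n_1} \times \dots \times \Delta^{n_k}$. The facets of $P$ then split into groups $F_{i,0}, \dots, F_{i,n_i}$, one group per factor, with primitive normals $v_{i,0}, \dots, v_{i,n_i}$. The primitive collections of $\Sigma$ are exactly the sets $\{v_{i,0}, \dots, v_{i,n_i}\}$, and a set of rays spans a cone precisely when it omits at least one element of every group. The steps are as follows.
\begin{enumerate}[(1)]
\item Find an index $i$ whose primitive relation is $v_{i,0} + \dots + v_{i,n_i} = 0$. For this I will invoke Batyrev's theorem that a smooth \emph{projective} complete fan always has a primitive collection with this property. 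The argument pairs each primitive relation with a strictly convex support function coming from the ample line bundle given by $P$, and this is where projectivity enters.
\item Let $N' \subseteq N$ be the span of $v_{i,0}, \dots, v_{i,n_i}$. The relation in (1) together with smoothness shows that these vectors form a fan of $\mathbb{P}^{n_i}$ in $N'$, and that $N'$ is saturated (any $n_i$ of them extend to a basis of $N$).
\item Project the remaining rays to $N/N'$. Because every cone of $\Sigma$ is combinatorially the sum of a cone on a subset of the $i$-th group with a cone on the other rays, the projections form a complete smooth fan $\Sigma_Y$. Its face poset is that of $\prod_{j \neq i} \Delta^{n_j}$, and $\Sigma$ is compatible with the projection $N \to N/N'$ in the sense of toric fiber bundles.
\item Conclude that $X \to Y$ is a toric $\mathbb{P}^{n_i}$-bundle. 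The base $Y$ is projective, being the image of a projective variety; it is smooth with polytope combinatorially a product of simplices, so by induction $Y$ is a generalized Bott tower.
\item Show that a toric $\mathbb{P}^{n}$-bundle arising this way is $\mathbb{P}(\mathcal{E})$ for a split equivariant $\mathcal{E}$. To do this, choose a splitting $N \cong N' \oplus N/N'$ and write each lifted ray $\tilde u_\rho$ as $\tilde u_\rho = s(u_\rho) + \sum_j a_{\rho j} v_{i,j}$. The integers $a_{\rho j}$ define divisors $D_j = \sum_\rho a_{\rho j} D_\rho$ on $Y$, and comparing with the fan description used in ($\Leftarrow$) identifies $X$ with $\mathbb{P}(\bigoplus_j \mathcal{O}(D_j))$.
\end{enumerate}

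The main obstacle is step (1): for a merely combinatorial product there is no a priori reason for any primitive relation to have zero right-hand side. I would first try to cite Batyrev's lemma directly. Failing that, I would reprove it by choosing the group $i$ minimizing the degree of its primitive relation with respect to the ample class, and showing that a nonzero right-hand side would force a relation of smaller degree.

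Steps (3) and (5) should be routine bookkeeping with cones and support functions, once it is checked that the lifts of the base cones glue into a fan compatible with $N \to N/N'$.
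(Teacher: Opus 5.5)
\textbf{Comparison with the paper.} The paper gives no proof of this statement. It defers to \cite{toric_classification} and \cite{quasitoric}, whose arguments come from the setting of real toric/quasitoric manifolds and which the authors say carry over, and to the direct polytopal proof in \cite{tuong_and_others}.

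Your argument is an independent route through Batyrev's primitive collections, and it is essentially correct:
\begin{itemize}
\item The face lattice of a product of simplices forces the primitive collections to be the pairwise disjoint groups $G_i$.
\item Batyrev's theorem for projective fans supplies one group with zero sum.
\item Disjointness then makes $\Sigma$ a split fan over $N/N'$, i.e.\ a $\mathbb{P}^{n_i}$-bundle. This is essentially Batyrev's projective-bundle criterion.
\end{itemize}
The bookkeeping in (2), (3) and (5) is sound. The key point in (3) is that the zero-sum relation gives $\Cone(S)+N'_{\R}=\bigcup_T \Cone(S\cup T)$, and $\pi(\Cone(S))\cap\pi(\Cone(S'))=\pi(\Cone(S\cap S'))$ follows from this. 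In (5), the $a_{\rho j}$ are defined only up to a common shift for each $\rho$, which amounts to twisting $\mathcal{E}$ by a line bundle.

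What your route buys is a short argument using only fan combinatorics plus one citation. Its cost is that projectivity enters essentially in step (1), so you must carry projectivity through the induction. That is exactly where there is an error.

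\textbf{The error in step (4).} The justification ``$Y$ is projective, being the image of a projective variety'' is not valid. By Chow's lemma, every complete variety is the image of a projective variety under a birational morphism. This holds even for every complete toric variety, via a projective toric refinement of its fan.

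The conclusion is still true, for a different reason. Fix any $n_i$-element subset $T\subset G_i$. The cones of $\Sigma$ containing $\Cone(T)$ are exactly the $\Cone(S\cup T)$ with $S$ admissible, and $T$ is a basis of $N'$. So the star of $\Cone(T)$ in $N/N'$ is $\Sigma_Y$. Hence $Y\cong V(\Cone(T))$ is a torus-invariant section of $X\to Y$, closed in $X$ and therefore projective. Polytopally, $Y$ is the toric variety of the face of $P$ corresponding to $\{\text{vertex}\}\times\prod_{j\neq i}\Delta^{n_j}$. This also gives the combinatorial type of its polytope directly.

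\textbf{The fallback for step (1).} Citing Batyrev suffices, but the self-contained reproof you sketch would not work as stated. A primitive relation of minimal $L$-degree need not have zero right-hand side.

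Take $\mathbb{F}_a$ with $a\ge 1$ and rays $u_1=e_1$, $u_2=e_2$, $u_3=-e_1+ae_2$, $u_4=-e_2$. The relations $u_1+u_3=au_2$ and $u_2+u_4=0$ are the classes of the negative section $C_0$ and the fiber $f$. For the ample class $L=5C_0+(5a+1)f$ they have degrees $1$ and $5$. So the minimal-degree group has nonzero right-hand side, and no relation of smaller degree exists.

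What does work is to minimize the lattice width $h_P(v)+h_P(-v)$ over $v\in N\setminus\{0\}$, where $h_P(v)=\max_{m\in P}\langle m,v\rangle$.
\begin{itemize}
\item Convexity of $h_P$ shows the minimum is attained at a ray generator $x$.
\item Strict convexity then forces every ray of the cone containing $-x$ in its relative interior to appear with coefficient $1$.
\item Those rays together with $x$ form a primitive collection with zero sum.
\end{itemize}
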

\begin{Rmk}
The authors in \cite{toric_classification} and \cite{quasitoric} are primarily concerned with real toric manifolds.
However, their methods with regard to \Cref{thm:GBT} are completely general and the proof extends to our setting.
\cite{tuong_and_others} provides a more direct, polytopal proof that avoids the machinery of quasitoric manifolds.
\end{Rmk}
\begin{Thm}\label[Thm]{thm:universal_strat}
Let $X$ be a smooth projective toric variety with an ample toric line bundle corresponding to a polytope $P$.
The following are equivalent.
\begin{enumerate}[(i)]
\item $X$ is universally stratified.
\item Every two-dimensional  $T$-stable subvariety of $X$ is universally stratified.
\item Every 2-face of $P$ is a triangle or parallelogram.
\item $P$ is unimodularly equivalent to a product of simplices.
\item $X$ is isomorphic as a toric variety to a product of projective spaces.
\end{enumerate}
\end{Thm}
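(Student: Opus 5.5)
I will prove the cycle (i)$\Rightarrow$(ii)$\Rightarrow$(iii)$\Rightarrow$(iv)$\Rightarrow$(v)$\Rightarrow$(i).

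\emph{(i)$\Rightarrow$(ii).} Let $Y = \overline{O_Q}$ be a two-dimensional $T$-stable subvariety. Every admissible $\bar v \in N/N_Q$ for $Q$ must be lifted to an admissible $v \in N$ for $P$. A lift of $\bar v$ can be perturbed by adding an element of $N_Q$ without changing its image. Since $\sigma_Q$ spans $N_Q\otimes\R$, a generic such perturbation avoids the finitely many hyperplanes orthogonal to edges of $P$ not lying in $Q$. For edges of $Q$, the pairing with $v$ depends only on $\bar v$, so admissibility along $Q$ is automatic. By (i), $v$ stratifies $X$, and \Cref{lm:toric_strat_inherit} then shows that $\bar v$ stratifies $Y$.

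\emph{(ii)$\Rightarrow$(iii).} This is \Cref{lm:2_dim_stratified} applied to each 2-face of $P$.

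\emph{(iii)$\Rightarrow$(iv).} This is \Cref{thm:aff_prod}, since smooth polytopes are simple.

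\emph{(iv)$\Rightarrow$(v).} A unimodular equivalence of polytopes induces an isomorphism of normal fans, and hence of toric varieties. The toric variety of $\Delta_{n_1}\times\cdots\times\Delta_{n_k}$ is $\PP^{n_1}\times\cdots\times\PP^{n_k}$, because the normal fan of a product of polytopes is the product of their normal fans. One point needs care: a simplex unimodularly equivalent to a smooth simplex is a dilate of the standard simplex, whose toric variety is still $\PP^{n}$.

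\emph{(v)$\Rightarrow$(i).} I first check that $\PP^n$ is universally stratified. For a simplex, the vertices are totally ordered by $v$, and every pair of vertices is adjacent. So by (\ref{eq:arrowcount}), $\dim P_q^+$ equals the number of vertices below $q$. Hence $p\midddarrow q$ implies $\dim P_p^+<\dim P_q^+$, and \Cref{lm:toric_strat_dim} gives the stratification.

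For a product, an admissible cocharacter $v=(v_1,\dots,v_k)$ of the product torus acts diagonally through the cocharacters $v_i$ of the factors. Each $v_i$ is admissible, since edges of a product polytope are edges of one factor times vertices of the others. I then expect \Cref{lm:Gm_product} to extend to this setting, where $\Gm$ acts on each factor through its own $v_i$ rather than through a common action. The limits are computed componentwise, so $X_{(p,q)}^+ = X_p^+\times Y_q^+$ still holds. Combined with \Cref{prop:strat_product_gen}, this shows the product is stratified.

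The main obstacle I expect is bookkeeping rather than depth. One issue is making sure the lifting argument in (i)$\Rightarrow$(ii) produces an admissible lift. The other is justifying the product lemma when the factors carry different cocharacters instead of one diagonal $\Gm$-action; for that I would just repeat the componentwise limit computation.
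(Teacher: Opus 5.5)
Your cycle of implications, and the lemma invoked at each step, are the paper's. The one place you add content is the lifting argument in (i)$\Rightarrow$(ii), and as stated it is false.

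Translating a lift $v$ by $n\in N_Q$ changes $\inner{e,v}$ only for edges $e$ of $P$ whose direction is not in $\sigma_Q^\perp$. An edge of $P$ outside $Q$ but parallel to the affine span of $Q$ pairs to zero with all of $N_Q$, so $\inner{e,v}=\inner{e,\bar v}$ for every lift. Such an edge need not be parallel to any edge of $Q$. Concretely, let $P$ be $[0,2]^3$ with the vertex $(2,2,2)$ cut off by $x+y+z\le 5$ (the blowup of $(\PP^1)^3$ at a fixed point), and let $Q=[0,2]^2\times\{0\}$. Then $N_Q=\Z(0,0,1)$, and $\bar v=(1,1)$ is admissible for $Q$. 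But every lift $(1,1,c)$ is orthogonal to the edge from $(1,2,2)$ to $(2,1,2)$. So an admissible $\bar v$ need not have any admissible lift. At this stage you do not yet know that $P$ is a product of simplices, where such edges cannot occur, so you cannot rule them out.

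The repair is to stop requiring the image to be exactly $\bar v$. Take any lift $v_0$, any $w\in N$ admissible for $P$, and set $v=mv_0+w$ with $m\gg 0$. Then $v$ is admissible for $P$. Moreover, on every edge of $Q$ the sign of $\inner{-,v}$ equals the sign of $\inner{-,\bar v}$. By \Cref{cor:BB_face} applied to $Q$, the faces $Q_q^+$ depend only on these edge orientations, because the maximizer of a generic functional on a face $F$ is the unique vertex at which all edges of $F$ point inward. Hence $\bar v$ and $m\bar v+\bar w$ give the same decomposition of $Y$, and \Cref{lm:toric_strat_inherit} applied to $v$ finishes the step. The paper just cites \Cref{lm:toric_strat_inherit} here and leaves this point implicit.

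Two smaller remarks. First, in (v)$\Rightarrow$(i) no extension of \Cref{lm:Gm_product} is needed. That lemma already allows arbitrary $\Gm$-actions on the two factors, and $(v_1,\dots,v_k)$ is exactly the diagonal action when the $i$-th factor is acted on through $v_i$. So \Cref{prop:strat_product} applies verbatim, as in the paper. Second, your direct check that $\PP^n$ is universally stratified, via (\ref{eq:arrowcount}) and \Cref{lm:toric_strat_dim}, is a fine substitute for the paper's appeal to \Cref{prop:strat_bottom_up}.
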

\begin{proof}
The implication (i) $\Rightarrow$ (ii) follows from \Cref{lm:toric_strat_inherit}.
The implication (ii) $\Rightarrow$ (iii) follows from \Cref{lm:2_dim_stratified}.
The implication (iii) $\Rightarrow$ (iv) is \Cref{thm:aff_prod}.
The implication (iv) $\Rightarrow$ (v) is standard (see for instance \cite[Theorem 2.4.7]{cox2011toric} and Example 2.4.8 there).
As for {(v)~$\Rightarrow$~(i)}, note that $\mathbb{P}^n$ is universally stratified---as an easy consequence of \Cref{prop:strat_bottom_up}---and that products of universally stratified toric varieties are universally stratified by \Cref{prop:strat_product}.
\end{proof}
\begin{Thm}\label[Thm]{thm:existential_strat}
Let $X$ be a smooth projective toric variety with an ample toric line bundle corresponding to a polytope $P$.
The following are equivalent.
\begin{enumerate}[(i)]
\item $X$ is existentially stratified.
\item Every two-dimensional  $T$-stable subvariety of $X$ is existentially stratified.
\item Every 2-face of $P$ is a triangle or quadrilateral.
\item $P$ is combinatorially equivalent to a product of simplices.
\item $X$ is isomorphic as a toric variety to a generalized Bott tower.
\end{enumerate}
\end{Thm}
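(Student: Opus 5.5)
The implications (i) $\Rightarrow$ (ii) $\Rightarrow$ (iii) $\Rightarrow$ (iv) $\Leftrightarrow$ (v) follow from results already stated, parallel to the proof of \Cref{thm:universal_strat}. If some $v$ stratifies $X$, then by \Cref{lm:toric_strat_inherit} its image stratifies every two-dimensional $T$-stable subvariety. This gives (ii). \Cref{lm:2_dim_stratified} then gives (iii). \Cref{thm:comb_prod} gives (iii) $\Rightarrow$ (iv), since a smooth polytope is simple. The equivalence (iv) $\Leftrightarrow$ (v) is \Cref{thm:GBT}. The real content is therefore (iv) $\Rightarrow$ (i): given $P$ combinatorially equivalent to $\Delta_{n_1}\times\cdots\times\Delta_{n_k}$, we must produce one admissible cocharacter $v$ that stratifies $X$.

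By \Cref{prop:strat_bottom_up}, it suffices to find admissible $v$ such that on every quadrilateral 2-face the maximizer and minimizer of $v$ are opposite vertices. Triangles impose no condition. The 2-faces of a combinatorial product of simplices are of two kinds. A triangle is an edge-triangle of a single factor $\Delta_{n_i}$ times vertices of the others. A quadrilateral is (edge of $\Delta_{n_i}$) $\times$ (edge of $\Delta_{n_j}$), $i\neq j$, times vertices of the others. Label vertices by tuples $(a_1,\dots,a_k)$ with $a_i\in\{0,\dots,n_i\}$. On a quadrilateral with vertices $(a,b),(a',b),(a,b'),(a',b')$, where the other coordinates are fixed, the condition says the two edges in direction $i$ point the same way, and likewise the two edges in direction $j$. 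So it suffices to find $v$ inducing a ``product-like'' orientation: for each factor $i$ there is a total order $<_i$ on $\{0,\dots,n_i\}$, and every edge changing only the $i$-th coordinate from $a$ to $a'$ is oriented upward exactly when $a<_i a'$, regardless of the other coordinates.

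To construct such $v$, I would use the generalized Bott tower description and induct on the number of factors. Realize $P$ as a polytope fibered over a lower-dimensional $P'$ (combinatorially $\Delta_{n_1}\times\cdots\times\Delta_{n_{k-1}}$) with simplex fibers $\Delta_{n_k}$, corresponding to $X=\mathbb{P}(\mathcal{E})\to X'$. In suitable coordinates $M=M'\oplus\Z^{n_k}$, the fiber simplices over the vertices of $P'$ have edge directions independent of the base vertex. For a split bundle the fiber over each fixed point is $\mathbb{P}^{n_k}$ with the same torus weights up to a common character shift, so the fiber edges are parallel to fixed directions $e_a-e_{a'}$. Choose $v=(v',w)$ with $w$ generic in the fiber coordinates; this fixes the orientation of the fiber edges by a single order $<_k$ independent of the base vertex. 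Then choose $v'$ by induction stratifying $P'$, and scale $v'$ by a large integer $N$. Edges of $P$ lying over edges of $P'$ have $M'$-component equal to the nonzero base edge vector, plus a bounded fiber correction, so for $N\gg0$ their orientation agrees with the orientation of the base edge. That orientation is independent of the fiber coordinate, and by induction it is product-like in the base coordinates. Admissibility follows from genericity together with $N\gg0$.

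The main obstacle is making this fibered coordinate description rigorous. One needs that a polytope of a generalized Bott tower is, after a unimodular change of coordinates, a ``twisted product'' in which fiber edges are translates of a fixed simplex's edges and base edges project onto edges of $P'$. I expect to derive this from \Cref{df:Bott} and the standard description of the polytope of $\mathbb{P}(\mathcal{O}(D_0)\oplus\cdots\oplus\mathcal{O}(D_{n}))$ as a Cayley-type polytope over $P'$, whose fibers are simplices with vertices $p'+\ell_a$ along fixed directions. With that in hand, the large-$N$ perturbation argument is routine.
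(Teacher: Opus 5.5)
Your implications (i)$\Rightarrow$(ii)$\Rightarrow$(iii)$\Rightarrow$(iv)$\Leftrightarrow$(v) are exactly the paper's. Attacking the remaining direction through the fibration $X=\mathbb{P}(\mathcal{E})\to Y$ is also what the paper does.

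The gap is in your construction of $v$: you have the geometry of the fibered polytope backwards. Take the Cayley description $P=\Conv\bigl(\bigcup_{j}\{e_j\}\times Q_j\bigr)\subseteq\R^n\times M'_{\R}$, where the $Q_j$ share the normal fan of the base polytope $Q$. There it is the \emph{section} faces $\{e_j\}\times Q_j$ whose edges are rigid: each is a positive multiple of the corresponding edge of $Q$, with zero $\R^n$-component. The \emph{fiber} edge over a vertex $q$ is $(e_j-e_{j'},\,q_j-q_{j'})$, and $q_j-q_{j'}$ varies with $q$ according to the twisting of $\mathcal{E}_j\otimes\mathcal{E}_{j'}^{-1}$. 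Parallelism is affine-invariant, so no change of coordinates fixes this. For the Hirzebruch trapezoid $\Conv((0,0),(0,b),(1,0),(1,b+a))$ the two fiber edges have directions $(1,0)$ and $(1,a)$. Only the fiberwise subtorus acts on all fibers with the same weights; the full torus does not.

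Consequently, scaling $v'$ by $N\gg0$ makes the fiber orientations depend on the base vertex.
\begin{itemize}
\item With $a=b=1$, the covector $(-1,10)$ fits your recipe and is admissible. Its values at the four vertices are $0,10,-1,19$, so its maximizer $(1,2)$ and minimizer $(1,0)$ are adjacent, and it does not stratify.
\item For $Q_0=[0,10]$, $Q_1=[0,11]$, $Q_2=[1,10]$ over $\mathbb{P}^1$, the recipe fails for every $w$. The 2-face $\{x_1+x_2=1\}$ has fiber edges $(1,-1,-1)$ and $(1,-1,1)$. These pair with $(w_1,w_2,\pm N)$ to numbers of opposite sign once $N>|w_1-w_2|$.
\end{itemize}

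The repair is to reverse the dominance, which is what the paper does. Take $v'$ stratifying $Q$ (by induction) and $w$ with $0<w_1\ll\cdots\ll w_n$, large relative to all $|\langle q_j-q_{j'},v'\rangle|$. Then each fiber edge is oriented by the order of the $w_j$, independently of $q$. Each section edge is oriented as in $Q$, whatever $w$ is, and admissibility is automatic. The paper then reads off $P^+_{(e_i,q)}=\Conv\bigl(\bigcup_{j\le i}\{e_j\}\times(Q_j)^+_q\bigr)$ and concludes with \Cref{lm:toric_strat_crit}. Your quadrilateral criterion via \Cref{prop:strat_bottom_up} would serve equally well once the dominance is corrected. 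Note also that the Cayley polytope may represent a different ample bundle than the given $P$. This is harmless, because stratification depends only on $X$ and $v$.
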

\begin{proof}
The implication (i) $\Rightarrow$ (ii) follows from \Cref{lm:toric_strat_inherit}.
The implication (ii) $\Rightarrow$ (iii) follows from \Cref{lm:2_dim_stratified}.
The implication (iii) $\Rightarrow$ (iv) is \Cref{thm:comb_prod}.
The implication (iv) $\Rightarrow$ (v) is \Cref{thm:GBT}.
To see (v) $\Rightarrow$ (i), we proceed by induction on the number of iterations of projective bundles, the base case being trivial.
By assumption, we may identify $X$ with the projectivized total space of an equivariantly-split toric bundle $\mathcal{E} \deq \mathcal{E}_0 \oplus \cdots \oplus \mathcal{E}_n$ over a generalized Bott tower $Y$.
Let $Q$ be a polytope in the character lattice $M$ of the dense torus of $Y$ which represents an ample toric line bundle on $Y$.
After tensoring $\mathcal{E}$ with a suitably high power of an ample toric line bundle on $Y$, we may assume that $\mathcal{E}_0, \dots, \mathcal{E}_n$ are (very) ample.
Let $Q_0, \dots, Q_n$ be the associated polytopes, all with the same normal fan as $Q$.
Then an equivariant ample line bundle on $X$ is represented by the polytope
\[
P \deq \Conv((\{e_0\} \times Q_0) \cup \cdots \cup (\{e_n\} \times Q_n)) \subseteq \R^n \times M_{\R}
\]
where $e_0 = 0$ and $e_1, \dots, e_n$ are the standard basis vectors of $\R^n$ and $\Conv$ is the convex hull operator.
See \cite[\textsection 3]{Lev_Caley_Poly}, \cite[\textsection 3]{Cox_Caley_Poly} for a discussion of this construction.
Note that we may have redefined $P$ in this process, though this has no effect on the validity of (v)$\Rightarrow$(i).
From the description of its facets in \cite[\textsection 3]{Cox_Caley_Poly}, one sees that the polytope $P$ is combinatorially equivalent to the product $\Delta_n \times Q$ where $\Delta_n$ is the simplex $\Conv(\{e_0, \dots, e_n\})$.
We may therefore identify the vertices of $P$ with the pairs $(e_i, q)$ where $q$ is a vertex of $Q$ and $0 \leq i \leq n$.

Let $v$ be a stratifying covector for $Q$.
Choose $w = (w_1,\dots,w_n) \in \Z^n$ with ${0< w_1 \ltl \cdots \ltl w_n}$.
Let $(e_i, q)$ be any vertex of $P$.
Then $P^+_{(e_i,q)}$ is a face of $P$, so corresponds---under the combinatorial isomorphism $P \cong \Delta_n \times Q$---to a product of a face in $\Delta_n$ and a face in $Q$.
By the choice of $w$, the former must be $\Conv(e_0, \dots, e_i)$; the latter is $Q_q^+$.
Thus
\[
P_{(e_i,q)}^+ = \Conv((\{e_0\} \times (Q_0)_q^+) \cup \dots \cup (\{e_i\} \times (Q_i)_q^+)).
\]    
If $(e_j, p) \in P_{(e_i, q)}^+$, then $j \leq i$ and $p \in Q_q^+$.
From the analogous expression for $P_{(e_j,p)}^+$, we see that $P_{(e_j,p)}^+ \subseteq P_{(e_i,q)}^+$.
By \Cref{lm:toric_strat_crit}, this means the vector $(w,v)$ stratifies $P$.
\end{proof}

\section{Convexity}

\subsection{Generalities}
We recall the definition of $\Gm$-convexity from \cite{buch}.
Here, $X$ will be a smooth complete $\Gm$-variety with finite fixed locus.
\begin{Df}\label[Df]{df:TConv}
Suppose $Y \subseteq X$ is a $\Gm$-stable subvariety.
We say that $Y$ is \emph{$\Gm$-convex} if, for any $\Gm$-stable subvariety $Z \subseteq X$, we have ${Z \subseteq Y \iff Z^{\Gm} \subseteq Y^{\Gm}}$.
\end{Df}
\begin{Rmk}\label[Rmk]{rmk:intersectability}
Suppose that $Y, Z \subseteq X$ are $\Gm$-stable $\Gm$-convex subvarieties.
If $Y\cap Z$ is irreducible, then it too is $\Gm$-convex.
\end{Rmk}
The following lemma shows that $\Gm$-convexity is a rather simple property to test.
Recall the $\midarrow$ notation from \Cref{df:arrow_notation}, as well as the graph $\Gamma_X$ constructed in \Cref{df:orbit_graph}.
\begin{Lm}\label[Lm]{lm:check_orbits}
Let $Y \subseteq X$ be a $\Gm$-stable subvariety of $X$.
Then $Y$ is $\Gm$-convex if and only if for all $x \in X\del X^{\Gm}$ and $p,q\in Y^{\Gm}$ it holds that
$p \midarrowX{x} q$ implies $x \in Y$.
\end{Lm}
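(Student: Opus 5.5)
The forward direction is immediate from \Cref{df:TConv}. Suppose $Y$ is $\Gm$-convex, $p, q \in Y^{\Gm}$, and $p \midarrowX{x} q$. Take $Z \deq \overline{\Gm \cdot x}$. This is a $\Gm$-stable subvariety: irreducible as the closure of the image of the irreducible $\Gm$, and stable because $\Gm \cdot x$ is. Its fixed points are exactly the two limit points $p$ and $q$, since $Z = \Gm \cdot x \sqcup \{p, q\}$ and $x$ is not fixed. Hence $Z^{\Gm} \subseteq Y^{\Gm}$, and convexity gives $Z \subseteq Y$, so $x \in Y$.

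For the converse, assume the orbit condition. The implication $Z \subseteq Y \Rightarrow Z^{\Gm} \subseteq Y^{\Gm}$ is trivial, so let $Z$ be a $\Gm$-stable subvariety with $Z^{\Gm} \subseteq Y^{\Gm}$. We must show $Z \subseteq Y$. Since $Y$ is closed and $Z$ is irreducible, it suffices to show that a dense subset of $Z$ lies in $Y$.

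If $Z$ is a single point, it is a fixed point and lies in $Y^{\Gm}$, so we are done. Otherwise $Z$ is a complete $\Gm$-variety with finite fixed locus, since $Z^{\Gm} \subseteq X^{\Gm}$. For a general point $x \in Z$ we have $x \notin X^{\Gm}$, and by the definition of $Z^{\down}$ and $Z^{\up}$ we get $Z^{\down} \midarrowX{x} Z^{\up}$. Both limits lie in $Z^{\Gm} \subseteq Y^{\Gm}$, so the hypothesis gives $x \in Y$. Thus a dense open subset of $Z$ lies in $Y$, and hence $Z \subseteq Y$.

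The only care needed is the genericity statement: the set of $x \in Z$ with both limits equal to $Z^{\down}, Z^{\up}$ is dense, which is how $Z^{\down}, Z^{\up}$ were defined via constructibility of the \BB subsets. Even without that, it is enough to note that every non-fixed $x \in Z$ has its limits in $Z^{\Gm}$. Indeed the limits lie in $Z$ because $Z$ is closed and $\Gm$-stable. So every point of $Z \setminus Z^{\Gm}$ lies in $Y$, and the remaining points of $Z$ are fixed points already in $Y$. This gives $Z \subseteq Y$ directly, with no density argument, and I would present that cleaner version.
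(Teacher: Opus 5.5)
Your proposal is correct and, in the cleaner version you say you would present, it is exactly the paper's argument: the forward direction applies convexity to the orbit closure $\overline{\Gm\cdot x}$, and the converse uses that every non-fixed point of $Z$ has both of its limits in $Z^{\Gm}\subseteq Y^{\Gm}$. As you note yourself, the preliminary density argument via $Z^{\down}$ and $Z^{\up}$ is unnecessary.
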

\begin{proof}
Suppose $p \midarrowX{x} q$.
Then the $\Gm$-fixed points of $\overline{\Gm \cdot x}$ lie in $Y$.
If $Y$ is $\Gm$-convex, this forces $x \in Y$.
Conversely, suppose the criterion holds for $Y$.
Consider a subvariety $Z \subseteq X$ for which $Z^{\Gm} \subseteq Y^{\Gm}$.
Let $z \in Z$ be a non-fixed point.
Then $p \midarrowX{z} q$ for some $p, q \in Z^{\Gm} \subseteq Y^{\Gm}$.
By hypothesis, we conclude $z \in Y$.
Thus $Z \subseteq Y$.
\end{proof}
\begin{Prop}[{\cite[Proposition 5.3]{buch}}]
If the \BB decomposition of $X$ is a stratification, then each \BB cell closure in $X$ is $\Gm$-convex.
This holds more generally for irreducible intersections of positive and negative \BB cell closures.
\end{Prop}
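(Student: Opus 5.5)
We use \Cref{lm:check_orbits}: it suffices to show that if $Y = \overline{X_r^+}$ (or an irreducible intersection $\overline{X_r^+}\cap\overline{X_s^-}$) contains fixed points $p,q$ and $p \midarrowX{x} q$, then $x \in Y$.

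\textbf{Cell closures.} Take $Y = \overline{X_r^+}$ with $p, q \in Y^{\Gm}$ and $p \midarrowX{x} q$. Since $\lim_{t\to 0} t\cdot x = q$, we have $x \in X_q^+$. By the stratification hypothesis in the form \Cref{thm:dim_strat}(ii), $q \in \overline{X_r^+}$ gives $X_q^+ \subseteq \overline{X_r^+}$. Hence $x \in Y$. This only uses $q \in Y$.

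\textbf{Intersections.} Let $Z$ be an irreducible component, or the irreducible intersection, of $\overline{X_r^+}\cap\overline{X_s^-}$.

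\begin{enumerate}[(1)]
\item By \Cref{cor:strat_reversibility}, the negative decomposition is also a stratification. Applying the previous paragraph to the reversed action shows that $\overline{X_s^-}$ is $\Gm$-convex.
\item By \Cref{rmk:intersectability}, the intersection of two $\Gm$-convex subvarieties is $\Gm$-convex whenever it is irreducible. This settles the irreducible case.
\end{enumerate}

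The remark's proof is immediate: if $Z' \subseteq X$ is $\Gm$-stable with $Z'^{\Gm}$ contained in both fixed-point sets, then $Z'$ lies in each subvariety, hence in their intersection.

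\textbf{Main obstacle.} The only delicate point is the direction of the limit convention. The decisive fact is $x \in X_q^+$ with $q$ the positive limit, so it is the endpoint $q$ that must lie in the positive cell closure. In the negative case the roles are swapped: $x \in X_p^-$ and $p \in \overline{X_s^-}$. Both endpoints lie in $Z$ by assumption, so each containment holds. No dimension counting is needed beyond the stratification hypothesis itself.
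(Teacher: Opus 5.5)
Your proof is correct and is essentially the paper's argument: apply \Cref{lm:check_orbits}, note that $x \in X_q^+$ with $q$ in the cell closure, so the stratification property gives $X_q^+ \subseteq \overline{X_r^+}$, and reduce the intersection case to cell closures via \Cref{rmk:intersectability}. The one thing you add is an explicit appeal to \Cref{cor:strat_reversibility} to obtain $\Gm$-convexity of the negative cell closures, a step the paper's proof uses only implicitly.
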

\begin{proof}
By \Cref{rmk:intersectability}, it suffices to deal with the case of \BB cell closures.
Let $p \in X^{\Gm}$.
We apply the criterion in \Cref{lm:check_orbits} to $\overline{X_p^+}$.
Suppose $q \midarrowX{x} r$ for some $x \in X \del X^{\Gm}$ and $q, r \in \overline{X_p^+}$.
The stratification assumption implies $X_r^+ \subseteq \overline{X_p^+}$.
Thus $x \in X_r^+ \subseteq \overline{X_p^+}$.
\end{proof}
Remarkably, if we are looking for \Gm-convex subvarieties, we need not look much beyond intersections of opposite \BB cell closures.
\begin{Prop}\label[Prop]{prop:convex_is_richardson}
Assume the \BB decomposition of $X$ is filterable.
Let $Y$ be a $\Gm$-stable \Gm-convex subvariety of $X$. Then $Y$ is an irreducible component of $\overline{X_p^+} \cap \overline{X_q^-}$ for some $p, q \in X^{\Gm}$.
\end{Prop}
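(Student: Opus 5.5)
Take $p \deq Y^{\up}$ and $q \deq Y^{\down}$, the fixed points toward which a general point of $Y$ flows as $t \to 0$ and $t \to \infty$ (well defined because $Y$ is a complete $\Gm$-variety with finite fixed locus). A general $y \in Y$ then lies in $X_p^+ \cap X_q^-$, so $Y \subseteq \overline{X_p^+} \cap \overline{X_q^-}$. Choose an irreducible component $Z$ of $\overline{X_p^+} \cap \overline{X_q^-}$ containing $Y$. By $\Gm$-convexity it suffices to prove $Z^{\Gm} \subseteq Y^{\Gm}$; then $Z \subseteq Y$, so $Y = Z$. Note that $Z$ is $\Gm$-stable, since $\Gm$ is connected and permutes the components of the $\Gm$-stable set $\overline{X_p^+}\cap\overline{X_q^-}$.

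Let $r \in Z^{\Gm}$. Since $r \in \overline{X_p^+}$, \Cref{thm:broken_trajectory} applied to the complete $\Gm$-variety $\overline{X_p^+}$ (whose $\up$-point is $p$) gives $r \middarrow p$ in $\Gamma_X$. Likewise $r \in \overline{X_q^-}$ gives $q \middarrow r$, by \Cref{thm:broken_trajectory} for $\overline{X_q^-}$. So there is a directed chain
\[
q = r_0 \midarrow r_1 \midarrow \dots \midarrow r_k = r \midarrow \dots \midarrow r_m = p
\]
in $\Gamma_X$, with each arrow realized by some orbit $r_{i} \midarrowX{x_i} r_{i+1}$ in $X$.

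The key step is to force every point of this chain into $Y$. The criterion of \Cref{lm:check_orbits} only applies to orbits whose two ends already lie in $Y$, so chains in $\Gamma_X$ alone do not suffice: $\Gm$-convexity constrains orbits but does not let us propagate membership along a chain. Instead, I would use the subvariety $W \deq \overline{\Gm\cdot x_0} \cup \dots$ only with care, since it is reducible. A cleaner route is to apply the definition of convexity to suitable irreducible $\Gm$-stable $Z' \subseteq X$ with $Z'^{\Gm} \subseteq Y^{\Gm}$.

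Here filterability enters. By \Cref{thm:filt_iff_acyclic}, $\middarrow$ is a partial order, and $q \middarrow r \middarrow p$. I would argue by contradiction. Suppose $Z^{\Gm} \not\subseteq Y^{\Gm}$, and pick $r \in Z^{\Gm}\setminus Y$ that is $\middarrow$-minimal among such points. Apply \Cref{thm:broken_trajectory} to $Z$ itself; note that $Z^{\up} = p$ and $Z^{\down} = q$, because a general point of $Z \supseteq Y$ is limit-constrained to $\overline{X_p^+}\cap \overline{X_q^-}$, and acyclicity forbids anything strictly above $p$ or below $q$ inside it. This gives a chain inside $Z$ from $q$ through $r$, and hence an orbit $s \midarrowX{x} r$ in $Z$ with $s \in Y$ by minimality. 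Symmetrically, choosing $r$ $\middarrow$-maximal would give an orbit $r \midarrowX{x'} s'$ with $s' \in Y$, but a single $r$ need not be both minimal and maximal, and that is the gap.

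I would close it by working with the $\Gm$-stable closed set $Y \cup \overline{X_r^+}\cap Z$ and its irreducible components, whose fixed points lie in $Y$ below $r$. I expect this step, turning the chain condition into containment via convexity, to be the main obstacle. The most promising way through is an induction on the filtering order that uses orbits inside $Z$ connecting $Y$-points, together with \Cref{lm:check_orbits}.
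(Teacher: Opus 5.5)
Your proposal has a genuine gap, and you point to it yourself. After reducing the problem to showing $Z^{\Gm} \subseteq Y^{\Gm}$, you never establish that inclusion. The minimal-counterexample step only produces an orbit $s \midarrowX{x} r$ with one end in $Y$, and \Cref{lm:check_orbits} does not apply to such an orbit. The proposed closing step, via $Y \cup (\overline{X_r^+}\cap Z)$ and an induction on the filtering order, is not an argument. The reduction is also the wrong target: controlling every fixed point of $Z$ is far more than you need.

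The missing idea follows directly from a fact you already prove, namely $Z^{\up} = p = Y^{\up}$ and $Z^{\down} = q = Y^{\down}$. To justify this cleanly, first note $Z^{\up} \in Z \subseteq \overline{X_p^+}$, so $Z^{\up} \middarrow p$ by \Cref{thm:broken_trajectory} applied to $\overline{X_p^+}$. Next, $p \in Y \subseteq Z$, so $p \middarrow Z^{\up}$ by \Cref{thm:broken_trajectory} applied to $Z$. Filterability makes $\middarrow$ antisymmetric, giving $Z^{\up} = p$, and the $\down$ case is symmetric.

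Now a general point $z \in Z$, which is non-fixed once $\dim Z > 0$, satisfies $q \midarrowX{z} p$. Both ends of this orbit lie in $Y^{\Gm}$, so \Cref{lm:check_orbits} gives $z \in Y$ directly. Such points are dense in $Z$ and $Y$ is closed, so $Z \subseteq Y$, hence $Y = Z$. This is exactly the paper's proof. Convexity is applied to the orbit of a general point of $Z$, not to its fixed points, so no chain-propagation argument is needed.
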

\begin{proof}
We have $Y \subseteq \overline{X_{Y^{\up}}^+}$ and similarly $Y \subseteq \overline{X_{Y^{\down}}^-}$.
Since $Y$ is irreducible, it is contained in some irreducible component $Z$ of ${\overline{X_{Y^{\up}}^+} \cap \overline{X_{Y^{\down}}^-}}$.
By \Cref{thm:broken_trajectory}, $Z^{\up} \in \overline{X_{Y^{\up}}^+}$ implies ${Z^{\up} \middarrow Y^{\up}}$.
Similarly, $Y^{\up} \in Z$ implies $Y^{\up} \middarrow Z^{\up}$. Since $X$ is filterable, $\middarrow$ is a partial order (see \Cref{thm:filt_iff_acyclic}), so $Y^{\up} = Z^{\up}$.
Symmetrically, $Y^{\down} = Z^{\down}$.
For a general point $z \in Z$, we have $Y^{\down} = Z^{\down} \midarrowX{z} Z^{\up} = Y^{\up}$.
The $\Gm$-convexity of $Y$ forces $z \in Y$.
We conclude $Z \subseteq Y$.
Hence $Y = Z$.
\end{proof}
The following is the analogue for the convexity property of Propositions \ref{prop:filt_product} and \ref{prop:strat_product}.
\begin{Lm}\label[Lm]{lm:product_convex}
Let $X, Y$ be smooth complete $\Gm$-varieties with finite fixed loci.
Let $Z \subseteq X$ and $W \subseteq Y$ be $\Gm$-stable subvarieties.
Equipping $X \times Y$ with the diagonal $\Gm$-action, we have that $Z \times W$ is $\Gm$-convex if and only if both $Z$ and $W$ are.
\end{Lm}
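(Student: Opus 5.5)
We will test convexity orbit by orbit using \Cref{lm:check_orbits}, combined with the product formulas of \Cref{lm:Gm_product}. A point $(x,y)\in X\times Y$ is non-fixed precisely when at least one of $x,y$ is non-fixed. Its limits are computed componentwise:
\[
\lim_{t\to 0} t\cdot(x,y) = \Bigl(\lim_{t\to 0}t\cdot x,\ \lim_{t\to 0}t\cdot y\Bigr),
\]
and likewise as $t\to\infty$. Also $(Z\times W)^{\Gm}=Z^{\Gm}\times W^{\Gm}$. Note that $Z\times W$ is irreducible and $\Gm$-stable, so the definition applies to it.

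For the ``if'' direction, assume $Z$ and $W$ are $\Gm$-convex. Suppose $(p,p')\midarrowX{(x,y)}(q,q')$ with $(p,p'),(q,q')\in Z^{\Gm}\times W^{\Gm}$. We show $x\in Z$ by two cases.
\begin{itemize}
\item If $x$ is fixed, then $x=p=q\in Z$ directly.
\item If $x$ is not fixed, then $p\midarrowX{x}q$ with $p,q\in Z^{\Gm}$, so $x\in Z$ by \Cref{lm:check_orbits} applied to $Z$.
\end{itemize}
The same argument gives $y\in W$, so $(x,y)\in Z\times W$. By \Cref{lm:check_orbits}, $Z\times W$ is $\Gm$-convex.

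For the ``only if'' direction, assume $Z\times W$ is $\Gm$-convex. We verify the criterion of \Cref{lm:check_orbits} for $Z$. Let $p\midarrowX{x}q$ with $p,q\in Z^{\Gm}$. Since $W$ is a nonempty complete $\Gm$-variety, Borel's fixed point theorem gives a fixed point $r\in W^{\Gm}$; alternatively, $W^{\up}$ exists by the discussion preceding \Cref{thm:broken_trajectory}. Then $(p,r)\midarrowX{(x,r)}(q,r)$, and both endpoints lie in $(Z\times W)^{\Gm}$. Convexity of $Z\times W$ gives $(x,r)\in Z\times W$, hence $x\in Z$. The argument for $W$ is symmetric, using a fixed point of $Z$.

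There is no serious obstacle here; the only points needing care are the following.
\begin{itemize}
\item The case where one coordinate of a non-fixed product point is itself fixed must be handled, which is the case split in the ``if'' direction.
\item In the ``only if'' direction we need a fixed point in the other factor; it exists by completeness.
\end{itemize}
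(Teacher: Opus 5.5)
Your proof is correct and follows the paper's argument essentially verbatim: both directions go through \Cref{lm:check_orbits} and the componentwise limits of \Cref{lm:Gm_product}, with the same case split ($x$ fixed or not) for the ``if'' direction and the same trick of pairing an offending orbit in $X$ with a fixed point of $W$ for the converse (the paper phrases this direction contrapositively). Your explicit justification that $W$ has a fixed point fills in a detail the paper leaves implicit.
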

\begin{proof}
We use \Cref{lm:Gm_product} and appeal to \Cref{lm:check_orbits}.
Suppose $Z$ and $W$ are both $\Gm$-convex.
If $(p_1, q_1) \midarrowX{(x,y)} (p_2, q_2)$ holds for some $(x, y) \in X \times Y$, $p_1, p_2 \in Z^{\Gm}$, and $q_1, q_2 \in W^{\Gm}$, then either $p_1 = x = p_2$ or $p_1 \midarrowX{x} p_2$.
In either case, $x \in Z$ by $\Gm$-convexity.
Similarly, $y \in W$.
So $(x,y) \in Z \times W$.

Now suppose for instance that $Z$ is \emph{not} $\Gm$-convex.
Choose $p_1, p_2 \in Z^{\Gm}$ and $x \in X \del Z$ such that $p_1 \midarrowX{x} p_2$.
Then $(x, q) \notin Z \times W$ but $(p_1, q) \midarrowX{(x, q)} (p_2, q)$ shows that this product is not $\Gm$-convex.
\end{proof}
In low dimensions, we have the following positive result.
\begin{Prop}\label[Prop]{prop:uGm_convex_small}
If $\dim X \leq 2$ then all its \BB cell closures are $\Gm$-convex.
\end{Prop}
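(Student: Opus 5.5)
We use the criterion of \Cref{lm:check_orbits}: for a fixed point $p$ and $Y = \overline{X_p^+}$, we must show that whenever $q \midarrowX{x} r$ with $q, r \in Y^{\Gm}$ and $x \in X \del X^{\Gm}$, we have $x \in Y$. The plan is to split according to $\dim Y = \dim X_p^+ \in \{0, \dots, \dim X\}$ and dispose of the extreme cases first. If $\dim Y = 0$ then $Y = \{p\}$, and $Y^{\Gm} = \{p\}$ forces $q = r = p$, contradicting $q \neq r$ (\Cref{rmk:loopless}); so there is nothing to check. If $\dim Y = \dim X$ then $Y = X$ and the claim is trivial. This settles curves, so we may assume $\dim X = 2$ and $\dim Y = 1$.

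In that case $Y$ is the closure of a single one-dimensional cell $X_p^+ \cong \mathbb{A}^1$. Since $Y$ is $\Gm$-stable and irreducible of dimension one, it is the closure of one non-trivial orbit, so $Y^{\Gm} = \{s, p\}$ where $s \midarrow p$ via a general point of $X_p^+$. Given $q \midarrowX{x} r$ with $q, r \in \{s,p\}$ and $q\ne r$, we have either $s \midarrowX{x} p$ or $p \midarrowX{x} s$. In the first case $x \in X_p^+ \subseteq Y$ by definition of the cell, and we are done. In the second case we would obtain a directed cycle $s \midarrow p \midarrow s$ in $\Gamma_X$. Since $X$ is a smooth complete surface, it is projective (as noted before \Cref{thm:smoo_sur_are_toric}), so its \BB decomposition is filterable and $\Gamma_X$ is acyclic by \Cref{thm:filt_iff_acyclic}. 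This is a contradiction.

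The one point needing care is the claim $Y^{\Gm}=\{s,p\}$, namely that a one-dimensional $\Gm$-stable irreducible closed subvariety contains exactly two fixed points. This holds because $Y$ is the closure of a single orbit $\Gm\cdot y$, and the fixed points in $\overline{\Gm\cdot y}$ are exactly the two limits, which are distinct since $y \in X_p^+$ is not fixed. The same argument also applies to the negative cells by reversing the action. The argument uses nothing surface-specific beyond filterability and the fact that $\dim Y\in\{0,1,\dim X\}$; this explains why it breaks in dimension $3$, where $\dim Y=2<\dim X$ allows more than two fixed points in $Y$. Alternatively, one could invoke \Cref{thm:smoo_sur_are_toric} and check polygons directly, but the argument above avoids that.
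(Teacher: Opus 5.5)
Your argument is correct, but it takes a different route from the paper. The paper treats curves the same way you do. For surfaces, however, it invokes \Cref{thm:smoo_sur_are_toric} to make $X$ a toric surface; the proof of that theorem in \Cref{append2} rests on Nagata's classification, Castelnuovo contraction, Blanchard's lemma and the automorphism groups of Hirzebruch surfaces. It then checks the polygon criterion of \Cref{lm:face-condition} in \Cref{prop:twodimensions}: a face $F$ with $F^{\up},F^{\down}\in E$ is either all of $P$, which forces $E=P$, or an edge or vertex, which is the hull of its two extremes.

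You instead work intrinsically with \Cref{lm:check_orbits}. You use only three facts: $\Gamma_X$ is loopless, a one-dimensional orbit closure has exactly two fixed points, and $\Gamma_X$ is acyclic because the decomposition is filterable (smooth complete surfaces are projective, then \Cref{thm:filt_iff_acyclic}). This avoids the classification machinery entirely. It also proves more: in any smooth complete $\Gm$-variety with finite fixed locus and filterable \BB decomposition, every cell closure of dimension $0$, $1$ or $\dim X$ is $\Gm$-convex. So convexity can only fail for cells of intermediate dimension $2\le \dim X_p^+\le \dim X-1$. This matches the threefold examples of \Cref{subsec:bad_convex}, where the offending cell closures are surfaces. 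The paper's route mainly buys uniformity with the polytope dictionary used in the rest of that section; the toric reduction is not needed for this statement.
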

\begin{proof}
If $\dim X = 0 \text{ or } 1$, the result is trivial since $X$ is either a point or $\mathbb{P}^1$.
Suppose $\dim X = 2$.
By \Cref{thm:smoo_sur_are_toric}, $X$ is a smooth projective toric variety and its $\Gm$-action is given by a suitable cocharacter of its dense torus.
So the result follows from \Cref{prop:twodimensions} below.
\end{proof}
We now specialize to the case of a smooth projective toric variety $X$ with torus $T$ and associated polytope $P$.
We assume that the choice of an admissible cocharacter $v$ has been made, equipping $X$ with the structure of a $\Gm$-variety.
As pointed out in \cite[Ex.\,5.6]{buch}, the cell closures $\overline{X_p^+}$ are $T$-convex.
The authors go on to ask whether they are moreover $\Gm$-convex.
The following lemma will enable us to inspect \BB cell closures for $\Gm$-convexity.
\begin{Lm}\label[Lm]{lm:face-condition}
Let $Y \subseteq X$ be a $T$-invariant subvariety.
Let $E$ be the face of $P$ such that $Y = \overline{O_E}$.
Then $Y$ is $\Gm$-convex if and only if for every face $F$ of $P$, if $F^\up, F^\down \in E$ then $F\subseteq E$.
\end{Lm}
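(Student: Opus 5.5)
The plan is to reduce everything to \Cref{lm:check_orbits} and translate its orbit-level criterion into faces of $P$, using \Cref{lm:Fup} together with its symmetric counterpart for $t\to\infty$. Every non-fixed point $x \in X$ lies in a unique $T$-orbit $O_F$, where $F$ is a face of $P$ of positive dimension. By \Cref{lm:Fup} and the remark after it, $F^\down \midarrowX{x} F^\up$. Moreover, $x \in Y = \overline{O_E}$ if and only if $O_F \subseteq \overline{O_E}$, which holds if and only if $F \subseteq E$, since $\overline{O_E} = \bigcup_{E' \subseteq E} O_{E'}$. Also, a vertex $p$ lies in $Y^{\Gm} = Y^T$ exactly when $p \in E$.

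For the forward direction, suppose $Y$ is $\Gm$-convex and let $F$ be a face with $F^\up, F^\down \in E$. If $F$ is a vertex, then $F \subseteq E$ trivially. Otherwise, pick any $x \in O_F$; then $F^\down \midarrowX{x} F^\up$ with both endpoints in $Y^{\Gm}$. By \Cref{lm:check_orbits}, $x \in Y$, and hence $F \subseteq E$.

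For the converse, assume the face condition holds. Take $x \in X \setminus X^{\Gm}$ and $p, q \in Y^{\Gm}$ with $p \midarrowX{x} q$. Let $F$ be the face with $x \in O_F$. By \Cref{lm:Fup} and its reverse, $q = F^\up$ and $p = F^\down$, and both lie in $E$. The hypothesis then gives $F \subseteq E$, so $x \in O_F \subseteq Y$. Applying \Cref{lm:check_orbits} shows that $Y$ is $\Gm$-convex.

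The only point needing care is the identification of the $t\to\infty$ limit as $F^\down$. This follows by applying \Cref{lm:Fup} to $-v$, which is still admissible and swaps maximizers with minimizers. Beyond that, the argument is purely combinatorial bookkeeping.
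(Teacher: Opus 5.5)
Your proposal is correct and follows the paper's own proof: place each non-fixed point $x$ in an orbit $O_F$, use \Cref{lm:Fup} and its reverse to get $F^{\down} \midarrowX{x} F^{\up}$, and translate the criterion of \Cref{lm:check_orbits} into faces of $P$ via the orbit--face correspondence. The paper compresses this into a few lines; your version simply writes out both directions, the vertex case, and the $-v$ justification of the $t \to \infty$ limit explicitly.
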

\begin{proof}
Each non-fixed point $x \in X$ lies in the orbit $O_F$ for some face $F$ of $P$.
By \Cref{lm:Fup}, we have $F^{\down} \midarrowX{x} F^{\up}$.
The conclusion now follows from \Cref{lm:check_orbits}, and the standard dictionary for projective toric varieties and their polytopes.
\end{proof}
For convenience, we employ the following definition.
\begin{Df}
For a given choice of $v \in N$, if every \BB cell closure in $X$ is $\Gm$-convex we will say that $X$ is \emph{$v$-well-rounded}.
We will similarly speak of the $v$-well-roundedness of the polytope $P$.
When the choice of $v$ is clear, we will simply say that $X$ (or $P$) is well-rounded.
\end{Df}
The following analogue of \Cref{lm:toric_strat_inherit} shows that well-roundedness is inherited by $T$-stable subvarieties.
\begin{Lm}\label[Lm]{lm:inheritance}
Let $Y \subseteq X$ be a $T$-stable subvariety.
Let $Q$ be the face of $P$ such that $Y = \overline{\orb{Q}}$.
If $X$ is $v$-well-rounded then $Y$ is $\overline{v}$-well-rounded where $\overline{v} \deq v + N_Q$ is the image of $v$ in $N/N_Q$.
\end{Lm}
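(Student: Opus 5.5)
The plan is to reduce everything to the combinatorial criterion of \Cref{lm:face-condition}, applied inside $Q$ with the cocharacter $\overline v$, and to transport it to $P$ with \Cref{lm:BB_inheritance}. First note that $\overline v$ is admissible for $Y$, since the edges of $Q$ are edges of $P$ and $\overline v$ pairs with vectors along $Q$ exactly as $v$ does. For the same reason, for any face $F \subseteq Q$ the vertices $F^{\up}, F^{\down}$ computed with respect to $\overline v$ coincide with those computed with respect to $v$: on the linear span of $Q - Q$ the functional $\inner{-,\overline v}$ agrees with $\inner{-,v}$ up to an additive constant. This compatibility needs a short check, using that $M \cap \sigma_Q^\perp$ is the character lattice of $\overline{O_Q}$ and that $Q$ sits in a translate of it.

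Next, fix a vertex $q$ of $Q$; we must show $\overline{Y_q^+}$ is $\overline v$-convex in $Y$. By \Cref{lm:BB_inheritance}, the face of $Q$ corresponding to $\overline{Y_q^+}$ is $Q_q^+ = Q \cap P_q^+$. By \Cref{lm:face-condition} applied to the toric variety $Y$, it suffices to show the following. Let $F$ be any face of $Q$ with $F^{\up}, F^{\down} \in Q \cap P_q^+$. Then $F \subseteq Q \cap P_q^+$.

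Such an $F$ is also a face of $P$, and its extreme vertices agree for $v$ and $\overline v$ by the first step. Hence $F^{\up}, F^{\down} \in P_q^+$. Since $X$ is $v$-well-rounded, $\overline{X_q^+}$ is $\Gm$-convex. \Cref{lm:face-condition} on $X$ then gives $F \subseteq P_q^+$. Combined with $F \subseteq Q$, this yields $F \subseteq Q \cap P_q^+ = Q_q^+$, as required.

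The only step with any real content is the first one: identifying the induced $\Gm$-structure on $Y$ with the restriction of $v$ to the face $Q$. That means checking that the maximizer and minimizer of $\overline v$ on subfaces of $Q$ are those of $v$. This is the same reduction already used in the proof of \Cref{lm:Fup} and implicitly in \Cref{lm:BB_inheritance}, so I expect it to be routine. Alternatively, one can avoid coordinates entirely. Since $Y$ is $\Gm$-stable, the $\Gm$-action on $Y$ through $\overline v$ is just the restriction of the action on $X$. The limits of points of $O_F \subseteq Y$ are therefore the same in $X$ and in $Y$, and \Cref{lm:Fup} identifies them with $F^{\up}$ and $F^{\down}$ in either setting.
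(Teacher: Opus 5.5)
Your proof is correct and is essentially the paper's argument. Both use \Cref{lm:BB_inheritance} to get $Q_q^+ = Q \cap P_q^+$, then the $\Gm$-convexity of $\overline{X_q^+}$ (via \Cref{lm:face-condition} on $P$) to get $F \subseteq P_q^+$, and conclude with \Cref{lm:face-condition} on $Q$. Your extra check that $\overline v$ is admissible and picks out the same vertices $F^{\up}, F^{\down}$ on faces of $Q$ is left implicit in the paper, and you handle it fine.
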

\begin{proof}
Let $q$ be a vertex of $Q$.
\Cref{lm:BB_inheritance} states that $P_q^+ \cap Q = Q_q^+$.
Let $F$ be a face of $Q$ such that $F^\up, F^\down \in Q_q^+$.
Then $F\subseteq P_q^+$ by the $\Gm$-convexity of $\overline{\orb{P_q^+}}$.
Hence $F \subseteq Q \cap P_q^+ = Q_q^+$.
This suffices by \Cref{lm:face-condition}.
\end{proof}
\begin{Prop}\label[Prop]{prop:twodimensions}
If $\dim X \leq 2$ then $X$ is $v$-well-rounded for every admissible $v \in N$.
\end{Prop}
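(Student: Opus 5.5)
The plan is to reduce everything to the combinatorial criterion of \Cref{lm:face-condition}. Fix an admissible $v$ and a vertex $q$ of $P$, and put $E \deq P_q^+$. We must check that every face $F$ of $P$ with $F^\up, F^\down \in E$ satisfies $F \subseteq E$.

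Two reductions come first. If $F$ is a single vertex, then $F = F^\up \in E$, so there is nothing to prove. If $F$ has more than one vertex, then admissibility of $v$ forces $F^\up \neq F^\down$: otherwise $\inner{-,v}$ would be constant on $F$, hence on some edge of $F$. We also use that a vertex of $P$ all of whose neighbours have smaller $v$-value is the global maximizer $P^\up$, since a local maximum of a linear functional on a polytope is global. The same argument applied to the face $P_q^+$ itself identifies it, via \Cref{cor:BB_face}, as the face spanned by $q$ and its lower neighbours; compare (\ref{eq:arrowcount}).

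Now split according to $\dim P$ and $\dim E$.

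\begin{itemize}
\item If $\dim P \leq 1$, the statement is immediate: $P$ is a point or a segment, and every face containing two vertices of $E$ is contained in $E$.
\item Let $\dim P = 2$. If $E = P$ (that is, $q = P^\up$), the criterion is trivial.
\item If $E = \{q\}$, then every $F$ with $F^\up, F^\down \in E$ has $F^\up = F^\down$, so $F$ is a vertex by the first reduction.
\item The remaining case is that $E$ is an edge $[p,q]$ with $p \midddarrow q$. A face $F \neq P$ with two vertices is an edge of the polygon. Since two vertices of a polygon are joined by at most one edge, $F^\up, F^\down \in \{p,q\}$ forces $F = E$.
\end{itemize}

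The main point, and the only genuine obstacle, is excluding $F = P$ in that last case. This requires $P^\up \notin E$. Neither endpoint of $E$ can be $P^\up$. The vertex $p$ is not the maximizer, since $p \midddarrow q$. The vertex $q$ is not the maximizer either: if it were, all of its neighbours would lie below it, and $P_q^+$ would be all of $P$ rather than an edge. Hence $P^\up \notin E$, so $F = P$ does not satisfy the hypothesis, and the criterion of \Cref{lm:face-condition} holds for every cell closure. This proves the proposition.
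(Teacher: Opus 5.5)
Your proof is correct and essentially the same as the paper's. Both reduce to \Cref{lm:face-condition} and use two facts: a face of dimension at most one is the convex hull of its two extremal vertices, and $P^{\up} \in E$ forces $E = P$. The paper splits on $\dim F$ rather than on $\dim E$, which avoids your separate cases for $E$ a vertex, an edge, or all of $P$, but the content is the same.
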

\begin{proof}
Let $E = P_p^+$ for some vertex $p$ of $P$.
We show that $\overline{\orb{E}} = \overline{X_p^+}$ is $\Gm$-convex.
Let $F$ be any face of $P$ and suppose $F^{\up} \in E$ and $F^{\down} \in E$.
If $\dim F = 2$ then $F^{\up} = E^{\up}$, whence $\overline{\orb{E}} = X$ is trivially $\Gm$-convex.
On the other hand if $\dim F \leq 1$ then $F$ is the convex hull of $F^{\up}$ and $F^{\down}$ and is therefore contained in $E$.
This gives $\Gm$-convexity by \Cref{lm:face-condition}.
\end{proof}
In three dimensions, we have the following test for well-roundedness.
\begin{Prop}\label[Prop]{thm:3d-iff}
Assuming $\dim X = 3$, $X$ is $v$-well-rounded if and only if for every facet $F$ of $P$, one of the following holds:
\begin{enumerate}[(i)]
\item $F^{\up} = P^{\up}$.
\item The vertices $F^{\up}$ and $F^{\down}$ are not adjacent.
\item $F^{\up}$ and $F^{\down}$ lie on the intersection of $F$ with a facet $L$ of $P$ containing $P^{\up}$.
\end{enumerate}
\end{Prop}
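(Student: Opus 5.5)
The plan is to reduce everything to \Cref{lm:face-condition} and to exploit two facts special to simple $3$-polytopes: every edge lies in exactly two facets, and a facet $E$ equals the intersection of $P$ with its affine hull. By \Cref{lm:face-condition}, $X$ is $v$-well-rounded if and only if the following holds for every vertex $q$ with $E \deq P_q^+$ and every face $F$ of $P$: if $F^\up, F^\down \in E$ then $F \subseteq E$.

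I would first dispose of the trivial pairs $(E,F)$.
\begin{enumerate}[(a)]
\item If $E = P$ there is nothing to prove.
\item If $F$ is a vertex, the condition is automatic.
\item If $F$ is an edge, then $F = \Conv(F^\up, F^\down)$. This segment lies in $E$ by convexity, so $F \subseteq E$ (in the $2$-dimensional case because $E = P \cap \operatorname{aff}(E)$).
\item If $F = P$ and $E \neq P$, then $P^\up \in E$ forces $P^\up = E^\up = q$. By \Cref{cor:BB_face} this gives $E = P_q^+ = P$, a contradiction.
\end{enumerate}
So the only possible violations have $F$ a facet, $F \not\subseteq E$, and $E$ a proper face of dimension $1$ or $2$ containing $F^\up$ and $F^\down$. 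In that situation $F \cap E$ is a proper face of $F$ containing both vertices, and I would argue it must be an edge $e$ joining them. Indeed, if $E$ is a facet then $F\cap E$ is an edge or a vertex, and $F^\up\neq F^\down$. If $E$ is an edge then $E = e$ itself. Hence a violation forces $F^\up, F^\down$ to be adjacent, which explains condition (ii). Also, $F^\up \ne P^\up$, since otherwise $P^\up \in E$ and case (d) again gives $E = P$; this explains condition (i).

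Now fix a facet $F$ with $F^\up \ne P^\up$ and $F^\up, F^\down$ adjacent along the edge $e$. Let $L$ be the unique other facet containing $e$. The faces containing $e$ are exactly $e, F, L, P$, so $F$ yields a violation if and only if $e$ or $L$ is a Bia\l ynicki-Birula face.
\begin{itemize}
\item The edge $e$ is never of the form $P_r^+$. Its top vertex is $F^\up$, but \Cref{cor:BB_face} says $P_{F^\up}^+$ is the maximal face with top $F^\up$, and this face contains $F \supsetneq e$.
\item For $L$: by \Cref{cor:BB_face}, $L = P_{L^\up}^+$ unless the only larger face $P$ also has top $L^\up$. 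Thus $L$ is a Bia\l ynicki-Birula face if and only if $P^\up \notin L$.
\end{itemize}
Therefore a violation occurs exactly when (i), (ii) fail and $P^\up \notin L$, that is, when (iii) fails too. This gives the equivalence in both directions: the converse direction is just the observation that $L = P_{L^\up}^+$ is then a cell closure face containing $F^\up, F^\down$ but not $F$.

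The step I expect to need the most care is the reduction to adjacency: showing that $E \cap F$ is an edge through both extremes, and checking that the case $\dim E = 1$ produces nothing new. I would handle it by the explicit face-lattice argument above, using simplicity to know that $e$ lies in exactly the two facets $F$ and $L$. I would also note that (iii) is to be read with $L$ the facet meeting $F$ along the edge $F^\up F^\down$.
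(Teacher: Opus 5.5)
Your proof is correct and is essentially the paper's argument. After disposing of $\dim F \neq 2$ and of $E = P$ via \Cref{lm:face-condition} and the maximality in \Cref{cor:BB_face}, both you and the paper observe that the faces through the edge joining $F^{\up}$ and $F^{\down}$ are that edge, $F$, the other facet $L$, and $P$; among those not containing $F$, only $L$ can be a cell-closure face, and exactly when $P^{\up} \notin L$. Your reading of (iii) with $L \neq F$ loses nothing, since taking $L = F$ in (iii) just recovers (i).
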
 
\begin{proof}
We first show the ``if'' direction.
Let $E \deq P_p^+$ for some vertex $p$ of $P$, i.e.\ $E$ is maximal for the property that $E^{\up} = p$.
Let $F$ be a face of $P$ with $F^{\up}, F^{\down} \in E$.
If $\dim F \leq 1$, then $F$ is the convex hull of $\{F^{\up}, F^{\down}\}$ and so $F \subseteq E$.
If $\dim F = 3$, then $F^{\up} = P^{\up} = E^{\up}$, so the maximality condition on $E$ implies $E = P = F$.
We may therefore assume $\dim F = 2$ in the following.

If (i) holds then $P^{\up} \in E$ implies $E = P \supseteq F$ by the maximality condition.
If (ii) holds, then $F \cap E$ is at least two-dimensional and thus $F \subseteq E$.
Suppose (iii) holds.
Note that $\dim E \leq 1$ would imply $E \subseteq F$ and $E^{\up} = F^{\up}$, violating maximality.
We must therefore have either $E = P$ or $E = F$ or $E = L$.
The last case gives $E^{\up} = L^{\up} = P^{\up}$, violating maximality.

For the converse, suppose all three conditions fail for some facet $F$ of $P$.
Let $E \neq F$ be the other facet of $P$ containing $F^{\up}, F^{\down}$.
Since $P^+_{E^\up} \neq P$ by the failure of (iii), we have $P^+_{E^\up} = E$.
Thus $\overline{X_{E^\up}^+}$ is a non-$\Gm$-convex \BB cell closure by \Cref{lm:face-condition}.
\end{proof}
We will use \Cref{thm:3d-iff} in the next subsection to construct examples of non-well-rounded polytopes.
We conclude this subsection by showing that smooth zonotopes are always well-rounded.
Recall that a zonotope is by definition a polytope all of whose faces are centrally symmetric.
In particular, every face of a zonotope is again a zonotope.
\begin{Prop}\label[Prop]{prop:zonotope}
If $P$ is a smooth lattice zonotope, then, for every admissible $v \in N$, each $T$-orbit closure
(in particular, each \BB cell closure) is $\Gm$-convex.
\end{Prop}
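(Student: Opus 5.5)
By \Cref{lm:face-condition}, applied to the $T$-orbit closure $\overline{O_E}$ for an arbitrary face $E$ of $P$, it suffices to prove a purely combinatorial statement. For every pair of faces $E, F$ of $P$ with $F^{\up}, F^{\down} \in E$, we must show $F \subseteq E$. The main tool will be the central symmetry of the face $F$.

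First I reduce to $E \subseteq F$. The intersection $G \deq E \cap F$ is a face of $F$ (and of $E$) containing both $F^{\up}$ and $F^{\down}$. So it is enough to show that a face $G$ of $F$ containing both $F^{\up}$ and $F^{\down}$ must equal $F$: then $F = G \subseteq E$. We may therefore replace $P$ by $F$, which is again a (smooth) zonotope, and $v$ by its restriction to the affine span of $F$. This restriction is still admissible, since admissibility means only that $v$ is not perpendicular to any edge. The claim becomes:
\begin{itemize}
\item[] \emph{if $Z$ is a zonotope, $v$ is a linear functional nonconstant on every edge of $Z$, and $G$ is a face of $Z$ containing both $Z^{\up}$ and $Z^{\down}$, then $G = Z$.}
\end{itemize}

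To prove this, use the central symmetry of $Z$. Let $c$ be the center and $\iota(x) = 2c - x$ the symmetry. Since $\iota$ negates $v - v(c)$, it maps the maximizer to the minimizer, so $Z^{\down} = \iota(Z^{\up})$; both are unique by admissibility. Thus $c$ is the midpoint of $Z^{\up}$ and $Z^{\down}$. Since $G$ is convex and contains both points, $c \in G$.

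Now $c$ lies in the relative interior of $Z$: any proper face $H$ has $\iota(H)$ also a face, and these two faces are disjoint. Indeed, $H$ is cut out by a supporting functional $\ell$ attaining its maximum on $H$, while $\iota(H)$ is where $\ell$ attains its minimum. These differ unless $\ell$ is constant on $Z$, i.e.\ unless $H = Z$. So $c \notin H$, because $c = \iota(c)$ would lie in both $H$ and $\iota(H)$. Hence the only face containing $c$ is $Z$ itself, giving $G = Z$.

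The only delicate point is justifying that the passage to faces preserves all the hypotheses. Faces of zonotopes are zonotopes, which is noted in the paper before the statement, and $F^{\up}, F^{\down}$ are defined intrinsically via $\inner{-,v}$ restricted to $F$, so they are the extremes of $v$ on $F$. I expect no real obstacle; smoothness is only needed so that the toric dictionary and \Cref{lm:face-condition} apply. Finally, \BB cell closures are $T$-orbit closures $\overline{O_{P_p^+}}$, so the parenthetical claim follows immediately.
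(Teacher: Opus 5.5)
Your proof is correct and follows the paper's route. You reduce via \Cref{lm:face-condition} to showing that the face $E\cap F$ of the centrally symmetric face $F$, which contains the opposite vertices $F^{\up}$ and $F^{\down}$, must be all of $F$, and you make explicit the two facts the paper uses tacitly: the central symmetry swaps the maximizer and minimizer of $v$, and the center of $F$ lies in its relative interior.
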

\begin{proof}
Let $E$ be any face of $P$ and suppose $F^{\up}, F^{\down} \in E$ for some face $F$ of $P$.
Since $F$ is centrally symmetric, the vertices $F^\up$ and $F^\down$ lie opposite one another on $F$.
Thus $F\cap E$ is a face of $F$ containing opposite points of $F$ and is therefore all of $F$.
That is, $F \subseteq E$.
The result now follows by appealing to \Cref{lm:face-condition}.
\end{proof}

\subsection{Pathologies}\label{subsec:bad_convex}
In this subsection, we exhibit various examples of non-$\Gm$-convexity.
We begin by producing an example of a non-$\Gm$-convex \BB cell closure.
This answers the question posed by Buch--Chaput--Perrin at the end of \cite[Example 5.6]{buch}.

\begin{figure}[ht]
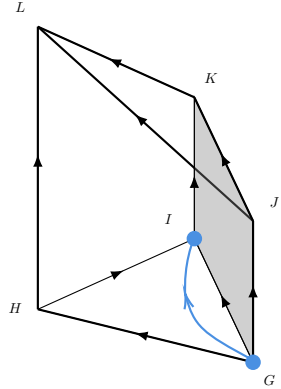

\centering
\includestandalone[width=0.25\textwidth]{with_arrows}
\caption{
A non-$\Gm$-convex \BB   cell closure $\overline{X_K^+}$, represented by the shaded facet.
Arrows indicate the $\Gm$-flow.
The curve in the face $GHI$ represents the $\Gm$-orbit of a point in the $T$-orbit corresponding to the facet $GHI$.
The $T$-fixed points $I$ and $G$---but not the $\Gm$-orbit itself---lie in $\overline{X_K^+}$.
}
\label{fig:intro}
\end{figure}

\begin{Ex}\label[Ex]{prop:firstexample}
Let $P$ be the polytope depicted in \Cref{fig:intro} in $\R^3$ with vertices
\[
G \deq (1,0,0);\; H \deq (0, 0, 0); \;  I \deq (0, 1, 0);\; J \deq (1,0,1);\; K \deq (0, 1, 1);\;  L \deq (0, 0, 2)
\]
Its facets are $GHI$, $JKL$, $HGJL$, $IHLK$, $GIKJ$.
One may verify that $P$ is smooth.
Thus its outer normal fan gives rise to a smooth projective toric variety $X$.
Let $v = (-1, 1, 2)$.
The values of $\inner{\cdot, v}$ at the above points are given respectively by
\[
\inner{G, v} = -1, \; \inner{H, v} = 0,\; \; \inner{I, v} = 1,\; \inner{J, v} = 1,\; \inner{K, v} = 3,\; \inner{L, v} = 4
\]
We have $GHI^{\up} = I$, $GHI^{\down} = G$ and, using \Cref{cor:BB_face}, $\overline{X_K^+} = \overline{\orb{GIKJ}}$.
Thus, $\overline{X_K^+}$ fails to be $\Gm$-convex by \Cref{lm:face-condition} since $GHI \nsubseteq GIKJ$.
\end{Ex}
\begin{Rmk}\label[Rmk]{rmk:convex_non_rev}
If in the above example we consider instead the vector $v' \deq -v = (1,-1,-2)$ then $X$ is $v'$-well-rounded.
This is to be contrasted with Corollaries \ref{cor:filtration_reversibility} and \ref{cor:strat_reversibility} which show that the filterability and stratification properties \emph{are} invariant under reversing the $\Gm$-action.
\end{Rmk}
The next result shows that the phenomenon in \Cref{prop:firstexample} is easy to engineer.
But first, it will be useful to recall the polytopal picture for toric blowups at smooth $T$-fixed points, which follows from the discussion in \cite[p.\,40]{fult93toric}.
Starting with a toric variety $X_0$ defined by a polytope $P_0$, to obtain a polytope $P_1$ representing the blowup of $X_0$ at the fixed point corresponding to a vertex $p \in P_0$ we truncate $P_0$ at $p$ by means of a hyperplane which meets each edge incident to $p$ at the nearest integral point to $p$.
To ensure that $P_1$ is combinatorially the truncation of $P_0$ at $p$, we may need to first replace $P_0$ by a suitable dilation.
If $P_1$ is obtained in this way from $P_0$---by dilating and truncating at $p$---we will say that it is a blowup of $P_0$ at $p$.
\begin{Prop}\label[Prop]{prop:blowup}
Suppose $X_0$ is a smooth projective toric variety of dimension at least 3 with dense torus $T$, corresponding to a polytope $P_0$.
Then the blowup $X$ of $X_0$ at two suitably chosen $T$-fixed points of $X_0$ fails to be $v$-well-rounded for some admissible cocharacter $v\colon\Gm \to T$.
\end{Prop}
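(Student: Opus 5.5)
The plan is to use \Cref{lm:face-condition}. We will produce a vertex $r$ of the blown-up polytope $P$ and a face $F$ of $P$ such that $E \deq P_r^+$ contains $F^{\up}$ and $F^{\down}$ but does not contain $F$. The natural candidate for $F$ is an exceptional facet. Blowing up a vertex $p$ of $P_0$ replaces $p$ by a facet $\Delta_p$ that is an $(n-1)$-simplex, where $n = \dim P_0 \geq 3$. Its vertices lie on the $n$ edges of $P_0$ at $p$, and each facet $G$ of $P_0$ through $p$ meets $\Delta_p$ in a facet of the simplex. A face of $P$ containing two vertices of $\Delta_p$ need not contain $\Delta_p$, and this is what we exploit, imitating \Cref{prop:firstexample} (where $GHI$ plays the role of $\Delta_p$).

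First fix the combinatorics. Choose a vertex $p$ of $P_0$, an adjacent vertex $q$, and a facet $G$ of $P_0$ containing $p$ but not $q$. Let $u$ be the vertex of $\Delta_p$ on the edge $pq$; it is the unique vertex of $\Delta_p$ not in $G$. Blow up $p$, and also the second fixed point $q$, after dilating $P_0$ so that both truncations are combinatorial. Let $\tilde G$ denote the facet of $P$ coming from $G$. We want an admissible $v$ with the following three properties:
\begin{enumerate}[(a)]
\item On $\Delta_p$, neither the maximum nor the minimum of $\inner{-,v}$ is attained at $u$, so $\Delta_p^{\up}, \Delta_p^{\down} \in \tilde G$.
\item The maximizer $r \deq \tilde G^{\up}$ of $v$ on $\tilde G$ is not $P^{\up}$.
\item At $r$, the unique edge of $P$ leaving $\tilde G$ points downward, i.e.\ ends at $s$ with $s \midddarrow r$.
\end{enumerate}
By \Cref{cor:BB_face} and (\ref{eq:arrowcount}), conditions (b) and (c) give $P_r^+ = \tilde G$: every neighbour of $r$ inside $\tilde G$ flows into $r$ since $r$ maximizes $v$ on $\tilde G$, and the remaining neighbour flows in by (c), so $\dim P_r^+ = n-1$ and the face is $\tilde G$. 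Then $F = \Delta_p$ and $E = \tilde G$ violate the criterion of \Cref{lm:face-condition}, because $\Delta_p \not\subseteq \tilde G$.

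To arrange (a)–(c), start from a $v$ close to a covector that is constant on the hyperplane of $\Delta_p$. Its restriction to $\Delta_p$ is then governed by a small perturbation, which we choose so that $u$ is strictly between the extremes of $v$ on $\Delta_p$; this is possible since $n-1 \geq 2$, so $\Delta_p$ has at least three vertices. The transverse component of $v$ is chosen so that $v$ increases from $\Delta_p$ into the interior of $G$ along most directions, making $r$ a vertex of $\tilde G$ away from $\Delta_p$. The role of the second blowup at $q$ (or at a suitable vertex of $G$) is to create, near $r$, a small exceptional simplex whose edges leaving $\tilde G$ can be oriented downward by the same perturbation. This is the analogue of vertex $K$ in \Cref{prop:firstexample}, where the edge $KL$ leaves the facet $GIKJ$. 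By \Cref{rmk:finiteness}, only the ordering of the values of $v$ on the vertices matters, so we may choose $v$ by successive perturbations $v = v_0 + \epsilon v_1 + \epsilon^2 v_2$ with $\epsilon$ small, and also keep it admissible.

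I expect the main obstacle to be step (c) together with (b). We must make $\tilde G$ exactly a \BB face, meaning the one edge at $r$ leaving $\tilde G$ goes down, while $v$ is still constrained on $\Delta_p$ by (a). If this cannot be done for a general $P_0$, the fallback is to restrict the choice of vertices using the freedom in ``two suitably chosen'' fixed points. Concretely, take $q$ adjacent to $p$ inside a $3$-face $Q$ of $P_0$, run the argument in $Q$ with the truncated polytope playing the role of $P$ in \Cref{thm:3d-iff}, and show that failure of (i)–(iii) there forces failure of well-roundedness of $X$. Note that \Cref{lm:inheritance} only transfers well-roundedness from $X$ to the orbit closure of $Q$. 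So if the blown-up $Q$ is not $\bar v$-well-rounded, then $X$ is not $v$-well-rounded, and it suffices to lift any suitable $\bar v \in N/N_Q$ to an admissible $v \in N$.
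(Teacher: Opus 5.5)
Your condition (c) has the wrong orientation, and as written it is incompatible with (b). If $r=\tilde G^{\up}$, the $n-1$ edges at $r$ inside $\tilde G$ all point into $r$. If the remaining edge also satisfies $s\midddarrow r$, then every edge at $r$ is incoming. So $r$ is a local, hence global, maximum of $\inner{-,v}$, i.e.\ $r=P^{\up}$, and (\ref{eq:arrowcount}) gives $\dim P_r^+=n$ rather than $n-1$. Your own model confirms this: $\inner{K,v}=3<4=\inner{L,v}$, so the edge leaving $GIKJ$ at $K$ points \emph{up}. In fact no condition of type (c) is needed. Since $\tilde G^{\up}=r$ and $\tilde G$ is a facet, \Cref{cor:BB_face} gives $P_r^+\in\{\tilde G,P\}$, and (b) rules out $P$. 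So the ``main obstacle'' you identify does not exist, and the role you assign to the second blowup (orienting edges leaving $\tilde G$ downward) targets an impossible configuration. The whole proposition reduces to producing two vertices to blow up and an admissible $v$ such that $\Delta_p^{\up},\Delta_p^{\down}$ lie in a facet $\tilde G$ through $p$ with $P^{\up}\notin\tilde G$. In dimension $3$ this is exactly the failure of (i)--(iii) of \Cref{thm:3d-iff} for $F=\Delta_p$.

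That existence statement is the actual content, and the proposal does not establish it: nothing in the perturbation $v_0+\epsilon v_1+\epsilon^2v_2$ controls where $P^{\up}$ lands relative to $\tilde G$. This is not a formality. With your primary choice (second blowup at the neighbour $q$ on the edge through $u$, and $G\not\ni q$), (a) and (b) are incompatible for every admissible $v$ when $P_0$ is a tetrahedron. The vertices off $\tilde G$ are $u$ and the three vertices of $\Delta_q$. Once $\inner{u,v}$ lies strictly between the extremes on $\Delta_p$, each of these four has a neighbour with larger $v$-value.

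The missing idea is a mechanism forcing $P^{\up}$ off the relevant facet. The paper first reduces to $\dim P_0=3$ via \Cref{lm:inheritance} (your fallback). If $P_0$ has at least five facets, it then chooses an edge $L$ sharing no facet with $p$, blows up $p$ and an endpoint $\ell$ of $L$, and takes $v$ admissible in the interior of the normal cone at the vertex $\tilde\ell$ of $\triangle_\ell$ on $L$. The facets through $P^{\up}=\tilde\ell$ are $\triangle_\ell$ and the two facets containing $L$, none of which contains an edge of $\triangle_p$. Since any two vertices of a triangle are adjacent, (i)--(iii) of \Cref{thm:3d-iff} fail for $F=\triangle_p$ wherever the extremes on $\triangle_p$ fall.

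A tetrahedron has no such edge $L$, so $X_0=\mathbb{P}^3$ requires a separate explicit choice. For example, take $\Conv(0,3e_1,3e_2,3e_3)$ truncated at $3e_1$ and $3e_3$, with $v=(1,2,3)$. There the second blown-up vertex $3e_3$ lies on $G=\{y=0\}$, not on the edge through $u=(2,1,0)$.
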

\begin{proof}
By \Cref{lm:inheritance}, we may assume that $\dim X_0 = 3$.
If $P_0$ is a tetrahedron, then $X_0$ is $\PP^3$.
This case can be dealt with in a hands-on way.
Here, $X$ is defined as a toric variety by the polytope depicted in \Cref{fig:twiceblown}.

\begin{figure}[ht]
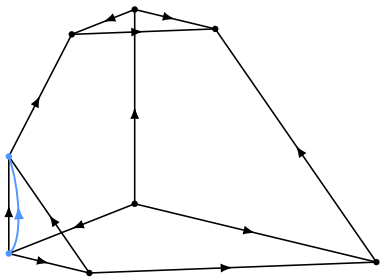

\centering
\includestandalone[width=.35\textwidth]{bad_richard}
\caption{A blowup of $\PP^3$ at two torus-fixed points.
This polytope has vertices $(0,0,0), (2,0,0), (0,0,2), (2,1,0), (2,0,1), (0,3,0), (1,0,2), (0,1,2)$. It fails to be $v$-well-rounded for $v = (1,2,3)$.
The bottom-left triangle fails the conditions in \Cref{thm:3d-iff}.
} 
\label{fig:twiceblown}
\end{figure}

Now suppose $P_0$ has at least five facets.
Then, choosing a vertex $p$ in $P_0$ arbitrarily, we can find an edge $L$ of $P_0$ such that $L$ and $p$ do not lie on a common facet of $P_0$.
Let $P$ be the successive blowup of $P_0$ at the vertex $p$ and at a vertex $\ell$ of $L$.
Let $\triangle_p, \triangle_\ell$ be the triangles in $P$ corresponding to the vertices $p$ and $\ell$.
Let $\tilde \ell$ be the vertex of $P$ lying on both $\triangle_\ell$ and the edge corresponding to $L$.
Then no edge of $\triangle_p$ lies on a common facet with $\tilde \ell$.
Choose $v$ to be an admissible outer normal vector at $\tilde \ell$.
Then $\triangle_p^{\up}$ and $\triangle_p^{\down}$ lie on a common edge of $\triangle_p$ and this edge does not lie on a facet containing $\tilde \ell$.
The claim now follows from \Cref{thm:3d-iff}.
\end{proof}
The polytope depicted in \Cref{fig:polyproof} has the remarkable property that for every admissible cocharacter $v$ it fails to be $v$-well-rounded.
This is proved in the following proposition.

\begin{figure}[hb]
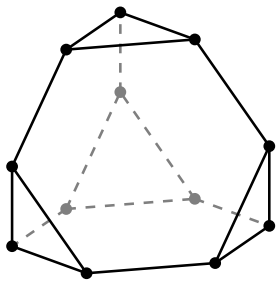

\centering
\includestandalone[width=.25\textwidth]{Pop_Projective_Space}
\caption{A polytope admitting no cocharacter making it well-rounded.}
\label{fig:polyproof}
\end{figure}

\begin{Prop}\label[Prop]{prop:tetrahedron}
Let $n \geq 3$.
If $X$ is the blowup of $\PP^n$ at all its $T$-fixed points then there is no admissible $v \in N$ making $X$ well-rounded.
\end{Prop}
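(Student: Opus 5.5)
The plan is to reduce to $n = 3$ via \Cref{lm:inheritance}, and then exhibit a facet of the truncated tetrahedron that violates all three conditions of \Cref{thm:3d-iff}.

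\textbf{Setting up the polytope.} Let $P_0 = \Conv(a_0,\dots,a_n)$ be a suitable dilate of the standard simplex. Then $P$ is obtained by cutting off every vertex $a_i$ with a hyperplane meeting each edge $a_ia_j$ at lattice distance $1$ from $a_i$. Since all edges of the dilated standard simplex have primitive direction $a_j - a_i$, the vertices of $P$ are
\[
a_{ij} \deq a_i + \varepsilon (a_j - a_i), \qquad i \neq j,
\]
for a single $\varepsilon > 0$. The faces of $P$ are of two kinds. There are the exceptional simplices $\triangle_i = \Conv(a_{ij} : j \neq i)$. There are also the truncations of the faces of $P_0$. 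In particular, for any $4$-element subset $S \subseteq \{0,\dots,n\}$, the truncation of $\Conv(a_s : s \in S)$ is a $3$-face of $P$, namely a truncated tetrahedron.

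\textbf{Reduction to $n = 3$.} Suppose $v$ is admissible and makes $X$ well-rounded. Every edge of such a $3$-face $Q$ is an edge of $P$, so the image $\bar v$ of $v$ is admissible for $Q$. By \Cref{lm:inheritance}, $\overline{\orb{Q}}$ is then $\bar v$-well-rounded. It therefore suffices to treat $n = 3$, where $P$ is the truncated tetrahedron with vertices $a_{ij}$ for $i \neq j \in \{0,1,2,3\}$.

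\textbf{Locating $P^{\up}$ and the facets through it.} We have $\inner{a_{ij}, v} = (1-\varepsilon)\inner{a_i,v} + \varepsilon\inner{a_j,v}$. Admissibility makes the values $\inner{a_i,v}$ pairwise distinct, so relabel them so that
\[
\inner{a_0,v} > \inner{a_1,v} > \inner{a_2,v},\ \inner{a_3,v}.
\]
Since $\varepsilon < 1/2$, the maximum over vertices is attained at $P^{\up} = a_{01}$. The facets of $P$ containing $a_{01}$ are $\triangle_0$ together with the hexagons $H_2$ and $H_3$. Here $H_k$ denotes the truncation of the facet of $P_0$ opposite $a_k$, so $H_k$ contains the triangles $\triangle_i$ for $i \neq k$.

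\textbf{The failing facet.} Consider $F = \triangle_2 = \Conv(a_{20},a_{21},a_{23})$.
\begin{itemize}
\item Condition (i) fails, because $F^{\up}$ lies near $a_2$ and hence differs from $a_{01}$.
\item Condition (ii) fails, because any two vertices of a triangle are adjacent.
\item For condition (iii), the ordering of $\inner{-,v}$ on the vertices $a_{2j}$ of $F$ matches the ordering of $\inner{a_j,v}$ for $j \in \{0,1,3\}$. So $F^{\up} = a_{20}$, and $F^{\down}$ is $a_{23}$ or $a_{21}$, whichever corresponds to the smaller of $\inner{a_3,v}$ and $\inner{a_1,v}$; since $\inner{a_1,v} > \inner{a_3,v}$, we get $F^{\down} = a_{23}$. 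The edge $a_{20}a_{23}$ is $\triangle_2 \cap H_1$, because it omits the $a_1$-direction. The only facets through $P^{\up}$ that meet $\triangle_2$ in an edge are $H_2$ and $H_3$, and $H_2$ does not even contain $\triangle_2$. Moreover $\triangle_2 \cap H_3 = a_{20}a_{21}$. Hence $F^{\up}$ and $F^{\down}$ do not lie on $F \cap L$ for any facet $L$ containing $P^{\up}$, and (iii) fails.
\end{itemize}
By \Cref{thm:3d-iff}, $X$ is therefore not $v$-well-rounded.

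\textbf{Main obstacle.} The delicate step is the claim that all truncations have the same depth $\varepsilon$, so that the order of $\inner{-,v}$ on $\triangle_i$ is exactly the order of $\inner{-,v}$ on the neighbouring original vertices. This requires choosing the dilation of $P_0$ correctly. If the depths differed, the argument would instead need $\varepsilon$ uniformly small relative to the gaps between the values $\inner{a_i,v}$. That is still enough, because by \Cref{rmk:finiteness} only the induced preorder on vertices matters, but it needs a brief justification.
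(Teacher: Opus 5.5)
Your proof is correct and follows the paper's route: you reduce to $\mathbb{P}^3$ via \Cref{lm:inheritance} and then exhibit a triangular facet that violates all of (i)--(iii) in \Cref{thm:3d-iff}. The paper instead takes the triangle through $P^{\down}$ and uses the central symmetry of the hexagons to show that $P^{\up}$ and $P^{\down}$ never share a facet, whereas you compute $P^{\up}$ and the failing triangle directly from $a_{ij} = a_i + \varepsilon(a_j - a_i)$. Your ``main obstacle'' is not an issue: every edge of the dilated standard simplex $k\Delta_3$ has lattice length $k$, so the truncation recipe gives the uniform depth $\varepsilon = 1/k$, and $k \geq 3$ ensures $\varepsilon < 1/2$.
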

\begin{proof}
By \Cref{lm:inheritance}, it suffices to prove this for $\mathbb{P}^3$.
Let $P$ be the blown-up tetrahedron depicted in \Cref{fig:polyproof}.
This polytope is obtained by truncating a suitable dilation of the convex hull of the standard basis of $\R^3$ and the origin.
$P$ is a polytope whose normal fan defines $X$.
The desired conclusion will be deduced from the following property of $P$: for any admissible $v \in N$, the vertices ${P}^{\up}$ and ${P}^{\down}$ do \emph{not} lie on a common facet of $P$.
Accepting this claim for now, we start by noting that $P^{\down}$ lies in a (unique) triangle $\triangle$.
The vertices $\triangle^{\up}$ and $\triangle^{\down} = P^{\down}$ lie in a common hexagon $H$.
The claim implies that $P^{\up} \notin \triangle$ and $P^{\up} \notin H$.
Thus \Cref{thm:3d-iff} (with $F = \triangle$) shows that $X$ is not $v$-well-rounded.

Now we proceed to prove the claim.
Note that any two vertices lying on a common facet of $P$ lie on a common hexagonal facet.
Suppose therefore that $P^{\up}$ and $P^{\down}$ lie on a hexagonal facet
$\hexagon$ of $P$.
Since $\hexagon$ is centrally symmetric, ${P}^{\up}$ and ${P}^{\down}$ must lie opposite one another on $\hexagon$.
However, it now suffices to observe that for every pair of opposite vertices on $\hexagon$, the two edges incident to these vertices and not lying on $\hexagon$ are parallel.
This precludes the simultaneous equalities $\hexagon^{\up} = P^{\up}$ and $\hexagon^{\down} = P^{\down}$.
\end{proof}
Our final construction will make use of the following definition.
\begin{Df}
Given a smooth projective toric variety $X_0$ defined by a lattice polytope $P_0$, we write $\Pop(X_0)$ for the toric variety that is the blowup of $X_0$ at all its $T$-fixed points.
Its associated polytope---denoted $\Pop(P_0)$---is the truncation (at all vertices) of a suitable dilation of $P_0$.
\end{Df}
We have just seen that if $Q$ is a smooth tetrahedron then $\Pop(Q)$ fails to be $v$-well-rounded for any admissible cocharacter $v$.
We will show that the same holds for \emph{most} three-dimensional smooth polytopes. (A higher-dimensional version can be extracted using \Cref{lm:inheritance}.)
\begin{Prop}\label[Prop]{prop:poponce}
Let $P_0$ be a three-dimensional smooth integral polytope.
Suppose that the vertices of $P_0$ do not all lie on one of two adjacent facets of $P_0$.
Then $P \deq \Pop(P_0)$ has the property that no admissible cocharacter $v \in N$ makes $P$ well-rounded.
\end{Prop}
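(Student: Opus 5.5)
Following the proof of \Cref{prop:tetrahedron}, we reduce everything to a single claim about $P = \Pop(P_0)$: for every admissible $v$, the vertices $P^{\up}$ and $P^{\down}$ do not lie on a common facet of $P$. Granting this, we argue as before. $P^{\down}$ lies on a unique triangle $\triangle$ of $P$, namely the truncation triangle of some vertex $a$ of $P_0$. The vertices $\triangle^{\down} = P^{\down}$ and $\triangle^{\up}$ are adjacent, since a triangle has all vertices adjacent. Moreover $\triangle^{\up} \neq P^{\up}$, because $P^{\up}$ does not lie on the facet $\triangle$, so condition (i) fails. The edge joining $\triangle^{\up}$ and $\triangle^{\down}$ lies on exactly one other facet $L$, the truncation of a facet of $P_0$ through $a$. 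The claim gives $P^{\up} \notin L$, so (iii) fails. Then \Cref{thm:3d-iff} applied to $F = \triangle$ shows that $P$ is not $v$-well-rounded.

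To prove the claim, observe that every facet of $P$ is either a truncation triangle $\triangle_a$ or a truncated facet $\tilde G$ of $P_0$. Each triangle vertex also lies on two truncated facets, so it suffices to rule out $P^{\up}, P^{\down} \in \tilde G$ for some facet $G$ of $P_0$. Suppose this happens. Then $\tilde G^{\up} = P^{\up}$ and $\tilde G^{\down} = P^{\down}$. Let $u$ be the outer normal of $G$. The vertex $P^{\up}$ maximizes $v$ on $P$, so $v$ lies in the normal cone at $P^{\up}$, which is spanned by $u$ together with the outer normals $u_1, u_2$ of the other two facets at $P^{\up}$. Symmetrically, $-v$ lies in the cone at $P^{\down}$ spanned by $u$ and two outer normals $u_3, u_4$. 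Each vertex of $\tilde G$ is a triangle vertex, so at each of $P^{\up}$ and $P^{\down}$ one of the two extra normals is a triangle normal, which is roughly the sum of the normals at a vertex of $P_0$.

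The main step, and the expected obstacle, is to show that these cone conditions force every vertex of $P_0$ to lie on $G$ or on a single facet $G'$ adjacent to $G$. That conclusion contradicts the hypothesis. The intended mechanism is as follows. Projecting $v$ to the plane of $G$, the conditions say that $P^{\up}$ and $P^{\down}$ are the extreme vertices of the polygon $\tilde G$ in direction $v$. The component of $v$ along $u$ must be compatible with both cones, and I expect this to force $v$ to be nearly parallel to $G$. Heuristically, $v$ then favours the edges of $P_0$ leaving $G$, and the extremes of $v$ over all of $P$ should stay on $G$ only if $P_0$ is "thin": all of its vertices lie on $G$ or on its neighbour through the relevant edge. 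I would first work this out with explicit normals in a local chart at the vertex $a$ of $P_0$ whose triangle contains $P^{\up}$. In that chart I would use smoothness to take the normals at $a$ to be $-e_1, -e_2, -e_3$ after a unimodular change. I would then do the same at the vertex $b$ for $P^{\down}$, and combine the two by following the edge path along $G$.

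If this direct approach becomes messy, the fallback is a case split on whether the vertices $a$ and $b$ of $P_0$ are equal, adjacent, or far apart along $G$. In the first two cases one checks that $v$ cannot simultaneously be extremal at a truncation vertex and its opposite. The remaining case uses that a vertex of $P_0$ off $G \cup G'$ exists, and has a $v$-value exceeding that of $P^{\up}$, via an edge path monotone in $v$.
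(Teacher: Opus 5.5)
Your proof has a genuine gap: the reduction rests on a claim that is false under the proposition's hypotheses. You need that $P^{\up}$ and $P^{\down}$ never lie on a common facet of $P=\Pop(P_0)$, so the step you flag as the main obstacle cannot be carried out by any method.

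Here is a counterexample. Let $R=\Conv\{(0,0),(3,0),(2,1),(0,1)\}$ be the smooth trapezoid of the Hirzebruch surface $\mathbb{F}_1$, and let $P_0=k\,(R\times[0,1])$ with $k\geq 3$. All facets of this prism are quadrilaterals, so two adjacent facets contain only $6$ of its $8$ vertices, and the hypothesis holds. The vertices of $P$ are the points $x+d$, with $x$ a vertex of $P_0$ and $d$ a primitive edge direction at $x$. One checks that $v=(4,2,1)$ pairs nonzero with every edge direction of $P$, so it is admissible.
\begin{itemize}
\item The maximum of $v$ on $P$ is attained only at $(3k,0,k-1)$, with value $13k-1$; the runner-up is $(3k-1,1,k)$, with value $13k-2$.
\item The minimum is attained only at $(0,0,1)$, with value $1$; the runner-up is $(0,1,0)$, with value $2$.
\item Both vertices lie on the truncation of the facet $\{y=0\}$ of $P_0$.
\end{itemize}
Worse, for the triangle $\triangle$ through $P^{\down}$ you get $\triangle^{\up}=(1,0,0)$ and $\triangle^{\down}=(0,0,1)$. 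The edge joining them lies on that same truncated facet, which contains $P^{\up}$. So condition (iii) of \Cref{thm:3d-iff} holds for $F=\triangle$, and your choice of $F$ detects nothing for this $v$.

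The relevant vertices of $P_0$ are $a=(3k,0,k)$ and $b=(0,0,0)$, which are opposite corners of $\{y=0\}$. This is exactly your fallback's third case, so that route cannot be closed either. In the tetrahedron case the claim relied on two features of $\Pop(\Delta_3)$: the central symmetry of its hexagons, and the parallel outgoing edges at their opposite vertices. Neither has an analogue for general $P_0$.

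The missing idea is to anchor the argument at $P^{\up}$ alone, which removes any need to locate $P^{\down}$. The vertex $P^{\up}$ lies on a unique edge $L$ of $P_0$. So the three facets of $P$ through $P^{\up}$ are the truncation triangle at an endpoint of $L$ and the truncations of the two facets $G,G'$ of $P_0$ containing $L$. Since $G$ and $G'$ are adjacent, the hypothesis supplies a vertex $q$ of $P_0$ off $G\cup G'$.

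Take $F=\triangle_q$. Every edge of $\triangle_q$ lies only on $\triangle_q$ and on the truncation of a facet of $P_0$ through $q$, and none of these facets contains $P^{\up}$. Hence (i) and (iii) fail for $F$, and (ii) fails because $F$ is a triangle, whatever $F^{\up}$ and $F^{\down}$ turn out to be. \Cref{thm:3d-iff} then shows that $P$ is not $v$-well-rounded. This is where the hypothesis on $P_0$ genuinely enters; your proposal deferred it to a main step that cannot work.
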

\begin{proof}
We fix an admissible cocharacter $v \in N$ for $P$.
The vertex $P^{\up}$ of $P$ lies on a unique edge $L$ of $P_0$.
By hypothesis, there is a vertex $q$ of $P_0$ that does not lie on a common facet with $L$.
Let $\triangle$ be the triangle in $P = \Pop(P_0)$ corresponding to $q$.
No edge of $\triangle$ is on a facet containing $P^\up$.
Taking $F = \triangle$ in \Cref{thm:3d-iff} shows that $P$ is not $v$-well-rounded.
\end{proof}

\begin{figure}[ht]
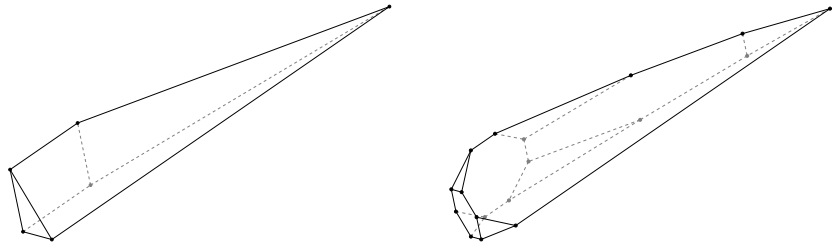

\centering
{\includestandalone[width=.35\textwidth]{pop_not_enough_1}}%
\qquad
{\includestandalone[width=.35\textwidth]{pop_not_enough_2}}%
\qquad
\caption{A polytope and its $\Pop$.}
\label{fig:badblowup}%
\end{figure}

\begin{Ex}
Consider the triangular prism depicted in \Cref{fig:badblowup}.
This polytope differs from the one in \Cref{prop:firstexample} only in that we set $L = (0,0,5)$.
Its image under $\Pop$---depicted on the right-hand side of \Cref{fig:badblowup}---is well-rounded for the vector $(-4,-3,-2)$.
This does not contradict \Cref{prop:poponce} since
any two quadrilateral facets of this polytope together contain all its vertices.
This shows that the technical requirement in \Cref{prop:poponce} is not wholly superfluous.
\end{Ex}

\section{Rigidity}\label{sec:rigidity}
Throughout this section, $X$ will be a smooth complete $T$-variety with finite fixed locus.
Our goal in this section is to answer \cite[Question 5.5]{buch} regarding the equivariant rigidity of \BB cell closures in $X$ for a choice of $\Gm \subseteq T$.
We define two related notions of equivariant homological rigidity.
\begin{Df}\label[Df]{df:weak_rigidity}
A $T$-stable subvariety $\Omega \subseteq X$ is weakly $T$-rigid if, whenever a $T$-stable subvariety $W$ in $X$ satisfies $[W] = c[\Omega]$ in the total equivariant Chow group $A_*^{T}(X)_{\QQ}$ for some $0 \neq c\in \QQ$, then $W = \Omega$ as subvarieties.
\end{Df}
\begin{Df}\label[Df]{df:strong_rigidity}
A $T$-stable subvariety $\Omega \subseteq X$ is strongly $T$-rigid if, whenever a $T$-invariant positive $\mathbb{Q}$-cycle $\alpha$ satisfies $\alpha = [\Omega]$ in the total equivariant Chow group $A_*^{T}(X)_{\QQ}$, then $\alpha = [\Omega]$ \emph{as a cycle}.
\end{Df}
\begin{Rmk}\label[Rmk]{rmk:differences}
We caution the reader that our notions differ somewhat from the notion of $T$-rigidity defined in \cite{buch}.
One difference is that they do not insist that $\Omega$ be irreducible.
In the irreducible case, their definition corresponds to our notion of strong $T$-rigidity with the added assumption that the coefficients of the prime cycles occurring in $\alpha$ are all equal.
They also work in the equivariant Chow cohomology ring, which is a purely aesthetic choice since the ambient variety is smooth.
The notion of weak $T$-rigidity appears sufficient in practice---our result for weak $T$-rigidity below is sufficient for their application of $T$-rigidity in \cite[Theorem 7.1]{buch} to a conjecture on curve neighborhoods in flag varieties.
\end{Rmk}
If the \BB decomposition on $X$ is filterable, then $A_*^T(X)$ is a free module over the equivariant Chow cohomology of a point by \cite[\textsection3.2, Corollary 1(iii)]{Brion_1997}.
In this case, moving from $A_*^T(X)$ to $A_*^T(X)_{\QQ}$ is purely a matter of convenience.
In the non-filterable case, \Cref{ex:zero_cycle} shows that a positive $\Gm$-invariant cycle may be zero in $A_*^{\Gm}(X)$.
Since this cycle may be added to any other without changing its equivariant class,
no subvariety can be strongly $\Gm$-rigid in such a case.
For this reason, we restrict all talk of strong $\Gm$-rigidity to the filterable case.

It was shown in \cite[Corollary 5.4]{buch} that the \BB cell closures of a $T$-variety (given by a choice of $\Gm \subseteq T$ with $X^{\Gm} = X^T$) are $T$-rigid under the assumption that the \BB decomposition is a stratification and that at every $T$-fixed point the $T$-weights of the tangent space lie strictly in a half-space of the character lattice.
The latter requirement is impossible when $T = \Gm$ and $X$ has dimension  at least 2.
In \Cref{thm:BB_rigidity} below, we show that $\Gm$-rigidity of \BB cells holds with no assumption other than filterability.
Recall that this condition is automatic for projective varieties.
Note also that in the setting of a $T$-variety, $\Gm$-rigidity for $T$-stable subvarieties is a stronger property than $T$-rigidity.
In the results that follow we specialize to the case $T = \Gm$.

To prove our results, we will need the following refinements of \cite[Proposition 3.1]{buch} and \cite[Corollary 3.2]{buch}.
Though our setting is slightly more general, our arguments are not essentially different.
\begin{Lm}\label[Lm]{fully_definite_non_zero}
Let $V$ be a finite-dimensional $\Gm$-representation with positive weights.
Let $\alpha \deq \sum_i a_i [W_i]$ be a positive pure-dimensional $\Gm$-invariant $\QQ$-cycle in $V$.
Then $\alpha \neq 0$ in $A_*^{\Gm}(V)_{\QQ}$.
\end{Lm}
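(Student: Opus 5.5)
The plan is to reduce to a computation in a weighted projective space, where positivity of degrees gives the non-vanishing. Write $n = \dim V$ and let $k$ be the common dimension of the $W_i$. By homotopy invariance, pullback along $V \to \mathrm{pt}$ identifies $A_*^{\Gm}(V)_{\QQ}$ with $A^*_{\Gm}(\mathrm{pt})_{\QQ} = \QQ[t]$. Under this identification each $[W_i]$ is a homogeneous element of degree $n-k$, so $[W_i] = c_i\, t^{\,n-k}$ for some $c_i \in \QQ$. It then suffices to show that all the $c_i$ are nonzero and of the same sign, because then $\sum_i a_i c_i \neq 0$. This is exactly where the hypothesis enters: pure-dimensionality puts every term in a single degree, and positivity of $\alpha$ means the $a_i$ are nonnegative and not all zero.

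The first case is $k = 0$. Since the weights are positive, every $\Gm$-orbit closure in $V$ contains $0$. Hence a closed $\Gm$-stable point is $0$ itself, and $W_i = \{0\}$. Then $[0] = c_{\text{top}}^{\Gm}(V) = \bigl(\prod_j w_j\bigr) t^n$, where the $w_j > 0$ are the weights. All coefficients are therefore positive.

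The main case is $k \geq 1$. Here each $W_i$ is a closed cone through $0$, and I use the localization sequence
\[
A_*^{\Gm}(\{0\})_{\QQ} \to A_*^{\Gm}(V)_{\QQ} \to A_*^{\Gm}(V\del 0)_{\QQ} \to 0 .
\]
Since the $\Gm$-action on $V \del 0$ has finite stabilizers, with rational coefficients $A_*^{\Gm}(V\del 0)_{\QQ} \cong A_*(\mathbb{P}_w(V))_{\QQ}$, where $\mathbb{P}_w(V)$ is the weighted projective space. This ring is $\QQ[h]/(h^n)$, with $h$ the class of the ample $\QQ$-line bundle $\mathcal{O}(1)$. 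The pullback of $t$ maps to a fixed nonzero rational multiple $\lambda h$; I would identify $\lambda$ by noting that $t$ is the first Chern class of the tautological character, which descends to $\mathcal{O}(\pm1)$. Under this map $[W_i]$ restricts to $[Y_i] = \deg(Y_i)\, h^{\,n-k}$, where $Y_i = (W_i \del 0)/\Gm$ is a subvariety of dimension $k-1$. Its degree with respect to the ample $\mathcal{O}(1)$ is strictly positive. Since $n-k < n$, the element $h^{n-k}$ is nonzero, so $c_i \lambda^{\,n-k} = \deg(Y_i) > 0$. Thus every $c_i$ has the sign of $\lambda^{-(n-k)}$, which is the same for all $i$. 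It follows that the image of $\alpha$ in $A_*^{\Gm}(V \del 0)_{\QQ}$, and hence $\alpha$ itself, is nonzero.

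The main obstacle is making the comparison between $t$ and $h$ precise. One must show that $A_*^{\Gm}(V\del 0)_{\QQ}$ really is the rational Chow ring of the weighted projective space, which amounts to the standard comparison for quotients by actions with finite stabilizers. One must also check that the equivariant restriction of $[W_i]$ is the class of the quotient $Y_i$ with its positive degree, with no spurious multiplicities of varying sign. If the weighted projective technicalities become awkward, I would instead pass to a finite cover of $\Gm$, namely $t \mapsto t^{m}$ with $m$ the least common multiple of the weights. This changes classes only by a positive rescaling of $t$ and brings us to a setting closer to a genuine projective space.
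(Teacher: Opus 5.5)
Your proof is correct and is essentially the paper's argument: the $0$-cycle case via the top equivariant Chern class (self-intersection formula), and the positive-dimensional case by restricting to $V\setminus 0$, identifying $A_*^{\Gm}(V\setminus 0)_{\QQ}$ with the rational Chow group of the quotient via the Edidin--Graham comparison theorem for finite stabilizers, and using that a nonzero effective cycle on a projective variety is nonzero there. There are two small differences. The paper avoids weighted projective space by first pushing forward along the finite equivariant map $\sum_i c_i v_i \mapsto \sum_i c_i^{\lambda/\lambda_i} e_i$ to a representation $U$ whose weights all equal $\lambda=\prod_i\lambda_i$, so the quotient is an honest $\mathbb{P}(U)$; your fallback of reparametrizing by $t\mapsto t^m$ would not achieve this, since it multiplies every weight by $m$. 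Also, your comparison of $t$ with $h$ and the sign bookkeeping for the $c_i$ are unnecessary, because nonvanishing of the restriction to $V\setminus 0$ already gives $\alpha\neq 0$.
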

\begin{proof}
Suppose first that $\alpha$ is a 0-cycle.
Then $\alpha = a [o]$ with $a > 0$ where $o$ is the origin of $V$.
The self-intersection formula \cite[Proposition 17.4.1]{Anderson_Fulton_2023} gives $\alpha|_o = c_{\text{top}}^{\Gm}(V) \frown a [o] \neq 0$ since the weights of $V$ are nonzero.

Now suppose the support of $\alpha$ has positive dimension.
Let $v_1, \dots, v_d$ be an eigenbasis for $V$ where $v_i$ has weight $\lambda_i$.
Let $\lambda =\prod_i \lambda_i$ and let $U = \mathbb{K}^d$ be the $\Gm$-representation defined by $t \cdot u = t^\lambda u$ for $u \in U$.
The morphism $\phi\colon V \to U$ given by
\[
\phi \left(\sum_{i}c_i v_i\right) = \sum_{i} c_i^{\lambda/\lambda_i} e_i
\]
is finite and $\Gm$-equivariant.
Moreover, the quotient map $\pi\colon U \del 0 \to \mathbb{P}(U)$ induces an isomorphism $\pi^*\colon  A_*(\mathbb{P}(U))_{\QQ} \to  A_*^{\Gm}(U\del 0)_{\QQ}$ by \cite[Theorem 2]{graham}.
The cycle
\[
\phi_*(\alpha)|_{U\del 0} = \sum_{i}a_i\deg(\phi|_{W_i})[\phi(W_i)\del 0]
\]
maps under $(\pi^*)^{-1}$ to the class of a nonempty effective $\QQ$-cycle in the ordinary total Chow group $A_*(\mathbb{P}(U))_{\QQ}$, so is nonzero in this group.
Hence ${\alpha \neq 0}$.
\end{proof}
\begin{Lm}\label[Lm]{lm:non_zero_to_point}
Let $p \in X^{\Gm}$.
Let $\alpha = \sum_{i} a_i [W_i]$ be a positive pure-dimensional $\Gm$-invariant $\QQ$-cycle in $X$ for which $p \in W_i \subseteq \overline{X_p^+}$ for all $i$.
Then the pullback $\alpha|_p$ of $\alpha$ to $A_*^{\Gm}(p)_{\QQ}$ under the regular embedding $p \hookrightarrow X$ is nonzero.
\end{Lm}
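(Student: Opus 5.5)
We reduce \Cref{lm:non_zero_to_point} to \Cref{fully_definite_non_zero} by restricting to the open set $X_p^+$, which is an affine space with a linear $\Gm$-action of positive weights.

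First, since every $W_i \subseteq \overline{X_p^+}$ contains $p$, and $X_p^+$ is open in $\overline{X_p^+}$ and contains $p$, each $W_i \cap X_p^+$ is a nonempty open subset of $W_i$. Next, $X_p^+$ is not open in $X$, so we pass to an open neighbourhood. Let $U \subseteq X$ be the set of points $x$ with $\lim_{t\to 0} t\cdot x$ and the limit lying in an open $\Gm$-stable neighbourhood of $p$. More concretely, by Sumihiro (as recalled in \Cref{sec:notation}), $p$ has a $\Gm$-stable affine open neighbourhood $U$, which is $\Gm$-equivariantly linearizable near $p$. By \cite[Theorem 4.1]{BB}, we may even take a $\Gm$-stable open $U \ni p$ with a $\Gm$-equivariant retraction $U \cap \overline{X_p^+}$... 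To keep this simple, we use instead the pullback formalism. The pullback $\alpha|_p$ factors through restriction to any open $\Gm$-stable $U \ni p$, so we may replace $X$ by $U$. In $U$, each $W_i \cap U$ is contained in the closed subvariety $Z \deq \overline{X_p^+}\cap U$.

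The key step is to choose $U$ so that $Z \cap U = X_p^+$ is closed in $U$. This follows from \cite[Theorem 4.1]{BB}, which exhibits $X_p^+$ as closed in a $\Gm$-stable open neighbourhood of $p$, namely the set $U$ of points whose positive limit lies in a small affine neighbourhood of $p$. Then $X_p^+ \cong T_pX_p^+ \eqqcolon V$, $\Gm$-equivariantly, with positive weights by \cite[Theorem 4.1]{BB}. Under this isomorphism $\alpha|_U$ is the pushforward along the closed embedding $\iota\colon V \hookrightarrow U$ of a positive pure-dimensional $\Gm$-invariant cycle $\beta$ on $V$.

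Now compute $\alpha|_p = \iota_*\beta|_p$. By the excess/self-intersection formula, $\iota_*(\beta)|_p = c^{\Gm}_{\mathrm{top}}(N_{V/U}|_p)\cdot(\beta|_p)$, where restriction to $p$ on the right is in $V$. The normal fibre $N_{V/U}|_p = T_pX/T_pX_p^+ \cong T_pX_p^-$ has nonzero (negative) weights, so its top Chern class is a nonzero element of $A^*_{\Gm}(p)_\QQ = \QQ[t]$. It therefore remains to show $\beta|_p \neq 0$. Since $V$ is a representation, $A_*^{\Gm}(V)_\QQ \cong A_*^{\Gm}(p)_\QQ$ via flat pullback, and restriction to the origin is the inverse isomorphism, which is injective. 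So $\beta|_p \neq 0$ iff $\beta \neq 0$ in $A_*^{\Gm}(V)_\QQ$, and the latter is exactly \Cref{fully_definite_non_zero}. Since $\QQ[t]$ is a domain, the product is nonzero.

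The main obstacle is the neighbourhood step: producing a $\Gm$-stable open $U \ni p$ in which $X_p^+$ is closed and equivariantly a linear representation. If this citation is unavailable, I would instead use a Sumihiro affine $\Gm$-stable neighbourhood, embedded equivariantly in a representation, and argue there directly.
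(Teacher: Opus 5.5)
Your argument is correct in outline, but the justification you give for the step you call the main obstacle is wrong as written. Every positive limit is a fixed point and $X^{\Gm}$ is finite, so the set of points whose positive limit lies in a small neighbourhood of $p$ is just $X_p^+$ itself. That set is open in $X$ only when it is the dense cell.

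The step is nevertheless immediate. Since $X_p^+$ is locally closed, it is open in $\overline{X_p^+}$, so $U \deq X\setminus(\overline{X_p^+}\setminus X_p^+)$ is open. It is $\Gm$-stable, contains $p$, and satisfies $\overline{X_p^+}\cap U = X_p^+$. Your Sumihiro fallback also works: any $\Gm$-stable open set containing $p$ contains all of $X_p^+$, and in a $\Gm$-stable affine neighbourhood the attracting set of an isolated fixed point is closed.

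You also need the equivariant isomorphism $X_p^+\cong T_pX_p^+$ that you cite. It does hold: the coordinate ring of the smooth affine cell is non-negatively graded with degree-zero part $\mathbb{K}$, hence a graded polynomial ring. With this $U$ and this isomorphism, the rest of your computation is sound: $\alpha|_U=\iota_*\beta$, the self-intersection formula, compatibility of Gysin pullbacks with composition and with Chern classes, homotopy invariance on $V$, and \Cref{fully_definite_non_zero}.

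Your route differs from the paper's in where the linearization happens. The paper linearizes infinitesimally. By the basic construction, $[W_i]|_p$ is the restriction to $p$ of the tangent-cone class $[C_p^XW_i]$ in $T_pX$, and these cones lie in $T_pX_p^+$. The self-intersection formula for the linear inclusion $T_pX_p^+\hookrightarrow T_pX$, together with \Cref{fully_definite_non_zero}, then finishes.

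You linearize globally: the cell is closed in a neighbourhood of $p$ and is itself equivariantly a positive-weight representation. This avoids deformation to the normal cone and the bookkeeping about tangent cones being $\Gm$-stable of the right dimension. The price is that you use the linear structure of the whole cell rather than only first-order data at $p$.

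The common core is identical: the top Chern class $c^{\Gm}_{\mathrm{top}}$ of the negative-weight part of $T_pX$ is nonzero, and it multiplies a class that is nonzero by \Cref{fully_definite_non_zero}.
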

\begin{proof}
The ``basic construction'' of intersection theory gives that $[W_i]|_p$ is the pullback to $p$ of the class $[C_p^X W_i]$ of the cone of $W_i$ at $p$ in $T_p X$.
Since this pullback is an isomorphism, it is sufficient to show that $\beta \deq \sum_i a_i[C_p^X W_i]$ is nonzero in $A_*^{\Gm}(T_pX)_{\QQ}$.
Let $j\colon T_pX_p^+ \to T_pX$ be the inclusion.
Again using the self-intersection formula \cite[Proposition 17.4.1]{Anderson_Fulton_2023},
\[
j^*\beta = j^* \left(\sum_i a_i [C_p^{X} W_i]\right) = j^* j_*\left(\sum_i a_i [C_p^{X_p^+} W_i]\right) = c_{\text{top}}^{\Gm}(T_pX/T_p X_p^+)\frown \sum_i a_i [C_p^{X_p^+} W_i].
\]
Since the fixed locus of $X$ is finite, the weights of $T_pX$ are all nonzero, so $c_{\text{top}}^{\Gm}(T_pX/T_p X_p^+) \neq 0$.
Moreover, since $T_p X_p^+$ has all positive weights we have $\sum_i a_i [C_p^{X_p^+} W_i] \neq 0$ in $A^{\Gm}_*(T_pX_p^+)_{\QQ}$ by \Cref{fully_definite_non_zero}.
Hence the pullback $j^* \beta$ is nonzero in $A_*^{\Gm}(T_p X_p^+)_{\QQ}$.
So $\beta \neq 0$ in $A^{\Gm}_*(T_pX)_{\QQ}$.
\end{proof}
Recall from \Cref{df:filterable} that a filtering ordering for $X$ is given by an ordering of its fixed points $p_1, \dots, p_n$ such that each partial union $X_{p_1}^+ \sqcup \dots \sqcup X_{p_j}$ is closed.
For a subvariety $Z \subseteq X$, it will be useful to consider the maximum index $j$ for which $p_j \in Z$.
The following lemma shows that the localization of $Z$ at this fixed point in equivariant Chow homology is nonzero.
\begin{Lm}\label[Lm]{lm:effective_nonzero}
Assume the \BB decomposition of $X$ is filterable.
Let $\alpha$ be a positive $\Gm$-invariant $\QQ$-cycle in $X$.
Let $q$ be a fixed point in the support of $\alpha$ maximal with respect to a filtering order on $X^{\Gm}$.
Then the components of $\alpha$ containing $q$ are contained in $\overline{X_q^+}$ and $\alpha|_q \neq 0$ in $A_*^{\Gm}(q)_{\QQ}$.
\end{Lm}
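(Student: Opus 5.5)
We prove the two assertions of the statement in turn. The first is a support statement, which we get from the filtering order. The second, nonvanishing of $\alpha|_q$, we get by splitting $\alpha$ into the components through $q$ and those missing $q$, then applying \Cref{lm:non_zero_to_point} to the former.

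\emph{Containment.} Let $p_1,\dots,p_n$ be the filtering order and let $q=p_j$. Let $W$ be a component of $\alpha$ containing $q$. Since $W$ is $\Gm$-stable and complete, \cite[Lemma 5.1]{buch} gives $W\subseteq\bigsqcup_{c\in W^{\Gm}}X_c^+$. By maximality of $q$, every $c\in W^{\Gm}$ has index at most $j$. Hence $W\subseteq X_{p_1}^+\sqcup\dots\sqcup X_{p_j}^+$.

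The generic point of $W$ lies in a unique cell $X_c^+$, so $W\subseteq\overline{X_c^+}$ with $c=p_i$ for some $i\le j$. We now show $c=q$. Since $q\in W\subseteq\overline{X_c^+}$, we have $X_q^+\cap\overline{X_c^+}\neq\varnothing$. \Cref{prop:filter_cond} then forces $j\le i$, so $i=j$ and $c=q$. Therefore $W\subseteq\overline{X_q^+}$.

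\emph{Nonvanishing.} Write $\alpha=\alpha_q+\alpha'$, where $\alpha_q$ collects the components containing $q$ and $\alpha'$ the rest. For any component $W$ not containing $q$, the restriction $[W]|_q$ vanishes. Indeed, $[W]$ is pushed forward from $W$, and the refined pullback to $q$ is supported on $W\cap\{q\}=\varnothing$. So $\alpha|_q=\alpha_q|_q$.

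The cycle $\alpha_q$ need not be pure-dimensional. Since pullback along $\iota_q$ shifts dimension uniformly, we decompose $\alpha_q$ by dimension, giving $\alpha_q|_q=\sum_d(\alpha_q)_d|_q$. The summands live in distinct graded pieces of $A^{\Gm}_*(q)_\QQ\cong\QQ[t]$: a $d$-dimensional cycle restricts into degree $\dim X-d$. It therefore suffices that one nonzero piece $(\alpha_q)_d$ has nonzero restriction. Each such piece is positive and pure-dimensional, and by the first part all its components satisfy $q\in W_i\subseteq\overline{X_q^+}$. \Cref{lm:non_zero_to_point} applies and gives the result.

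The main obstacle is the containment step: ruling out a component through $q$ that is generically in a cell $X_c^+$ with $c\ne q$. The filtering inequality from \Cref{prop:filter_cond} handles this, provided we use $q\in\overline{X_c^+}$ rather than only $c\in W$. The other point needing care is the grading argument for mixed-dimensional $\alpha_q$.
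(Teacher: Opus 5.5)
Your proof is correct and follows the paper's argument. The component $W$ through $q$ lies generically in a single cell $X_c^+$; maximality of $q$ together with the filtering criterion \Cref{prop:filter_cond} forces $c=q$; and nonvanishing then follows from \Cref{lm:non_zero_to_point} once the components missing $q$ are discarded. Your dimension-by-dimension splitting of $\alpha_q$ is a worthwhile addition. The paper applies \Cref{lm:non_zero_to_point}, which is stated only for pure-dimensional cycles, directly to a possibly non-pure $\alpha$, and your grading argument covers that case.
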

\begin{proof}
Write 
\[
\alpha = \sum_i a_i [W_i] + \sum_j b_j [Z_j], \quad a_i, b_j > 0, \quad q \in W_i, q\notin Z_j.
\]
Let $q_i = W_i^\up$.
Then $q \in W_i \subseteq \overline{X_{q_i}^+}$, so by maximality $q = q_i$.
This shows the first claim.
For the second claim, we use  \Cref{lm:non_zero_to_point} to deduce
\[
\alpha|_q = \sum_i a_i [W_i]|_q + \sum_j b_j [Z_j]|_q = \sum_i a_i [W_i]|_q \neq 0.\qedhere
\]
\end{proof}
\begin{Cor}\label[Cor]{cor:nonvanishing}
If the \BB decomposition of $X$ is filterable, then the equivariant Chow homology class of any positive $\Gm$-invariant $\QQ$-cycle in $X$ is nonzero in $A_*^{\Gm}(X)_{\QQ}$.
\end{Cor}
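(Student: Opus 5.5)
The plan is to derive this directly from \Cref{lm:effective_nonzero}. Let $\alpha$ be a positive $\Gm$-invariant $\QQ$-cycle in $X$, and fix a filtering order $p_1, \dots, p_n$ on $X^{\Gm}$. Filterability guarantees such an order exists, and the order must include every fixed point.

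The first step is to find a fixed point in the support of $\alpha$. Since $\alpha$ is positive, some component $W$ of $\alpha$ occurs with a strictly positive coefficient. This $W$ is a $\Gm$-stable closed subvariety of the complete variety $X$, so it is itself complete. For any $x \in W$ the limit $\lim_{t\to 0} t\cdot x$ exists and lies in $W$, since $W$ is closed and $\Gm$-stable. That limit is a fixed point, so $W^{\Gm} \neq \varnothing$ and the support of $\alpha$ contains fixed points.

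Next, among the finitely many fixed points in the support of $\alpha$, let $q$ be the one maximal for the filtering order. By \Cref{lm:effective_nonzero}, the localization $\alpha|_q$ is nonzero in $A_*^{\Gm}(q)_{\QQ}$.

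The restriction $\alpha \mapsto \alpha|_q = \iota_q^*\alpha$ is a well-defined homomorphism on classes. Explicitly, it is the Gysin pullback $A_*^{\Gm}(X)_{\QQ} \to A_*^{\Gm}(q)_{\QQ}$ along the regular embedding $\iota_q$, and $\iota_q$ is regular because $X$ is smooth. Since a homomorphism sends $0$ to $0$, the class of $\alpha$ in $A_*^{\Gm}(X)_{\QQ}$ cannot vanish. This proves the statement.

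There is no real obstacle here. The only points needing care are that $\alpha|_q$ depends only on the equivariant class of $\alpha$, which is the functoriality of refined Gysin maps, and that the support of a positive cycle contains a fixed point.
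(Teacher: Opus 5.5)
Your proposal is correct and is exactly the argument the paper intends. The paper states the corollary without a separate proof, as an immediate consequence of \Cref{lm:effective_nonzero} applied at a filtering-maximal fixed point $q$ of the support, combined with the fact that $\alpha \mapsto \alpha|_q$ is well defined on equivariant classes. Your check that the support of a positive $\Gm$-invariant cycle contains a fixed point, via limits inside a closed $\Gm$-stable component of the complete variety $X$, supplies the one detail the paper leaves tacit.
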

\begin{Ex}\label[Ex]{ex:zero_cycle}
We note that \Cref{cor:nonvanishing} need not hold without the assumption of filterability of the \BB decomposition for $X$.
Here we give an example of a smooth, complete toric threefold with a positive torus-invariant cycle which is zero in $A_{*}^{\Gm}(X)$.
Our construction is a variation on \cite[Example 3.3.3]{JJ}.

Let $e_0, e_1, e_2$ be a basis for $\Z^3$ and let $e_0^*, e_1^*, e_2^*$ be the dual basis.
Let $\epsilon_i = -e_i$ and let $w_i = e_i - 3e_{i+1} - e_{i + 2}$ with indices viewed modulo 3.
Let $X$ be the toric variety given by the complete unimodular fan in $\R^3$ whose maximal cones are:
\begin{align*}
&\Cone(e_0, e_1, e_2), &  &\Cone(\epsilon_0, \epsilon_1, \epsilon_2), & &\{\Cone(e_i, e_{i+1}, w_i)\}_{i \in \Z\!/3\!\Z},\\
&\{\Cone(w_{i}, \epsilon_{i+1}, \epsilon_{i+2})\}_{i \in \Z\!/3\!\Z}, &  &\{\Cone(e_i, w_{i+2},\epsilon_{i+1})\}_{i \in \Z\!/3\!\Z}, & &\{\Cone(e_{i}, w_{i},\epsilon_{i+1})\}_{i \in \Z\!/3\!\Z}.
\end{align*}
For a ray generator $\rho$, we write $V(\rho)$ for the $T$-stable divisor in $X$ corresponding to $\rho$.
For ray generators $\rho, \rho'$ in the fan of $X$, we write $V(\rho, \rho')$ for $V(\rho)\cap V(\rho')$, the $T$-stable subvariety corresponding to the cone $\Cone(\rho, \rho')$.

We equip $X$ with a $\Gm$-structure using the admissible cocharacter $v = e_0 + e_1 + e_2$.
The $T$-character $\chi_i = e_{i+1}^* - e_{i+2}^*$ restricts to a rational function on the torus-invariant subvariety $V(e_i)$ which is a $T$-eigenfunction.
For our choice of $\Gm$, the rational function $\chi_i$ has weight $0$.
Thus \cite[Theorem 2.1]{Brion_1997} gives the relation $[\Div_{V(e_i)} \chi_i] = 0$ in $A^{\Gm}_*(X)$.
Using \mbox{\cite[Equation 1]{Int_on_toric}}, one computes
\[
[\Div_{V(e_i)} \chi_i] = [V(e_i, e_{i + 1})] - [V(e_i, e_{i + 2})] - 2 [V(e_i, w_i)] - [V(e_i, \epsilon_{i + 1})] - 2 [V(e_i, w_{i + 2})].
\]
Summing these relations for $i \in \{0,1,2\}$, we get in $A_*^{\Gm}(X)$ the equality
\[
0 = -\sum_{i} [\Div_{V(e_i)} \chi_i] = 2\sum_i [V(e_i, w_{i})]  + \sum_i [V(e_i, \epsilon_{i + 1})] + 2 \sum_i [V(e_i, w_{i+2})].
\]
\end{Ex}
\begin{Thm}\label[Thm]{thm:BB_rigidity}
Let $X$ be a smooth complete $\Gm$-variety with finite fixed locus whose \BB decomposition is filterable.
Then each \BB cell closure $\overline{X_p^+}$ is strongly $\Gm$-rigid.
\end{Thm}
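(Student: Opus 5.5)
The plan is to compare $\alpha$ and $[\overline{X_p^+}]$ by restricting both to fixed points, using a filtering order on $X^{\Gm}$. Lemma~\ref{lm:effective_nonzero} and Corollary~\ref{cor:nonvanishing} supply the nonvanishing. Set $\Omega \deq \overline{X_p^+}$ and $d \deq \dim \Omega$, and let $\alpha = \sum_i a_i [W_i]$ be a positive $\Gm$-invariant $\QQ$-cycle with $\alpha = [\Omega]$ in $A_*^{\Gm}(X)_{\QQ}$. Fix a filtering order $p_1, \dots, p_n$ on $X^{\Gm}$.

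\emph{Step 1: reduce to $\alpha$ pure of dimension $d$.} The total equivariant Chow group is graded by dimension, and cycle classes respect this grading. For each $k \neq d$, the dimension-$k$ part $\alpha_k$ of $\alpha$ therefore has zero class. Each $\alpha_k$ is either zero or a positive $\Gm$-invariant $\QQ$-cycle. By Corollary~\ref{cor:nonvanishing}, which needs filterability, a positive cycle has nonzero class, so $\alpha_k = 0$ for $k \neq d$. Hence $\alpha$ is pure of dimension $d$.

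\emph{Step 2: the maximal fixed point in the support of $\alpha$ is $p$.} I will use two standard facts about restriction to a fixed point $r$.
\begin{itemize}
\item If $r \notin Z$, then $[Z]|_r = 0$, since pullback along the regular embedding $r \hookrightarrow X$ factors through the complement of $Z$.
\item Restriction is compatible with rational equivalence.
\end{itemize}
Hence $\alpha|_r = [\Omega]|_r$ for every $r \in X^{\Gm}$.

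Let $q$ be the maximal fixed point in $\operatorname{supp}\alpha$. Lemma~\ref{lm:effective_nonzero} gives $\alpha|_q \neq 0$, so $[\Omega]|_q \neq 0$, which forces $q \in \Omega$. Every fixed point of $\Omega$ lies in some $X_r^+$ with $r$ at or before $p$ in the filtering order, and a fixed point $r \in X_r^+$ is just $r$ itself. So $q \preceq p$.

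Conversely, Lemma~\ref{lm:non_zero_to_point} applied to the cycle $[\Omega]$ gives $[\Omega]|_p \neq 0$. Then $\alpha|_p \neq 0$, so $p \in \operatorname{supp}\alpha$ and $p \preceq q$. Hence $q = p$.

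\emph{Step 3: identify the components through $p$.} By the first part of Lemma~\ref{lm:effective_nonzero}, every $W_i$ containing $p$ lies in $\Omega$. Each such $W_i$ is irreducible of dimension $d = \dim \Omega$, so $W_i = \Omega$. We can therefore write $\alpha = a[\Omega] + \gamma$ with $a > 0$, where $\gamma$ is zero or positive and no component of $\gamma$ contains $p$.

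Restricting to $p$ gives $[\Omega]|_p = \alpha|_p = a[\Omega]|_p$. Here $[\Omega]|_p \neq 0$ in $A_*^{\Gm}(p)_{\QQ}$, which is a polynomial ring over $\QQ$ and hence torsion-free, so $a = 1$. Then $\gamma = \alpha - [\Omega] = 0$ in $A_*^{\Gm}(X)_{\QQ}$, and Corollary~\ref{cor:nonvanishing} forces $\gamma = 0$ as a cycle. Thus $\alpha = [\Omega]$ as a cycle.

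I expect the main obstacle to be Step 2, specifically justifying the restriction formalism. One needs the vanishing $[Z]|_r = 0$ for $r \notin Z$, the well-definedness of $\alpha|_r$ on equivariant classes, and, from filterability, that the fixed points of $\Omega$ all come at or before $p$. The remaining steps follow directly from the lemmas already established.
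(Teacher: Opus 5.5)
Your proposal is correct and follows essentially the same route as the paper's proof. Both arguments get purity of $\alpha$ from Corollary~\ref{cor:nonvanishing}, show that the filtering-maximal fixed point in the support of $\alpha$ is $p$ by comparing restrictions to fixed points, and then restrict to $p$ to force the coefficient of $[\overline{X_p^+}]$ to be $1$ before eliminating the remainder with Corollary~\ref{cor:nonvanishing}. The differences are only cosmetic: you cite Lemma~\ref{lm:non_zero_to_point} directly for $[\overline{X_p^+}]|_p \neq 0$, and you spell out the filtering argument for $q \preceq p$, which the paper leaves implicit.
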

\begin{proof}
Let $\alpha$ be a positive $\Gm$-invariant $\mathbb{Q}$-cycle in $X$ with $\alpha = [\overline{X_p^+}]$ in $A_*^{\Gm}(X)_{\QQ}$.
It follows from \Cref{cor:nonvanishing} that $\alpha$ is pure of dimension $\dim X_p^+$, since $A_*^{\Gm}(X)_{\QQ}$ is graded by dimension.
Let $q$ be a fixed point in the support of $\alpha$ maximal with respect to a filtering order on $X^{\Gm}$.
Write
\[
\alpha = \sum_{i} a_i [W_i] + \sum_j b_j [Z_j], \quad a_i> 0, b_j > 0 \quad q \in W_i, q\notin Z_j.
\]
By \Cref{lm:effective_nonzero} applied to $\alpha$ we have $\bigcup_i W_i \subseteq \overline{X_q^+}$ and $\alpha|_q \neq 0$.
So by hypothesis $[\overline{X_p^+}]|_q \neq 0$, whence  $q \in \overline{X_p^+}$.
By \Cref{lm:effective_nonzero} applied to the cycle $[\overline{X_p^+}]$, we have $[\overline{X_p^+}]|_p \neq 0$.
By hypothesis, also $\alpha|_p \neq 0$.
So $p$ lies in the support of $\alpha$.
By maximality of $q$, we conclude $q = p$ since $q \in \overline{X_p^+}$.
For each $i$, we have $W_i \subseteq \overline{X_p^+}$ and $\dim W_i = \dim X_p^+$, so $W_i = \overline{X_p^+}$.
Thus $\alpha$ as a cycle is
\[
\alpha = a [\overline{X_p^+}] + \sum_j b_j [Z_j].
\]
The equality $\alpha = [\overline{X_p^+}]$ in $A_*^{\Gm}(X)_{\QQ}$ gives
\[
(1- a)[\overline{X_p^+}] = \sum_j b_j [Z_j] \quad \text{in $A_*^{\Gm}(X)_{\QQ}$.}
\]
By definition, $p \notin Z_j$ for all $j$.
Thus $\sum_j b_j [Z_j]|_p = 0$.
Since $[\overline{X_p^+}]|_p\neq 0$, we conclude $1 = a$.
Then $\sum_j b_j [Z_j] = 0$ in $A_*^{\Gm}(X)_{\QQ}$.
By \Cref{cor:nonvanishing}, we deduce that $\sum_j b_j [Z_j]$ is the zero cycle.
So $\alpha = [\overline{X_p^+}]$, as desired.
\end{proof}
\begin{Rmk}
In light of \Cref{thm:BB_rigidity}, the abundance of examples of smooth projective toric varieties with non-\Gm-convex \BB cell closures in \Cref{subsec:bad_convex} answers the question implicit at the end of \cite[Example 4.4]{buch} about whether equivariant rigidity requires  convexity.
For comparison, see \Cref{ex:convexity_is_needed}, in which a failure of $\Gm$-convexity leads to the non-rigidity of an intersection of opposite \BB cell closures.
\end{Rmk}
We now show that the assumptions of projectivity and fully-definiteness in \cite[Theorem 4.3]{buch} are unneeded if one only wants weak rigidity.
\begin{Prop}\label[Prop]{prop:weak_rigidity_sufficiency}
Let $Y \subseteq X$ be a $\Gm$-stable subvariety.
If $Y$ is $\Gm$-convex, then it is weakly $\Gm$-rigid.
\end{Prop}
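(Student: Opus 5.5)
The plan is to show first that $W \subseteq Y$, using $\Gm$-convexity applied to a general $\Gm$-orbit of $W$, and then to conclude $W = Y$ by comparing dimensions. Let $W \subseteq X$ be a $\Gm$-stable subvariety with $[W] = c[Y]$ in $A_*^{\Gm}(X)_{\QQ}$ for some $0 \neq c \in \QQ$.

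\emph{Step 1: the extreme fixed points of $W$ lie in $Y$.} Since $W$ is a complete $\Gm$-variety with finite fixed locus, it has fixed points $W^{\down}$ and $W^{\up}$ with $W^{\down} \midarrowX{x} W^{\up}$ for general $x \in W$. In particular $W \subseteq \overline{X_{W^{\up}}^+}$ and $W^{\up} \in W$. Applying \Cref{lm:non_zero_to_point} to the cycle $[W]$ gives $[W]|_{W^{\up}} \neq 0$, and hence $c[Y]|_{W^{\up}} \neq 0$.

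If $W^{\up} \notin Y$, then the regular embedding $W^{\up} \hookrightarrow X$ factors through the open set $X \setminus Y$. The class $[Y]$ restricts to zero on $X \setminus Y$ (flat pullback to the open complement), so $[Y]|_{W^{\up}} = 0$, a contradiction. Therefore $W^{\up} \in Y$.

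The same argument for the reversed $\Gm$-action gives $W^{\down} \in Y$. This uses that reversal does not change $A_*^{\Gm}(X)_{\QQ}$ up to the automorphism induced by $t \mapsto t^{-1}$, and that \Cref{lm:non_zero_to_point} applies equally to the reversed action.

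\emph{Step 2: $W \subseteq Y$.} For a general point $x \in W$ we have $W^{\down} \midarrowX{x} W^{\up}$, and both endpoints lie in $Y^{\Gm}$ by Step 1. If $W$ is not a single fixed point, such $x$ is non-fixed, and \Cref{lm:check_orbits} together with the $\Gm$-convexity of $Y$ gives $x \in Y$. Thus a dense subset of $W$ lies in the closed set $Y$, so $W \subseteq Y$. If $W$ is a fixed point, then $W = \{W^{\up}\} \subseteq Y$ directly.

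Note that this avoids any appeal to \Cref{thm:broken_trajectory} or to filterability: only the general orbit matters.

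\emph{Step 3: equality of dimensions.} By \Cref{lm:non_zero_to_point} applied at $Y^{\up}$, we have $[Y] \neq 0$ in $A_*^{\Gm}(X)_{\QQ}$, and it is homogeneous of degree $\dim Y$. Likewise $[W]$ is nonzero and homogeneous of degree $\dim W$. Since the equivariant Chow group is graded by dimension, the equality $[W] = c[Y]$ with $c \neq 0$ forces $\dim W = \dim Y$. Combined with $W \subseteq Y$ and the irreducibility of $Y$, this gives $W = Y$.

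The step I expect to require the most care is Step 1. One must check that pulling back to a fixed point outside $Y$ kills $[Y]$, via the factorization through the open complement. One must also check that \Cref{lm:non_zero_to_point} genuinely applies, which requires $W \subseteq \overline{X_{W^{\up}}^+}$; this follows because a dense subset of $W$ flows to $W^{\up}$ as $t \to 0$.
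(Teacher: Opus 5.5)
Your proof is correct and follows the paper's argument step for step. Nonvanishing of $[W]|_{W^{\up}}$ via \Cref{lm:non_zero_to_point} forces $W^{\up}, W^{\down} \in Y$; $\Gm$-convexity then puts the general orbit of $W$ inside $Y$; and the dimension grading of $A_*^{\Gm}(X)_{\QQ}$ gives $W = Y$. You also spell out two points the paper leaves implicit: that $[Y]$ restricts to zero at a fixed point outside $Y$ because the restriction factors through the open set $X \setminus Y$, and that $[W] \neq 0$, which the grading argument needs.
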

\begin{proof}
Assume $Z \subseteq X$ is a $\Gm$-stable subvariety with $[Z] = c[Y]$ in $A^{\Gm}_*(X)_{\QQ}$ for some $c \neq 0$.
Then $Z \subseteq \overline{X_{Z^{\up}}^+}$, whence $[Z]|_{Z^{\up}} \neq 0$ by \Cref{lm:non_zero_to_point}.
By hypothesis, also $[Y]|_{Z^{\up}} \neq 0$.
Hence $Z^{\up} \in Y$.
Similarly, ${Z^\down \in Y}$.
The $\Gm$-orbit of a general point in $Z$ has fixed points $Z^{\up}$ and $Z^{\down}$, so is contained in $Y$ by the $\Gm$-convexity assumption.
Thus $Z \subseteq Y$.
Since $A_*^{\Gm}(X)_{\QQ}$ is graded by dimension, the equality $[Z] = c[Y]$ implies $\dim Z = \dim Y$.
Since $Z \subseteq Y$, this forces $Z = Y$.
\end{proof}
Since the property of $\Gm$-convexity is closed under taking irreducible intersections, this gives a criterion for weak $\Gm$-rigidity of \BB cell closures.
\begin{Cor}\label[Cor]{cor:convexity_rigidity}
If the positive and negative \BB cell closures of $X$ are $\Gm$-convex, then the intersection $\overline{X_p^+} \cap \overline{X_q^-}$ is weakly $\Gm$-rigid for $p, q \in X^{\Gm}$ whenever it is irreducible.
\end{Cor}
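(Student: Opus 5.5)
This is a direct combination of \Cref{rmk:intersectability} and \Cref{prop:weak_rigidity_sufficiency}; the plan is to verify convexity of the intersection and then apply the proposition. Fix $p,q \in X^{\Gm}$ and set $Y \deq \overline{X_p^+}$, $Z \deq \overline{X_q^-}$, assuming $Y \cap Z$ is irreducible.

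First, $Y$ and $Z$ are $\Gm$-stable, since they are closures of $\Gm$-stable subsets. By hypothesis both are $\Gm$-convex. I would show directly that $Y\cap Z$ is $\Gm$-convex, using \Cref{lm:check_orbits}. Take $x \in X \del X^{\Gm}$ and fixed points $r,s \in (Y\cap Z)^{\Gm}$ with $r \midarrowX{x} s$. Then $r,s \in Y^{\Gm}$, so the $\Gm$-convexity of $Y$ (via \Cref{lm:check_orbits}) gives $x \in Y$. Likewise $r,s \in Z^{\Gm}$ gives $x \in Z$. Hence $x \in Y \cap Z$, so $Y\cap Z$ satisfies the criterion of \Cref{lm:check_orbits}.

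The intersection $Y\cap Z$ is also $\Gm$-stable. Since it is assumed irreducible, it is a $\Gm$-stable subvariety in the paper's sense (irreducible, closed, with reduced structure). This is exactly the setting of \Cref{rmk:intersectability}, and the argument above just spells that remark out.

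Finally, I would apply \Cref{prop:weak_rigidity_sufficiency} to $Y\cap Z$, which gives weak $\Gm$-rigidity. No step is a real obstacle. The only point needing care is that \Cref{lm:check_orbits} and \Cref{prop:weak_rigidity_sufficiency} require a genuine subvariety, and this is precisely why irreducibility of $\overline{X_p^+}\cap\overline{X_q^-}$ is assumed. If $Y \cap Z$ is empty there is nothing to prove; empty sets are excluded by irreducibility anyway.
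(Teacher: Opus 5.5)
Your proposal is correct and follows the paper's argument. The paper obtains the corollary by noting that $\Gm$-convexity is preserved under irreducible intersections (\Cref{rmk:intersectability}) and then applying \Cref{prop:weak_rigidity_sufficiency}. Your verification of that remark through \Cref{lm:check_orbits} is exactly the justification the paper leaves implicit.
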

\begin{Ex}\label[Ex]{ex:convexity_is_needed}
That the convexity assumption in \Cref{cor:convexity_rigidity} is not superfluous can be seen by analyzing the smooth projective toric variety $X$ whose polytope is pictured in \Cref{fig:twiceblown}.
It is the blowup of $\mathbb{P}^3$ at two fixed points.
The choice of cocharacter $v = (1,2,3)$ induces $\midddarrow$ relations as indicated there.
The curve inside the bottom left triangle represents the $\Gm$-orbit closure $C'$ of a general point in the corresponding exceptional divisor.
The curve $C'$ degenerates to the $T$-stable curve $C$ corresponding to the edge $(2,0,0)\midddarrow(2,0,1)$.
It follows that $[C'] = k[C]$ in $A_*^{\Gm}(X)$ for some positive integer $k$.
Thus $C$ is not weakly $\Gm$-rigid.
We conclude by observing that $C = \overline{X_{(2,0,0)}^-} \cap \overline{X_{(1,0,2)}^+}$ is an intersection of opposite \BB cell closures.
\end{Ex}
We can improve weak $\Gm$-rigidity to strong if we assume that the \BB decomposition is a stratification.
The following result may be used to show that Richardson varieties in flag varieties are strongly $\Gm$-rigid for the $\Gm$-action induced by a general cocharacter (cf.\ \cite[Lemma 6.1]{buch}).
This improves upon \cite[Theorem 6.3]{buch}, which proves $T$-rigidity.
\begin{Thm}\label[Thm]{thm:strong_richardson_rigidity}
Let $X$ be a smooth complete $\Gm$-variety with finite fixed locus.
Suppose the \BB decomposition of $X$ is a stratification.
Then the intersection $\overline{X_p^+} \cap \overline{X_q^-}$ is strongly $\Gm$-rigid for $p, q \in X^{\Gm}$ whenever it is irreducible.
\end{Thm}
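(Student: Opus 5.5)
Let $R \deq \overline{X_p^+} \cap \overline{X_q^-}$, assumed irreducible, and let $\alpha$ be a positive $\Gm$-invariant $\QQ$-cycle with $\alpha = [R]$ in $A_*^{\Gm}(X)_{\QQ}$. The plan is to mirror the proof of \Cref{thm:BB_rigidity}, with two changes: we localize at both extreme fixed points, using the positive and the negative filtrations, and we replace the containment argument by $\Gm$-convexity of $R$. Since a stratification is filterable (\Cref{rmk:strat_implies_filt}), \Cref{cor:nonvanishing} applies. It shows that $\alpha$ is pure of dimension $\dim R$, because any piece of another dimension would be a nonzero positive cycle with zero class. Moreover $R$ is $\Gm$-convex: each \BB cell closure is $\Gm$-convex by \cite[Proposition 5.3]{buch}, the negative decomposition is also a stratification by \Cref{cor:strat_reversibility}, and irreducible intersections of convex subvarieties are convex (\Cref{rmk:intersectability}).

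First we locate the support of $\alpha$. Let $q'$ be a fixed point in $\operatorname{supp}\alpha$ that is maximal for a filtering order. By \Cref{lm:effective_nonzero}, $\alpha|_{q'} \neq 0$, so $[R]|_{q'} \neq 0$ and hence $q' \in R$. For each component $W$ of $\alpha$ we have $W \subseteq \overline{X_{W^\up}^+}$ and $[W]|_{W^{\up}}\neq 0$ by \Cref{lm:non_zero_to_point}.

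It is not automatic that $[W]|_{W^\up}$ survives in $\alpha|_{W^\up}$, since other components could cancel it. I therefore plan an induction that peels off components. Repeatedly choose a maximal fixed point $r$ in the support of the remaining cycle. \Cref{lm:effective_nonzero} shows its localization is nonzero, hence $r\in R$. The components through $r$ lie in $\overline{X_r^+}$ and have $W^{\up}=r$. The stratification gives $X_r^+ \subseteq \overline{X_p^+}$, because $r\in\overline{X_p^+}$ (the stratification criterion (ii) of \Cref{thm:dim_strat}). The obstacle is that after subtracting these components, the remaining cycle no longer has class $[R]$.

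The cleaner route, which I would try first, is to prove the containment $\operatorname{supp}\alpha \subseteq R$ via a dimension argument. By the stratification property, $\overline{X_p^+}$ is the union of the $X_r^+$ with $r \in \overline{X_p^+}$, and similarly for $\overline{X_q^-}$. I would show that every component $W$ satisfies $W^\up\in \overline{X_p^+}$ and $W^\down \in \overline{X_q^-}$. For this, take $r$ maximal among the $W^\up$ of components not satisfying the condition, and localize at $r$. Components with $W^{\up}$ larger than $r$ do not contain $r$ unless $r\in\overline{X^+_{W^\up}}$. Here we must argue that all those components already lie in $R$. If so, their contribution at $r$ is part of the class of $[R]$, and $R$ is itself in the support. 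Handling this cancellation is the main obstacle; I expect to control it by ordering the components via the filtering order and using that each $[\overline{X_s^+}]|_s\neq0$ with positive leading coefficient, as in \Cref{lm:non_zero_to_point}. Once $W^\up, W^\down \in R$ for every $W$, the general $\Gm$-orbit in $W$ runs from $W^\down$ to $W^\up$, so $\Gm$-convexity of $R$ (\Cref{lm:check_orbits}) gives $W \subseteq R$. Equidimensionality and irreducibility of $R$ then force $W = R$.

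Thus $\alpha = a[R]$, and $a[R]=[R]$ in $A_*^{\Gm}(X)_\QQ$. Since $[R] \neq 0$ by \Cref{cor:nonvanishing}, we get $a = 1$, so $\alpha = [R]$ as a cycle.
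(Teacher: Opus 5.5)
\textbf{Gap: the support containment is never proved.} The step that carries the whole proof is showing that every component of $\alpha$ lies in $R=\overline{X_p^+}\cap\overline{X_q^-}$, and you never carry it out. You name the cancellation at the localized fixed point as ``the main obstacle'' and offer only a hope of controlling it.

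Moreover, the localization you set up cannot produce a contradiction. You take $r$ maximal among the $W^{\up}$ of components violating the \emph{conjunction} ``$W^{\up}\in\overline{X_p^+}$ and $W^{\down}\in\overline{X_q^-}$''. If a component violates only the negative half, then $r\in\overline{X_p^+}$ may well lie in $R$, so $[R]|_r$ need not vanish. Your suggestion that the interfering components ``already lie in $R$'' and contribute ``part of the class of $[R]$'' does not lead to a contradiction either.

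\textbf{The missing idea.} Treat the two closures separately, and localize at a point chosen from the bad part only. Write $\alpha=\sum_i a_i[W_i]+\sum_j b_j[Z_j]$ with $W_i\subseteq\overline{X_p^+}$ and $Z_j\not\subseteq\overline{X_p^+}$. If some $Z_j$ exists, let $\ell$ be a fixed point of $\bigcup_j Z_j$ that is maximal for a filtering order. Applying \Cref{lm:effective_nonzero} to the subcycle $\sum_j b_j[Z_j]$ gives two facts: $Z_k\subseteq\overline{X_\ell^+}$ whenever $\ell\in Z_k$, and $\sum_j b_j[Z_j]|_\ell\neq 0$.

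If $\ell\in\overline{X_p^+}$, criterion (ii) of \Cref{thm:dim_strat} would give $Z_k\subseteq\overline{X_\ell^+}\subseteq\overline{X_p^+}$, which is a contradiction. So $\ell\notin\overline{X_p^+}$. Then every $[W_i]|_\ell=0$, and hence $\alpha|_\ell=\sum_j b_j[Z_j]|_\ell\neq0$. But $[R]|_\ell=0$ because $\ell\notin R$. No cancellation ever arises.

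Your own set-up also works once you drop the conjunction and call a component bad only when $W^{\up}\notin\overline{X_p^+}$. Good components then lie in $\overline{X_p^+}$ by stratification, so they miss $r$. Bad components through $r$ have $W^{\up}=r$ by maximality. The negative decomposition is a stratification by \Cref{cor:strat_reversibility}, so the same argument with the negative filtration puts every component inside $\overline{X_q^-}$, hence inside $R$, and equidimensionality finishes the proof. With this argument the appeal to $\Gm$-convexity of $R$ is unnecessary, though not wrong.
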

\begin{proof}
First note that the \BB decomposition for $X$ is filterable (see \Cref{rmk:strat_implies_filt}).
Let $\alpha$ be a positive $\Gm$-invariant $\mathbb{Q}$-cycle in $X$ with $\alpha = [\overline{X_p^+} \cap \overline{X_q^-}]$ in $A_*^{\Gm}(X)_{\QQ}$.
It follows from \Cref{cor:nonvanishing} that $\alpha$ is pure of dimension equal to $\dim (\overline{X_p^+} \cap \overline{X_q^-})$, since $A_*^{\Gm}(X)_{\QQ}$ is graded by dimension.
Write
\[
\alpha = \sum_i a_i [W_i] + \sum_j b_j [Z_j], \quad a_i >0, b_j > 0 \quad W_i \subseteq \overline{X_p^+} , Z_j \nsubseteq \overline{X_p^+}.
\]
Assume there is at least one $Z_j$.
Let $\ell$ be a fixed point of $\bigcup_j Z_j$ maximal with respect to a filtering order on $X^{\Gm}$.
Fix $k$ such that $\ell \in Z_k$.
Then $Z_k \subseteq \overline{X_{\ell}^+}$ by the first part of \Cref{lm:effective_nonzero}.
If we had $\ell \in \overline{X_p^+}$, then also $Z_k \subseteq \overline{X_{\ell}^+} \subseteq \overline{X_p^+}$ by (i)$\Rightarrow$(ii) in \Cref{thm:dim_strat}.
This would contradict the definition of $Z_k$.
So $\ell \notin \overline{X_p^+}$.
By the second part of \Cref{lm:effective_nonzero},
\[
\alpha|_{\ell} = \sum_i a_i [W_i]|_{\ell} + \sum_j b_j[Z_j]|_{\ell} = \sum_j b_j[Z_j]|_{\ell} \neq 0. 
\]
Since $\alpha|_{\ell} = [\overline{X_p^+} \cap \overline{X_q^-}]|_{\ell} = 0$, this is a contradiction.

From this discussion, we deduce that the support of $\alpha$ is contained in $\overline{X_p^+}$.
By an analogous argument, one shows that its support is contained also in $\overline{X_q^-}$.
(Note that the negative \BB decomposition is a stratification by \Cref{cor:strat_reversibility}.)
Each irreducible component of $\alpha$ is a subvariety of $\overline{X_p^+} \cap \overline{X_q^-}$ of dimension $\dim(\overline{X_p^+} \cap \overline{X_q^-})$.
Thus as a cycle $\alpha = a[\overline{X_p^+} \cap \overline{X_q^-}]$ for some $a$.
The equality $a = 1$ follows from \Cref{cor:nonvanishing}.
\end{proof}

\section{Acknowledgments}

The authors are grateful to the Institute for Advanced Study for having provided the forum for discussion which brought these questions to our attention.
We would like to thank Harold Blum, Sergio Cristancho, Tuong Le, Leonardo Mihalcea, Botong Wang, and Shouda Wang for helpful discussions. 
We are particularly grateful to 
Anders Buch and Matt Larson for their thoughtful feedback.
We thank Mateusz Micha\l ek, Leonid Monin, and Botong Wang for a productive correspondence during the final stages of our preparation of this manuscript. We are also grateful to Seonjeong Park and JiSun Huh for their openness to discussing their research surrounding generalized Bott towers.
In the course of testing various hypotheses, the numerous examples of smooth convex polytopes in \cite{smooth_classification} were a valuable resource. 

\appendix

\section{Proof of Theorem \ref{thm:smoo_sur_are_toric}}
\label[appendix]{append2}
Here we prove \Cref{thm:smoo_sur_are_toric} based on the ideas in \cite[\textsection4.1]{Orlik}. We restate it as \Cref{thm:smoo_sur_are_toric2}.
\begin{Thm}\label{thm:smoo_sur_are_toric2}
Let $X$ be a smooth complete $\Gm$-surface with finite fixed locus.
Then $X$ admits the structure of a toric variety from which the $\Gm$-action may be recovered via some admissible cocharacter.
\end{Thm}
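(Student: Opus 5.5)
We argue in three stages: reduce to a projective $\Gm$-surface, show it is rational with a minimal model that is toric for a torus containing the $\Gm$, and then check that each blowup back to $X$ is at a fixed point of that torus.

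\emph{Stage 1: projectivity and rationality.} By Zariski's criterion, $X$ is projective. Since $X^{\Gm}$ is finite, the action is nontrivial, so $X$ has a one-dimensional family of $\Gm$-orbit closures. By \Cref{thm:broken_trajectory}, a general orbit closure is a rational curve joining $X^{\down}$ to $X^{\up}$. The surface is therefore covered by rational curves and is ruled. Its Euler characteristic (or $\ell$-adic Euler characteristic) equals the number of fixed points. Equivalently, the \BB decomposition into affine cells gives $A_*(X)$ free of finite rank, hence $H^1=0$. So $X$ is ruled over a curve of genus $0$, i.e.\ $X$ is rational.

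\emph{Stage 2: equivariant minimal model.} We run an equivariant minimal model program, contracting $\Gm$-stable $(-1)$-curves one at a time. Since $\Gm$ is connected, it preserves each of the finitely many $(-1)$-curves, so every such curve is $\Gm$-stable. Contracting one yields a smooth $\Gm$-surface $X'$ in which the image point is fixed. The fixed locus stays finite: the contracted curve is either a $\Gm$-orbit closure or pointwise fixed, and the latter is impossible because $X^{\Gm}$ is finite. We end at $\mathbb{P}^2$ or a Hirzebruch surface $\mathbb{F}_n$, $n\neq 1$. On these, $\Gm$ acts through $\Aut^0$, which is $\PGL_3$ or $\Aut^0(\mathbb{F}_n)$, a connected linear algebraic group. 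The image of $\Gm$ is a torus, hence lies in a maximal torus $T$ (dimension $2$). Since the toric structures correspond to maximal tori, we may choose the toric structure so that $T$ is the big torus and $\Gm$ acts through a cocharacter of $T$. Finiteness of the fixed locus then forces $X'^{\Gm}=X'^T$, i.e.\ the cocharacter is admissible.

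\emph{Stage 3: induction back up.} We reverse the contractions: $X$ is obtained by successive blowups at $\Gm$-fixed points. We show inductively that each intermediate surface is toric with $\Gm\subseteq T$ and with finite fixed locus equal to the $T$-fixed locus. A $\Gm$-fixed point is then $T$-fixed, and blowing up a $T$-fixed point of a smooth toric surface gives a toric surface whose $T$-action extends the lifted $\Gm$-action. The lift is unique by the universal property, as in \Cref{prop:blowup_path}. Admissibility is preserved because the new fixed points on the exceptional divisor are isolated; otherwise the $\Gm$-fixed locus of $X$ would be infinite.

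\emph{Main obstacle.} The delicate step is Stage 2: the equivariant contraction and the identification of the torus. The claim that $\Gm$ preserves each $(-1)$-curve needs care in arbitrary characteristic. The tangent-weight argument needs that a pointwise-fixed $(-1)$-curve is excluded, which holds by finiteness. One must also check that $\Gm\to\Aut(X')$ lands in $\Aut^0$ and that maximal tori of $\Aut^0(\mathbb{F}_n)$ give the toric structure. The last point I would verify directly from the explicit description $\Aut^0(\mathbb{F}_n)\cong (\GL_2/\mu_n)\ltimes H^0(\mathbb{P}^1,\mathcal{O}(n))$, using conjugacy of maximal tori.
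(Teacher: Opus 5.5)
Your route is the paper's: projectivity by Zariski's criterion, then equivariant contraction of $(-1)$-curves down to $\mathbb{P}^2$ or $\mathbb{F}_n$ ($n\neq 1$). Next you put the image of $\Gm$ in a maximal torus of the rank-two group $\Aut^0$, and finally blow back up at points that are $T$-fixed because $X^{\Gm}=X^T$. Your Stage 3 matches the paper essentially verbatim. Two of your justifications, however, fail as written.

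In Stage 1, ``covered by rational curves, hence ruled'' is false in characteristic $p>0$, which the paper allows. For example, the Fermat quartic is unirational when $p\equiv 3 \pmod 4$, yet it is a K3 surface. Your $H^1=0$ observation only controls the base of a ruling you have not produced. The fix is the paper's one-line argument, which also makes your Euler-characteristic detour unnecessary: by the Bia\l ynicki-Birula theorem the dense cell $X^+_{X^{\up}}$ is isomorphic to $\mathbb{A}^2$, so $X$ is rational.

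In Stage 2 you derive $\Gm$-stability of each $(-1)$-curve $C$ from there being finitely many such curves. Nothing gives you that finiteness a priori: a rational surface can carry infinitely many $(-1)$-curves, e.g.\ $\mathbb{P}^2$ blown up at nine general points. Argue instead as follows. The translates $tC$, $t\in\Gm$, form a flat family over the connected curve $\Gm$, so each $tC$ is numerically equivalent to $C$. If $tC\neq C$, this would give $0\le C\cdot tC=C^2=-1$, a contradiction. This argument works in every characteristic, which settles the worry you raise; the paper simply cites \cite[Proposition 1.9]{Orlik}.

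The remaining step you flag but do not prove is the descent of the action to the contracted surface. This is exactly Blanchard's lemma \cite[Proposition 4.2.1]{blanchard}, applied to the Castelnuovo contraction $\pi\colon X\to Y$ using $\pi_*\mathcal{O}_X=\mathcal{O}_Y$, as the paper does. With these repairs your argument goes through.
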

\begin{proof}
Note that any such surface must be projective by Zariski's criterion ({\cite[Theorem 1.28]{smoo_sur_proj}}, \cite[Corollary II.2.6]{zariski}) and rational because it contains a dense \BB cell.

Suppose first that $X$ does not contain a curve isomorphic to $\mathbb{P}^1$ with self-intersection number $-1$.
By Nagata's classification of minimal rational surfaces \cite[Theorem 2]{Nagata}, $X$ must be isomorphic as a variety to either $\mathbb{P}^2$ or a Hirzebruch surface. In either case, the automorphism group $\Aut X$ is a linear algebraic group of rank 2. Concretely, the identity component of $\Aut X$ is
$\PGL_3$ in the 
case of $\mathbb{P}^2$, 
the product
$\PGL_2 \times \PGL_2$
in the case of $\mathbb{P}^1 \times \mathbb{P}^1$, and a semidirect product 
$\mathbb{G}_a^{k+1} \rtimes \GL_2/\mu_{k}$
in the case of higher Hirzebruch surfaces (see \cite[Remarks 2.4.3, 2.4.4]{aut_group}).\footnote{The quotient $GL_2/\mu_k$ must be interpreted appropriately when $k$ is divisible by the characteristic of $\mathbb{K}$. Note that the proof of \cite[Proposition 4.1]{Orlik} incorrectly states that these automorphism groups are instead all isomorphic to 
$\PGL_2 \times \Gm$.} (Here $\mathbb{G}_a$ is the additive group over $\mathbb{K}$ and $\mu_k \subseteq \GL_2$ is the subgroup of $k$-torsion scalar matrices.) 
The $\Gm$-action on $X$ is induced by a morphism $\Gm \to \Aut X$.
From the fact that every torus in a linear algebraic group is contained in a maximal torus, we see that the
image of $\Gm \to \Aut X$ lies inside a $\Gm^2$ subtorus. Since $\dim X = \dim \Gm^2$ and $\Gm^2$ acts (scheme-theoretically) effectively on $X$, this is a toric structure extending the $\Gm$-action on $X$.

If such a curve $C$ does exist, then it must be $\Gm$-stable \cite[Proposition 1.9]{Orlik}.
By Castelnuovo's criterion \cite[Theorem V.5.7]{Hartshorne}, the curve $C$ may be blown down to a point via a morphism $\pi: X \to Y$ to a smooth projective surface.
By \cite[\href{https://stacks.math.columbia.edu/tag/0AY8}{Tag 0AY8}]{stacks-project}, we have $\pi_*(\mathcal{O}_X) = \mathcal{O}_Y$.
So by Blanchard's lemma \cite[Proposition 4.2.1]{blanchard}, there is a unique $\Gm$-action on $Y$ making the blow-down morphism equivariant.
The point $\pi(C)$ must then be a $\Gm$-fixed point of $Y$.

It follows from \cite[Proposition V.3.2]{Hartshorne} that contraction of curves decreases the Picard rank.
Since the Picard rank is finite to begin with by the N\'eron-Severi theorem,
repeatedly blowing down curves $C \cong \mathbb{P}^1$ with $C^2 = -1$ in our $\Gm$-surface eventually leads to one---call it $\check X$---that contains no such curves.
This is precisely the case handled above.

In light of the above discussion, it suffices to show that if $X, Y$ are smooth complete $\Gm$-surfaces with finite fixed loci and $\pi: X \to Y$ is the blowup of a $\Gm$-fixed point of $Y$ then the theorem holds for $X$ if it holds for $Y$. 
Let $T \subseteq Y$ be a dense torus such that the $\Gm$-action on $Y$ is induced by a morphism 
$\Gm \to T$. Since this $\Gm$-action has finite fixed locus, it is admissible. So $Y^{T} = Y^{\Gm}$.
In particular, $X$ is obtained from $Y$ by blowing up a $T$-fixed point.
Thus $X$ inherits a toric structure extending its $\Gm$-structure. 
\end{proof}
\begin{Rmk}
Whereas for smooth complete $\Gm$-surfaces with finite fixed locus the $\Gm$-action lifts to a toric structure, this fails for threefolds.
For instance, the smooth projective $\Gm$-variety in \Cref{ex:transversality_failure} cannot be toric.
This is because the \BB cell closures in a toric variety are torus-orbit closures and intersections of orbit closures in a toric variety are always reduced (cf.\ \cite[\textsection 3.1, \textsection 5.1]{fult93toric}).
\end{Rmk}

\section{Proof of Theorem \ref{thm:aff_prod}}
\label[appendix]{append1}
Our proof of \Cref{thm:aff_prod} is inspired by its analogue \cite[Lemma 2.5]{coxeter}.
\begin{Lm}\label[Lm]{lm:3d_possibilities}
Let $P$ be a simple three-dimensional polytope whose facets are all either triangles or parallelograms.
Then $P$ is either a simplex, a triangular prism, or a parallelepiped.
\end{Lm}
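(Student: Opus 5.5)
The argument is pure polytope combinatorics, organized by whether $P$ has a triangular facet and driven by counting at vertices. Simplicity means every vertex has degree $3$ and lies on exactly three facets. At any vertex $v$, two of its three facets share exactly one edge through $v$.

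\emph{Case 1: every facet is a parallelogram.} Fix a vertex $v$ with edges $e_1,e_2,e_3$ and edge vectors $u_1,u_2,u_3$. The three facets at $v$ are the parallelograms spanned by pairs $u_i,u_j$. Their far vertices $v+u_i+u_j$ each have degree $3$, with two edges already in the corresponding parallelogram. The third edge at $v+u_1+u_2$ lies on the parallelogram facets containing $v+u_1$ and $v+u_2$, and is parallel to $u_3$. Propagating this parallel-transport argument shows that the vertex set is closed under the translations $\pm u_i$ within the region. Since $P$ is bounded and edge-connected, the vertices are exactly $v+\sum_{i\in S}u_i$ for $S\subseteq\{1,2,3\}$, so $P$ is a parallelepiped.

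A cleaner way to say this: every edge direction is one of $u_1,u_2,u_3$. Each parallelogram facet transports edge directions, and the edge graph is connected. The directions therefore form three parallel classes, so $P$ is a zonotope with three generators, namely a parallelepiped.

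\emph{Case 2: some facet is a triangle $\Delta$ with vertices $a,b,c$.} Each vertex of $\Delta$ has one edge leaving $\Delta$; call these $a\to a'$, $b\to b'$, $c\to c'$. The facet containing $ab$ other than $\Delta$ contains $a,b,a',b'$. If it is a triangle, then $a'=b'$. If it is a parallelogram $aba'b'$, then $a'-a=b'-b$.

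\begin{itemize}
\item \emph{All three side facets are triangles.} Then $a'=b'=c'$ is a single apex, and $P$ is a simplex, because the apex has degree $3$ and the faces close up.
\item \emph{Some side facet is a triangle and another is a parallelogram.} Take $a'=b'$ (a triangle on $ab$) and suppose the facet on $bc$ is a parallelogram. That parallelogram $b\,c\,c'\,b'$ has $b'=a'$, and its translation vector equals $c'-c=a'-b$. Now consider the facet on $ca$: it contains $c,a,c',a'$. It cannot be a triangle unless $c'=a'$, which contradicts $c'\ne b'$. It cannot be a parallelogram either, since that would force $c'-c=a'-a$, while also $c'-c=a'-b$, giving $a=b$. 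So this mixed configuration is impossible.
\item \emph{All three side facets are parallelograms.} Then $a'-a=b'-b=c'-c=w$. The points $a',b',c'$ each have degree $3$, with two edges already used in side facets and one edge along the translated triangle. Hence $a'b'c'$ is a facet, and $P=\Delta\times[0,w]$ is a triangular prism.
\end{itemize}

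At each closure step the argument uses that a $3$-polytope is determined once every vertex has its three edges and the boundary forms a closed surface. I would justify this by noting that the union of the facets found so far already forms a closed $2$-sphere (every edge lies in two of them), so it is the whole boundary.

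The main obstacle is making Case 1 rigorous, that is, showing that all-parallelogram facets force exactly $8$ vertices. I would first try Euler's formula. Writing $V,E,F$ for the numbers of vertices, edges, and facets, simplicity gives $2E=3V$, and all facets being quadrilaterals gives $4F=2E$. Then $V-E+F=2$ yields $V=8$, $E=12$, $F=6$. The $8$ points $v+\sum_{S}u_i$ are already vertices by the parallel-transport argument, so they are all the vertices.
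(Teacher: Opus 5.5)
Your proof is correct, but it takes a different route from the paper.

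\textbf{The paper's route.} The paper applies Euler's formula globally. Writing $F_3,F_4$ for the numbers of triangular and parallelogram facets, the relations $F=F_3+F_4$, $2E=3F_3+4F_4$, $3V=2E$ and $V-E+F=2$ give $3F_3+2F_4=12$. Hence $(F_3,F_4)\in\{(0,6),(2,3),(4,0)\}$. The paper then simply asserts that the simplex, triangular prism and parallelepiped are the unique simple polytopes with these parameters.

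\textbf{Your route.} You use Euler only in the all-parallelogram case; your count $V=8$ is the paper's identity specialized to $F_3=0$. When there is a triangular facet, you argue locally from the three facets adjacent to it, and you rule out the mixed configuration directly through the translation vectors.

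\textbf{What each buys.} The paper's argument is shorter and gives the face counts in all cases at once. Yours actually carries out the recognition step the paper leaves implicit, and it gives the affine conclusion rather than just the combinatorial type. Specifically, the second triangle of the prism is a translate of the first, and the combinatorial cube is a genuine parallelepiped. That affine information is what is used later in the proof of \Cref{prop:general}: there a $3$-face being a triangular prism is used to conclude that a translated edge, or an edge parallel to a given one, is an edge of $P$. Your local analysis also shows directly that the three facets adjacent to a triangle are either all triangles or all parallelograms, which is exactly what \Cref{lm:vertex_eq_relation} exploits.

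\textbf{Two small remarks.} First, the parallel-transport and zonotope discussion in Case 1 can be dropped. The Euler count plus one observation suffices: the third facet at $v+u_1$ is the parallelogram with fourth vertex $v+u_1+u_2+u_3$, so all eight points are vertices. Second, your closure steps are cleaner via connectivity of the edge graph of $P$. Once every vertex in a set has all three of its edges inside that set, the set must be the full vertex set.
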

\begin{proof}
Write $V, E, F$ for the number of vertices, edges and facets of $P$, respectively.
Let $F_3$ be the number of triangles and $F_4$ the number of parallelograms.
We have $F = F_3 + F_4$ and $2E = 3F_3 + 4F_4$.
Also $3V = 2E$ since $P$ is simple.
Euler's formula gives $V - E + F = 2$.
These combine to give $3F_3 + 2F_4 = 12$. The possibilities for the pair $(F_3, F_4)$ are then $(0, 6), (2, 3)$, and $(4, 0)$.
The listed polytopes are the unique simple polytopes with these parameters.
\end{proof}
In \Cref{lm:vertex_eq_relation} and \Cref{prop:general}, we will repeatedly use the fact that in a simple polytope, any set of $k$ edges incident to a common vertex spans a $k$-dimensional face.
\begin{Lm}\label[Lm]{lm:vertex_eq_relation}
Let $P$ be a simple convex polytope whose two-dimensional faces are either triangles or parallelograms.
Let $v$ be a vertex of $P$.
For edges $e$ and $f$ incident to $v$, we write $e \sim_v f$ to mean that
either $e = f$ or the 2-face spanned by $e$ and $f$ is a triangle.
This relation is an equivalence relation.
\end{Lm}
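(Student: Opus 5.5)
Reflexivity and symmetry are immediate from the definition, since the 2-face spanned by $e$ and $f$ is the same as that spanned by $f$ and $e$. The whole content is transitivity. So the plan is to take three distinct edges $e,f,g$ at $v$ with $e\sim_v f$ and $f\sim_v g$, and show that the 2-face spanned by $e$ and $g$ is a triangle. If any two of the three edges coincide, there is nothing to prove.

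Since $P$ is simple, the three edges span a $3$-dimensional face $Q$ of $P$. Every facet of $Q$ is a 2-face of $P$, so it is a triangle or a parallelogram, and $Q$ is itself simple. By \Cref{lm:3d_possibilities}, $Q$ is therefore a simplex, a triangular prism, or a parallelepiped. At the vertex $v$ of $Q$ there are exactly three 2-faces: $\langle e,f\rangle$, $\langle f,g\rangle$ and $\langle e,g\rangle$. We know the first two are triangles.

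Now go through the three cases.
\begin{itemize}
\item A parallelepiped has no triangular faces, so this case is impossible.
\item In a triangular prism, each vertex lies on exactly one triangle and two parallelograms. This contradicts the two triangles at $v$, so this case is also impossible.
\item Hence $Q$ is a simplex, every 2-face of $Q$ is a triangle, and in particular $\langle e,g\rangle$ is a triangle. This gives $e\sim_v g$.
\end{itemize}

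The only step needing care is the reduction to the 3-face $Q$. There we rely on two facts: faces of a simple polytope are simple, and the 2-faces of $Q$ containing $v$ are exactly the pairwise spans of $e$, $f$ and $g$. Both are standard properties of simple polytopes, as recalled just before the lemma. After that, the count of triangles at a vertex of each polytope in \Cref{lm:3d_possibilities} finishes the argument.
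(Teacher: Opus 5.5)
Your proof is correct and matches the paper's: you pass to the 3-face spanned by $e,f,g$, apply \Cref{lm:3d_possibilities}, and note that only the simplex has two triangles meeting at a vertex. Your explicit elimination of the prism and the parallelepiped just spells out the paper's one-line remark that this 3-face ``has two adjacent triangles.''
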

\begin{proof}
Suppose $e,f,g$ are distinct edges incident to $v$.
If $e\sim_v f$ and $f \sim_v g$, the 3-face of $P$ spanned by $e, f, g$ is a simple three-dimensional polytope whose facets are all triangles or parallelograms.
By \Cref{lm:3d_possibilities}, it must be a simplex, as it has two adjacent triangles.
Thus $e \sim_v g$.
\end{proof}
We now proceed with the main result of this section.
We split up the proof of \Cref{thm:aff_prod} into two parts: the general case is \Cref{prop:general} and the smooth case is handled by \Cref{prop:unimodular}.
\begin{Prop}\label[Prop]{prop:general}
If a simple convex polytope $P$ has all two-dimensional faces triangles or parallelograms, then $P$ is affinely equivalent to a product of simplices.
\end{Prop}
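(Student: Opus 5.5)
Fix a vertex $v$ of $P$ and let $d=\dim P$. By \Cref{lm:vertex_eq_relation}, the $d$ edges at $v$ split into equivalence classes $C_1,\dots,C_k$ of sizes $n_1,\dots,n_k$ with $\sum n_i=d$. For each class $C_i$, the $n_i$ edges span an $n_i$-dimensional face $S_i$ containing $v$. I would show that every $2$-face of $S_i$ through $v$ is a triangle and conclude that $S_i$ is a simplex. All pairs of edges at $v$ in $S_i$ span triangles. Inducting on dimension with \Cref{lm:3d_possibilities} as the engine, every $3$-face of $S_i$ through $v$ is a simplex, since it has adjacent triangles at $v$. Hence $S_i$ has only $n_i+1$ vertices: every vertex adjacent to $v$ is adjacent to all the others, and a simple polytope whose vertex graph near $v$ is complete is a simplex. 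For $j\neq i$, the $2$-face spanned by an edge of $C_i$ and an edge of $C_j$ is a parallelogram.

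The candidate affine map is $\Phi\colon S_1\times\cdots\times S_k\to P$ given by $(x_1,\dots,x_k)\mapsto v+\sum_i (x_i-v)$. It is affine and injective, because the edge directions at $v$ are linearly independent by simplicity. What remains is to show that its image is exactly $P$. I would prove by induction on $k$ that the face $F$ spanned by $C_1\cup\dots\cup C_{k-1}$, which by induction equals $S_1\times\cdots\times S_{k-1}$ (translated), and $S_k$ together satisfy $P=F+S_k-v$.

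The key geometric step is to translate $S_k$ along edges of $F$. Take a vertex $w$ of $F$ adjacent to $v$ along an edge $e\in C_j$ with $j<k$. For each edge $f\in C_k$ at $v$, the $2$-face spanned by $e$ and $f$ is a parallelogram. It therefore contains a vertex $w+(f\text{-direction})$ and an edge at $w$ parallel to $f$. These edges at $w$ span a face $S_k'$, which I would show is the translate $S_k+(w-v)$. Any two of the new edges at $w$ lie in a $3$-face spanned by $e$ and two edges of $C_k$. That $3$-face has adjacent triangles and parallelograms, so by \Cref{lm:3d_possibilities} it is a triangular prism, and the two new edges span a triangle parallel to the one at $v$. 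This preserves the equivalence-class structure at $w$. Propagating along paths in the vertex graph of $F$, which is connected, gives a translate of $S_k$ at every vertex of $F$. So $F+S_k-v$ is a subset of $P$ whose vertex set is closed under taking neighbors in $P$. Every vertex of $F+S_k-v$ has $d$ neighbors inside this set, and these exhaust its $P$-neighbors by simplicity. Since the vertex graph of $P$ is connected, all vertices of $P$ lie in $F+S_k-v$, so $P=\Phi(\prod S_i)$.

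I expect the main obstacle to be the consistency of this propagation. The translates obtained by going around different paths must agree, and the edge classes at the new vertices must match those at $v$, both of which rely on the prism rigidity from \Cref{lm:3d_possibilities}. For this step I would first work out the local product structure carefully at a single neighbor $w$ and then argue by connectivity.
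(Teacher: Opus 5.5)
Your plan follows the paper's route. You split off one $\sim_v$-class $C_k$, whose span $S_k$ is a simplex (the paper's $J$), and handle the complementary face $F$ (the paper's $C$) by induction. You then transport $S_k$ along the $1$-skeleton of $F$ using parallelograms and the prism case of \Cref{lm:3d_possibilities}, and conclude from simplicity and connectivity of the vertex graph.

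The gap is the step you flag yourself: the sentence ``This preserves the equivalence-class structure at $w$'' does not follow from what precedes it. Identify each $f\in C_k$ with its edge vector. Your prism argument shows only that the translates $f+(w-v)$, $f\in C_k$, are pairwise $\sim_w$-equivalent and not equivalent to $wv$. It says nothing about the other $d-n_k-1$ edges of $F$ at $w$. A priori one of them, $ww'$, could span a triangle with a translate, and hence, by \Cref{lm:vertex_eq_relation}, with all of them. In that case the $2$-face spanned by $ww'$ and $f+(w-v)$ is a triangle. Your argument then can neither move $S_k$ on to $w'$ nor produce the edge from $w+f$ to $w'+f$ that your count of $d$ neighbors at $w+f$ requires. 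Your other worry, that translates along different paths must agree, is not a real issue: what is propagated is a property of the specific set $S_k+(w-v)$, attached to the vertex $w$.

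The missing idea is to carry the invariant $(\ast)$: \emph{the translates of the edges of $C_k$ at $w$ form an entire $\sim_w$-class}. This holds at $v$ by definition, and you prove it at a neighbor by transporting a putative bad triangle backwards, as the paper does.

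Let $w_1w_2$ be an edge of $F$ with $(\ast)$ holding at $w_1$. Then $w_1w_2\not\sim_{w_1} f+(w_1-v)$, so the $2$-face they span is a parallelogram whose opposite side is $f+(w_2-v)$. Your prism argument, run at $w_1$ instead of $v$, shows that these translates at $w_2$ are pairwise equivalent.

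Now suppose some edge $g'\neq w_2w_1$ of $F$ at $w_2$ has $g'\sim_{w_2} f+(w_2-v)$. The $3$-face spanned by $g'$, $f+(w_2-v)$ and $w_2w_1$ contains a triangle and a parallelogram, so it is a triangular prism with lateral edge $w_2w_1$. Its triangle at $w_1$ is spanned by $f+(w_1-v)$ and the edge $g$ at $w_1$ parallel to $g'$, so $g\sim_{w_1} f+(w_1-v)$. But $g$ lies in the $2$-face spanned by the two $F$-edges $g'$ and $w_2w_1$, hence in $F$. This contradicts $(\ast)$ at $w_1$.

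With $(\ast)$ established at every vertex of $F$ by connectivity, both your transport step and your neighbor count go through.
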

\begin{proof}
Let $d \deq \dim P$.
Let $\Sigma_P^j$ denote the $j$-skeleton of $P$, i.e.\ the union of all $j$-dimensional faces of $P$.
We may assume that $P$ lies in $\R^d$ and contains the origin $o$ as a vertex.
Consider the set $E_o$ of edges of $P$ incident to $o$.
By \Cref{lm:vertex_eq_relation}, $E_o$ is partitioned into equivalence classes by $\sim_o$.
If there is only one class, then the closed neighborhood of $o$ in the graph $\Sigma^1_P$ forms the $1$-skeleton of a $d$-dimensional simplex.
Since $P$ is simple, it must coincide with this simplex.

Now suppose $\sim_o$ has at least two equivalence classes of edges.
Let $\Gamma$ be one such class and let $J$ be the face of $P$ that it spans.
$J$ is a simplex.
Let $C$ be the face of $P$ spanned by all edges incident to $o$ not in $\Gamma$.
By induction on dimension, we may assume that $C$ is affinely equivalent to a product of simplices.
Since $P$ is simple, we have ${\mathbb{R}^d=\Span(C) \oplus \Span(J)}$.
We will show that $P$ is the Minkowski sum $C + J$, hence affinely equivalent to $C \times J$.
We will achieve this by showing the inclusion $\Sigma^{1}_{C + J} \subseteq \Sigma^1_P$.
Since both of these 1-skeleta are connected $d$-regular graphs, this will imply that $P$ and $C + J$ have identical 1-skeleta, and must therefore coincide.

Still using $+$ to denote Minkowski sum, we have $\Sigma^1_{C+ J} = (\Sigma_C^1 + \Sigma_J^0) \cup (\Sigma_C^0 + \Sigma_J^1)$.
We first verify $\Sigma_C^0 + \Sigma_J^1 \subseteq \Sigma_P^1$ and thereafter that $\Sigma_C^1 + \Sigma_J^0 \subseteq \Sigma_P^1$.

\begin{enumerate}
\item 
We want to show that $\Sigma^1_J + w$ lies in the $1$-skeleton of $P$ for all vertices $w$ of $C$.
To do so, we prove that for every vertex $w$ of $C$ the following pair of conditions holds:
\begin{enumerate}[(a)]
\item $\Sigma_J^1 + w \subseteq \Sigma_P^1$.
\item The translates $e + w$ of the edges $e \in \Gamma$ form a $\sim_{w}$-equivalence class at the vertex $w$.
\end{enumerate}

The proof proceeds by traversing from vertex to vertex in $C$.
The statements are clearly true for $w = o$.
It suffices to show that if they hold at a vertex $w_1$ of $C$, then they hold for any adjacent vertex $w_2$ in $C$.
We place ourselves in this setting.
\begin{figure}[ht]
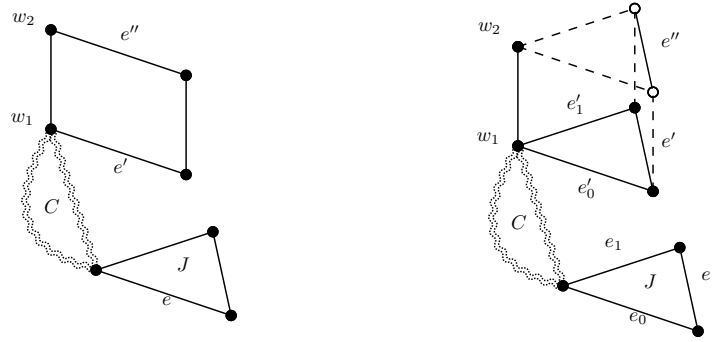

\centering\includestandalone[width=.64\textwidth]{product_part1}
\caption{Transporting $e \in \Sigma_J^1$ in cases (i) $e \in \Gamma$ and (ii) $e \notin \Gamma$.} 
\label{fig:product_part1}
\end{figure}

We begin by proving (a) for $w_2$.
Let $e$ be an edge of $\Sigma_J^1$.
By assumption, $e' \deq e + w_1$ lies in $\Sigma_P^1$.
We consider two cases.
If $e \in \Gamma$---see the left side of \Cref{fig:product_part1}---then $e' \nsim_{w_1} w_1w_2$ by the assumption of (b) for $w_1$.
The 2-face of $P$ spanned by the edges $e'$ and $w_1w_2$ is thus a parallelogram.
So the edge $e'' \deq e + w_2$ lies in $\Sigma^1_P$.
If $e \notin \Gamma$---see the right side of \Cref{fig:product_part1}---then there exists a unique pair of edges $e_0, e_1 \in \Gamma$ such that $e_0, e_1, e$ form a triangle in $J$.
Write $e' \deq e + w_1$, $e_0' \deq e_0 + w_1$ and $e_1' \deq e_1 + w_1$.
Then $e_0', e_1', e'$ form a triangle in $\Sigma_P^1$ by the assumption of (a) for $w_1$.
By the assumption of (b) for $w_1$, we have $e_0' \nsim_{w_1} w_1 w_2$.
\Cref{lm:3d_possibilities} implies that the 3-face of $P$ spanned by $e_0'$, $e_1'$ and $w_1w_2$ is a triangular prism, as depicted in the right side of \Cref{fig:product_part1}.
It follows that the translate $e'' \deq e + w_2$ is an edge of $P$. 
\begin{figure}[ht]
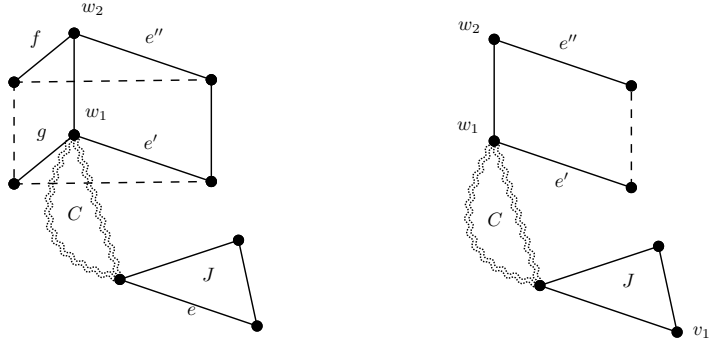

\centering\includestandalone[width=.64\textwidth]{product_part1_inductive}
\caption{
On the left: the validity of (b).
On the right: transporting $ov_1$.
} 
\label{fig:product_part1_inductive}
\end{figure}

The proof of the second case above also shows that for edges $e_0, e_1 \in \Gamma$, the corresponding edges $e_0'' \deq e_0 + w_2$ and $e_1'' \deq e_1 + w_2$ span a triangular 2-face at $w_2$.
That is, $e_0 \sim_{o} e_1$ implies $e_0'' \sim_{w_2} e_1''$.
The validity of (a) for $w_2$ combined with the simplicity of $P$ implies that all edges of $P$ incident to $w_2$ are either translates of edges in $\Gamma$ or lie in $C$.
It therefore remains only to show that if $e$ is an edge of $\Gamma$ then the edge $e'' \deq e + w_2$ satisfies $e'' \nsim_{w_2} f$ for all edges $f$ of $C$ incident to $w_2$.
Suppose instead that $e'' \sim_{w_2} f$ for some $f \in \Sigma_C^1$---see the left side of \Cref{fig:product_part1_inductive}. As observed in the proof of (a), we know that $e''\nsim_{w_2} w_2 w_1$.
The 3-face spanned by the edges $e'', f, w_2w_1$ contains both a triangle and a parallelogram and must therefore be a triangular prism, by \Cref{lm:3d_possibilities}.
The picture is as depicted in the left side of  \Cref{fig:product_part1_inductive}. In particular, this face contains an edge $g$ parallel to $f$ and incident to $w_1$.
Since both $f$ and $w_1$ lie in $C$, so too must $g$.
On the other hand, the setup forces $g \sim_{w_1} e'$ where $e' \deq e + w_1$.
By the assumption of (b) for $w_1$, this forces $g \in \Gamma + w_1$.
This is a contradiction because $g$ must lie in $C$.
\item We now verify $\Sigma_C^1 + \Sigma_J^0 \subseteq \Sigma_P^1$.
That is, for any vertex $v_1$ of $J$ (we may assume $v_1 \neq o$), and any edge $w_1 w_2$ of $C$, we show that $w_1 w_2 + v_1 \in \Sigma^1_P$.
The situation is depicted in the right side of \Cref{fig:product_part1_inductive}.
That is, we must verify that the edge $w_1w_2$ may be \emph{transported} along the vector $ov_1$.
By part (1), we know that $\Sigma_P^1$ contains the edges $e' \deq ov_1 + w_1$ and $e'' \deq ov_1 + w_2$.
These are edges adjacent to $w_1$ and $w_2$ respectively.
The 2-face at $w_1$ spanned by $w_1w_2$ and $e'$ must contain the parallel edge $e''$.
This face must therefore be a parallelogram.
In particular, $w_1w_2 + v_1$ is an edge of $\Sigma^1_P$.\qedhere
\end{enumerate}
\end{proof}
\begin{Prop}\label[Prop]{prop:unimodular}
If $P$ is a smooth lattice polytope affinely equivalent to a product of simplices then it is unimodularly equivalent to a product of smooth simplices.
\end{Prop}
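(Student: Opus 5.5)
The plan is to read off the product structure at a single vertex and then use smoothness there to upgrade the affine map to a lattice-preserving one. Write $P = A(\Delta_{n_1}\times\dots\times\Delta_{n_k}) + b$ for an affine isomorphism $A$. Pick a vertex $o$ of $P$ corresponding to the product of the vertices $0$ of the standard simplices $\Delta_{n_i}=\Conv(0,e^{(i)}_1,\dots,e^{(i)}_{n_i})$, and translate so that $o$ is the origin; translation by a lattice vector is unimodular. The edges of $P$ at $o$ are the images of the edges $[0,e^{(i)}_j]$ of the factors. So they come grouped into $k$ blocks, and the block $i$ edges span the face $J_i = A(\Delta_{n_i})$. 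Let $u^{(i)}_j$ be the primitive lattice vector along the edge $A[0,e^{(i)}_j]$. By smoothness of $P$ at $o$, the collection $\{u^{(i)}_j\}$ is a $\Z$-basis of $M$. Also $A e^{(i)}_j = \lambda^{(i)}_j u^{(i)}_j$ for some positive integers $\lambda^{(i)}_j$, since the edge endpoints are lattice vertices.

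The key step is to show that, within each block, all $\lambda^{(i)}_j$ are equal to a common value $\lambda_i$. For this I would look at the triangular $2$-face $\Conv(0,\lambda^{(i)}_j u^{(i)}_j,\lambda^{(i)}_{j'}u^{(i)}_{j'})$, or more simply the simplex face $J_i$ itself, and apply smoothness at the vertex $\lambda^{(i)}_j u^{(i)}_j$. The edges of $J_i$ at that vertex have directions $\lambda^{(i)}_{j'}u^{(i)}_{j'} - \lambda^{(i)}_j u^{(i)}_j$, together with $-u^{(i)}_j$. The other edges of $P$ there are translates of the other blocks' edge directions, because they come from the product structure, so their primitive vectors are the $u^{(i')}_{j'}$ for $i'\neq i$.

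For these to form a $\Z$-basis together with the $u$'s, each $\lambda^{(i)}_{j'}u^{(i)}_{j'}-\lambda^{(i)}_j u^{(i)}_j$ has primitive vector $(\lambda^{(i)}_{j'}u^{(i)}_{j'}-\lambda^{(i)}_j u^{(i)}_j)/g$, where $g=\gcd(\lambda^{(i)}_{j'},\lambda^{(i)}_j)$. Writing the vectors at that vertex in the basis $\{u\}$, the determinant is, up to sign, $\prod_{j'\neq j}\lambda^{(i)}_{j'}/\gcd(\lambda^{(i)}_{j'},\lambda^{(i)}_j)$. Unimodularity forces $\lambda^{(i)}_{j'}\mid\lambda^{(i)}_j$ for all $j'$. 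Running this at every vertex of $J_i$ (including $o$, which gives no constraint) gives mutual divisibility, hence equality. This short determinant computation is the step I expect to need the most care, in particular checking exactly which edges sit at the non-origin vertex, but it is a routine computation once the edge list is written down.

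With $\lambda^{(i)}_j=\lambda_i$, the lattice automorphism $\phi$ of $M$ sending $u^{(i)}_j\mapsto e^{(i)}_j$ is unimodular because the $u$'s form a $\Z$-basis. It carries $P=\sum_i \lambda_i\Conv(0,u^{(i)}_1,\dots,u^{(i)}_{n_i})$ onto $\prod_i \lambda_i\Delta_{n_i}$. Here we use that $P$ is the Minkowski sum of its faces $J_i$ at $o$, which holds since $A$ is linear. The result is a product of dilated standard simplices, each of which is a smooth simplex, which proves the statement.
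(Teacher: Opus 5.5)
Your proof is correct and follows the same route as the paper. Both use smoothness at the vertex $o$ to send the primitive edge directions to the standard basis by a unimodular map, and both use that $P$ is the Minkowski sum of the simplex faces $J_i$ at $o$, which then lie in complementary coordinate subspaces. The one difference is your determinant step equalizing the $\lambda^{(i)}_j$: the paper skips it and simply notes that each face of the smooth polytope $P$ is a smooth simplex, while your extra step is correct and gives the sharper normal form $\prod_i \lambda_i \Delta_{n_i}$.
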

\begin{proof}
After a unimodular transformation, we may assume that the origin $o$ is a vertex of $P$ and that the edges incident to $o$ lie on the positive coordinate axes.
Choose an affine isomorphism $P \cong \Delta_1 \times \dots \times \Delta_n$ where each $\Delta_i$ is a simplex.
The vertex $o$ in $P$ corresponds to a vertex $(o_1, \dots, o_n)$ of the product. Let $T_1, \dots, T_n$ be the faces of $P$ corresponding to the subproducts
\[
T_i \quad\leftrightsquigarrow\quad  \{o_1\} \times \cdots \times \{o_{i-1}\} \times \Delta_i \times \{o_{i+1}\} \times \cdots \times \{o_n\}.
\]
Then $P$ is the Minkowski sum of $T_1, \dots, T_n$ since the analogous statement is true (up to translation) for $\Delta_1 \times  \dots \times \Delta_n$.
Seeing as the linear spans of $T_1, \dots, T_n$ are coordinate subspaces in a direct sum, $P$ is in fact the product $T_1 \times \dots \times T_n$.
Since $P$ is smooth, each $T_i$ is  a smooth simplex.
\end{proof}

\printbibliography
\end{document}

%% file: permutohedron.tex
\begin{tikzpicture}[x=2.1cm,y=2.1cm]
\coordinate (P0) at (-0.63864,-1.14626);
\coordinate (P1) at (-1.42898,-1.16952);
\coordinate (P2) at (-0.31932,-0.09415);
\coordinate (P3) at (-1.90000,-0.14066);
\coordinate (P4) at (-0.79034,0.93470);
\coordinate (P5) at (-1.58068,0.91145);
\coordinate (P6) at (0.15170,-1.76165);
\coordinate (P7) at (-0.63864,-1.78490);
\coordinate (P8) at (0.79034,0.34257);
\coordinate (P9) at (-1.58068,0.27282);
\coordinate (P10) at (0.31932,1.37143);
\coordinate (P11) at (-1.26136,1.32492);
\coordinate (P12) at (1.26136,-1.32492);
\coordinate (P13) at (-0.31932,-1.37143);
\coordinate (P14) at (1.58068,-0.27282);
\coordinate (P15) at (-0.79034,-0.34257);
\coordinate (P16) at (0.63864,1.78490);
\coordinate (P17) at (-0.15170,1.76165);
\coordinate (P18) at (1.58068,-0.91145);
\coordinate (P19) at (0.79034,-0.93470);
\coordinate (P20) at (1.90000,0.14066);
\coordinate (P21) at (0.31932,0.09415);
\coordinate (P22) at (1.42898,1.16952);
\coordinate (P23) at (0.63864,1.14626);

% Faces
\fill[black!4, opacity=0.18] (P2) -- (P4) -- (P10) -- (P8) -- cycle;
\fill[black!4, opacity=0.18] (P12) -- (P6) -- (P0) -- (P2) -- (P8) -- (P14) -- cycle;
\fill[black!4, opacity=0.18] (P4) -- (P2) -- (P0) -- (P1) -- (P3) -- (P5) -- cycle;
\fill[black!4, opacity=0.18] (P14) -- (P8) -- (P10) -- (P16) -- (P22) -- (P20) -- cycle;
\fill[black!4, opacity=0.18] (P0) -- (P1) -- (P7) -- (P6) -- cycle;
\fill[black!4, opacity=0.18] (P16) -- (P10) -- (P4) -- (P5) -- (P11) -- (P17) -- cycle;
\fill[black!4, opacity=0.18] (P12) -- (P14) -- (P20) -- (P18) -- cycle;
\fill[black!24, opacity=0.95] (P3) -- (P5) -- (P11) -- (P9) -- cycle;
\fill[black!24, opacity=0.95] (P18) -- (P12) -- (P6) -- (P7) -- (P13) -- (P19) -- cycle;
\fill[black!12, opacity=0.95] (P16) -- (P17) -- (P23) -- (P22) -- cycle;
\fill[black!24, opacity=0.95] (P7) -- (P1) -- (P3) -- (P9) -- (P15) -- (P13) -- cycle;
\fill[fill={rgb,255:red,92;green,142;blue,220}, draw=none, opacity=0.55] (P22) -- (P20) -- (P18) -- (P19) -- (P21) -- (P23) -- cycle;
\fill[black!12, opacity=0.95] (P21) -- (P15) -- (P9) -- (P11) -- (P17) -- (P23) -- cycle;
\fill[black!15, opacity=0.95] (P13) -- (P15) -- (P21) -- (P19) -- cycle;

% Hidden edges and arrows
\draw[line width=0.55pt, dash pattern=on 2.2pt off 1.5pt, draw=black!40, line cap=round] (P0) -- (P1);
\draw[line width=0.55pt, dash pattern=on 2.2pt off 1.5pt, draw=black!40, line cap=round] (P0) -- (P2);
\draw[line width=0.55pt, dash pattern=on 2.2pt off 1.5pt, draw=black!40, line cap=round] (P0) -- (P6);
\draw[line width=0.55pt, dash pattern=on 2.2pt off 1.5pt, draw=black!40, line cap=round] (P2) -- (P4);
\draw[line width=0.55pt, dash pattern=on 2.2pt off 1.5pt, draw=black!40, line cap=round] (P2) -- (P8);
\draw[line width=0.55pt, dash pattern=on 2.2pt off 1.5pt, draw=black!40, line cap=round] (P4) -- (P5);
\draw[line width=0.55pt, dash pattern=on 2.2pt off 1.5pt, draw=black!40, line cap=round] (P4) -- (P10);
\draw[line width=0.55pt, dash pattern=on 2.2pt off 1.5pt, draw=black!40, line cap=round] (P8) -- (P10);
\draw[line width=0.55pt, dash pattern=on 2.2pt off 1.5pt, draw=black!40, line cap=round] (P8) -- (P14);
\draw[line width=0.55pt, dash pattern=on 2.2pt off 1.5pt, draw=black!40, line cap=round] (P10) -- (P16);
\draw[line width=0.55pt, dash pattern=on 2.2pt off 1.5pt, draw=black!40, line cap=round] (P12) -- (P14);
\draw[line width=0.55pt, dash pattern=on 2.2pt off 1.5pt, draw=black!40, line cap=round] (P14) -- (P20);
\draw[line width=0.6pt, draw=black!50, dash pattern=on 2.2pt off 1.5pt, -{Latex[length=4.5pt,width=5pt]}] (-1.13376,-1.16083) -- (-0.93385,-1.15495);
\draw[line width=0.6pt, draw=black!50, dash pattern=on 2.2pt off 1.5pt, -{Latex[length=4.5pt,width=5pt]}] (-0.44994,-0.52452) -- (-0.50802,-0.71590);
\draw[line width=0.6pt, draw=black!50, dash pattern=on 2.2pt off 1.5pt, -{Latex[length=4.5pt,width=5pt]}] (-0.16456,-1.51539) -- (-0.32237,-1.39252);
\draw[line width=0.6pt, draw=black!50, dash pattern=on 2.2pt off 1.5pt, -{Latex[length=4.5pt,width=5pt]}] (-0.59646,0.51120) -- (-0.51320,0.32935);
\draw[line width=0.6pt, draw=black!50, dash pattern=on 2.2pt off 1.5pt, -{Latex[length=4.5pt,width=5pt]}] (0.32856,0.16083) -- (0.14246,0.08759);
\draw[line width=0.6pt, draw=black!50, dash pattern=on 2.2pt off 1.5pt, -{Latex[length=4.5pt,width=5pt]}] (-1.28547,0.92014) -- (-1.08555,0.92602);
\draw[line width=0.6pt, draw=black!50, dash pattern=on 2.2pt off 1.5pt, -{Latex[length=4.5pt,width=5pt]}] (-0.14246,1.18969) -- (-0.32856,1.11644);
\draw[line width=0.6pt, draw=black!50, dash pattern=on 2.2pt off 1.5pt, -{Latex[length=4.5pt,width=5pt]}] (0.51320,0.94792) -- (0.59646,0.76607);
\draw[line width=0.6pt, draw=black!50, dash pattern=on 2.2pt off 1.5pt, -{Latex[length=4.5pt,width=5pt]}] (1.26441,-0.02656) -- (1.10661,0.09631);
\draw[line width=0.6pt, draw=black!50, dash pattern=on 2.2pt off 1.5pt, -{Latex[length=4.5pt,width=5pt]}] (0.54010,1.65731) -- (0.41785,1.49902);
\draw[line width=0.6pt, draw=black!50, dash pattern=on 2.2pt off 1.5pt, -{Latex[length=4.5pt,width=5pt]}] (1.45006,-0.70318) -- (1.39198,-0.89456);
\draw[line width=0.6pt, draw=black!50, dash pattern=on 2.2pt off 1.5pt, -{Latex[length=4.5pt,width=5pt]}] (1.80146,0.01307) -- (1.67922,-0.14522);

% Visible edges and arrows
\draw[line width=0.95pt, line cap=round] (P1) -- (P3);
\draw[line width=0.95pt, line cap=round] (P1) -- (P7);
\draw[line width=0.95pt, line cap=round] (P3) -- (P5);
\draw[line width=0.95pt, line cap=round] (P3) -- (P9);
\draw[line width=0.95pt, line cap=round] (P5) -- (P11);
\draw[line width=0.95pt, line cap=round] (P6) -- (P7);
\draw[line width=0.95pt, line cap=round] (P6) -- (P12);
\draw[line width=0.95pt, line cap=round] (P7) -- (P13);
\draw[line width=0.95pt, line cap=round] (P9) -- (P11);
\draw[line width=0.95pt, line cap=round] (P9) -- (P15);
\draw[line width=0.95pt, line cap=round] (P11) -- (P17);
\draw[line width=0.95pt, line cap=round] (P12) -- (P18);
\draw[line width=0.95pt, line cap=round] (P13) -- (P15);
\draw[line width=0.95pt, line cap=round] (P13) -- (P19);
\draw[line width=0.95pt, line cap=round] (P15) -- (P21);
\draw[line width=0.95pt, line cap=round] (P16) -- (P17);
\draw[line width=0.95pt, line cap=round] (P16) -- (P22);
\draw[line width=0.95pt, line cap=round] (P17) -- (P23);
\draw[line width=0.95pt, line cap=round] (P18) -- (P19);
\draw[line width=0.95pt, line cap=round] (P18) -- (P20);
\draw[line width=0.95pt, line cap=round] (P19) -- (P21);
\draw[line width=0.95pt, line cap=round] (P20) -- (P22);
\draw[line width=0.95pt, line cap=round] (P21) -- (P23);
\draw[line width=0.95pt, line cap=round] (P22) -- (P23);
\draw[line width=0.8pt, -{Latex[length=5.2pt,width=6pt]}] (-1.70612,-0.56416) -- (-1.62286,-0.74601);
\draw[line width=0.8pt, -{Latex[length=5.2pt,width=6pt]}] (-0.95490,-1.53864) -- (-1.11271,-1.41577);
\draw[line width=0.8pt, -{Latex[length=5.2pt,width=6pt]}] (-1.71130,0.48109) -- (-1.76938,0.28971);
\draw[line width=0.8pt, -{Latex[length=5.2pt,width=6pt]}] (-1.67922,0.14522) -- (-1.80146,-0.01307);
\draw[line width=0.8pt, -{Latex[length=5.2pt,width=6pt]}] (-1.35990,1.19733) -- (-1.48215,1.03904);
\draw[line width=0.8pt, -{Latex[length=5.2pt,width=6pt]}] (-0.34342,-1.77622) -- (-0.14351,-1.77033);
\draw[line width=0.8pt, -{Latex[length=5.2pt,width=6pt]}] (0.79959,-1.50666) -- (0.61348,-1.57991);
\draw[line width=0.8pt, -{Latex[length=5.2pt,width=6pt]}] (-0.41785,-1.49902) -- (-0.54010,-1.65731);
\draw[line width=0.8pt, -{Latex[length=5.2pt,width=6pt]}] (-1.39198,0.89456) -- (-1.45006,0.70318);
\draw[line width=0.8pt, -{Latex[length=5.2pt,width=6pt]}] (-1.10661,-0.09631) -- (-1.26441,0.02656);
\draw[line width=0.8pt, -{Latex[length=5.2pt,width=6pt]}] (-0.61348,1.57991) -- (-0.79959,1.50666);
\draw[line width=0.8pt, -{Latex[length=5.2pt,width=6pt]}] (1.48215,-1.03904) -- (1.35990,-1.19733);
\draw[line width=0.8pt, -{Latex[length=5.2pt,width=6pt]}] (-0.59646,-0.76607) -- (-0.51320,-0.94792);
\draw[line width=0.8pt, -{Latex[length=5.2pt,width=6pt]}] (0.32856,-1.11644) -- (0.14246,-1.18969);
\draw[line width=0.8pt, -{Latex[length=5.2pt,width=6pt]}] (-0.14246,-0.08759) -- (-0.32856,-0.16083);
\draw[line width=0.8pt, -{Latex[length=5.2pt,width=6pt]}] (0.14351,1.77033) -- (0.34342,1.77622);
\draw[line width=0.8pt, -{Latex[length=5.2pt,width=6pt]}] (1.11271,1.41577) -- (0.95490,1.53864);
\draw[line width=0.8pt, -{Latex[length=5.2pt,width=6pt]}] (0.32237,1.39252) -- (0.16456,1.51539);
\draw[line width=0.8pt, -{Latex[length=5.2pt,width=6pt]}] (1.08555,-0.92602) -- (1.28547,-0.92014);
\draw[line width=0.8pt, -{Latex[length=5.2pt,width=6pt]}] (1.76938,-0.28971) -- (1.71130,-0.48109);
\draw[line width=0.8pt, -{Latex[length=5.2pt,width=6pt]}] (0.51320,-0.32935) -- (0.59646,-0.51120);
\draw[line width=0.8pt, -{Latex[length=5.2pt,width=6pt]}] (1.62286,0.74601) -- (1.70612,0.56416);
\draw[line width=0.8pt, -{Latex[length=5.2pt,width=6pt]}] (0.50802,0.71590) -- (0.44994,0.52452);
\draw[line width=0.8pt, -{Latex[length=5.2pt,width=6pt]}] (0.93385,1.15495) -- (1.13376,1.16083);

% Highlighted face vertices
\fill[black] (P18) circle[radius=1.55pt];
\fill[black] (P19) circle[radius=1.55pt];
\fill[black] (P20) circle[radius=1.55pt];
\fill[black] (P21) circle[radius=1.55pt];
\fill[black] (P22) circle[radius=1.55pt];
\fill[black] (P23) circle[radius=1.55pt];

% Selected inverse-permutation labels
\node[anchor=north west, font=\fontsize{6}{6.0}\selectfont] at (1.68655,-0.97250) {1230};
\node[anchor=north west, font=\fontsize{6}{6.0}\selectfont] at (0.86925,-0.97250) {1320};
\node[anchor=west, font=\fontsize{6}{6.0}\selectfont] at (2.02188,0.14969) {2130};
\node[anchor=west, font=\fontsize{6}{6.0}\selectfont] at (0.44104,0.14969) {3120};
\node[anchor=south west, font=\fontsize{6}{6.0}\selectfont] at (1.52357,1.24693) {2310};
\node[anchor=south west, font=\fontsize{6}{6.0}\selectfont] at (0.67615,1.24693) {3210};
\end{tikzpicture}

%% file: blowups.tex
% Pattern Info

\tikzset{every picture/.style={line width=0.75pt}} %set default line width to 0.75pt        

\begin{tikzpicture}[x=0.75pt,y=0.75pt,yscale=-1,xscale=1]
%uncomment if require: \path (0,300); %set diagram left start at 0, and has height of 300

%Straight Lines [id:da8937470716641651] 
\draw    (60,100) -- (120,60) ;
%Straight Lines [id:da6291956829038424] 
\draw    (120,60) -- (190,60) ;
\draw [shift={(120,60)}, rotate = 0] [color={rgb, 255:red, 0; green, 0; blue, 0 }  ][fill={rgb, 255:red, 0; green, 0; blue, 0 }  ][line width=0.75]      (0, 0) circle [x radius= 3.35, y radius= 3.35]   ;
%Straight Lines [id:da1522877509456526] 
\draw    (120,60) -- (158,60) ;
\draw [shift={(160,60)}, rotate = 180] [fill={rgb, 255:red, 0; green, 0; blue, 0 }  ][line width=0.08]  [draw opacity=0] (12,-3) -- (0,0) -- (12,3) -- cycle    ;
\draw [shift={(120,60)}, rotate = 0] [color={rgb, 255:red, 0; green, 0; blue, 0 }  ][fill={rgb, 255:red, 0; green, 0; blue, 0 }  ][line width=0.75]      (0, 0) circle [x radius= 3.35, y radius= 3.35]   ;
%Straight Lines [id:da254847792633499] 
\draw    (270,100) -- (330,60) ;
%Straight Lines [id:da5966338627055563] 
\draw    (330,60) -- (400,60) ;
\draw [shift={(330,60)}, rotate = 0] [color={rgb, 255:red, 0; green, 0; blue, 0 }  ][fill={rgb, 255:red, 0; green, 0; blue, 0 }  ][line width=0.75]      (0, 0) circle [x radius= 3.35, y radius= 3.35]   ;
%Straight Lines [id:da603843333835518] 
\draw    (330,60) -- (368,60) ;
\draw [shift={(370,60)}, rotate = 180] [fill={rgb, 255:red, 0; green, 0; blue, 0 }  ][line width=0.08]  [draw opacity=0] (12,-3) -- (0,0) -- (12,3) -- cycle    ;
\draw [shift={(330,60)}, rotate = 0] [color={rgb, 255:red, 0; green, 0; blue, 0 }  ][fill={rgb, 255:red, 0; green, 0; blue, 0 }  ][line width=0.75]      (0, 0) circle [x radius= 3.35, y radius= 3.35]   ;
%Straight Lines [id:da43839650655728546] 
\draw    (480,100) -- (540,60) ;
%Straight Lines [id:da6560226540420092] 
\draw    (540,60) -- (610,60) ;
\draw [shift={(540,60)}, rotate = 0] [color={rgb, 255:red, 0; green, 0; blue, 0 }  ][fill={rgb, 255:red, 0; green, 0; blue, 0 }  ][line width=0.75]      (0, 0) circle [x radius= 3.35, y radius= 3.35]   ;
%Straight Lines [id:da5870278063605779] 
\draw    (60,220) -- (90,190) ;
\draw [shift={(90,190)}, rotate = 315] [color={rgb, 255:red, 0; green, 0; blue, 0 }  ][fill={rgb, 255:red, 0; green, 0; blue, 0 }  ][line width=0.75]      (0, 0) circle [x radius= 3.35, y radius= 3.35]   ;
%Straight Lines [id:da1504030798196322] 
\draw    (150,180) -- (190,180) ;
\draw [shift={(150,180)}, rotate = 0] [color={rgb, 255:red, 0; green, 0; blue, 0 }  ][fill={rgb, 255:red, 0; green, 0; blue, 0 }  ][line width=0.75]      (0, 0) circle [x radius= 3.35, y radius= 3.35]   ;
%Curve Lines [id:da8050956334643667] 
\draw    (90,190) .. controls (89.67,189.67) and (91.16,163.2) .. (118.31,160.16) ;
\draw [shift={(120,160)}, rotate = 175.8] [fill={rgb, 255:red, 0; green, 0; blue, 0 }  ][line width=0.08]  [draw opacity=0] (12,-3) -- (0,0) -- (12,3) -- cycle    ;
%Curve Lines [id:da7263609573108573] 
\draw    (120,160) .. controls (141.5,159.75) and (149,181) .. (150,180) ;
%Straight Lines [id:da9148412257639051] 
\draw    (270,220) -- (300,190) ;
\draw [shift={(300,190)}, rotate = 315] [color={rgb, 255:red, 0; green, 0; blue, 0 }  ][fill={rgb, 255:red, 0; green, 0; blue, 0 }  ][line width=0.75]      (0, 0) circle [x radius= 3.35, y radius= 3.35]   ;
%Straight Lines [id:da6045083153297797] 
\draw    (360,180) -- (400,180) ;
\draw [shift={(360,180)}, rotate = 0] [color={rgb, 255:red, 0; green, 0; blue, 0 }  ][fill={rgb, 255:red, 0; green, 0; blue, 0 }  ][line width=0.75]      (0, 0) circle [x radius= 3.35, y radius= 3.35]   ;
%Curve Lines [id:da10860597497450641] 
\draw    (300,190) .. controls (299.67,189.68) and (301.16,163.2) .. (328.31,160.16) ;
\draw [shift={(330,160)}, rotate = 175.8] [fill={rgb, 255:red, 0; green, 0; blue, 0 }  ][line width=0.08]  [draw opacity=0] (12,-3) -- (0,0) -- (12,3) -- cycle    ;
%Curve Lines [id:da7950986248491687] 
\draw    (330,160) .. controls (351.5,159.75) and (359,181) .. (360,180) ;
%Straight Lines [id:da9553585067952773] 
\draw    (480,220) -- (510,190) ;
\draw [shift={(510,190)}, rotate = 315] [color={rgb, 255:red, 0; green, 0; blue, 0 }  ][fill={rgb, 255:red, 0; green, 0; blue, 0 }  ][line width=0.75]      (0, 0) circle [x radius= 3.35, y radius= 3.35]   ;
%Straight Lines [id:da678533750023786] 
\draw    (570,180) -- (610,180) ;
\draw [shift={(570,180)}, rotate = 0] [color={rgb, 255:red, 0; green, 0; blue, 0 }  ][fill={rgb, 255:red, 0; green, 0; blue, 0 }  ][line width=0.75]      (0, 0) circle [x radius= 3.35, y radius= 3.35]   ;
%Curve Lines [id:da7137459375649567] 
\draw    (510,190) .. controls (509.67,189.68) and (511.16,163.2) .. (538.31,160.16) ;
\draw [shift={(540,160)}, rotate = 175.8] [fill={rgb, 255:red, 0; green, 0; blue, 0 }  ][line width=0.08]  [draw opacity=0] (12,-3) -- (0,0) -- (12,3) -- cycle    ;
%Curve Lines [id:da38717832859170187] 
\draw    (540,160) .. controls (561.5,159.75) and (569,181) .. (570,180) ;
%Straight Lines [id:da4100724335655571] 
\draw    (150,180) -- (178,180) ;
\draw [shift={(180,180)}, rotate = 180] [fill={rgb, 255:red, 0; green, 0; blue, 0 }  ][line width=0.08]  [draw opacity=0] (12,-3) -- (0,0) -- (12,3) -- cycle    ;
\draw [shift={(150,180)}, rotate = 0] [color={rgb, 255:red, 0; green, 0; blue, 0 }  ][fill={rgb, 255:red, 0; green, 0; blue, 0 }  ][line width=0.75]      (0, 0) circle [x radius= 3.35, y radius= 3.35]   ;
%Straight Lines [id:da6966054699889076] 
\draw    (270,100) -- (298.34,81.11) ;
\draw [shift={(300,80)}, rotate = 146.31] [fill={rgb, 255:red, 0; green, 0; blue, 0 }  ][line width=0.08]  [draw opacity=0] (12,-3) -- (0,0) -- (12,3) -- cycle    ;
%Straight Lines [id:da7963945433712981] 
\draw    (270,220) -- (288.59,201.41) ;
\draw [shift={(290,200)}, rotate = 135] [fill={rgb, 255:red, 0; green, 0; blue, 0 }  ][line width=0.08]  [draw opacity=0] (12,-3) -- (0,0) -- (12,3) -- cycle    ;
%Straight Lines [id:da5286391760386177] 
\draw    (360,180) -- (388,180) ;
\draw [shift={(390,180)}, rotate = 180] [fill={rgb, 255:red, 0; green, 0; blue, 0 }  ][line width=0.08]  [draw opacity=0] (12,-3) -- (0,0) -- (12,3) -- cycle    ;
\draw [shift={(360,180)}, rotate = 360] [color={rgb, 255:red, 0; green, 0; blue, 0 }  ][fill={rgb, 255:red, 0; green, 0; blue, 0 }  ][line width=0.75]      (0, 0) circle [x radius= 3.35, y radius= 3.35]   ;
%Straight Lines [id:da04670237198943161] 
\draw    (480,100) -- (508.34,81.11) ;
\draw [shift={(510,80)}, rotate = 146.31] [fill={rgb, 255:red, 0; green, 0; blue, 0 }  ][line width=0.08]  [draw opacity=0] (12,-3) -- (0,0) -- (12,3) -- cycle    ;
%Straight Lines [id:da31554038366096426] 
\draw    (480,220) -- (498.59,201.41) ;
\draw [shift={(500,200)}, rotate = 135] [fill={rgb, 255:red, 0; green, 0; blue, 0 }  ][line width=0.08]  [draw opacity=0] (12,-3) -- (0,0) -- (12,3) -- cycle    ;
%Straight Lines [id:da35416821549272925] 
\draw    (120,95) .. controls (121.67,96.67) and (121.67,98.33) .. (120,100) .. controls (118.33,101.67) and (118.33,103.33) .. (120,105) .. controls (121.67,106.67) and (121.67,108.33) .. (120,110) -- (120,114) -- (120,122) ;
\draw [shift={(120,125)}, rotate = 270] [fill={rgb, 255:red, 0; green, 0; blue, 0 }  ][line width=0.08]  [draw opacity=0] (10.72,-5.15) -- (0,0) -- (10.72,5.15) -- (7.12,0) -- cycle    ;
%Straight Lines [id:da7356176822951471] 
\draw    (330,95) .. controls (331.67,96.67) and (331.67,98.33) .. (330,100) .. controls (328.33,101.67) and (328.33,103.33) .. (330,105) .. controls (331.67,106.67) and (331.67,108.33) .. (330,110) -- (330,114) -- (330,122) ;
\draw [shift={(330,125)}, rotate = 270] [fill={rgb, 255:red, 0; green, 0; blue, 0 }  ][line width=0.08]  [draw opacity=0] (10.72,-5.15) -- (0,0) -- (10.72,5.15) -- (7.12,0) -- cycle    ;
%Straight Lines [id:da9349628599863099] 
\draw    (540,95) .. controls (541.67,96.67) and (541.67,98.33) .. (540,100) .. controls (538.33,101.67) and (538.33,103.33) .. (540,105) .. controls (541.67,106.67) and (541.67,108.33) .. (540,110) -- (540,114) -- (540,122) ;
\draw [shift={(540,125)}, rotate = 270] [fill={rgb, 255:red, 0; green, 0; blue, 0 }  ][line width=0.08]  [draw opacity=0] (10.72,-5.15) -- (0,0) -- (10.72,5.15) -- (7.12,0) -- cycle    ;

% Text Node
\draw (111,38.4) node [anchor=north west][inner sep=0.75pt]    {$p$};
% Text Node
\draw (321,38.4) node [anchor=north west][inner sep=0.75pt]    {$p$};
% Text Node
\draw (531,38.4) node [anchor=north west][inner sep=0.75pt]    {$p$};
% Text Node
\draw (87,192.4) node [anchor=north west][inner sep=0.75pt]    {$p^{0}$};
% Text Node
\draw (141,182.4) node [anchor=north west][inner sep=0.75pt]    {$p^{-}$};
% Text Node
\draw (296,192.4) node [anchor=north west][inner sep=0.75pt]    {$p^{+}$};
% Text Node
\draw (350,182.4) node [anchor=north west][inner sep=0.75pt]    {$p^{-}$};
% Text Node
\draw (510,192.4) node [anchor=north west][inner sep=0.75pt]    {$p^{+}$};
% Text Node
\draw (562,187.4) node [anchor=north west][inner sep=0.75pt]    {$p^{0}$};
% Text Node
\draw (74,59.4) node [anchor=north west][inner sep=0.75pt]    {$I_{p}^{0}$};
% Text Node
\draw (154,34.4) node [anchor=north west][inner sep=0.75pt]    {$I_{p}^{-}$};
% Text Node
\draw (53,184.4) node [anchor=north west][inner sep=0.75pt]    {$\varPhi _{p^0}^{0}$};
% Text Node
\draw (163,154.4) node [anchor=north west][inner sep=0.75pt]    {$\varPhi _{p^-}^{-}$};
% Text Node
\draw (110,137.4) node [anchor=north west][inner sep=0.75pt]    {$O$};
% Text Node
\draw (258,184.4) node [anchor=north west][inner sep=0.75pt]    {$\varPhi _{p^+}^{+}$};
% Text Node
\draw (372,152.4) node [anchor=north west][inner sep=0.75pt]    {$\varPhi _{p^-}^{-}$};
% Text Node
\draw (321,138.4) node [anchor=north west][inner sep=0.75pt]    {$O$};
% Text Node
\draw (531,138.4) node [anchor=north west][inner sep=0.75pt]    {$O$};
% Text Node
\draw (471,182.4) node [anchor=north west][inner sep=0.75pt]    {$\varPhi _{p^+}^{+}$};
% Text Node
\draw (583,154.4) node [anchor=north west][inner sep=0.75pt]    {$\varPhi _{p^0}^{0}$};
% Text Node
\draw (277,59.4) node [anchor=north west][inner sep=0.75pt]    {$I_{p}^{+}$};
% Text Node
\draw (366,34.4) node [anchor=north west][inner sep=0.75pt]    {$I_{p}^{-}$};
% Text Node
\draw (487,59.4) node [anchor=north west][inner sep=0.75pt]    {$I_{p}^{+}$};
% Text Node
\draw (566,34.4) node [anchor=north west][inner sep=0.75pt]    {$I_{p}^{0}$};

\end{tikzpicture}

%% file: two_faced.tex
% Pattern Info

\tikzset{every picture/.style={line width=0.75pt}} %set default line width to 0.75pt        

\begin{tikzpicture}[x=0.75pt,y=0.75pt,yscale=-1,xscale=1]
%uncomment if require: \path (0,300); %set diagram left start at 0, and has height of 300

%Shape: Polygon [id:ds16088943885203555] 
\draw   (50,50) -- (100,50) -- (120,100) -- (40,110) -- cycle ;
%Shape: Polygon [id:ds7688157871000191] 
\draw   (180,50) -- (230,50) -- (250,100) -- (170,110) -- cycle ;
%Straight Lines [id:da670094430051517] 
\draw    (40,110) -- (78.02,105.25) ;
\draw [shift={(80,105)}, rotate = 172.87] [fill={rgb, 255:red, 0; green, 0; blue, 0 }  ][line width=0.08]  [draw opacity=0] (12,-3) -- (0,0) -- (12,3) -- cycle    ;
%Straight Lines [id:da6017987239577217] 
\draw    (50,50) -- (78,50) ;
\draw [shift={(80,50)}, rotate = 180] [fill={rgb, 255:red, 0; green, 0; blue, 0 }  ][line width=0.08]  [draw opacity=0] (12,-3) -- (0,0) -- (12,3) -- cycle    ;
\draw [shift={(50,50)}, rotate = 0] [color={rgb, 255:red, 0; green, 0; blue, 0 }  ][fill={rgb, 255:red, 0; green, 0; blue, 0 }  ][line width=0.75]      (0, 0) circle [x radius= 3.35, y radius= 3.35]   ;
%Straight Lines [id:da9647008718562163] 
\draw    (40,110) -- (45.35,76.97) ;
\draw [shift={(45.67,75)}, rotate = 99.2] [fill={rgb, 255:red, 0; green, 0; blue, 0 }  ][line width=0.08]  [draw opacity=0] (12,-3) -- (0,0) -- (12,3) -- cycle    ;
%Straight Lines [id:da18844085579908854] 
\draw    (120,100) -- (108.84,71.31) ;
\draw [shift={(108.11,69.44)}, rotate = 68.74] [fill={rgb, 255:red, 0; green, 0; blue, 0 }  ][line width=0.08]  [draw opacity=0] (12,-3) -- (0,0) -- (12,3) -- cycle    ;
%Straight Lines [id:da3853363078731956] 
\draw    (170,110) -- (208.02,105.25) ;
\draw [shift={(210,105)}, rotate = 172.87] [fill={rgb, 255:red, 0; green, 0; blue, 0 }  ][line width=0.08]  [draw opacity=0] (12,-3) -- (0,0) -- (12,3) -- cycle    ;
%Straight Lines [id:da5734622998491674] 
\draw    (180,50) -- (208,50) ;
\draw [shift={(210,50)}, rotate = 180] [fill={rgb, 255:red, 0; green, 0; blue, 0 }  ][line width=0.08]  [draw opacity=0] (12,-3) -- (0,0) -- (12,3) -- cycle    ;
%Straight Lines [id:da10177831079881827] 
\draw    (180,50) -- (175.33,78.03) ;
\draw [shift={(175,80)}, rotate = 279.46] [fill={rgb, 255:red, 0; green, 0; blue, 0 }  ][line width=0.08]  [draw opacity=0] (12,-3) -- (0,0) -- (12,3) -- cycle    ;
%Shape: Triangle [id:dp04593850023288115] 
\draw   (336.03,100) -- (298,49.97) -- (377.99,48.91) -- cycle ;
%Straight Lines [id:da6191141040048072] 
\draw    (298,49.97) -- (338.65,49.36) ;
\draw [shift={(340.65,49.33)}, rotate = 179.13] [fill={rgb, 255:red, 0; green, 0; blue, 0 }  ][line width=0.08]  [draw opacity=0] (12,-3) -- (0,0) -- (12,3) -- cycle    ;
%Straight Lines [id:da6751168532559669] 
\draw    (298,49.97) -- (322.45,82.05) ;
\draw [shift={(323.67,83.64)}, rotate = 232.68] [fill={rgb, 255:red, 0; green, 0; blue, 0 }  ][line width=0.08]  [draw opacity=0] (12,-3) -- (0,0) -- (12,3) -- cycle    ;
%Straight Lines [id:da7466889013219975] 
\draw    (336.03,100) -- (359.07,71.86) ;
\draw [shift={(360.33,70.31)}, rotate = 129.3] [fill={rgb, 255:red, 0; green, 0; blue, 0 }  ][line width=0.08]  [draw opacity=0] (12,-3) -- (0,0) -- (12,3) -- cycle    ;
%Straight Lines [id:da9254423315450856] 
\draw    (230,50) -- (240.92,77.14) ;
\draw [shift={(241.67,79)}, rotate = 248.09] [fill={rgb, 255:red, 0; green, 0; blue, 0 }  ][line width=0.08]  [draw opacity=0] (12,-3) -- (0,0) -- (12,3) -- cycle    ;
%Shape: Triangle [id:dp9876409617594362] 
\draw   (464.37,100) -- (426.33,49.97) -- (506.32,48.91) -- cycle ;
%Straight Lines [id:da2093537415917439] 
\draw    (426.33,49.97) -- (466.82,49.47) ;
\draw [shift={(468.82,49.44)}, rotate = 179.29] [fill={rgb, 255:red, 0; green, 0; blue, 0 }  ][line width=0.08]  [draw opacity=0] (12,-3) -- (0,0) -- (12,3) -- cycle    ;
%Straight Lines [id:da08796683341439149] 
\draw    (464.37,100) -- (445.2,74.57) ;
\draw [shift={(444,72.97)}, rotate = 53] [fill={rgb, 255:red, 0; green, 0; blue, 0 }  ][line width=0.08]  [draw opacity=0] (12,-3) -- (0,0) -- (12,3) -- cycle    ;
%Straight Lines [id:da5934289486483239] 
\draw    (464.37,100) -- (487.21,72.05) ;
\draw [shift={(488.47,70.5)}, rotate = 129.25] [fill={rgb, 255:red, 0; green, 0; blue, 0 }  ][line width=0.08]  [draw opacity=0] (12,-3) -- (0,0) -- (12,3) -- cycle    ;
%Shape: Triangle [id:dp43326763233244414] 
\draw   (590.87,100) -- (552.83,49.97) -- (632.82,48.91) -- cycle ;
%Straight Lines [id:da8319444030541832] 
\draw    (552.83,49.97) -- (593.32,49.47) ;
\draw [shift={(595.32,49.44)}, rotate = 179.29] [fill={rgb, 255:red, 0; green, 0; blue, 0 }  ][line width=0.08]  [draw opacity=0] (12,-3) -- (0,0) -- (12,3) -- cycle    ;
%Straight Lines [id:da32627137087507085] 
\draw    (552.83,49.97) -- (573.46,77.21) ;
\draw [shift={(574.67,78.81)}, rotate = 232.87] [fill={rgb, 255:red, 0; green, 0; blue, 0 }  ][line width=0.08]  [draw opacity=0] (12,-3) -- (0,0) -- (12,3) -- cycle    ;
%Straight Lines [id:da9536701720764904] 
\draw    (631,50.81) -- (609.59,77.25) ;
\draw [shift={(608.33,78.81)}, rotate = 308.99] [fill={rgb, 255:red, 0; green, 0; blue, 0 }  ][line width=0.08]  [draw opacity=0] (12,-3) -- (0,0) -- (12,3) -- cycle    ;
%Straight Lines [id:da9954593202910825] 
\draw    (50,50) -- (100,50) ;
\draw [shift={(100,50)}, rotate = 0] [color={rgb, 255:red, 0; green, 0; blue, 0 }  ][fill={rgb, 255:red, 0; green, 0; blue, 0 }  ][line width=0.75]      (0, 0) circle [x radius= 3.35, y radius= 3.35]   ;
%Straight Lines [id:da8746687895532279] 
\draw    (180,50) -- (230,50) ;
\draw [shift={(230,50)}, rotate = 0] [color={rgb, 255:red, 0; green, 0; blue, 0 }  ][fill={rgb, 255:red, 0; green, 0; blue, 0 }  ][line width=0.75]      (0, 0) circle [x radius= 3.35, y radius= 3.35]   ;
\draw [shift={(180,50)}, rotate = 0] [color={rgb, 255:red, 0; green, 0; blue, 0 }  ][fill={rgb, 255:red, 0; green, 0; blue, 0 }  ][line width=0.75]      (0, 0) circle [x radius= 3.35, y radius= 3.35]   ;
%Straight Lines [id:da42525415027223057] 
\draw    (298,49.97) -- (377.99,48.91) ;
\draw [shift={(377.99,48.91)}, rotate = 359.24] [color={rgb, 255:red, 0; green, 0; blue, 0 }  ][fill={rgb, 255:red, 0; green, 0; blue, 0 }  ][line width=0.75]      (0, 0) circle [x radius= 3.35, y radius= 3.35]   ;
\draw [shift={(298,49.97)}, rotate = 359.24] [color={rgb, 255:red, 0; green, 0; blue, 0 }  ][fill={rgb, 255:red, 0; green, 0; blue, 0 }  ][line width=0.75]      (0, 0) circle [x radius= 3.35, y radius= 3.35]   ;
%Straight Lines [id:da6698402368941995] 
\draw    (426.33,49.97) -- (506.32,48.91) ;
\draw [shift={(506.32,48.91)}, rotate = 359.24] [color={rgb, 255:red, 0; green, 0; blue, 0 }  ][fill={rgb, 255:red, 0; green, 0; blue, 0 }  ][line width=0.75]      (0, 0) circle [x radius= 3.35, y radius= 3.35]   ;
\draw [shift={(426.33,49.97)}, rotate = 359.24] [color={rgb, 255:red, 0; green, 0; blue, 0 }  ][fill={rgb, 255:red, 0; green, 0; blue, 0 }  ][line width=0.75]      (0, 0) circle [x radius= 3.35, y radius= 3.35]   ;
%Straight Lines [id:da827604112613894] 
\draw    (552.83,49.97) -- (632.82,48.91) ;
\draw [shift={(632.82,48.91)}, rotate = 359.24] [color={rgb, 255:red, 0; green, 0; blue, 0 }  ][fill={rgb, 255:red, 0; green, 0; blue, 0 }  ][line width=0.75]      (0, 0) circle [x radius= 3.35, y radius= 3.35]   ;
\draw [shift={(552.83,49.97)}, rotate = 359.24] [color={rgb, 255:red, 0; green, 0; blue, 0 }  ][fill={rgb, 255:red, 0; green, 0; blue, 0 }  ][line width=0.75]      (0, 0) circle [x radius= 3.35, y radius= 3.35]   ;

% Text Node
\draw (34,35.4) node [anchor=north west][inner sep=0.75pt]    {$p$};
% Text Node
\draw (106,35.4) node [anchor=north west][inner sep=0.75pt]    {$q$};
% Text Node
\draw (164,38.4) node [anchor=north west][inner sep=0.75pt]    {$p$};
% Text Node
\draw (236,37.4) node [anchor=north west][inner sep=0.75pt]    {$q$};
% Text Node
\draw (281,38.37) node [anchor=north west][inner sep=0.75pt]    {$p$};
% Text Node
\draw (383,37.37) node [anchor=north west][inner sep=0.75pt]    {$q$};
% Text Node
\draw (411,38.37) node [anchor=north west][inner sep=0.75pt]    {$p$};
% Text Node
\draw (511,36.37) node [anchor=north west][inner sep=0.75pt]    {$q$};
% Text Node
\draw (538,36.37) node [anchor=north west][inner sep=0.75pt]    {$p$};
% Text Node
\draw (637,34.37) node [anchor=north west][inner sep=0.75pt]    {$q$};
% Text Node
\draw (71,122.4) node [anchor=north west][inner sep=0.75pt]    {$(\mathrm{i})$};
% Text Node
\draw (201,122.4) node [anchor=north west][inner sep=0.75pt]    {$\mathrm{( ii)}$};
% Text Node
\draw (326,122.4) node [anchor=north west][inner sep=0.75pt]    {$\mathrm{( iii)}$};
% Text Node
\draw (451,122.4) node [anchor=north west][inner sep=0.75pt]    {$\mathrm{( iv)}$};
% Text Node
\draw (581,122.4) node [anchor=north west][inner sep=0.75pt]    {$\mathrm{( v)}$};

\end{tikzpicture}

%% file: with_arrows.tex
\tikzset{every picture/.style={line width=0.75pt}} % set default line width

\begin{tikzpicture}[x=0.75pt,y=0.75pt,yscale=-1,xscale=1]

% ----------------------------
% Arrow style for order relation
% ----------------------------
\tikzset{
  ord/.style={
    postaction={decorate},
    decoration={markings, mark=at position 0.55 with {\arrow[scale=1.4]{Latex}}}
  }
}

%arrows = {-Latex[width'=0pt .5, length=10pt]}

\tikzset{
  ordo/.style={
    postaction={decorate},
    decoration={markings, mark=at position 0.45 with {\arrow[scale=1.4]{Latex}}}
  }
}

% ----------------------------
% Name the labeled vertices
% ----------------------------
\coordinate (H) at (223.65,322.85);
\coordinate (G) at (423.06,371.98);
\coordinate (I) at (368.67,257.33);
\coordinate (L) at (223.65,60.79);
\coordinate (J) at (423.06,240.95);
\coordinate (K) at (368.67,126.3);

%Straight Lines [id:da4902721703636367]
\draw    (223.65,322.85) -- (423.06,371.98) ;
%Straight Lines [id:da47540576202534]
\draw    (223.65,322.85) -- (368.67,257.33) ;
%Straight Lines [id:da3268796331043976]
\draw    (368.67,257.33) -- (423.06,371.98) ;
%Straight Lines [id:da2571330274097242]
\draw    (368.67,257.33) -- (368.67,126.3) ;
%Straight Lines [id:da7609840054271558]
\draw [line width=1.5]    (223.65,322.85) -- (223.65,191.82) ;
%Straight Lines [id:da5151140798101816]
\draw    (423.06,371.98) -- (423.06,240.95) ;
%Straight Lines [id:da07211966463142305]
\draw [line width=1.5]    (223.65,191.82) -- (223.65,60.79) ;
%Straight Lines [id:da3405461375844955]
\draw    (423.06,240.95) -- (368.67,126.3) ;
%Straight Lines [id:da6869470038576081]
\draw    (368.67,126.3) -- (223.65,60.79) ;
%Straight Lines [id:da6213148322803826]
\draw [line width=1.5]    (423.06,240.95) -- (223.65,60.79) ;
%Shape: Polygon [id:ds5267183311858671]
\draw  [fill={rgb, 255:red, 128; green, 128; blue, 128 }  ,fill opacity=0.37 ]
  (423.06,240.95) -- (423.06,371.98) -- (368.67,257.33) -- (368.67,126.3) -- cycle ;
%Straight Lines [id:da7647143771697386]
\draw [line width=1.5]    (423.06,371.98) -- (423.06,240.95) ;
%Straight Lines [id:da13776524756122766]
\draw [line width=1.5]    (423.06,240.95) -- (368.67,126.3) ;
%Straight Lines [id:da9741284578081841]
\draw [line width=1.5]    (423.06,371.98) -- (223.65,322.85) ;
%Straight Lines [id:da5607999762365043]
\draw [line width=1.5]    (368.67,126.3) -- (223.65,60.79) ;
%Curve Lines [id:da5177781947923937]
\draw [color={rgb, 255:red, 74; green, 144; blue, 226 }  ,draw opacity=1 ][line width=1.5]
  (423.11,370) .. controls (375.47,344.78) and (368.4,338.38) .. (360.76,310.65) ;
\draw [shift={(360.05,308.02)}, rotate = 75.15] [color={rgb, 255:red, 74; green, 144; blue, 226 }  ,draw opacity=1 ][line width=1.5]
  (14.21,-4.28) .. controls (9.04,-1.82) and (4.3,-0.39) .. (0,0)
  .. controls (4.3,0.39) and (9.04,1.82) .. (14.21,4.28)   ;
%Curve Lines [id:da23339448369922333]
\draw [color={rgb, 255:red, 74; green, 144; blue, 226 }  ,draw opacity=1 ][line width=1.5]
  (368.73,255.35) .. controls (359.05,281.02) and (360.05,300.02) .. (360.05,308.02) ;
%Shape: Circle [id:dp9096176540231814]
\draw  [color={rgb, 255:red, 0; green, 0; blue, 0 }  ,draw opacity=0 ][fill={rgb, 255:red, 74; green, 144; blue, 226 }  ,fill opacity=1 ]
  (361.51,257.33) .. controls (361.51,253.38) and (364.72,250.17) .. (368.67,250.17)
  .. controls (372.63,250.17) and (375.84,253.38) .. (375.84,257.33)
  .. controls (375.84,261.29) and (372.63,264.5) .. (368.67,264.5)
  .. controls (364.72,264.5) and (361.51,261.29) .. (361.51,257.33) -- cycle ;
%Shape: Circle [id:dp6662692734919173]
\draw  [color={rgb, 255:red, 255; green, 255; blue, 255 }  ,draw opacity=0 ][fill={rgb, 255:red, 74; green, 144; blue, 226 }  ,fill opacity=1 ]
  (415.9,371.98) .. controls (415.9,368.03) and (419.1,364.82) .. (423.06,364.82)
  .. controls (427.01,364.82) and (430.22,368.03) .. (430.22,371.98)
  .. controls (430.22,375.94) and (427.01,379.15) .. (423.06,379.15)
  .. controls (419.1,379.15) and (415.9,375.94) .. (415.9,371.98) -- cycle ;

% Text Node
\draw (195.3,315.25) node [anchor=north west][inner sep=0.75pt] {$H$};
% Text Node
\draw (431.56,382.4) node [anchor=north west][inner sep=0.75pt] {$G$};
% Text Node
\draw (339.92,233.35) node [anchor=north west][inner sep=0.75pt] {$I$};
% Text Node
\draw (201.33,36.81) node [anchor=north west][inner sep=0.75pt] {$L$};
% Text Node
\draw (437,216.98) node [anchor=north west][inner sep=0.75pt] {$J$};
% Text Node
\draw (376.58,102.32) node [anchor=north west][inner sep=0.75pt] {$K$};

% ----------------------------
% Arrows encoding: G < H < I < J < K < L
% (direction is "smaller -> larger")
% ----------------------------
\path[ord] (G) -- (H);
\path[ord] (H) -- (I);
\path[ord] (G) -- (I);
\path[ord] (G) -- (J);
\path[ord] (J) -- (K);
\path[ordo] (I) -- (K);
\path[ord] (K) -- (L);
\path[ord] (H) -- (L);
\path[ord] (J) -- (L);

\end{tikzpicture}

%% file: bad_richard.tex
\tdplotsetmaincoords{72}{128}
\begin{tikzpicture}[
  tdplot_main_coords,
  scale=2.15,
  line join=round,
  line cap=round,
  edge/.style={black, line width=0.95pt},
  orient/.style={
    edge,
    postaction={decorate},
    decoration={
      markings,
      mark=at position 0.5 with {\arrow{Latex}}
    }
  },
  bluecurve/.style={
    draw={rgb,255:red,80;green,150;blue,255},
    line width=1.15pt,
    postaction={decorate},
    decoration={
      markings,
      mark=at position 0.5 with {
        \arrow[color={rgb,255:red,80;green,150;blue,255}]{Latex}
      }
    }
  },
  vtx/.style={circle, fill=black, inner sep=1.35pt},
  bluevtx/.style={circle, fill={rgb,255:red,80;green,150;blue,255}, inner sep=1.35pt}
]

% vertices
\coordinate (A) at (2,0,0);   % (2,0,0)
\coordinate (B) at (2,1,0);   % (2,1,0)
\coordinate (C) at (2,0,1);   % (2,0,1)
\coordinate (D) at (0,0,2);   % (0,0,2)
\coordinate (E) at (1,0,2);   % (1,0,2)
\coordinate (F) at (0,1,2);   % (0,1,2)
\coordinate (G) at (0,0,0);   % (0,0,0)
\coordinate (H) at (0,3,0);   % (0,3,0)

% oriented edges
\foreach \u/\v in {G/A,A/B,A/C,B/C,B/H,C/E,G/D,D/E,D/F,E/F,G/H,H/F}
  \draw[orient] (\u)--(\v);

% blue curved arrow in the face x=2 from (2,0,0) to (2,0,1)
\draw[bluecurve]
  (A) .. controls (2,0.18,0.14) and (2,0.14,0.70) .. (C);

% vertices, all same size
\node[bluevtx] at (A) {};
\node[bluevtx] at (C) {};
\foreach \P in {B,D,E,F,G,H}
  \node[vtx] at (\P) {};

\end{tikzpicture}

%% file: Pop_Projective_Space.tex
\tikzset{every picture/.style={line width=0.75pt}} %set default line width to 0.75pt        

\begin{tikzpicture}[
    %--- 3d projection vectors ------------------------
    x={(-0.587802cm,-0.404450cm)},
    y={(0.809005cm,-0.293947cm)},
    z={(0.000078cm,0.866034cm)},
    scale=1,
    %--- arrow tip style -------------------------------
    >=stealth,
    %--- styles ----------------------------------------
    aedge/.style={
      color=black,
      thick,
      decoration={
        markings,
        mark=at position 0.5 with {\arrow{>}}
      },
      postaction={decorate}
    },
    aedge_red/.style={
      color=red,
      thick,
      decoration={
        markings,
        mark=at position 0.5 with {\arrow{>}}
      },
      postaction={decorate}
    },
    edge/.style={
      color=black,
      thick
    },
    back/.style={
      dashed,
      decoration={
        %markings,
        %mark=at position 0.5 with {\arrow{>}}
      },
      postaction={decorate}
    },
    facet/.style={fill=white,fill opacity=0.5},
    vertex/.style={inner sep=1pt,circle,draw=black,fill=black,thick}
  ]
%vertices
  \coordinate (A) at (0,0,1);
  \coordinate (B) at (0,0,2);
  \coordinate (C) at (0,1,0);
  \coordinate (D) at (0,1,2);
  \coordinate (E) at (0,2,0);
  \coordinate (F) at (0,2,1);
  \coordinate (G) at (1,0,0);
  \coordinate (H) at (1,0,2);
  \coordinate (I) at (1,2,0);
  \coordinate (J) at (2,0,0);
  \coordinate (K) at (2,0,1);
  \coordinate (L) at (2,1,0);

  %% back‐facing edges
  \draw[edge,back] (A) -- (B);
  \draw[edge,back] (C) -- (A);
  \draw[edge,back] (G) -- (A);
  \draw[edge,back] (C) -- (E);
  \draw[edge,back] (G) -- (C);
  \draw[edge,back] (G) -- (J);

  %% back vertices
  \node[vertex] at (C) {};
  \node[vertex] at (G) {};
  \node[vertex] at (A) {};

  %% facets
  \fill[facet] (L) -- (I) -- (F) -- (D) -- (H) -- (K) -- cycle;
  \fill[facet] (H) -- (B) -- (D) -- cycle;
  \fill[facet] (I) -- (E) -- (F) -- cycle;
  \fill[facet] (L) -- (J) -- (K) -- cycle;

  %% front‐facing edges
  \draw[edge] (B) -- (D);
  \draw[edge] (B) -- (H);
  \draw[edge] (F) -- (D);
  \draw[edge] (H) -- (D);
  \draw[edge] (E) -- (F);
  \draw[edge] (E) -- (I);
  \draw[edge] (I) -- (F);
  \draw[edge] (K) -- (H);
  \draw[edge] (L) -- (I);
  \draw[edge] (J) -- (K);
  \draw[edge] (L) -- (J);
  \draw[edge] (L) -- (K);

  %% front vertices
  \foreach \P in {B,D,E,F,H,I,J,K,L}
    \node[vertex] at (\P) {};

\end{tikzpicture}

%% file: pop_not_enough_1.tex
% Pattern Info

\tikzset{every picture/.style={line width=0.75pt}} %set default line width to 0.75pt        

\tikzset{every picture/.style={line width=0.75pt}} %set default line width to 0.75pt        

\begin{tikzpicture}%
	[x={(-0.341079cm, 0.092831cm)},
	y={(-0.494526cm, 0.828564cm)},
	z={(0.799443cm, 0.552146cm)},
	scale=1.000000,
	back/.style={dashed},
	edge/.style={color=black, thick},
	facet/.style={fill=white,fill opacity=0.500000},
	vertex/.style={inner sep=1pt,circle,draw=black,fill=black,thick}]
%
%
%% This TikZ-picture was produced with Sagemath version 10.6
%% with the command: ._tikz_3d_in_3d and parameters:
%% view = [-3.50340000000000, 20.9870000000000, 7.10530000000000]
%% angle = 112
%% scale = 1
%% edge_color = blue!95!black
%% facet_color = blue!95!black
%% opacity = 0.8
%% vertex_color = green
%% axis = False
%%
%% Coordinate of the vertices:
%%
\coordinate (3.00000, 0.00000, 0.00000) at (3.00000, 0.00000, 0.00000);
\coordinate (3.00000, 0.00000, 3.00000) at (3.00000, 0.00000, 3.00000);
\coordinate (0.00000, 3.00000, 3.00000) at (0.00000, 3.00000, 3.00000);
\coordinate (0.00000, 0.00000, 15.00000) at (0.00000, 0.00000, 15.00000);
\coordinate (0.00000, 3.00000, 0.00000) at (0.00000, 3.00000, 0.00000);
\coordinate (0.00000, 0.00000, 0.00000) at (0.00000, 0.00000, 0.00000);
%%
%%
%% Drawing edges in the back
%%
\draw[edge,back] (3.00000, 0.00000, 0.00000) -- (3.00000, 0.00000, 3.00000);
\draw[edge,back] (3.00000, 0.00000, 3.00000) -- (0.00000, 3.00000, 3.00000);
\draw[edge,back] (3.00000, 0.00000, 3.00000) -- (0.00000, 0.00000, 15.00000);
%%
%%
%% Drawing vertices in the back
%%
\node[vertex] at (3.00000, 0.00000, 3.00000)     {};
%%
%%
%% Drawing the facets
%%
\fill[facet] (0.00000, 0.00000, 0.00000) -- (0.00000, 0.00000, 15.00000) -- (0.00000, 3.00000, 3.00000) -- (0.00000, 3.00000, 0.00000) -- cycle {};
\fill[facet] (0.00000, 0.00000, 0.00000) -- (3.00000, 0.00000, 0.00000) -- (0.00000, 3.00000, 0.00000) -- cycle {};
%%
%%
%% Drawing edges in the front
%%
\draw[edge] (3.00000, 0.00000, 0.00000) -- (0.00000, 3.00000, 0.00000);
\draw[edge] (3.00000, 0.00000, 0.00000) -- (0.00000, 0.00000, 0.00000);
\draw[edge] (0.00000, 3.00000, 3.00000) -- (0.00000, 0.00000, 15.00000);
\draw[edge] (0.00000, 3.00000, 3.00000) -- (0.00000, 3.00000, 0.00000);
\draw[edge] (0.00000, 0.00000, 15.00000) -- (0.00000, 0.00000, 0.00000);
\draw[edge] (0.00000, 3.00000, 0.00000) -- (0.00000, 0.00000, 0.00000);
%%
%%
%% Drawing the vertices in the front
%%
\node[vertex] at (3.00000, 0.00000, 0.00000)     {};
\node[vertex] at (0.00000, 3.00000, 3.00000)     {};
\node[vertex] at (0.00000, 0.00000, 15.00000)     {};
\node[vertex] at (0.00000, 3.00000, 0.00000)     {};
\node[vertex] at (0.00000, 0.00000, 0.00000)     {};
\end{tikzpicture}

%% file: pop_not_enough_2.tex
% Pattern Info

\tikzset{every picture/.style={line width=0.75pt}} %set default line width to 0.75pt        

\tikzset{every picture/.style={line width=0.75pt}} %set default line width to 0.75pt        

\begin{tikzpicture}%
	[x={(-0.341079cm, 0.092831cm)},
	y={(-0.494526cm, 0.828564cm)},
	z={(0.799443cm, 0.552146cm)},
	scale=1.000000,
	back/.style={ dashed},
	edge/.style={color=black, thick},
	facet/.style={fill=white,fill opacity=0.500000},
	vertex/.style={inner sep=1pt,circle,draw=black,fill=black,thick}]
%
%
%% This TikZ-picture was produced with Sagemath version 10.6
%% with the command: ._tikz_3d_in_3d and parameters:
%% view = [-3.50340000000000, 20.9870000000000, 7.10530000000000]
%% angle = 112
%% scale = 1
%% edge_color = blue!95!black
%% facet_color = blue!95!black
%% opacity = 0.8
%% vertex_color = green
%% axis = False
%%
%% Coordinate of the vertices:
%%
\coordinate (0.00000, 0.00000, 1.00000) at (0.00000, 0.00000, 1.00000);
\coordinate (0.00000, 0.00000, 14.00000) at (0.00000, 0.00000, 14.00000);
\coordinate (0.00000, 1.00000, 0.00000) at (0.00000, 1.00000, 0.00000);
\coordinate (0.00000, 1.00000, 11.00000) at (0.00000, 1.00000, 11.00000);
\coordinate (0.00000, 2.00000, 0.00000) at (0.00000, 2.00000, 0.00000);
\coordinate (0.00000, 2.00000, 7.00000) at (0.00000, 2.00000, 7.00000);
\coordinate (0.00000, 3.00000, 1.00000) at (0.00000, 3.00000, 1.00000);
\coordinate (0.00000, 3.00000, 2.00000) at (0.00000, 3.00000, 2.00000);
\coordinate (1.00000, 0.00000, 0.00000) at (1.00000, 0.00000, 0.00000);
\coordinate (1.00000, 0.00000, 11.00000) at (1.00000, 0.00000, 11.00000);
\coordinate (1.00000, 2.00000, 0.00000) at (1.00000, 2.00000, 0.00000);
\coordinate (1.00000, 2.00000, 3.00000) at (1.00000, 2.00000, 3.00000);
\coordinate (2.00000, 0.00000, 0.00000) at (2.00000, 0.00000, 0.00000);
\coordinate (2.00000, 0.00000, 7.00000) at (2.00000, 0.00000, 7.00000);
\coordinate (2.00000, 1.00000, 0.00000) at (2.00000, 1.00000, 0.00000);
\coordinate (2.00000, 1.00000, 3.00000) at (2.00000, 1.00000, 3.00000);
\coordinate (3.00000, 0.00000, 1.00000) at (3.00000, 0.00000, 1.00000);
\coordinate (3.00000, 0.00000, 2.00000) at (3.00000, 0.00000, 2.00000);
%%
%%
%% Drawing edges in the back
%%
\draw[edge,back] (0.00000, 0.00000, 14.00000) -- (1.00000, 0.00000, 11.00000);
\draw[edge,back] (0.00000, 1.00000, 11.00000) -- (1.00000, 0.00000, 11.00000);
\draw[edge,back] (0.00000, 2.00000, 7.00000) -- (1.00000, 2.00000, 3.00000);
\draw[edge,back] (0.00000, 3.00000, 2.00000) -- (1.00000, 2.00000, 3.00000);
\draw[edge,back] (1.00000, 0.00000, 11.00000) -- (2.00000, 0.00000, 7.00000);
\draw[edge,back] (1.00000, 2.00000, 3.00000) -- (2.00000, 1.00000, 3.00000);
\draw[edge,back] (2.00000, 0.00000, 0.00000) -- (3.00000, 0.00000, 1.00000);
\draw[edge,back] (2.00000, 0.00000, 7.00000) -- (2.00000, 1.00000, 3.00000);
\draw[edge,back] (2.00000, 0.00000, 7.00000) -- (3.00000, 0.00000, 2.00000);
\draw[edge,back] (2.00000, 1.00000, 0.00000) -- (3.00000, 0.00000, 1.00000);
\draw[edge,back] (2.00000, 1.00000, 3.00000) -- (3.00000, 0.00000, 2.00000);
\draw[edge,back] (3.00000, 0.00000, 1.00000) -- (3.00000, 0.00000, 2.00000);
%%
%%
%% Drawing vertices in the back
%%
\node[vertex] at (1.00000, 2.00000, 3.00000)     {};
\node[vertex] at (2.00000, 1.00000, 3.00000)     {};
\node[vertex] at (3.00000, 0.00000, 1.00000)     {};
\node[vertex] at (3.00000, 0.00000, 2.00000)     {};
\node[vertex] at (1.00000, 0.00000, 11.00000)     {};
\node[vertex] at (2.00000, 0.00000, 7.00000)     {};
%%
%%
%% Drawing the facets
%%
\fill[facet] (0.00000, 3.00000, 2.00000) -- (0.00000, 2.00000, 7.00000) -- (0.00000, 1.00000, 11.00000) -- (0.00000, 0.00000, 14.00000) -- (0.00000, 0.00000, 1.00000) -- (0.00000, 1.00000, 0.00000) -- (0.00000, 2.00000, 0.00000) -- (0.00000, 3.00000, 1.00000) -- cycle {};
\fill[facet] (2.00000, 1.00000, 0.00000) -- (1.00000, 2.00000, 0.00000) -- (0.00000, 2.00000, 0.00000) -- (0.00000, 1.00000, 0.00000) -- (1.00000, 0.00000, 0.00000) -- (2.00000, 0.00000, 0.00000) -- cycle {};
\fill[facet] (1.00000, 2.00000, 0.00000) -- (0.00000, 2.00000, 0.00000) -- (0.00000, 3.00000, 1.00000) -- cycle {};
\fill[facet] (1.00000, 0.00000, 0.00000) -- (0.00000, 0.00000, 1.00000) -- (0.00000, 1.00000, 0.00000) -- cycle {};
%%
%%
%% Drawing edges in the front
%%
\draw[edge] (0.00000, 0.00000, 1.00000) -- (0.00000, 0.00000, 14.00000);
\draw[edge] (0.00000, 0.00000, 1.00000) -- (0.00000, 1.00000, 0.00000);
\draw[edge] (0.00000, 0.00000, 1.00000) -- (1.00000, 0.00000, 0.00000);
\draw[edge] (0.00000, 0.00000, 14.00000) -- (0.00000, 1.00000, 11.00000);
\draw[edge] (0.00000, 1.00000, 0.00000) -- (0.00000, 2.00000, 0.00000);
\draw[edge] (0.00000, 1.00000, 0.00000) -- (1.00000, 0.00000, 0.00000);
\draw[edge] (0.00000, 1.00000, 11.00000) -- (0.00000, 2.00000, 7.00000);
\draw[edge] (0.00000, 2.00000, 0.00000) -- (0.00000, 3.00000, 1.00000);
\draw[edge] (0.00000, 2.00000, 0.00000) -- (1.00000, 2.00000, 0.00000);
\draw[edge] (0.00000, 2.00000, 7.00000) -- (0.00000, 3.00000, 2.00000);
\draw[edge] (0.00000, 3.00000, 1.00000) -- (0.00000, 3.00000, 2.00000);
\draw[edge] (0.00000, 3.00000, 1.00000) -- (1.00000, 2.00000, 0.00000);
\draw[edge] (1.00000, 0.00000, 0.00000) -- (2.00000, 0.00000, 0.00000);
\draw[edge] (1.00000, 2.00000, 0.00000) -- (2.00000, 1.00000, 0.00000);
\draw[edge] (2.00000, 0.00000, 0.00000) -- (2.00000, 1.00000, 0.00000);
%%
%%
%% Drawing the vertices in the front
%%
\node[vertex] at (0.00000, 0.00000, 1.00000)     {};
\node[vertex] at (0.00000, 0.00000, 14.00000)     {};
\node[vertex] at (0.00000, 1.00000, 0.00000)     {};
\node[vertex] at (0.00000, 1.00000, 11.00000)     {};
\node[vertex] at (0.00000, 2.00000, 0.00000)     {};
\node[vertex] at (0.00000, 2.00000, 7.00000)     {};
\node[vertex] at (0.00000, 3.00000, 1.00000)     {};
\node[vertex] at (0.00000, 3.00000, 2.00000)     {};
\node[vertex] at (1.00000, 0.00000, 0.00000)     {};
\node[vertex] at (1.00000, 2.00000, 0.00000)     {};
\node[vertex] at (2.00000, 0.00000, 0.00000)     {};
\node[vertex] at (2.00000, 1.00000, 0.00000)     {};
\end{tikzpicture}

%% file: product_part1.tex
% Pattern Info

\tikzset{every picture/.style={line width=0.75pt}} %set default line width to 0.75pt        

\begin{tikzpicture}[x=0.75pt,y=0.75pt,yscale=-1,xscale=1]
%uncomment if require: \path (0,300); %set diagram left start at 0, and has height of 300

%Straight Lines [id:da752609958411949] 
\draw    (474.69,226.61) -- (565.71,257.23) ;
\draw [shift={(565.71,257.23)}, rotate = 18.59] [color={rgb, 255:red, 0; green, 0; blue, 0 }  ][fill={rgb, 255:red, 0; green, 0; blue, 0 }  ][line width=0.75]      (0, 0) circle [x radius= 3.35, y radius= 3.35]   ;
\draw [shift={(474.69,226.61)}, rotate = 18.59] [color={rgb, 255:red, 0; green, 0; blue, 0 }  ][fill={rgb, 255:red, 0; green, 0; blue, 0 }  ][line width=0.75]      (0, 0) circle [x radius= 3.35, y radius= 3.35]   ;
%Straight Lines [id:da22889825958872145] 
\draw    (553.41,200.83) -- (565.71,257.23) ;
\draw [shift={(565.71,257.23)}, rotate = 77.7] [color={rgb, 255:red, 0; green, 0; blue, 0 }  ][fill={rgb, 255:red, 0; green, 0; blue, 0 }  ][line width=0.75]      (0, 0) circle [x radius= 3.35, y radius= 3.35]   ;
\draw [shift={(553.41,200.83)}, rotate = 77.7] [color={rgb, 255:red, 0; green, 0; blue, 0 }  ][fill={rgb, 255:red, 0; green, 0; blue, 0 }  ][line width=0.75]      (0, 0) circle [x radius= 3.35, y radius= 3.35]   ;
%Straight Lines [id:da9997773151108145] 
\draw    (474.69,226.61) -- (553.41,200.83) ;
\draw [shift={(553.41,200.83)}, rotate = 341.86] [color={rgb, 255:red, 0; green, 0; blue, 0 }  ][fill={rgb, 255:red, 0; green, 0; blue, 0 }  ][line width=0.75]      (0, 0) circle [x radius= 3.35, y radius= 3.35]   ;
\draw [shift={(474.69,226.61)}, rotate = 341.86] [color={rgb, 255:red, 0; green, 0; blue, 0 }  ][fill={rgb, 255:red, 0; green, 0; blue, 0 }  ][line width=0.75]      (0, 0) circle [x radius= 3.35, y radius= 3.35]   ;
%Straight Lines [id:da20484680503396357] 
\draw    (444.35,132.33) -- (535.37,162.95) ;
\draw [shift={(535.37,162.95)}, rotate = 18.59] [color={rgb, 255:red, 0; green, 0; blue, 0 }  ][fill={rgb, 255:red, 0; green, 0; blue, 0 }  ][line width=0.75]      (0, 0) circle [x radius= 3.35, y radius= 3.35]   ;
\draw [shift={(444.35,132.33)}, rotate = 18.59] [color={rgb, 255:red, 0; green, 0; blue, 0 }  ][fill={rgb, 255:red, 0; green, 0; blue, 0 }  ][line width=0.75]      (0, 0) circle [x radius= 3.35, y radius= 3.35]   ;
%Straight Lines [id:da8252482370384508] 
\draw    (523.07,106.55) -- (535.37,162.95) ;
\draw [shift={(535.37,162.95)}, rotate = 77.7] [color={rgb, 255:red, 0; green, 0; blue, 0 }  ][fill={rgb, 255:red, 0; green, 0; blue, 0 }  ][line width=0.75]      (0, 0) circle [x radius= 3.35, y radius= 3.35]   ;
\draw [shift={(523.07,106.55)}, rotate = 77.7] [color={rgb, 255:red, 0; green, 0; blue, 0 }  ][fill={rgb, 255:red, 0; green, 0; blue, 0 }  ][line width=0.75]      (0, 0) circle [x radius= 3.35, y radius= 3.35]   ;
%Straight Lines [id:da04419063608611751] 
\draw    (444.35,132.33) -- (523.07,106.55) ;
\draw [shift={(523.07,106.55)}, rotate = 341.86] [color={rgb, 255:red, 0; green, 0; blue, 0 }  ][fill={rgb, 255:red, 0; green, 0; blue, 0 }  ][line width=0.75]      (0, 0) circle [x radius= 3.35, y radius= 3.35]   ;
\draw [shift={(444.35,132.33)}, rotate = 341.86] [color={rgb, 255:red, 0; green, 0; blue, 0 }  ][fill={rgb, 255:red, 0; green, 0; blue, 0 }  ][line width=0.75]      (0, 0) circle [x radius= 3.35, y radius= 3.35]   ;
%Straight Lines [id:da14149457826100864] 
\draw  [dash pattern={on 0.75pt off 0.75pt}]  (445.78,131.87) .. controls (447.87,132.95) and (448.38,134.54) .. (447.31,136.63) .. controls (446.23,138.73) and (446.74,140.32) .. (448.84,141.39) .. controls (450.94,142.46) and (451.45,144.05) .. (450.37,146.15) .. controls (449.3,148.25) and (449.81,149.84) .. (451.91,150.91) .. controls (454.01,151.98) and (454.52,153.57) .. (453.44,155.67) .. controls (452.36,157.77) and (452.87,159.36) .. (454.97,160.43) .. controls (457.07,161.5) and (457.58,163.09) .. (456.5,165.19) .. controls (455.42,167.29) and (455.93,168.88) .. (458.03,169.95) .. controls (460.13,171.02) and (460.64,172.61) .. (459.56,174.71) .. controls (458.49,176.81) and (459,178.4) .. (461.1,179.47) .. controls (463.2,180.54) and (463.71,182.13) .. (462.63,184.23) .. controls (461.55,186.33) and (462.06,187.92) .. (464.16,188.99) .. controls (466.26,190.06) and (466.77,191.65) .. (465.69,193.75) .. controls (464.61,195.85) and (465.12,197.44) .. (467.22,198.51) .. controls (469.32,199.58) and (469.83,201.17) .. (468.75,203.27) .. controls (467.68,205.37) and (468.19,206.96) .. (470.29,208.03) .. controls (472.39,209.1) and (472.9,210.69) .. (471.82,212.79) .. controls (470.74,214.89) and (471.25,216.48) .. (473.35,217.55) .. controls (475.45,218.62) and (475.96,220.21) .. (474.88,222.31) -- (476.12,226.15) -- (476.12,226.15)(442.92,132.79) .. controls (445.02,133.87) and (445.53,135.46) .. (444.45,137.55) .. controls (443.38,139.65) and (443.89,141.24) .. (445.99,142.31) .. controls (448.09,143.38) and (448.6,144.97) .. (447.52,147.07) .. controls (446.44,149.17) and (446.95,150.76) .. (449.05,151.83) .. controls (451.15,152.9) and (451.66,154.49) .. (450.58,156.59) .. controls (449.5,158.69) and (450.01,160.28) .. (452.11,161.35) .. controls (454.21,162.42) and (454.72,164.01) .. (453.64,166.11) .. controls (452.57,168.21) and (453.08,169.8) .. (455.18,170.87) .. controls (457.28,171.94) and (457.79,173.53) .. (456.71,175.63) .. controls (455.63,177.73) and (456.14,179.32) .. (458.24,180.39) .. controls (460.34,181.46) and (460.85,183.05) .. (459.77,185.15) .. controls (458.69,187.25) and (459.2,188.84) .. (461.3,189.91) .. controls (463.4,190.98) and (463.91,192.57) .. (462.84,194.67) .. controls (461.76,196.77) and (462.27,198.36) .. (464.37,199.43) .. controls (466.47,200.5) and (466.98,202.09) .. (465.9,204.19) .. controls (464.82,206.29) and (465.33,207.88) .. (467.43,208.95) .. controls (469.53,210.02) and (470.04,211.61) .. (468.96,213.71) .. controls (467.88,215.81) and (468.39,217.4) .. (470.49,218.47) .. controls (472.59,219.54) and (473.1,221.13) .. (472.03,223.23) -- (473.26,227.07) -- (473.26,227.07) ;
%Straight Lines [id:da7870690738291825] 
\draw    (444.35,132.33) -- (444.35,65.45) ;
\draw [shift={(444.35,65.45)}, rotate = 270] [color={rgb, 255:red, 0; green, 0; blue, 0 }  ][fill={rgb, 255:red, 0; green, 0; blue, 0 }  ][line width=0.75]      (0, 0) circle [x radius= 3.35, y radius= 3.35]   ;
\draw [shift={(444.35,132.33)}, rotate = 270] [color={rgb, 255:red, 0; green, 0; blue, 0 }  ][fill={rgb, 255:red, 0; green, 0; blue, 0 }  ][line width=0.75]      (0, 0) circle [x radius= 3.35, y radius= 3.35]   ;
%Straight Lines [id:da9429706462818134] 
\draw  [dash pattern={on 4.5pt off 4.5pt}]  (535.37,162.95) -- (535.37,98.42) ;
\draw [shift={(535.37,96.07)}, rotate = 270] [color={rgb, 255:red, 0; green, 0; blue, 0 }  ][line width=0.75]      (0, 0) circle [x radius= 3.35, y radius= 3.35]   ;
\draw [shift={(535.37,162.95)}, rotate = 270] [color={rgb, 255:red, 0; green, 0; blue, 0 }  ][fill={rgb, 255:red, 0; green, 0; blue, 0 }  ][line width=0.75]      (0, 0) circle [x radius= 3.35, y radius= 3.35]   ;
%Straight Lines [id:da18217665965563523] 
\draw  [dash pattern={on 4.5pt off 4.5pt}]  (523.07,106.55) -- (523.07,42.02) ;
\draw [shift={(523.07,39.67)}, rotate = 270] [color={rgb, 255:red, 0; green, 0; blue, 0 }  ][line width=0.75]      (0, 0) circle [x radius= 3.35, y radius= 3.35]   ;
\draw [shift={(523.07,106.55)}, rotate = 270] [color={rgb, 255:red, 0; green, 0; blue, 0 }  ][fill={rgb, 255:red, 0; green, 0; blue, 0 }  ][line width=0.75]      (0, 0) circle [x radius= 3.35, y radius= 3.35]   ;
%Straight Lines [id:da27541225781975953] 
\draw  [dash pattern={on 4.5pt off 4.5pt}]  (444.35,65.45) -- (520.84,40.4) ;
\draw [shift={(523.07,39.67)}, rotate = 341.86] [color={rgb, 255:red, 0; green, 0; blue, 0 }  ][line width=0.75]      (0, 0) circle [x radius= 3.35, y radius= 3.35]   ;
\draw [shift={(444.35,65.45)}, rotate = 341.86] [color={rgb, 255:red, 0; green, 0; blue, 0 }  ][fill={rgb, 255:red, 0; green, 0; blue, 0 }  ][line width=0.75]      (0, 0) circle [x radius= 3.35, y radius= 3.35]   ;
%Straight Lines [id:da6907557427193569] 
\draw  [dash pattern={on 4.5pt off 4.5pt}]  (444.35,65.45) -- (533.15,95.32) ;
\draw [shift={(535.37,96.07)}, rotate = 18.59] [color={rgb, 255:red, 0; green, 0; blue, 0 }  ][line width=0.75]      (0, 0) circle [x radius= 3.35, y radius= 3.35]   ;
\draw [shift={(444.35,65.45)}, rotate = 18.59] [color={rgb, 255:red, 0; green, 0; blue, 0 }  ][fill={rgb, 255:red, 0; green, 0; blue, 0 }  ][line width=0.75]      (0, 0) circle [x radius= 3.35, y radius= 3.35]   ;
%Straight Lines [id:da4816346704193779] 
\draw    (523.57,41.96) -- (534.87,93.78) ;
\draw [shift={(535.37,96.07)}, rotate = 77.7] [color={rgb, 255:red, 0; green, 0; blue, 0 }  ][line width=0.75]      (0, 0) circle [x radius= 3.35, y radius= 3.35]   ;
\draw [shift={(523.07,39.67)}, rotate = 77.7] [color={rgb, 255:red, 0; green, 0; blue, 0 }  ][line width=0.75]      (0, 0) circle [x radius= 3.35, y radius= 3.35]   ;
%Straight Lines [id:da7590712895090702] 
\draw    (160.19,216) -- (251.22,246.62) ;
\draw [shift={(251.22,246.62)}, rotate = 18.59] [color={rgb, 255:red, 0; green, 0; blue, 0 }  ][fill={rgb, 255:red, 0; green, 0; blue, 0 }  ][line width=0.75]      (0, 0) circle [x radius= 3.35, y radius= 3.35]   ;
\draw [shift={(160.19,216)}, rotate = 18.59] [color={rgb, 255:red, 0; green, 0; blue, 0 }  ][fill={rgb, 255:red, 0; green, 0; blue, 0 }  ][line width=0.75]      (0, 0) circle [x radius= 3.35, y radius= 3.35]   ;
%Straight Lines [id:da22721673730965697] 
\draw    (238.92,190.21) -- (251.22,246.62) ;
\draw [shift={(251.22,246.62)}, rotate = 77.7] [color={rgb, 255:red, 0; green, 0; blue, 0 }  ][fill={rgb, 255:red, 0; green, 0; blue, 0 }  ][line width=0.75]      (0, 0) circle [x radius= 3.35, y radius= 3.35]   ;
\draw [shift={(238.92,190.21)}, rotate = 77.7] [color={rgb, 255:red, 0; green, 0; blue, 0 }  ][fill={rgb, 255:red, 0; green, 0; blue, 0 }  ][line width=0.75]      (0, 0) circle [x radius= 3.35, y radius= 3.35]   ;
%Straight Lines [id:da5636126288155222] 
\draw    (160.19,216) -- (238.92,190.21) ;
\draw [shift={(238.92,190.21)}, rotate = 341.86] [color={rgb, 255:red, 0; green, 0; blue, 0 }  ][fill={rgb, 255:red, 0; green, 0; blue, 0 }  ][line width=0.75]      (0, 0) circle [x radius= 3.35, y radius= 3.35]   ;
\draw [shift={(160.19,216)}, rotate = 341.86] [color={rgb, 255:red, 0; green, 0; blue, 0 }  ][fill={rgb, 255:red, 0; green, 0; blue, 0 }  ][line width=0.75]      (0, 0) circle [x radius= 3.35, y radius= 3.35]   ;
%Curve Lines [id:da43198199037044505] 
\draw  [dash pattern={on 0.75pt off 0.75pt}]  (474.38,228.08) .. controls (471.88,229.23) and (470.05,228.8) .. (468.9,226.8) .. controls (467.85,224.79) and (466.38,224.38) .. (464.48,225.56) .. controls (462.07,226.54) and (460.46,226) .. (459.63,223.95) .. controls (458.5,221.74) and (456.79,221.05) .. (454.5,221.9) .. controls (452.57,222.85) and (451.19,222.19) .. (450.36,219.91) .. controls (449.71,217.66) and (448.09,216.72) .. (445.52,217.11) .. controls (443.24,217.58) and (441.96,216.67) .. (441.67,214.38) .. controls (441.31,211.92) and (439.9,210.68) .. (437.45,210.66) .. controls (435.21,210.72) and (434.13,209.51) .. (434.22,207.03) .. controls (434.55,204.66) and (433.56,203.21) .. (431.25,202.68) .. controls (428.94,201.9) and (428.17,200.35) .. (428.94,198.03) .. controls (429.91,195.88) and (429.36,194.23) .. (427.28,193.09) .. controls (425.25,191.7) and (424.91,189.96) .. (426.26,187.89) .. controls (427.74,185.97) and (427.61,184.33) .. (425.88,182.98) .. controls (424.21,181.07) and (424.25,179.37) .. (425.99,177.89) .. controls (427.82,176.14) and (428.02,174.37) .. (426.58,172.6) .. controls (425.23,170.68) and (425.54,169.06) .. (427.51,167.75) .. controls (429.52,166.52) and (429.96,164.86) .. (428.81,162.77) .. controls (427.74,160.56) and (428.29,158.85) .. (430.46,157.64) .. controls (432.5,156.93) and (433.08,155.4) .. (432.2,153.05) .. controls (431.41,150.61) and (432.08,149.05) .. (434.21,148.36) .. controls (436.34,147.71) and (437.1,146.12) .. (436.48,143.58) .. controls (435.7,141.43) and (436.42,140.03) .. (438.63,139.4) .. controls (440.86,138.79) and (441.64,137.38) .. (440.97,135.15) -- (443.06,131.56)(475,225.15) .. controls (472.55,226.31) and (470.77,225.89) .. (469.65,223.9) .. controls (468.64,221.9) and (467.21,221.5) .. (465.35,222.69) .. controls (463,223.69) and (461.44,223.17) .. (460.65,221.13) .. controls (459.58,218.94) and (457.94,218.28) .. (455.71,219.15) .. controls (453.84,220.13) and (452.52,219.5) .. (451.75,217.25) .. controls (451.17,215.04) and (449.64,214.15) .. (447.15,214.59) .. controls (444.94,215.11) and (443.73,214.25) .. (443.52,212.02) .. controls (443.25,209.63) and (441.94,208.47) .. (439.58,208.54) .. controls (437.41,208.67) and (436.41,207.55) .. (436.58,205.17) .. controls (436.99,202.92) and (436.08,201.58) .. (433.84,201.17) .. controls (431.59,200.5) and (430.88,199.08) .. (431.71,196.89) .. controls (432.73,194.86) and (432.22,193.34) .. (430.18,192.33) .. controls (428.18,191.06) and (427.86,189.45) .. (429.23,187.49) .. controls (430.72,185.69) and (430.6,184.16) .. (428.87,182.9) .. controls (427.21,181.09) and (427.25,179.49) .. (428.98,178.09) .. controls (430.81,176.44) and (431,174.77) .. (429.55,173.06) .. controls (428.18,171.21) and (428.47,169.66) .. (430.44,168.41) .. controls (432.44,167.24) and (432.86,165.64) .. (431.69,163.61) .. controls (430.61,161.45) and (431.14,159.79) .. (433.29,158.64) .. controls (435.32,157.96) and (435.88,156.47) .. (434.98,154.17) .. controls (434.18,151.78) and (434.83,150.25) .. (436.94,149.6) .. controls (439.07,148.98) and (439.81,147.42) .. (439.17,144.91) .. controls (438.37,142.78) and (439.07,141.42) .. (441.28,140.81) .. controls (443.49,140.23) and (444.25,138.84) .. (443.58,136.64) -- (445.64,133.1) ;
%Straight Lines [id:da6917355737702283] 
\draw  [dash pattern={on 0.75pt off 0.75pt}]  (131.28,120.54) .. controls (133.38,121.62) and (133.89,123.21) .. (132.81,125.3) .. controls (131.73,127.4) and (132.24,128.99) .. (134.34,130.06) .. controls (136.44,131.13) and (136.95,132.72) .. (135.88,134.82) .. controls (134.8,136.92) and (135.31,138.51) .. (137.41,139.58) .. controls (139.51,140.65) and (140.02,142.24) .. (138.94,144.34) .. controls (137.86,146.44) and (138.37,148.03) .. (140.47,149.1) .. controls (142.57,150.17) and (143.08,151.76) .. (142,153.86) .. controls (140.92,155.96) and (141.43,157.55) .. (143.53,158.62) .. controls (145.63,159.69) and (146.14,161.28) .. (145.07,163.38) .. controls (143.99,165.48) and (144.5,167.07) .. (146.6,168.14) .. controls (148.7,169.21) and (149.21,170.8) .. (148.13,172.9) .. controls (147.05,175) and (147.56,176.59) .. (149.66,177.66) .. controls (151.76,178.73) and (152.27,180.32) .. (151.19,182.42) .. controls (150.11,184.52) and (150.62,186.11) .. (152.72,187.18) .. controls (154.82,188.25) and (155.33,189.84) .. (154.26,191.94) .. controls (153.19,194.03) and (153.7,195.62) .. (155.79,196.69) .. controls (157.89,197.76) and (158.4,199.35) .. (157.32,201.45) .. controls (156.24,203.55) and (156.75,205.14) .. (158.85,206.21) .. controls (160.95,207.28) and (161.46,208.87) .. (160.38,210.97) -- (161.62,214.82) -- (161.62,214.82)(128.42,121.46) .. controls (130.52,122.53) and (131.03,124.12) .. (129.96,126.22) .. controls (128.88,128.32) and (129.39,129.91) .. (131.49,130.98) .. controls (133.59,132.05) and (134.1,133.64) .. (133.02,135.74) .. controls (131.94,137.84) and (132.45,139.43) .. (134.55,140.5) .. controls (136.65,141.57) and (137.16,143.16) .. (136.08,145.26) .. controls (135,147.36) and (135.51,148.95) .. (137.61,150.02) .. controls (139.71,151.09) and (140.22,152.68) .. (139.15,154.78) .. controls (138.07,156.88) and (138.58,158.47) .. (140.68,159.54) .. controls (142.78,160.61) and (143.29,162.2) .. (142.21,164.3) .. controls (141.13,166.4) and (141.64,167.99) .. (143.74,169.06) .. controls (145.84,170.13) and (146.35,171.72) .. (145.27,173.82) .. controls (144.2,175.92) and (144.71,177.51) .. (146.81,178.58) .. controls (148.91,179.65) and (149.42,181.24) .. (148.34,183.34) .. controls (147.27,185.43) and (147.78,187.02) .. (149.87,188.09) .. controls (151.97,189.16) and (152.48,190.75) .. (151.4,192.85) .. controls (150.32,194.95) and (150.83,196.54) .. (152.93,197.61) .. controls (155.03,198.68) and (155.54,200.27) .. (154.46,202.37) .. controls (153.39,204.47) and (153.9,206.06) .. (156,207.13) .. controls (158.1,208.2) and (158.61,209.79) .. (157.53,211.89) -- (158.77,215.74) -- (158.77,215.74) ;
%Curve Lines [id:da8418188657902628] 
\draw  [dash pattern={on 0.75pt off 0.75pt}]  (159.88,216.75) .. controls (157.39,217.9) and (155.56,217.47) .. (154.41,215.47) .. controls (153.36,213.46) and (151.89,213.05) .. (149.98,214.23) .. controls (147.57,215.21) and (145.96,214.67) .. (145.13,212.62) .. controls (144,210.41) and (142.29,209.72) .. (140,210.57) .. controls (138.07,211.52) and (136.69,210.86) .. (135.86,208.58) .. controls (135.21,206.33) and (133.6,205.39) .. (131.02,205.78) .. controls (128.74,206.25) and (127.46,205.34) .. (127.17,203.05) .. controls (126.82,200.59) and (125.41,199.35) .. (122.96,199.33) .. controls (120.72,199.39) and (119.64,198.18) .. (119.72,195.69) .. controls (120.05,193.33) and (119.07,191.88) .. (116.76,191.35) .. controls (114.44,190.56) and (113.67,189.01) .. (114.44,186.7) .. controls (115.41,184.55) and (114.86,182.9) .. (112.78,181.76) .. controls (110.75,180.37) and (110.41,178.63) .. (111.76,176.55) .. controls (113.24,174.63) and (113.11,173) .. (111.38,171.65) .. controls (109.71,169.73) and (109.75,168.03) .. (111.49,166.55) .. controls (113.33,164.8) and (113.53,163.04) .. (112.09,161.27) .. controls (110.73,159.35) and (111.04,157.73) .. (113.01,156.42) .. controls (115.03,155.19) and (115.46,153.52) .. (114.31,151.43) .. controls (113.24,149.22) and (113.8,147.52) .. (115.97,146.31) .. controls (118,145.59) and (118.58,144.06) .. (117.7,141.72) .. controls (116.91,139.28) and (117.58,137.72) .. (119.71,137.03) .. controls (121.84,136.38) and (122.6,134.78) .. (121.98,132.24) .. controls (121.2,130.09) and (121.92,128.7) .. (124.13,128.06) .. controls (126.36,127.46) and (127.14,126.05) .. (126.47,123.82) -- (128.56,120.23)(160.5,213.81) .. controls (158.05,214.97) and (156.27,214.55) .. (155.16,212.56) .. controls (154.15,210.57) and (152.72,210.17) .. (150.85,211.36) .. controls (148.5,212.36) and (146.94,211.84) .. (146.15,209.8) .. controls (145.09,207.61) and (143.44,206.95) .. (141.21,207.82) .. controls (139.34,208.79) and (138.02,208.16) .. (137.25,205.92) .. controls (136.67,203.71) and (135.14,202.82) .. (132.65,203.26) .. controls (130.44,203.77) and (129.23,202.91) .. (129.02,200.68) .. controls (128.75,198.3) and (127.44,197.14) .. (125.08,197.21) .. controls (122.91,197.34) and (121.91,196.22) .. (122.08,193.84) .. controls (122.49,191.59) and (121.58,190.25) .. (119.34,189.83) .. controls (117.09,189.17) and (116.39,187.74) .. (117.22,185.55) .. controls (118.24,183.52) and (117.73,182) .. (115.68,180.99) .. controls (113.68,179.72) and (113.37,178.11) .. (114.74,176.16) .. controls (116.23,174.36) and (116.11,172.83) .. (114.38,171.57) .. controls (112.71,169.76) and (112.75,168.15) .. (114.48,166.76) .. controls (116.31,165.11) and (116.5,163.43) .. (115.05,161.73) .. controls (113.68,159.88) and (113.98,158.33) .. (115.94,157.08) .. controls (117.94,155.91) and (118.36,154.31) .. (117.19,152.27) .. controls (116.11,150.12) and (116.64,148.46) .. (118.79,147.31) .. controls (120.82,146.63) and (121.38,145.14) .. (120.49,142.84) .. controls (119.68,140.45) and (120.34,138.92) .. (122.45,138.26) .. controls (124.57,137.64) and (125.31,136.08) .. (124.67,133.58) .. controls (123.88,131.45) and (124.58,130.09) .. (126.78,129.48) .. controls (128.99,128.9) and (129.76,127.51) .. (129.08,125.3) -- (131.14,121.77) ;
%Straight Lines [id:da012617133578799677] 
\draw    (129.85,121) -- (129.85,54.12) ;
\draw [shift={(129.85,54.12)}, rotate = 270] [color={rgb, 255:red, 0; green, 0; blue, 0 }  ][fill={rgb, 255:red, 0; green, 0; blue, 0 }  ][line width=0.75]      (0, 0) circle [x radius= 3.35, y radius= 3.35]   ;
\draw [shift={(129.85,121)}, rotate = 270] [color={rgb, 255:red, 0; green, 0; blue, 0 }  ][fill={rgb, 255:red, 0; green, 0; blue, 0 }  ][line width=0.75]      (0, 0) circle [x radius= 3.35, y radius= 3.35]   ;
%Straight Lines [id:da8526023518713186] 
\draw    (129.85,121) -- (220.88,151.62) ;
\draw [shift={(220.88,151.62)}, rotate = 18.59] [color={rgb, 255:red, 0; green, 0; blue, 0 }  ][fill={rgb, 255:red, 0; green, 0; blue, 0 }  ][line width=0.75]      (0, 0) circle [x radius= 3.35, y radius= 3.35]   ;
\draw [shift={(129.85,121)}, rotate = 18.59] [color={rgb, 255:red, 0; green, 0; blue, 0 }  ][fill={rgb, 255:red, 0; green, 0; blue, 0 }  ][line width=0.75]      (0, 0) circle [x radius= 3.35, y radius= 3.35]   ;
%Straight Lines [id:da08254226258710207] 
\draw    (129.85,54.12) -- (220.88,84.74) ;
\draw [shift={(220.88,84.74)}, rotate = 18.59] [color={rgb, 255:red, 0; green, 0; blue, 0 }  ][fill={rgb, 255:red, 0; green, 0; blue, 0 }  ][line width=0.75]      (0, 0) circle [x radius= 3.35, y radius= 3.35]   ;
\draw [shift={(129.85,54.12)}, rotate = 18.59] [color={rgb, 255:red, 0; green, 0; blue, 0 }  ][fill={rgb, 255:red, 0; green, 0; blue, 0 }  ][line width=0.75]      (0, 0) circle [x radius= 3.35, y radius= 3.35]   ;
%Straight Lines [id:da6018622832099214] 
\draw    (220.88,151.62) -- (220.88,84.74) ;
\draw [shift={(220.88,84.74)}, rotate = 270] [color={rgb, 255:red, 0; green, 0; blue, 0 }  ][fill={rgb, 255:red, 0; green, 0; blue, 0 }  ][line width=0.75]      (0, 0) circle [x radius= 3.35, y radius= 3.35]   ;
\draw [shift={(220.88,151.62)}, rotate = 270] [color={rgb, 255:red, 0; green, 0; blue, 0 }  ][fill={rgb, 255:red, 0; green, 0; blue, 0 }  ][line width=0.75]      (0, 0) circle [x radius= 3.35, y radius= 3.35]   ;

% Text Node
\draw (416.01,47.88) node [anchor=north west][inner sep=0.75pt]    {$w_{2}$};
% Text Node
\draw (415.19,122.02) node [anchor=north west][inner sep=0.75pt]    {$w_{1}$};
% Text Node
\draw (475.76,96.23) node [anchor=north west][inner sep=0.75pt]    {$e_{1} '$};
% Text Node
\draw (483.14,152.64) node [anchor=north west][inner sep=0.75pt]    {$e_{0} '$};
% Text Node
\draw (539.62,121.81) node [anchor=north west][inner sep=0.75pt]    {$e'$};
% Text Node
\draw (501.43,192.93) node [anchor=north west][inner sep=0.75pt]    {$e_{1}$};
% Text Node
\draw (517.83,242.89) node [anchor=north west][inner sep=0.75pt]    {$e_{0}$};
% Text Node
\draw (566.57,215.28) node [anchor=north west][inner sep=0.75pt]    {$e$};
% Text Node
\draw (539.38,54.12) node [anchor=north west][inner sep=0.75pt]    {$e''$};
% Text Node
\draw (203.2,232.2) node [anchor=north west][inner sep=0.75pt]    {$e$};
% Text Node
\draw (527.63,217.6) node [anchor=north west][inner sep=0.75pt]    {$J$};
% Text Node
\draw (438.3,177.98) node [anchor=north west][inner sep=0.75pt]    {$C$};
% Text Node
\draw (213.14,206.26) node [anchor=north west][inner sep=0.75pt]    {$J$};
% Text Node
\draw (123.8,166.65) node [anchor=north west][inner sep=0.75pt]    {$C$};
% Text Node
\draw (101.41,109.24) node [anchor=north west][inner sep=0.75pt]    {$w_{1}$};
% Text Node
\draw (102.23,41.59) node [anchor=north west][inner sep=0.75pt]    {$w_{2}$};
% Text Node
\draw (170.49,139.29) node [anchor=north west][inner sep=0.75pt]    {$e'$};
% Text Node
\draw (175.21,49.27) node [anchor=north west][inner sep=0.75pt]    {$e''$};

\end{tikzpicture}

%% file: product_part1_inductive.tex
% Pattern Info

\tikzset{every picture/.style={line width=0.75pt}} %set default line width to 0.75pt        

\begin{tikzpicture}[x=0.75pt,y=0.75pt,yscale=-1,xscale=1]
%uncomment if require: \path (0,300); %set diagram left start at 0, and has height of 300

%Straight Lines [id:da6934980007315071] 
\draw    (186.26,218.63) -- (280.83,250.77) ;
\draw [shift={(280.83,250.77)}, rotate = 18.77] [color={rgb, 255:red, 0; green, 0; blue, 0 }  ][fill={rgb, 255:red, 0; green, 0; blue, 0 }  ][line width=0.75]      (0, 0) circle [x radius= 3.35, y radius= 3.35]   ;
\draw [shift={(186.26,218.63)}, rotate = 18.77] [color={rgb, 255:red, 0; green, 0; blue, 0 }  ][fill={rgb, 255:red, 0; green, 0; blue, 0 }  ][line width=0.75]      (0, 0) circle [x radius= 3.35, y radius= 3.35]   ;
%Straight Lines [id:da3528918760718237] 
\draw    (268.05,191.58) -- (280.83,250.77) ;
\draw [shift={(280.83,250.77)}, rotate = 77.82] [color={rgb, 255:red, 0; green, 0; blue, 0 }  ][fill={rgb, 255:red, 0; green, 0; blue, 0 }  ][line width=0.75]      (0, 0) circle [x radius= 3.35, y radius= 3.35]   ;
\draw [shift={(268.05,191.58)}, rotate = 77.82] [color={rgb, 255:red, 0; green, 0; blue, 0 }  ][fill={rgb, 255:red, 0; green, 0; blue, 0 }  ][line width=0.75]      (0, 0) circle [x radius= 3.35, y radius= 3.35]   ;
%Straight Lines [id:da15393934245900165] 
\draw    (186.26,218.63) -- (268.05,191.58) ;
\draw [shift={(268.05,191.58)}, rotate = 341.69] [color={rgb, 255:red, 0; green, 0; blue, 0 }  ][fill={rgb, 255:red, 0; green, 0; blue, 0 }  ][line width=0.75]      (0, 0) circle [x radius= 3.35, y radius= 3.35]   ;
\draw [shift={(186.26,218.63)}, rotate = 341.69] [color={rgb, 255:red, 0; green, 0; blue, 0 }  ][fill={rgb, 255:red, 0; green, 0; blue, 0 }  ][line width=0.75]      (0, 0) circle [x radius= 3.35, y radius= 3.35]   ;
%Straight Lines [id:da5836652407099755] 
\draw  [dash pattern={on 0.75pt off 0.75pt}]  (156.17,118.49) .. controls (158.26,119.57) and (158.77,121.16) .. (157.69,123.25) .. controls (156.6,125.34) and (157.11,126.93) .. (159.2,128.02) .. controls (161.29,129.1) and (161.8,130.69) .. (160.72,132.78) .. controls (159.64,134.87) and (160.15,136.46) .. (162.24,137.54) .. controls (164.33,138.63) and (164.84,140.22) .. (163.76,142.31) .. controls (162.68,144.4) and (163.19,145.99) .. (165.28,147.07) .. controls (167.37,148.16) and (167.88,149.75) .. (166.79,151.84) .. controls (165.71,153.93) and (166.22,155.52) .. (168.31,156.6) .. controls (170.4,157.68) and (170.91,159.27) .. (169.83,161.36) .. controls (168.75,163.45) and (169.26,165.04) .. (171.35,166.13) .. controls (173.44,167.21) and (173.95,168.8) .. (172.86,170.89) .. controls (171.78,172.98) and (172.29,174.57) .. (174.38,175.66) .. controls (176.47,176.74) and (176.98,178.33) .. (175.9,180.42) .. controls (174.82,182.51) and (175.33,184.1) .. (177.42,185.18) .. controls (179.51,186.27) and (180.02,187.86) .. (178.94,189.95) .. controls (177.85,192.04) and (178.36,193.63) .. (180.45,194.71) .. controls (182.54,195.8) and (183.05,197.39) .. (181.97,199.48) .. controls (180.89,201.57) and (181.4,203.16) .. (183.49,204.24) .. controls (185.58,205.32) and (186.09,206.91) .. (185.01,209) .. controls (183.93,211.09) and (184.44,212.68) .. (186.53,213.77) -- (187.69,217.42) -- (187.69,217.42)(153.31,119.4) .. controls (155.4,120.48) and (155.91,122.07) .. (154.83,124.16) .. controls (153.75,126.25) and (154.26,127.84) .. (156.35,128.93) .. controls (158.44,130.01) and (158.95,131.6) .. (157.86,133.69) .. controls (156.78,135.78) and (157.29,137.37) .. (159.38,138.45) .. controls (161.47,139.54) and (161.98,141.13) .. (160.9,143.22) .. controls (159.82,145.31) and (160.33,146.9) .. (162.42,147.98) .. controls (164.51,149.07) and (165.02,150.66) .. (163.93,152.75) .. controls (162.85,154.84) and (163.36,156.43) .. (165.45,157.51) .. controls (167.54,158.59) and (168.05,160.18) .. (166.97,162.27) .. controls (165.89,164.36) and (166.4,165.95) .. (168.49,167.04) .. controls (170.58,168.12) and (171.09,169.71) .. (170.01,171.8) .. controls (168.92,173.89) and (169.43,175.48) .. (171.52,176.57) .. controls (173.61,177.65) and (174.12,179.24) .. (173.04,181.33) .. controls (171.96,183.42) and (172.47,185.01) .. (174.56,186.09) .. controls (176.65,187.18) and (177.16,188.77) .. (176.08,190.86) .. controls (175,192.95) and (175.51,194.54) .. (177.6,195.62) .. controls (179.69,196.71) and (180.2,198.3) .. (179.11,200.39) .. controls (178.03,202.48) and (178.54,204.07) .. (180.63,205.15) .. controls (182.72,206.23) and (183.23,207.82) .. (182.15,209.91) .. controls (181.07,212) and (181.58,213.59) .. (183.67,214.68) -- (184.83,218.33) -- (184.83,218.33) ;
%Curve Lines [id:da282485005597114] 
\draw  [dash pattern={on 0.75pt off 0.75pt}]  (185.95,219.35) .. controls (183.92,220.6) and (182.02,220.15) .. (180.26,218) .. controls (179.16,215.98) and (177.63,215.55) .. (175.66,216.71) .. controls (173.67,217.82) and (172,217.26) .. (170.63,215.02) .. controls (169.87,212.95) and (168.31,212.32) .. (165.94,213.14) .. controls (163.97,214.08) and (162.52,213.4) .. (161.6,211.09) .. controls (160.87,208.8) and (159.35,207.95) .. (157.05,208.52) .. controls (154.7,208.98) and (153.33,208.05) .. (152.94,205.72) .. controls (152.75,203.44) and (151.38,202.3) .. (148.85,202.29) .. controls (146.56,202.37) and (145.38,201.13) .. (145.31,198.57) .. controls (145.71,196.4) and (144.72,195.07) .. (142.33,194.58) .. controls (139.93,193.85) and (139.04,192.27) .. (139.66,189.84) .. controls (140.62,187.89) and (140.01,186.37) .. (137.84,185.3) .. controls (135.71,183.97) and (135.24,182.2) .. (136.45,179.99) .. controls (137.85,178.28) and (137.62,176.61) .. (135.76,174.97) .. controls (134,173.5) and (133.94,171.76) .. (135.59,169.74) .. controls (137.32,168.25) and (137.41,166.64) .. (135.87,164.93) .. controls (134.41,163.06) and (134.63,161.4) .. (136.54,159.96) .. controls (138.49,158.6) and (138.85,156.89) .. (137.6,154.84) .. controls (136.44,152.65) and (136.92,150.9) .. (139.04,149.57) .. controls (141.05,148.75) and (141.57,147.18) .. (140.6,144.85) .. controls (139.55,142.89) and (140.16,141.28) .. (142.45,140.02) .. controls (144.57,139.27) and (145.17,137.87) .. (144.25,135.8) .. controls (143.39,133.67) and (144.18,131.99) .. (146.61,130.78) .. controls (148.82,130.08) and (149.57,128.62) .. (148.84,126.4) .. controls (148.17,124.12) and (148.98,122.63) .. (151.27,121.94) -- (153.45,118.18)(186.57,216.41) .. controls (184.58,217.68) and (182.73,217.24) .. (181.02,215.1) .. controls (179.96,213.09) and (178.47,212.67) .. (176.54,213.84) .. controls (174.59,214.97) and (172.97,214.42) .. (171.66,212.2) .. controls (170.95,210.15) and (169.45,209.54) .. (167.14,210.39) .. controls (165.21,211.35) and (163.82,210.69) .. (162.97,208.42) .. controls (162.3,206.17) and (160.86,205.36) .. (158.63,205.97) .. controls (156.36,206.48) and (155.06,205.59) .. (154.74,203.32) .. controls (154.62,201.1) and (153.34,200.03) .. (150.9,200.1) .. controls (148.68,200.25) and (147.58,199.1) .. (147.6,196.64) .. controls (148.07,194.55) and (147.15,193.32) .. (144.84,192.94) .. controls (142.51,192.32) and (141.68,190.86) .. (142.37,188.55) .. controls (143.37,186.7) and (142.81,185.3) .. (140.68,184.36) .. controls (138.59,183.15) and (138.16,181.51) .. (139.39,179.42) .. controls (140.81,177.81) and (140.6,176.24) .. (138.75,174.72) .. controls (137,173.34) and (136.94,171.7) .. (138.59,169.79) .. controls (140.31,168.38) and (140.4,166.85) .. (138.86,165.22) .. controls (137.39,163.43) and (137.6,161.84) .. (139.5,160.46) .. controls (141.44,159.17) and (141.78,157.53) .. (140.52,155.54) .. controls (139.35,153.41) and (139.81,151.72) .. (141.91,150.45) .. controls (143.9,149.66) and (144.41,148.13) .. (143.43,145.86) .. controls (142.36,143.93) and (142.96,142.36) .. (145.23,141.15) .. controls (147.33,140.44) and (147.92,139.06) .. (146.99,137.02) .. controls (146.12,134.92) and (146.89,133.28) .. (149.3,132.1) .. controls (151.5,131.43) and (152.23,130) .. (151.5,127.8) .. controls (150.81,125.55) and (151.61,124.09) .. (153.89,123.42) -- (156.03,119.71) ;
%Straight Lines [id:da7025330583030641] 
\draw    (154.74,118.94) -- (154.74,48.76) ;
\draw [shift={(154.74,48.76)}, rotate = 270] [color={rgb, 255:red, 0; green, 0; blue, 0 }  ][fill={rgb, 255:red, 0; green, 0; blue, 0 }  ][line width=0.75]      (0, 0) circle [x radius= 3.35, y radius= 3.35]   ;
\draw [shift={(154.74,118.94)}, rotate = 270] [color={rgb, 255:red, 0; green, 0; blue, 0 }  ][fill={rgb, 255:red, 0; green, 0; blue, 0 }  ][line width=0.75]      (0, 0) circle [x radius= 3.35, y radius= 3.35]   ;
%Straight Lines [id:da5384313400700488] 
\draw    (154.74,118.94) -- (249.31,151.08) ;
\draw [shift={(249.31,151.08)}, rotate = 18.77] [color={rgb, 255:red, 0; green, 0; blue, 0 }  ][fill={rgb, 255:red, 0; green, 0; blue, 0 }  ][line width=0.75]      (0, 0) circle [x radius= 3.35, y radius= 3.35]   ;
\draw [shift={(154.74,118.94)}, rotate = 18.77] [color={rgb, 255:red, 0; green, 0; blue, 0 }  ][fill={rgb, 255:red, 0; green, 0; blue, 0 }  ][line width=0.75]      (0, 0) circle [x radius= 3.35, y radius= 3.35]   ;
%Straight Lines [id:da16690421882543116] 
\draw    (154.74,48.76) -- (249.31,80.89) ;
\draw [shift={(249.31,80.89)}, rotate = 18.77] [color={rgb, 255:red, 0; green, 0; blue, 0 }  ][fill={rgb, 255:red, 0; green, 0; blue, 0 }  ][line width=0.75]      (0, 0) circle [x radius= 3.35, y radius= 3.35]   ;
\draw [shift={(154.74,48.76)}, rotate = 18.77] [color={rgb, 255:red, 0; green, 0; blue, 0 }  ][fill={rgb, 255:red, 0; green, 0; blue, 0 }  ][line width=0.75]      (0, 0) circle [x radius= 3.35, y radius= 3.35]   ;
%Straight Lines [id:da4465774079984246] 
\draw    (249.31,151.08) -- (249.31,80.89) ;
\draw [shift={(249.31,80.89)}, rotate = 270] [color={rgb, 255:red, 0; green, 0; blue, 0 }  ][fill={rgb, 255:red, 0; green, 0; blue, 0 }  ][line width=0.75]      (0, 0) circle [x radius= 3.35, y radius= 3.35]   ;
\draw [shift={(249.31,151.08)}, rotate = 270] [color={rgb, 255:red, 0; green, 0; blue, 0 }  ][fill={rgb, 255:red, 0; green, 0; blue, 0 }  ][line width=0.75]      (0, 0) circle [x radius= 3.35, y radius= 3.35]   ;
%Straight Lines [id:da7581077397817662] 
\draw    (154.74,48.76) -- (113.33,82.56) ;
\draw [shift={(113.33,82.56)}, rotate = 140.77] [color={rgb, 255:red, 0; green, 0; blue, 0 }  ][fill={rgb, 255:red, 0; green, 0; blue, 0 }  ][line width=0.75]      (0, 0) circle [x radius= 3.35, y radius= 3.35]   ;
\draw [shift={(154.74,48.76)}, rotate = 140.77] [color={rgb, 255:red, 0; green, 0; blue, 0 }  ][fill={rgb, 255:red, 0; green, 0; blue, 0 }  ][line width=0.75]      (0, 0) circle [x radius= 3.35, y radius= 3.35]   ;
%Straight Lines [id:da6027951135098035] 
\draw    (154.74,118.94) -- (113.33,152.75) ;
\draw [shift={(113.33,152.75)}, rotate = 140.77] [color={rgb, 255:red, 0; green, 0; blue, 0 }  ][fill={rgb, 255:red, 0; green, 0; blue, 0 }  ][line width=0.75]      (0, 0) circle [x radius= 3.35, y radius= 3.35]   ;
\draw [shift={(154.74,118.94)}, rotate = 140.77] [color={rgb, 255:red, 0; green, 0; blue, 0 }  ][fill={rgb, 255:red, 0; green, 0; blue, 0 }  ][line width=0.75]      (0, 0) circle [x radius= 3.35, y radius= 3.35]   ;
%Straight Lines [id:da4558782310599374] 
\draw  [dash pattern={on 4.5pt off 4.5pt}]  (113.33,152.75) -- (249.31,151.08) ;
\draw [shift={(249.31,151.08)}, rotate = 359.3] [color={rgb, 255:red, 0; green, 0; blue, 0 }  ][fill={rgb, 255:red, 0; green, 0; blue, 0 }  ][line width=0.75]      (0, 0) circle [x radius= 3.35, y radius= 3.35]   ;
\draw [shift={(113.33,152.75)}, rotate = 359.3] [color={rgb, 255:red, 0; green, 0; blue, 0 }  ][fill={rgb, 255:red, 0; green, 0; blue, 0 }  ][line width=0.75]      (0, 0) circle [x radius= 3.35, y radius= 3.35]   ;
%Straight Lines [id:da7226035049221524] 
\draw  [dash pattern={on 4.5pt off 4.5pt}]  (113.33,82.56) -- (249.31,80.89) ;
\draw [shift={(249.31,80.89)}, rotate = 359.3] [color={rgb, 255:red, 0; green, 0; blue, 0 }  ][fill={rgb, 255:red, 0; green, 0; blue, 0 }  ][line width=0.75]      (0, 0) circle [x radius= 3.35, y radius= 3.35]   ;
\draw [shift={(113.33,82.56)}, rotate = 359.3] [color={rgb, 255:red, 0; green, 0; blue, 0 }  ][fill={rgb, 255:red, 0; green, 0; blue, 0 }  ][line width=0.75]      (0, 0) circle [x radius= 3.35, y radius= 3.35]   ;
%Straight Lines [id:da6297571649311656] 
\draw  [dash pattern={on 4.5pt off 4.5pt}]  (113.33,82.56) -- (113.33,152.75) ;
\draw [shift={(113.33,152.75)}, rotate = 90] [color={rgb, 255:red, 0; green, 0; blue, 0 }  ][fill={rgb, 255:red, 0; green, 0; blue, 0 }  ][line width=0.75]      (0, 0) circle [x radius= 3.35, y radius= 3.35]   ;
\draw [shift={(113.33,82.56)}, rotate = 90] [color={rgb, 255:red, 0; green, 0; blue, 0 }  ][fill={rgb, 255:red, 0; green, 0; blue, 0 }  ][line width=0.75]      (0, 0) circle [x radius= 3.35, y radius= 3.35]   ;
%Straight Lines [id:da9646748753895977] 
\draw    (475.79,222.83) -- (570.36,254.97) ;
\draw [shift={(570.36,254.97)}, rotate = 18.77] [color={rgb, 255:red, 0; green, 0; blue, 0 }  ][fill={rgb, 255:red, 0; green, 0; blue, 0 }  ][line width=0.75]      (0, 0) circle [x radius= 3.35, y radius= 3.35]   ;
\draw [shift={(475.79,222.83)}, rotate = 18.77] [color={rgb, 255:red, 0; green, 0; blue, 0 }  ][fill={rgb, 255:red, 0; green, 0; blue, 0 }  ][line width=0.75]      (0, 0) circle [x radius= 3.35, y radius= 3.35]   ;
%Straight Lines [id:da05798579726844655] 
\draw    (557.58,195.77) -- (570.36,254.97) ;
\draw [shift={(570.36,254.97)}, rotate = 77.82] [color={rgb, 255:red, 0; green, 0; blue, 0 }  ][fill={rgb, 255:red, 0; green, 0; blue, 0 }  ][line width=0.75]      (0, 0) circle [x radius= 3.35, y radius= 3.35]   ;
\draw [shift={(557.58,195.77)}, rotate = 77.82] [color={rgb, 255:red, 0; green, 0; blue, 0 }  ][fill={rgb, 255:red, 0; green, 0; blue, 0 }  ][line width=0.75]      (0, 0) circle [x radius= 3.35, y radius= 3.35]   ;
%Straight Lines [id:da9734436278797667] 
\draw    (475.79,222.83) -- (557.58,195.77) ;
\draw [shift={(557.58,195.77)}, rotate = 341.69] [color={rgb, 255:red, 0; green, 0; blue, 0 }  ][fill={rgb, 255:red, 0; green, 0; blue, 0 }  ][line width=0.75]      (0, 0) circle [x radius= 3.35, y radius= 3.35]   ;
\draw [shift={(475.79,222.83)}, rotate = 341.69] [color={rgb, 255:red, 0; green, 0; blue, 0 }  ][fill={rgb, 255:red, 0; green, 0; blue, 0 }  ][line width=0.75]      (0, 0) circle [x radius= 3.35, y radius= 3.35]   ;
%Straight Lines [id:da010294313210437944] 
\draw  [dash pattern={on 0.75pt off 0.75pt}]  (445.69,122.69) .. controls (447.78,123.77) and (448.29,125.36) .. (447.21,127.45) .. controls (446.13,129.54) and (446.64,131.13) .. (448.73,132.21) .. controls (450.82,133.3) and (451.33,134.89) .. (450.25,136.98) .. controls (449.16,139.07) and (449.67,140.66) .. (451.76,141.74) .. controls (453.85,142.83) and (454.36,144.42) .. (453.28,146.51) .. controls (452.2,148.6) and (452.71,150.19) .. (454.8,151.27) .. controls (456.89,152.35) and (457.4,153.94) .. (456.32,156.03) .. controls (455.24,158.12) and (455.75,159.71) .. (457.84,160.8) .. controls (459.93,161.88) and (460.44,163.47) .. (459.35,165.56) .. controls (458.27,167.65) and (458.78,169.24) .. (460.87,170.33) .. controls (462.96,171.41) and (463.47,173) .. (462.39,175.09) .. controls (461.31,177.18) and (461.82,178.77) .. (463.91,179.85) .. controls (466,180.94) and (466.51,182.53) .. (465.42,184.62) .. controls (464.34,186.71) and (464.85,188.3) .. (466.94,189.38) .. controls (469.03,190.47) and (469.54,192.06) .. (468.46,194.15) .. controls (467.38,196.24) and (467.89,197.83) .. (469.98,198.91) .. controls (472.07,199.99) and (472.58,201.58) .. (471.5,203.67) .. controls (470.41,205.76) and (470.92,207.35) .. (473.01,208.44) .. controls (475.1,209.52) and (475.61,211.11) .. (474.53,213.2) .. controls (473.45,215.29) and (473.96,216.88) .. (476.05,217.97) -- (477.21,221.62) -- (477.21,221.62)(442.83,123.6) .. controls (444.92,124.68) and (445.43,126.27) .. (444.35,128.36) .. controls (443.27,130.45) and (443.78,132.04) .. (445.87,133.12) .. controls (447.96,134.21) and (448.47,135.8) .. (447.39,137.89) .. controls (446.3,139.98) and (446.81,141.57) .. (448.9,142.65) .. controls (450.99,143.74) and (451.5,145.33) .. (450.42,147.42) .. controls (449.34,149.51) and (449.85,151.1) .. (451.94,152.18) .. controls (454.03,153.26) and (454.54,154.85) .. (453.46,156.94) .. controls (452.38,159.03) and (452.89,160.62) .. (454.98,161.71) .. controls (457.07,162.79) and (457.58,164.38) .. (456.49,166.47) .. controls (455.41,168.56) and (455.92,170.15) .. (458.01,171.24) .. controls (460.1,172.32) and (460.61,173.91) .. (459.53,176) .. controls (458.45,178.09) and (458.96,179.68) .. (461.05,180.76) .. controls (463.14,181.85) and (463.65,183.44) .. (462.57,185.53) .. controls (461.48,187.62) and (461.99,189.21) .. (464.08,190.29) .. controls (466.17,191.38) and (466.68,192.97) .. (465.6,195.06) .. controls (464.52,197.15) and (465.03,198.74) .. (467.12,199.82) .. controls (469.21,200.9) and (469.72,202.49) .. (468.64,204.58) .. controls (467.56,206.67) and (468.07,208.26) .. (470.16,209.35) .. controls (472.25,210.43) and (472.76,212.02) .. (471.67,214.11) .. controls (470.59,216.2) and (471.1,217.79) .. (473.19,218.88) -- (474.36,222.53) -- (474.36,222.53) ;
%Curve Lines [id:da8561870012168837] 
\draw  [dash pattern={on 0.75pt off 0.75pt}]  (475.47,223.54) .. controls (473.44,224.8) and (471.55,224.35) .. (469.78,222.2) .. controls (468.68,220.17) and (467.15,219.74) .. (465.19,220.9) .. controls (463.2,222.02) and (461.52,221.46) .. (460.15,219.21) .. controls (459.4,217.14) and (457.83,216.52) .. (455.46,217.34) .. controls (453.49,218.28) and (452.04,217.6) .. (451.12,215.29) .. controls (450.39,213) and (448.87,212.14) .. (446.57,212.72) .. controls (444.22,213.18) and (442.86,212.24) .. (442.47,209.91) .. controls (442.27,207.64) and (440.9,206.5) .. (438.37,206.49) .. controls (436.08,206.57) and (434.9,205.33) .. (434.84,202.77) .. controls (435.23,200.6) and (434.24,199.27) .. (431.85,198.78) .. controls (429.45,198.04) and (428.56,196.46) .. (429.18,194.04) .. controls (430.14,192.08) and (429.53,190.57) .. (427.36,189.5) .. controls (425.23,188.17) and (424.77,186.4) .. (425.97,184.19) .. controls (427.38,182.48) and (427.15,180.81) .. (425.28,179.17) .. controls (423.53,177.7) and (423.47,175.95) .. (425.12,173.94) .. controls (426.84,172.45) and (426.93,170.84) .. (425.39,169.12) .. controls (423.93,167.25) and (424.16,165.6) .. (426.07,164.15) .. controls (428.02,162.8) and (428.38,161.09) .. (427.13,159.03) .. controls (425.97,156.85) and (426.45,155.1) .. (428.56,153.77) .. controls (430.57,152.95) and (431.1,151.38) .. (430.13,149.05) .. controls (429.08,147.08) and (429.69,145.47) .. (431.97,144.22) .. controls (434.09,143.47) and (434.69,142.07) .. (433.78,140) .. controls (432.92,137.87) and (433.7,136.19) .. (436.13,134.98) .. controls (438.34,134.28) and (439.09,132.82) .. (438.37,130.59) .. controls (437.7,128.32) and (438.51,126.83) .. (440.8,126.14) -- (442.97,122.38)(476.1,220.61) .. controls (474.1,221.87) and (472.25,221.43) .. (470.54,219.3) .. controls (469.48,217.29) and (467.99,216.86) .. (466.07,218.03) .. controls (464.12,219.16) and (462.49,218.62) .. (461.18,216.4) .. controls (460.47,214.35) and (458.97,213.74) .. (456.66,214.59) .. controls (454.73,215.55) and (453.34,214.89) .. (452.49,212.62) .. controls (451.83,210.37) and (450.39,209.55) .. (448.16,210.17) .. controls (445.89,210.68) and (444.59,209.79) .. (444.27,207.52) .. controls (444.15,205.3) and (442.87,204.23) .. (440.42,204.3) .. controls (438.2,204.45) and (437.1,203.29) .. (437.13,200.83) .. controls (437.59,198.75) and (436.67,197.52) .. (434.36,197.13) .. controls (432.03,196.51) and (431.21,195.05) .. (431.89,192.75) .. controls (432.9,190.9) and (432.34,189.5) .. (430.21,188.55) .. controls (428.12,187.35) and (427.69,185.7) .. (428.92,183.61) .. controls (430.34,182) and (430.12,180.44) .. (428.27,178.92) .. controls (426.52,177.54) and (426.47,175.9) .. (428.12,173.99) .. controls (429.84,172.58) and (429.93,171.05) .. (428.38,169.41) .. controls (426.91,167.62) and (427.13,166.03) .. (429.02,164.66) .. controls (430.96,163.37) and (431.3,161.73) .. (430.04,159.74) .. controls (428.87,157.61) and (429.33,155.91) .. (431.43,154.64) .. controls (433.43,153.86) and (433.94,152.33) .. (432.95,150.06) .. controls (431.88,148.13) and (432.48,146.56) .. (434.75,145.35) .. controls (436.86,144.64) and (437.45,143.26) .. (436.52,141.22) .. controls (435.65,139.12) and (436.42,137.48) .. (438.83,136.3) .. controls (441.03,135.63) and (441.76,134.19) .. (441.02,132) .. controls (440.34,129.75) and (441.14,128.28) .. (443.41,127.61) -- (445.55,123.9) ;
%Straight Lines [id:da0913364658700343] 
\draw    (444.26,123.14) -- (444.26,52.96) ;
\draw [shift={(444.26,52.96)}, rotate = 270] [color={rgb, 255:red, 0; green, 0; blue, 0 }  ][fill={rgb, 255:red, 0; green, 0; blue, 0 }  ][line width=0.75]      (0, 0) circle [x radius= 3.35, y radius= 3.35]   ;
\draw [shift={(444.26,123.14)}, rotate = 270] [color={rgb, 255:red, 0; green, 0; blue, 0 }  ][fill={rgb, 255:red, 0; green, 0; blue, 0 }  ][line width=0.75]      (0, 0) circle [x radius= 3.35, y radius= 3.35]   ;
%Straight Lines [id:da4113430644531171] 
\draw    (444.26,123.14) -- (538.83,155.27) ;
\draw [shift={(538.83,155.27)}, rotate = 18.77] [color={rgb, 255:red, 0; green, 0; blue, 0 }  ][fill={rgb, 255:red, 0; green, 0; blue, 0 }  ][line width=0.75]      (0, 0) circle [x radius= 3.35, y radius= 3.35]   ;
\draw [shift={(444.26,123.14)}, rotate = 18.77] [color={rgb, 255:red, 0; green, 0; blue, 0 }  ][fill={rgb, 255:red, 0; green, 0; blue, 0 }  ][line width=0.75]      (0, 0) circle [x radius= 3.35, y radius= 3.35]   ;
%Straight Lines [id:da2627849105127581] 
\draw    (444.26,52.96) -- (538.83,85.09) ;
\draw [shift={(538.83,85.09)}, rotate = 18.77] [color={rgb, 255:red, 0; green, 0; blue, 0 }  ][fill={rgb, 255:red, 0; green, 0; blue, 0 }  ][line width=0.75]      (0, 0) circle [x radius= 3.35, y radius= 3.35]   ;
\draw [shift={(444.26,52.96)}, rotate = 18.77] [color={rgb, 255:red, 0; green, 0; blue, 0 }  ][fill={rgb, 255:red, 0; green, 0; blue, 0 }  ][line width=0.75]      (0, 0) circle [x radius= 3.35, y radius= 3.35]   ;
%Straight Lines [id:da6550434073196226] 
\draw  [dash pattern={on 4.5pt off 4.5pt}]  (538.83,155.27) -- (538.83,85.09) ;
\draw [shift={(538.83,85.09)}, rotate = 270] [color={rgb, 255:red, 0; green, 0; blue, 0 }  ][fill={rgb, 255:red, 0; green, 0; blue, 0 }  ][line width=0.75]      (0, 0) circle [x radius= 3.35, y radius= 3.35]   ;
\draw [shift={(538.83,155.27)}, rotate = 270] [color={rgb, 255:red, 0; green, 0; blue, 0 }  ][fill={rgb, 255:red, 0; green, 0; blue, 0 }  ][line width=0.75]      (0, 0) circle [x radius= 3.35, y radius= 3.35]   ;

% Text Node
\draw (231.16,235.98) node [anchor=north west][inner sep=0.75pt]    {$e$};
% Text Node
\draw (241.48,208.77) node [anchor=north west][inner sep=0.75pt]    {$J$};
% Text Node
\draw (148.7,167.2) node [anchor=north west][inner sep=0.75pt]    {$C$};
% Text Node
\draw (160.92,99.31) node [anchor=north west][inner sep=0.75pt]    {$w_{1}$};
% Text Node
\draw (158.31,26.92) node [anchor=north west][inner sep=0.75pt]    {$w_{2}$};
% Text Node
\draw (201.35,116.8) node [anchor=north west][inner sep=0.75pt]    {$e'$};
% Text Node
\draw (202.2,44.02) node [anchor=north west][inner sep=0.75pt]    {$e''$};
% Text Node
\draw (122.79,45.76) node [anchor=north west][inner sep=0.75pt]    {$f$};
% Text Node
\draw (127.64,112.92) node [anchor=north west][inner sep=0.75pt]    {$g$};
% Text Node
\draw (531,212.97) node [anchor=north west][inner sep=0.75pt]    {$J$};
% Text Node
\draw (438.22,171.39) node [anchor=north west][inner sep=0.75pt]    {$C$};
% Text Node
\draw (417.2,108.41) node [anchor=north west][inner sep=0.75pt]    {$w_{1}$};
% Text Node
\draw (418.05,39.51) node [anchor=north west][inner sep=0.75pt]    {$w_{2}$};
% Text Node
\draw (579.63,249.77) node [anchor=north west][inner sep=0.75pt]    {$v_{1}$};
% Text Node
\draw (484.64,143.15) node [anchor=north west][inner sep=0.75pt]    {$e'$};
% Text Node
\draw (487.51,48) node [anchor=north west][inner sep=0.75pt]    {$e''$};

\end{tikzpicture}